\documentclass[11pt]{amsart}
\usepackage[utf8]{inputenc}
\usepackage{amsmath,amssymb,amsthm,colonequals,mathrsfs,mathtools}
\usepackage{array,enumitem,yfonts}
\usepackage{booktabs,longtable}
\usepackage{algorithm}
\usepackage{algpseudocode}
\usepackage[alphabetic]{amsrefs}
\usepackage[all,cmtip]{xy}
\usepackage[colorlinks,anchorcolor=blue,citecolor=blue,linkcolor=blue,urlcolor=blue,bookmarksdepth=2]{hyperref}
\allowdisplaybreaks

\usepackage{comment}

\usepackage[margin=1in]{geometry}

\usepackage{tikz}
\usetikzlibrary{cd,arrows}
\tikzset{>=stealth}
\tikzcdset{arrow style=tikz}
\tikzset{link/.style={column sep=1.8cm,row sep=0.16cm}}

\AtBeginDocument{%
	\def\MR#1{}
}
\usepackage{fancyhdr}
\usepackage{amsmath}
\usepackage{mathdots}
\usepackage{amssymb}
\usepackage{amsthm}
\usepackage{here}
\usepackage{amscd} 
\usepackage{mathrsfs}
\usepackage{mathtools}
\usepackage{scalefnt}
\usepackage{url}
\theoremstyle{plain}
\newtheorem{thm}{Theorem}[section]
\newtheorem{lem}[thm]{Lemma}
\newtheorem{cor}[thm]{Corollary}
\newtheorem{prop}[thm]{Proposition}

\theoremstyle{definition}

\newtheorem{rem}[thm]{Remark}

\newtheorem{defn}[thm]{Definition}

\newtheorem{ex}[thm]{Example}
\numberwithin{equation}{section}

\def\Q{{\mathbb Q}}

\def\id{\mathop{\mathrm{id}}\nolimits}

\def\Aut{\mathop{\mathrm{Aut}}\nolimits}

\def\Lie{\mathop{\mathrm{Lie}}\nolimits}
\def\Hom{\mathop{\mathrm{Hom}}\nolimits}

\def\GL{\mathop{\mathrm{GL}}\nolimits}
\def\SL{\mathop{\mathrm{SL}}\nolimits}
\def\SU{\mathop{\mathrm{SU}}\nolimits}

\def\Spec{\mathop{\mathrm{Spec}}\nolimits}

\def\det{\mathop{\mathrm{det}}\nolimits}
\def\dim{\mathop{\mathrm{dim}}\nolimits}
\def\div{\mathop{\mathrm{div}}\nolimits}

\def\Stab{\mathop{\mathrm{Stab}}\nolimits}

\def\Hom{\mathop{\mathrm{Hom}}\nolimits}
\def\diag{\mathop{\mathrm{diag}}\nolimits}

\def\ord{\mathrm{ord}}

\def\tor{\mathrm{tor}}

\def\L{\mathscr{L}}

\def\OO{\mathscr{O}}

\def\exp{\mathop{\mathrm{exp}}\nolimits}

\def\SO{\mathop{\mathrm{SO}}\nolimits}
\def\SU{\mathop{\mathrm{SU}}\nolimits}

\def\U{\mathrm{U}}
\def\O{\mathrm{O}}

\newcommand{\bbA}{\mathbb{A}}

\newcommand{\bbC}{\mathbb{C}}

\newcommand{\bbQ}{\mathbb{Q}}

\newcommand{\bbR}{\mathbb{R}}

\newcommand{\bbZ}{\mathbb{Z}}

\newcommand{\calD}{\mathcal{D}}

\newcommand{\calE}{\mathcal{E}}

\newcommand{\calF}{\mathcal{F}}

\newcommand{\calS}{\mathcal{S}}

\newcommand{\calV}{\mathcal{V}}

\newcommand{\frakH}{\mathfrak{H}}

\newcommand{\Mat}{\mathrm{Mat}}

\newcommand{\fini}{\mathrm{fin}}

\newcommand{\vep}{\varepsilon}

\newcommand{\abcd}{\begin{pmatrix}a & b \\ c & d \end{pmatrix}
}

\newcommand{\Irr}{\mathrm{Irr}}

\newcommand{\BC}{\mathrm{BC}}
\newcommand{\disc}{\mathrm{disc}}

\allowdisplaybreaks[4]

\newcommand{\defeq}{\vcentcolon=}

\usepackage{comment}

\begin{document}
\title[On the singularities and the Kodaira dimension of unitary Shimura varieties]{On the singularities and the Kodaira dimension of unitary Shimura varieties}
\author{Shuji Horinaga$^{1}$ \and Yota Maeda$^{2,3}$}
\email{syuuji.horinaga@ntt.com, shorinaga@gmail.com, y.maeda.math@gmail.com}

\def\l@subsection{\@tocline{2}{0pt}{2.3pc}{5pc}{}}

\maketitle
\vspace{-1em}
\begin{center}
  \begin{minipage}{0.9\textwidth}
    \centering
    {\small
    $^{1}$ NTT Institute for Fundamental Mathematics, NTT Corporation, Japan\\
    $^{2}$ Fachbereich Mathematik, Technische Universität Darmstadt, Germany.\\
    $^{3}$ Mathematical Institute, Tohoku University, Japan\\
    }
  \end{minipage}
\end{center}

\vspace{1em}

\begin{abstract}
We study the geometry of Shimura varieties associated with a Hermitian form of signature $(p,q)$ over an imaginary quadratic field $E$, where $2\leq p\leq q$.
We prove that when $p\geq w_E$ and $(p,q)\neq(2,2),(2,3)$, where $w_E:=\#\OO_E^\times$, there exists a toroidal compactification with at worst canonical singularities.
As an application, combining this singularity analysis with Arthur's multiplicity formula, we prove that only finitely many pairs $(p,q)$ satisfying the above conditions and $p+q\equiv1\pmod{w_E}$ give rise to unitary Shimura varieties that are not of general type.
Our method also improves the singularity bound of Gritsenko--Hulek--Sankaran (Invent.\ Math., 2007) for $\O^+(2,n)$ to the range $n\geq6$ and shows that this bound is sharp.
\end{abstract}

\section{Introduction}

The geometry and birational classification of arithmetic quotients of Hermitian symmetric domains
have been a central theme in the study of Shimura varieties.
Given a connected, non-compact Hermitian symmetric domain $\mathcal{D}$ and a neat arithmetic subgroup
$\Gamma \subset \mathrm{Aut}(\mathcal{D})$, the quotient $X := \Gamma \backslash \mathcal{D}$
is a quasi-projective variety which carries rich arithmetic and geometric structures.
A fundamental problem is to understand the birational type of $X$, and in particular
to determine when $X$ is of general type, in connection with Lang's conjecture.

A powerful general approach to this problem was initiated by Ash, Mumford, Rapoport, and Tai in their work
on toroidal compactifications of locally symmetric varieties \cite{ash2010smooth}.
Choosing a smooth admissible fan $\Sigma$, a toroidal compactification $\overline{X}^{\Sigma} \supset X$ has a simple normal crossing boundary divisor $\Delta := \overline{X}^{\Sigma} \setminus X$.
Building on the Hirzebruch proportionality principle, Mumford showed that
automorphic vector bundles admit canonical extensions to toroidal
compactifications and established a proportionality theorem for their
Chern numbers  \cite{mumford1977hirzebruch}.
In this neat setting, $\overline{X}^{\Sigma}$ is smooth
and $\Delta$ is a simple normal crossing divisor.
Moreover, the log canonical divisor
$K_{\overline{X}^{\Sigma}}+\Delta$ is big.

We now allow $\Gamma$ to be an arbitrary arithmetic
subgroup.
With suitable choices in the toroidal construction,
the resulting compactification is locally described
by finite quotients of smooth charts.
Nontrivial stabilizers can produce quotient singularities,
both in the interior and along the boundary,
and these singularities need not be canonical.
Thus, determining the birational type at arbitrary
arithmetic level requires both an analysis of these
quotient singularities and the construction of
pluricanonical forms that extend across the boundary
and to a resolution.

The aim of this paper is to develop both the analysis of  singularities and the birational classification for Shimura varieties associated with unitary groups of signature $(p,q)$.
More precisely, let $E$ be an imaginary quadratic field and $L$ a Hermitian $\OO_E$-lattice of signature $(p,q)$ with $p,q>1$.
Let $G$ be the corresponding unitary group scheme over $\bbZ$ and $\mathcal{D}_{p,q}$ the associated Hermitian symmetric domain.
For an arithmetic subgroup $\Gamma \subset G(\bbZ)$, we consider the
\emph{unitary modular variety} $\mathrm{Sh}^0(\Gamma) \defeq \Gamma\backslash \mathcal{D}_{p,q}$.
This admits a toroidal compactification $\overline{\mathrm{Sh}^0(\Gamma)}^{\Sigma}$ with boundary divisor $\Delta$.

Our first main result shows that, under explicit
conditions on $(p,q)$ and $E$, the toroidal
compactification
$\overline{\mathrm{Sh}^0(\Gamma)}^{\Sigma}$
can be chosen to have at worst canonical singularities;
see Theorem \ref{mainthm:singularity}.
Combining this singularity analysis with the
construction of low weight cusp forms via Arthur's
multiplicity formula, we obtain sufficient conditions
for $\mathrm{Sh}^0(\Gamma)$ to be of general type;
see Theorem~\ref{mainthm:general type}.

These results provide finer birational information on unitary Shimura varieties.
In particular, compactifications with canonical singularities are better suited to techniques from the minimal model program.
The methods developed here, combining a detailed analysis of quotient singularities with automorphic representation theory, may also be applicable to other classes of Shimura varieties.

\section{Main results}
\subsection{Shimura varieties}

Let $E$ be an imaginary quadratic field with discriminant $-D$, and $\OO_E$ be its ring of integers with the unit group $\OO_E^\times$ of order $w_E$.
Let $L$ be a Hermitian lattice over $\OO_E$ with signature $(p,q)$ and $G$ be the associated group scheme over $\bbZ$.
Unless otherwise stated, we assume throughout this paper $p,q > 1$ and $p \leq q$.
Put $N = p+q$.
Associated with these data, the type $I_{p,q}$ Hermitian symmetric domain is defined as
\begin{align}\label{def_realization_D_p,q}
  \mathcal{D}_{p,q}&\defeq\{W \subset L \otimes_{\OO_E} \bbC \mid \dim_{\bbC} W = q,\ h|_W\ \text{is negative definite} \}\\
  &\cong \{Z\in \Mat_{p, q}(\bbC)\mid I_q - Z^*Z>0\}.\notag
\end{align}
Note that if $p=1$, this space coincides with the usual $q$-dimensional complex ball.
The unitary group $G(\bbZ)=\U(L)$ acts on this space.
For an arithmetic subgroup $\Gamma\subset\U(L)$, we define the unitary modular variety $\mathrm{Sh}^0(\Gamma) = \Gamma \backslash \mathcal{D}_{p,q}$.

We now recall how these quotients occur as connected components of unitary Shimura varieties. The Hermitian symmetric domain $\mathcal{D}_{p,q}$ corresponds to the $G(\bbR)$-conjugacy class of homomorphisms $\mathrm{Res}_{\bbC/\bbR}\mathbb{G}_{\mathrm{m}}\to G_{\bbR}$. For a compact open subgroup $K\subset G(\mathbb{A}_f)$, the associated \emph{unitary Shimura variety} is defined by the double coset
\[
  \mathrm{Sh}_{K}(G,\mathcal{D}_{p,q})\defeq G(\bbQ)\backslash\bigl(\mathcal{D}_{p,q}\times G(\mathbb{A}_f)\bigr)/K.
\]
It is known that $\mathrm{Sh}_{K}(G,\mathcal{D}_{p,q})$ decomposes into a finite union of unitary modular varieties. More precisely, if $g$ runs over a set of representatives of $G(\bbQ)\backslash G(\mathbb{A}_f)/K$ and  
  $\Gamma_g \defeq G(\bbQ) \cap gKg^{-1}$,
then
\[
  \mathrm{Sh}_{K}(G,\mathcal{D}_{p,q}) = \coprod_{g\in G(\bbQ)\backslash G(\mathbb{A}_f)/K} \Gamma_g\backslash \mathcal{D}_{p,q}.
\]
Thus, when an arithmetic subgroup $\Gamma\subset \U(L)$ arises from a compact open subgroup of $G(\bbA_f)$, the quotient $\mathrm{Sh}^0(\Gamma)$ appears as a connected component of such a Shimura variety; in the rest of the paper we work directly with the arithmetic quotient $\Gamma\backslash\mathcal{D}_{p,q}$.

\begin{rem}
Unitary Shimura varieties form a standard class of PEL-type Shimura varieties modulo central extensions. Their moduli-theoretic and automorphic representations have played an important role in the Langlands program and in the theory of special cycles \cites{Kottwitz1992points,harris2000geometry,shin2011galois,caraiani2017generic,kudla2011special}. In this paper, however, we mainly use their analytic description as arithmetic quotients of type $I_{p,q}$ domains.
\end{rem}

\subsection{Singularities of unitary Shimura varieties}
Our first main result concerns the singularities of a compactification of $\mathrm{Sh}^0(\Gamma)$.
\begin{thm}[{Theorems \ref{thm_classif_sing_disc<-4}, \ref{thm:nonterminal}}]
\label{mainthm:singularity}
  Any unitary modular variety $\mathrm{Sh}^0(\Gamma)$ has a toroidal compactification $\overline{\mathrm{Sh}^0(\Gamma)}^{\Sigma}$ that has at worst canonical singularities for a suitable choice of a regular inversion-free fan $\Sigma$ when $p \geq w_E$ and $(p,q) \neq (2,2),(2,3)$.
  In particular, the same conclusion holds for any unitary Shimura variety $\mathrm{Sh}_{K}(G,\mathcal{D}_{p,q})$.
\end{thm}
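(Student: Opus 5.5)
The plan follows the strategy of Gritsenko--Hulek--Sankaran for orthogonal modular varieties, adapted to the type $I_{p,q}$ domain. All singularities of $\overline{\mathrm{Sh}^0(\Gamma)}^{\Sigma}$ are finite quotient singularities, both in the interior and on the boundary. We therefore apply the Reid--Shepherd-Barron--Tai criterion: a quotient $\bbC^n/H$, with $H$ a finite group containing no quasi-reflections, is canonical if and only if every $h\in H$ satisfies
\[
\Sigma(h)\defeq\sum_i \{a_i\}\geq 1 ,
\]
where the eigenvalues of $h$ on the tangent space are $e^{2\pi\sqrt{-1}a_i}$. Quasi-reflections are first removed by passing to the quotient by the subgroup they generate; that quotient is again smooth, and we apply the criterion to the image group.

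The proof then has three steps.

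\textbf{Step 1 (interior points).} Take $[W]\in\mathcal{D}_{p,q}$ and $g\in\Stab_\Gamma(W)$. The tangent space at $W$ is $\Hom(W,W^\perp)\cong W^{\vee}\otimes W^\perp$. Hence the eigenvalues of $g$ are the products $\bar\mu_j\lambda_i$, where $\lambda_1,\dots,\lambda_p$ are the eigenvalues of $g$ on the positive part $W^\perp$ and $\mu_1,\dots,\mu_q$ those on $W$. Modulo scalars, $g$ is of finite order and is defined over $\OO_E$.

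\begin{itemize}
\item \emph{Arithmetic input.} The characteristic polynomial of $g$ on $L$ has $\OO_E$-coefficients. Thus the eigenvalues on $L\otimes\bbC$ come in Galois orbits of roots of unity $\zeta$ of some order $m$, with multiplicity governed by $\varphi(m)/[E(\zeta_m):\bbQ(\zeta_m)]$.
\item \emph{Which elements are dangerous.} A scalar $\zeta\in\OO_E^\times$ acts trivially on $\mathcal{D}_{p,q}$. Comparing $g$ with $\zeta g$ for $\zeta\in\OO_E^\times$ leaves freedom of size $w_E$. The elements with $\Sigma(g)<1$ are essentially those acting as a scalar on a large subspace and as a different root of unity on a small complementary subspace.
\item \emph{Counting.} The product structure means that a single non-scalar eigenvalue on $W^\perp$, say, already contributes about $q\cdot\{a\}$. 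Likewise a single one on $W$ contributes about $p\cdot\{a\}$.
\item \emph{Main case.} Since $p\geq w_E$, the bad case is pinned down to $g$ being a quasi-reflection, namely $g$ acting by a unit of $\OO_E$ on a one-dimensional subspace. This corresponds to reflections in $\OO_E$-vectors: orders $2$, $4$, $6$ for $D=3,4$, or order $2$ otherwise. Such $g$ are quasi-reflections and are discarded.
\item \emph{Remaining cases.} All other elements give $\Sigma\geq 1$ by case analysis on the orders $m$ with $\varphi(m)\leq 2N$. The small cases $(2,2)$ and $(2,3)$ have to be excluded because explicit elements give $\Sigma<1$; this is where Theorem~\ref{thm:nonterminal} enters, presumably as the non-canonical examples.
\end{itemize}

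\textbf{Step 2 (boundary points).} Choose a regular, inversion-free, $\Stab$-invariant fan $\Sigma$ at every cusp. Cusps correspond to isotropic $\OO_E$-sublattices of rank $1\leq r\leq p$.

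\begin{itemize}
\item Near a boundary stratum, the local chart is a torus-embedding bundle over an abelian-variety fibration over a lower-dimensional unitary domain of signature $(p-r,q-r)$, and it is acted on by a finite group.
\item Following GHS, we split the finite stabilizer into two parts: its action on the torus coordinates, and its action on the fiber and base coordinates.
\item We use inversion-freeness to ensure that no element acts on the toroidal coordinates by exchanging a coordinate with its inverse. Then an element either fixes the relevant cone coordinates pointwise or permutes them.
\item A permutation of cone coordinates gives a contribution of at least $1$ as soon as it is nontrivial. Otherwise the contribution reduces to the eigenvalue computation on the quotient space $U^\perp/U$ twisted with the action on $U$. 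This is the same product structure as in Step 1, with $p$ replaced by the size of the fiber directions.
\item For the deepest cusps the bound $p\geq w_E$ is again what is needed.
\end{itemize}

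\textbf{Step 3 (conclusion).} Combining the two steps gives the statement for $\mathrm{Sh}^0(\Gamma)$. The Shimura variety statement follows because $\mathrm{Sh}_K$ is a finite disjoint union of such quotients, as recalled above.

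The main obstacle is the exhaustive eigenvalue analysis in Step 1. There one must classify the finite-order elements with small $\Sigma$ and show that, after removing quasi-reflections, none survive. This is delicate in the cases $D=3,4$, where extra units give extra order-$3$, $4$ and $6$ quasi-reflections and near-reflections, and this is exactly where the hypothesis $p\geq w_E$ is used.

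To handle it, I would first reduce to $g$ of prime power order, by passing to a suitable power whose $\Sigma$ is controlled. I would then use the Galois-orbit multiplicity bound to show that the minimal non-scalar eigenspace must have dimension at least $\varphi(m)/2$, and finish with a case check over the finitely many small $m$.
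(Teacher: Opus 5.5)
Your proposal has a genuine gap, and it sits exactly where $p\geq w_E$ should enter. In Step~1 you identify the dangerous elements as unitary reflections that act by a unit $\zeta\in\OO_E^\times$ on a line, and you discard them as quasi-reflections. On $T_x\mathcal{D}_{p,q}\cong\Hom(V_x^-,V_x^+)$, however, such an element has its nontrivial eigenvalue with multiplicity $q$ (namely $\zeta$, for a line in $V_x^+$) or $p$ (namely $\zeta^{-1}$, for a line in $V_x^-$). So for $p,q\geq 2$ it is never a quasi-reflection; in fact no interior stabilizer contains one (Proposition~\ref{prop:no-qref}). These elements must be kept. Their age $q\{\theta\}$ or $p\{-\theta\}$ is at least $1$ precisely because $p\geq w_E$ (Lemma~\ref{lem:unit-eigenvalues-interior}, which covers every element all of whose eigenvalues are units). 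If discarding were legitimate, the hypothesis $p\geq w_E$ would be superfluous. Yet for $E=\bbQ(\sqrt{-1})$ and $p=3$, the order-$4$ reflection in a negative vector has type $\frac14(1,1,1,0,\ldots,0)$ and age $\frac34$ (Table~\ref{list_noncanonical_sing}). Your ``counting'' bullet contains the right estimate; you drew the wrong conclusion from it. The rest of Step~1 is not yet a proof either. Passing to a power of prime-power order is not a valid reduction: the criterion must hold for $g$ itself, and a power only yields $\mathrm{RT}(g)\geq\mathrm{RT}(g^k)/k$. Also, ``orders $m$ with $\varphi(m)\leq 2N$'' is not a finite list, since $N$ is unbounded. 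What is needed is a uniform bound on how the Galois orbit of an irreducible factor $f\mid\Phi_d$ over $E$ can split between $V_x^+$ and $V_x^-$. The paper's spacing estimate $S(A_f,B_f)\geq \#A_f\,\#B_f(\#B_f+1)/2d$, together with Lemma~\ref{lem:C(d)}, gives $c_{\min}(f,E)>1$ outside a finite list of $d$. Two interactions must then still be handled: two nonlinear factors, and one nonlinear factor together with unit eigenvalues. The latter is the source of the exclusions $(2,2),(2,3)$ (Example~\ref{ex:exceptional-interior}).

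Step~2 has the analogous gap. Reducing to ``the same product structure with $p$ replaced by the size of the fiber directions'' loses the hypotheses. The boundary domain $F_P$ has signature $(p-s,q-s)$; here $p-s$ may be below $w_E$ or even $0$, and the signature may be exceptional. The missing idea is Lemma~\ref{lem:boundary-interior-comparison}. The Levi part $\delta=\diag(a_\gamma,d_\gamma,(a_\gamma^*)^{-1})$ of a boundary stabilizer fixes an interior point $y$ of the \emph{same} domain $\mathcal{D}_{p,q}$, given by $V_y^\pm=J^\pm\oplus M^\pm$ with $J^\pm=\{(u,\pm u)\}$. The differential at the boundary point is block triangular. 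Its diagonal blocks are the actions on $v_\pm$ and on $T_{\omega_0}F_P$, plus a toric block which, by inversion-freeness, contains every nontrivial eigenvalue of $X\mapsto a_\gamma Xa_\gamma^*$. Hence $\mathrm{RT}(\gamma)\geq\mathrm{RT}_{T_y\mathcal{D}_{p,q}}(\delta)$, and the interior theorem for $(p,q)$ applies unchanged. You also misstate inversion-freeness. In the paper it means that an element preserving a cone fixes it pointwise, so toric coordinates are never permuted. Your fallback claim, that a nontrivial permutation contributes at least $1$, is false: a transposition of two coordinates contributes only $\frac12$. Two further ingredients are missing. First, you need the reduction to $\Gamma'=\langle\Gamma,Z_L\rangle$, which shows that an element with scalar Levi part acts trivially. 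Second, you must exclude boundary quasi-reflections; the paper does this via Ma's theorem that there is no boundary ramification divisor (Proposition~\ref{prop:boundary-no-qref}). Removing them by quotienting, as you propose, would instead require a modified age such as $\mathrm{RT}'$, which you do not set up.
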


Our method uses the Reid--Shepherd-Barron--Tai criterion, following \cites{tai1982kodaira,kondo1993kodaira,Gritsenko2007kodaira} in the Siegel and orthogonal settings, together with refinements specific to the unitary case.
The same analysis also yields the following refinements of the results for $\O^+(2,n)$ \cite{Gritsenko2007kodaira} and $\U(1,n)$ \cite{behrens2012singularities}; see Section \ref{subsec:Application to orthogonal modular varieties}.

Related results in two low-rank unitary settings were obtained
independently by Watson \cite{watson2026singularities}.  He classified the
noncanonical cyclic quotient types for Eisenstein ball
quotients, including their toroidal boundary, and also classified
the possible noncanonical interior types for generalized ball
quotients of type $I_{2,n}$ over
$\mathbb Q(\sqrt{-3})$.
See Section \ref{sec:other_groups} for more precise arguments and discussions.
We will also give a classification of possible non-canonical singularities in Tables \ref{list_canonical_sing} and \ref{list_noncanonical_sing}.

\subsection{Kodaira dimension}
The Kodaira dimension of modular varieties is one of the most important topics in algebraic geometry \cites{tai1982kodaira,freitag1983Siegelsche,mumford1977hirzebruch,kondo1993kodaira,Gritsenko2007kodaira,ma2018kodaira,2025_HMY_Kodaira_dimension_ball_quotients}.
Previous studies, particularly those concerning the general type, have mostly focused on types III and IV, with the exception \cite{2025_HMY_Kodaira_dimension_ball_quotients}.  There has been very little work on type I domains of rank at least 2. 
We claim that almost all such varieties are of general type.
Let     
\begin{equation}\label{def_B(E)}
B(E)\defeq 
\begin{cases}
    5 & \text{if $D \geq 19$ and $D \neq 23$;}\\
    9 & \text{if $D =8, 11, 15, 23$;}\\
    13 & \text{if $D =7$;}\\
    17 & \text{if $D=4$;}\\
    25 & \text{if $D=3$.}
\end{cases}
\end{equation}

\begin{thm}
    \label{mainthm:general type}
    Let $L$ be as above and let $\Gamma\subset \U(L)$ be an arithmetic subgroup.
    Then the unitary modular variety $\mathrm{Sh}^0(\Gamma)$ is of general type if $p \geq w_E$, $(p,q) \neq (2,2),(2,3)$, $p+q\equiv 1 \pmod{w_E}$, and $p+q \geq B(E)$.

\end{thm}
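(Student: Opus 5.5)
We follow the low-weight cusp form strategy of Tai, Gritsenko--Hulek--Sankaran and \cite{2025_HMY_Kodaira_dimension_ball_quotients}. First, we may replace $\Gamma$ by $\Gamma\cap \U(L)$, or by a finite-index subgroup containing $-1$ and the scalars $\OO_E^\times$. Since the Kodaira dimension can only drop under passage to a finite quotient, it suffices to treat $\Gamma$ commensurable with $\U(L)$; in fact we work with $\U(L)$ itself or a congruence subgroup of it. By Theorem~\ref{mainthm:singularity}, under $p\ge w_E$ and $(p,q)\neq(2,2),(2,3)$, there is a toroidal compactification $\overline{X}:=\overline{\mathrm{Sh}^0(\Gamma)}^{\Sigma}$ with at worst canonical singularities. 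Pluricanonical forms on a resolution of $\overline{X}$ are therefore the same as those on the regular locus of $\overline{X}$. Hence it suffices to produce enough sections of $mK_{\overline{X}}$ on $\overline{X}$ itself.

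Next we translate canonical forms into modular forms. On $\mathcal{D}_{p,q}$ the canonical bundle is the automorphic line bundle $\mathcal{L}^{\otimes N}$, where $\mathcal{L}$ is the determinant (weight-one) bundle and $N=p+q$, twisted by a determinant character of $\U(L)$. A modular form $F$ of weight $mN$ gives the form $F\,(dZ)^{\otimes m}$, and this extends over the boundary divisor $\Delta$ exactly when $F$ vanishes to order $\ge m$ along $\Delta$. Using the standard decomposition
\[
mK_{\overline{X}} = mN\mathcal{L} - m\Delta,
\]
it then suffices to find a nonzero cusp form $F_0$ of weight $k<N$ with the correct character that vanishes on $\Delta$ to order at least one. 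Given such an $F_0$, the products $F_0^{m}\cdot M_{m(N-k)}$ embed the large space of weight-$m(N-k)$ modular forms into $H^0(mK_{\overline{X}})$. Since $\mathcal{L}$ is big, $\dim M_{m(N-k)}\sim c\,m^{pq}$, so $\overline{X}$ is of general type. The condition $p+q\equiv 1\pmod{w_E}$ enters here: the central units $\OO_E^\times$ act on $K$ through the character $\zeta\mapsto\zeta^{N}$. To make the product $F_0\cdot(\text{forms of weight } N-k)$ compatible with this central character, we choose $k$ so that the weights and characters of $F_0$ and the forms of weight $N-k$ match. We need care here so that forms exist in weight $N-k$ with the right central character. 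Taking $N\equiv 1$ allows weight $N-k$ with $k\equiv 1$, and then the central characters are trivial and nonvanishing forms exist.

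The main obstacle is constructing the low-weight cusp form $F_0$ of weight $k<N$, and this is where $B(E)$ and Arthur's multiplicity formula enter. The plan is to build a cuspidal automorphic representation of $\U(p,q)$ whose archimedean component is a holomorphic discrete series (or a limit of one) of scalar weight $k$ slightly less than $N$. We take an Arthur parameter of the form $\psi=\pi\boxtimes\nu(d)\oplus\cdots$, where $\pi$ is a cuspidal representation of a smaller $\GL$ over $E$ coming from a classical cusp form or theta series on $E$. This parameter is designed so that the associated $A_q(\lambda)$-packet contains the holomorphic weight-$k$ representation at infinity. The finite components are taken unramified away from $D$ and chosen to be the spherical members, so that $\Gamma$-invariants exist, and we verify through Arthur's multiplicity formula that the sign character is trivial. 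The existence of a suitable $\pi$, for example a cusp form of small weight for $\Gamma_0(D)$ with nebentypus $\chi_{-D}$ and the right central character, is what forces $N\ge B(E)$. For small $D$ these spaces vanish in low weight, so larger $N$ is needed, giving $B(E)=25,17,13,9$ for $D=3,4,7$ and $\{8,11,15,23\}$ respectively. Finally, we prove vanishing of order $\ge 1$ on $\Delta$ by cuspidality, via Fourier--Jacobi expansions at the one-dimensional cusps together with the Siegel operator at all cusps; this step requires checking that no boundary component of $\Delta$ has an extra ramification factor.
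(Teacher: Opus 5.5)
Your outline has the same skeleton as the paper's proof: canonical singularities from Theorem~\ref{mainthm:singularity}, $K\sim_{\bbQ}N\L-\Delta$, a nonzero cusp form of weight below $N$, and bigness of $\L$. The gap is the step you yourself call the main obstacle: the cusp form is only announced, and your sketch of it puts the hypotheses in the wrong places.

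The congruence $p+q\equiv1\pmod{w_E}$ is not needed when multiplying forms. There you can take $m$ divisible by $w_E$, or argue with $\bbQ$-divisors, $K\sim_{\bbQ}((N-1)\L-\Delta)+\L$, as the paper does. Your choice $k\equiv1\pmod{w_E}$ in fact points the wrong way. The paper's form has weight exactly $N-1\equiv0\pmod{w_E}$, with lowest $\U(p)\times\U(q)$-type $\lambda=(N-1,\dots,N-1,0,\dots,0)$. Its archimedean central character $\zeta\mapsto\zeta^{p(N-1)}$ is therefore trivial on $\OO_E^\times\cdot\id_L\subset\U(L)$. Note also that no holomorphic discrete series has weight below $N$; the only tempered candidate is the limit $L(\lambda)$ at weight $N-1$.

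That limit is what Theorem~\ref{thm:low weight cusp form} produces. It comes from a \emph{generic} parameter, with all $d_i=1$, no $\nu(d)$ factors, and hence $\varepsilon_\psi=1$: $\psi=\chi^{2a/w_E}\oplus\bigoplus_i\pi^{\BC}_{\ell_i-k_i+1}\otimes\chi^{(k_i+\ell_i)/w_E}$. Here $\chi$ is unramified with $\chi_\infty=\chi_{w_E}$, and $\pi_\kappa$ is attached to an eigenform in $S_\kappa(D,\omega_{E/\bbQ})$ whose base change to $E$ stays cuspidal.

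The congruence enters exactly in this construction:
\begin{enumerate}
\item For $N$ odd, every constituent must be conjugate-orthogonal.
\item Unramified twists must have infinity type divisible by $w_E$.
\item The local characters must be those of Lemma~\ref{A_packet_hol_LDS}(2), so that the product formula $\prod_v\eta_v(z_v)=1$ yields $\prod_v\eta_v\circ\Delta_v=1$.
\end{enumerate}
Meeting all of this means splitting the multiset of $t_i/2$, whose total is $p(N-1)$, into two kinds of pieces. One is a singleton $a\equiv0\pmod{w_E}$ with $\eta(e(a))=(-1)^q$. The others are pairs $\{k,\ell\}$ with $k+\ell\equiv0\pmod{w_E}$, $\eta(e(k))=\eta(e(\ell))$, and $|k-\ell|$ large enough for the needed $f$ to exist. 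This is Lemma~\ref{lem_partition}, and it is where $B(E)$ actually comes from. Your proposal reads the values of $B(E)$ off the statement rather than deriving them.

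Two further points are missing. First, at $p\mid D$, ``unramified away from $D$'' does not give vectors fixed by the stabilizer of $L$. The paper chooses $f$ with $a_p(f)\neq\pm p^{(\kappa-1)/2}$ for $p\mid D$, so that $\pi_{\kappa,p}$ is a principal series whose base change to $E_p$ is unramified. It then obtains $\pi^K\neq0$ as in Lemma~7.7 of \cite{2025_HMY_Kodaira_dimension_ball_quotients}.

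Second, your ``standard decomposition'' $K=N\L-\Delta$ assumes that $\mathcal D_{p,q}\to\mathrm{Sh}^0(\Gamma)$ and its toroidal extension have no ramification divisor. In the interior this is Proposition~\ref{prop:no-qref} and Corollary~\ref{cor:branch-codimge2}, which use $p,q\geq2$; for $\O^+(2,n)$ or $\U(1,n)$ a branch term can appear. Along the boundary it is Proposition~\ref{prop:boundary-no-qref}(2), valid once $Z_L\subset\Gamma$.

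Reducing to $\U(L)$, which contains $Z_L$, is a legitimate shortcut: it replaces the paper's $m_\rho$ argument for $Z_L\not\subset\Gamma$, since general type passes from $\U(L)\backslash\mathcal D_{p,q}$ to its finite covers $\Gamma\backslash\mathcal D_{p,q}$. But the reduction must go up to $\U(L)$ or $\langle\Gamma,Z_L\rangle$, not down to a congruence or finite-index subgroup as parts of your first paragraph suggest.
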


\begin{rem}
The case $p=1$, i.e., ball quotients, was treated in    \cite{2025_HMY_Kodaira_dimension_ball_quotients}.
 Since such unitary groups have rank $1$, their toroidal compactifications are unique and geometrically simpler than those in higher rank; however, reflective divisors must be taken into account \cite{maeda2024reflective}.
\end{rem}

The assumptions in Theorem \ref{mainthm:general type} have different origins. The condition $p\geq w_E$ and the exclusions $(p,q)\neq(2,2),(2,3)$ are used in the singularity analysis, especially in Theorem \ref{mainthm:singularity}. The congruence $p+q\equiv 1 \pmod{w_E}$ and $p+q \geq B(E)$ are used in the construction of low weight cusp forms in Sections \ref{sec:Archimedean packets in the present case} and \ref{sec:The Kodaira dimension}.
\subsection{Orthogonal modular varieties}
\label{subsec:Application to orthogonal modular varieties}

The method developed above also gives an application to orthogonal
modular varieties.  Let $L$ be a lattice of signature $(2,n)$ and let
$\Gamma<\O^+(L)$ be a subgroup of finite index.  We write
$\calF_L(\Gamma)
  \defeq
  \Gamma\backslash\calD_L$
for the associated orthogonal modular variety.

The singularities of toroidal compactifications of orthogonal modular
varieties are known to be canonical if $n\geq 9$ by Gritsenko--Hulek--Sankaran
\cite{Gritsenko2007kodaira}*{Theorem 1.1}.  By refining their estimates for the Reid--Shepherd-Barron--Tai sum,
both in the interior and along the boundary, we obtain the following
improvement of their bound.

\begin{thm}[{Theorem \ref{thm:orthogonal_refinement}}]
\label{mainthm:orthogonal-singularity}
Let $L$ be a lattice of signature $(2,n)$ with $n\geq 6$, and let
$\Gamma<\O^+(L)$ be a subgroup of finite index.  Then there exists a
toroidal compactification $\overline{\calF_L(\Gamma)}$ of $\calF_L(\Gamma)$ having at worst canonical singularities.
\end{thm}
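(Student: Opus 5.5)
We follow the strategy of Gritsenko--Hulek--Sankaran \cite{Gritsenko2007kodaira}, with sharper bounds on the Reid--Shepherd-Barron--Tai (RST) sum. Fix a regular (basic) admissible fan $\Sigma$ for $\Gamma$, refined so that it is inversion-free. Then every point of $\overline{\calF_L(\Gamma)}$ is locally a quotient $\bbC^n/G$ with $G$ finite. After the standard reduction by quasi-reflections, the singularity is canonical as soon as every non-quasi-reflection $g\in G$ acting with eigenvalues $e^{2\pi i a_j/m}$, $0\le a_j<m$, satisfies
\[
\Sigma(g)=\sum_j a_j/m\ \ge 1 .
\]
It suffices to check this for $g$ of prime order and for suitable powers. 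The argument then splits into three parts: interior points, points over the $0$-dimensional cusps, and points over the $1$-dimensional cusps.

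\textbf{Interior points.} Let $g\in\Gamma$ fix $[\omega]\in\calD_L$. Decompose $L\otimes\bbC$ under $g$ into the part on which $g$ acts through $\omega$ (the eigenvalue $\zeta$ on $\omega$) and the cyclotomic pieces on $\omega^\perp$. The tangent space is $\Hom(\omega,\omega^\perp/\omega)$, so its eigenvalues are $\zeta^{-1}\lambda$, with $\lambda$ running over the eigenvalues on $L_\bbC$ other than $\zeta,\bar\zeta$.

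Since $g$ is integral, the eigenvalues come in full Galois orbits $\{\zeta_m^k:(k,m)=1\}$ of size $\varphi(m)$. Gritsenko--Hulek--Sankaran bound the contribution of each orbit of length $\varphi(m)$ from below. We refine this in two ways:
\begin{itemize}
\item[(i)] use the exact value $\sum_{(k,m)=1}\{k/m+c\}$ for a full orbit, rather than a crude bound;
\item[(ii)] use that an orbit containing $\zeta$ contributes its remaining $\varphi(m)-2$ conjugates.
\end{itemize}
We then do a case analysis on the order $m$ of $\zeta$ against $n$. Since $\varphi(m)\le n+2$, the orders are bounded. For $n\ge6$ we show that the only elements with $\Sigma(g)<1$ are quasi-reflections, namely $\pm$ reflections, or the $m=3,4,6$ exceptional elements. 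For the latter we check directly that they force $\Sigma\ge1$ once the eigenspace of $1$ (or $-1$) has dimension $\ge n-3$, exactly as in the proof of Theorem \ref{mainthm:singularity}.

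\textbf{Boundary points.} Near a $0$-dimensional cusp, the stabilizer acts on the torus chart and the abelian part. With $\Sigma$ basic and inversion-free, a boundary point with nontrivial stabilizer reduces to an element $g$ acting on $\bbC^n$ as a finite order element of the parabolic Levi, which is $\O(2,n-2)$-type for the lattice $L_0=\sigma^\perp/\sigma$, together with the toric coordinate. The toric coordinate contributes eigenvalue $1$ by inversion-freeness. The remaining eigenvalues come from the action on the cone coordinates and on $L_0\otimes\bbC$, giving at least as many Galois orbits as in the interior. This lets us reuse the interior estimate with $n$ replaced by $n-2$ plus the extra contribution from the cusp direction. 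For $1$-dimensional cusps, the fibre is an abelian (Kuga--Sato type) family over a modular curve, handled as in \cite{Gritsenko2007kodaira}, Section 2, with our refined orbit sums. The key is that we can choose the refinement $\Sigma$ so that no cone is inverted by the stabilizer. This is possible because $\O(L_0)$ acts on the cone with finite stabilizers; take a $\Gamma$-equivariant barycentric subdivision.

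\textbf{Main obstacle.} The hardest step is the interior case analysis for small $n$, namely $n=6,7,8$. Here the Gritsenko--Hulek--Sankaran bound fails, and one must enumerate the possible characteristic polynomials of $g$ with a small eigenvalue configuration, e.g.\ orbits of orders $m\in\{3,4,5,6,8,10,12\}$. For each, the RST sum must be computed exactly, and any potential failure must be exhibited to be either a quasi-reflection power or impossible integrally, using that the eigenvalue on $\omega$ has real-positivity constraints from signature $(2,n)$. The earlier reduction lemmas let us treat the sum orbit-by-orbit, which makes this enumeration finite and explicit.
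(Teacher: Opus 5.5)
Your outline has the same three-part structure as the paper (interior, $0$-dimensional cusps, $1$-dimensional cusps). However, at exactly the points where the Gritsenko--Hulek--Sankaran bounds are improved to $n\geq6$, it gives either no argument or an incorrect one.

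\textbf{Interior.} First, the reduction to elements of prime order is invalid. The paper's element of type $\frac16(1,1,1,1,1)$ has powers of orders $3$ and $2$ with ages $5/3$ and $5/2$, while its own age is $5/6$; so every element of the stabilizer must be checked.

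Second, and more seriously, interior stabilizers in $\O^+(L)$ do contain quasi-reflections ($\pm$ reflections in vectors of negative norm). The critical elements are those $g$ that are not quasi-reflections but have a power $g^k$ that is one. Your criterion ``age $\geq1$ for every non-quasi-reflection'' is actually a valid sufficient condition: it is the Reid--Tai criterion for the pair $(V/G,\Delta_G)$, with $\Delta_G$ the branch divisor. It is stronger than what the paper verifies, namely $\mathrm{RT}'(g^\ell)\geq1$ for the modified sums of \cite{Gritsenko2007kodaira}*{Lemma 2.14}. Either way, such $g$ must be analysed, and your refinements (i), (ii) do not do this. They concern only stabilizers without quasi-reflections, which is already Theorem \ref{thm_GHS_interior}(1) for $n\geq6$. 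The reflection case is precisely where GHS need $n\geq7$.

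The paper's new input there is Proposition \ref{prop_decomp_L_quasi_ref}. It shows the summand on which $g^k$ acts nontrivially is a one-dimensional $\calV_{n_{j_0}}$ with $n_{j_0}\in\{1,2\}$, and that the other $n_j$ satisfy $n_j\mid k$ or $n_j=2(n_j,k)$. The paper combines this with induction on the order of the character $\alpha$, the extra contribution of $\calV_{n_{j_0}}$ to $\mathrm{RT}'$, and explicit checks for $n_0\in\{3,4,5,6,8,10,12\}$ (Theorem \ref{thm_orth_interior}).

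\textbf{$0$-dimensional cusps.} Your local model is wrong. For an isotropic line $I$, $I^\perp/I$ has signature $(1,n-1)$, the unipotent radical is abelian of rank $n$, and the whole tangent space at a boundary point is toric. There is no $\O(2,n-2)$-factor and no ``interior estimate with $n$ replaced by $n-2$''. The paper simply quotes Theorem \ref{thm_GHS_0_dim_cusp} here.

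\textbf{$1$-dimensional cusps.} ``As in GHS with refined orbit sums'' is exactly the step where GHS only reach $n\geq9$. Your claim that inversion-freeness makes the toric eigenvalue trivial is also false: it only makes $\gamma$ fix the cone pointwise. The normal eigenvalue $\exp_e(t)$ is a root of unity that depends on the translation part. Controlling it is essential (Lemma \ref{lem_action_exp_e}: $\exp_e(2t)=1$ when $Z_\gamma=\pm1$ and $X_\gamma^2=1$). Otherwise an element with a single eigenvalue $-1$ and a small toric eigenvalue would have age $<1$ without being a quasi-reflection.

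The missing argument runs as follows. The number $\xi=c\tau_0+d$ is a root of unity of order $1,2,3,4$ or $6$.
\begin{itemize}
\item If $\xi\neq\pm1$: each linear (resp.\ quadratic) factor of $X_\gamma$ contributes at least $1/6$ (resp.\ $1/3$), factors of degree $\geq4$ contribute at least $1$, and the $\frakH$-direction eigenvalue $\xi^{-2}$ contributes at least $1/3$. Hence $\mathrm{RT}(\gamma)\geq n/6$.
\item If $\xi=\pm1$: nonlinear factors contribute at least $1$ via $s_{\min}$; otherwise all eigenvalues are $\pm1$.
\item If some $\gamma^k$ is a quasi-reflection: one shows $\xi^{2k}=\exp_e(kt)=1$ and bounds $\mathrm{RT}'(\gamma^\ell)$ from below, e.g.\ by $\tfrac13+\tfrac13+\tfrac{n-3}{6}$ when $\xi^{\ell}\neq\pm1$ (Theorem \ref{thm_oethogonal_dim_1_cusp}).
\end{itemize}

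Accordingly, your ``main obstacle'' is misplaced. For $n=6,7,8$ the GHS bound fails at the $1$-dimensional cusps, and for $n=6$ also in the interior when reflections are present. It does not fail in the interior enumeration you describe.
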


The bound in Theorem \ref{mainthm:orthogonal-singularity} is optimal.
More precisely, in dimension $5$ we construct an orthogonal modular
variety having a noncanonical cyclic quotient singularity of type
\[
  \frac{1}{6}(1,1,1,1,1);
\]
see Section \ref{subsec:An example showing the optimality of the bound}.
Thus the condition $n\geq 6$ cannot in general be improved.
Theorem \ref{mainthm:orthogonal-singularity} and the optimality
statement are proved in Section \ref{sec:other_groups}; see in
particular Theorem \ref{thm:orthogonal_refinement} and the explicit
construction of noncanonical singularities.

We also work on the case of ball quotients; see Theorems \ref{thm_ball_quot_singularity}, \ref{thm:noncanonical-sing-ball-quotient} for precise statements.

\subsection*{Acknowledgment}
 The authors are grateful to Shouhei Ma, Takuya Yamauchi, Klaus Hulek and Matthew P. Watson for their valuable and insightful comments. S.H. is partially supported by JSPS KAKENHI Grant Number 23K12965.
Y.M. is partially supported by the Alexander von Humboldt Foundation through a Humboldt research fellowship, by Deutsche Forschungsgemeinschaft (DFG, German Research
Foundation) through the Collaborative Research Centre TRR 326 \emph{Geometry and Arithmetic
of Uniformized Structures}, project number 444845124, and by Waseda University Grant for Special Research Projects (Project number: 2026R-022).

\section{The Reid--Shepherd-Barron--Tai sum}

\subsection{Reid--Shepherd-Barron--Tai criterion}\label{subsec:RT-criterion}
Given a root of unity $\lambda\in\bbC$, we choose its \emph{exponent}
$\theta\in \bbQ/\bbZ$ such that $\lambda=\exp(2\pi\sqrt{-1}\theta)$.
 Let $\varphi$ denote Euler's totient function, i.e., $\varphi(d) = \#(\bbZ/d\bbZ)^\times$.
Let $V$ be a complex vector space and let $H\subset \GL(V)$ be a finite
subgroup. We recall a numerical criterion for determining when the quotient
$V/H$ has canonical or terminal singularities.

\begin{defn}\label{def:age}
  Let $g\in H$ be of order $m$ and $W\subset V$ be a $g$-stable subspace. Choose a basis of $W$ in which the restriction $g|_W$ is diagonal:
  \[
    g|_W=\mathrm{diag}(\zeta_m^{a_1},\dots,\zeta_m^{a_t}),
    \qquad
    0\le a_j\le m-1
  \]
  where $\zeta_m=\exp(2\pi i/m)$.
  The \emph{age} (or the Reid--Shepherd-Barron--Tai sum) of $g$ on $W$ is
  \[
    \mathrm{RT}_W(g):=\sum_{j=1}^t \Bigl\{\frac{a_j}{m}\Bigr\}\in \bbQ_{\ge 0},
  \]
  where $\{x\}$ denotes the fractional part of a rational number $x$.
  When $W=V$, we simply write $\mathrm{RT}(g)$ instead of $\mathrm{RT}_V(g)$.
\end{defn}

An element $g\in \GL(V)$ is called a \emph{quasi-reflection}
if $g$ has eigenvalues
$(1,\dots,1,\lambda)$ with $\lambda\ne 1$.
We use the following criterion for finite quotient singularities \cites{reid1987young,tai1982kodaira}.

\begin{thm}[Reid--Shepherd-Barron--Tai criterion]\label{thm:Reid--Shepherd-Barron--Tai}
  Assume that $H\subset \GL(V)$ contains no quasi-reflections. Then the quotient
  singularity $(V/H,0)$ is canonical (resp.\ terminal)  if and only if $\mathrm{RT}(g)\ge 1$ (resp.\ $>1$) for every $1\ne g\in H$.
\end{thm}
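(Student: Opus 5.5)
The statement immediately above is the Reid--Shepherd-Barron--Tai criterion (Theorem \ref{thm:Reid--Shepherd-Barron--Tai}). The plan is to compare $H$-invariant pluricanonical forms on $V$ with forms on a resolution of $V/H$, and to reduce the extension question to cyclic subgroups, where toric geometry computes the discrepancies directly.

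\emph{Step 1: reduction to the quotient map.} Since $H$ contains no quasi-reflections, the map $\pi\colon V\to V/H$ is étale in codimension one. Hence $K_{V/H}$ is $\bbQ$-Cartier, and $\pi^*K_{V/H}=K_V$. Choose $r$ so that $rK_{V/H}$ is Cartier, and let $\omega=(dx_1\wedge\cdots\wedge dx_n)^{\otimes r}$. A suitable power of $\omega$ is $H$-invariant (it transforms by a power of $\det$), so it descends to a generator $\bar\omega$ of $\OO(rK_{V/H})$ near $0$. The singularity is canonical (resp.\ terminal) if and only if, for every divisorial valuation $v$ over $V/H$ with exceptional centre, the order $\ord_v(\bar\omega)$ is $\ge 0$ (resp.\ $>0$) after normalising by $r$. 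To use this, take a resolution $\tilde Y\to V/H$, and let $\tilde X$ be the normalisation of $\tilde Y$ in the function field of $V$, with a $G$-equivariant resolution on top. Then $\ord_v$ on $V/H$ is determined by the valuations $w$ on $V$ lying over $v$, together with their ramification index $e$, via the Hurwitz formula
\[
  a(v,V/H)+1=\frac{a(w,V)+1}{e}.
\]

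\emph{Step 2: reduction to cyclic groups.} The inertia group of $w$ is a finite group acting on the residue field that fixes its centre. After passing to the generic point of the centre and taking completion, it acts on a one-dimensional ``normal'' direction through a character. Its image in the local field therefore acts through a cyclic quotient generated by some $g\in H$, and $v$ is already the image of a valuation over $V/\langle g\rangle$. So it suffices to show, for cyclic $\langle g\rangle$ of order $m$, that every exceptional discrepancy of $V/\langle g\rangle$ is $\ge 0$ (resp.\ $>0$) if and only if $\mathrm{RT}(g^k)\ge 1$ (resp.\ $>1$) for all $g^k\neq 1$.

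\emph{Step 3: the cyclic case via toric geometry.} Diagonalise $g=\mathrm{diag}(\zeta_m^{a_1},\dots,\zeta_m^{a_n})$. Then $V/\langle g\rangle$ is the toric variety of the cone $\bbR_{\ge0}^n$ with respect to the overlattice $N=\bbZ^n+\bbZ\cdot\frac1m(a_1,\dots,a_n)$. By the standard toric discrepancy formula, the toric divisor attached to a primitive vector $u\in N\cap\bbR^n_{\ge0}$ has discrepancy $\sum_i u_i-1$. The primitive points not in $\bbZ^n$ reduce modulo $\bbZ^n$ to the vectors $\bigl(\{ka_i/m\}\bigr)_i$, and every point of the cone is such a vector plus a nonnegative integer vector, which only raises the coordinate sum. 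Hence the minimal discrepancy is governed by $\min_k \mathrm{RT}(g^k)-1$. For the converse direction, note that toric valuations suffice to compute the minimal discrepancy: any divisorial valuation is dominated, in discrepancy, by its monomial approximation with the same weights. Alternatively, one can construct a toric log resolution and observe that all exceptional divisors are toric.

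\emph{Main obstacle.} The main obstacle is Step 2, namely justifying rigorously that every divisorial valuation over $V/H$ is detected by some cyclic subgroup, with the correct relation between ramification index and age. This is Reid's argument in \cite{reid1987young}. I would follow it by working with the eigenvalues of the generator of the inertia group acting on the completed local ring along the centre.
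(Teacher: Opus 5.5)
Your outline is the classical Reid--Tai argument: reduce, via the inertia group of a divisorial valuation, to cyclic subgroups, then compute the discrepancies of the cyclic quotient, which you do torically. The paper relies on exactly this, since it gives no proof of its own and simply cites \cite{reid1987young} and \cite{tai1982kodaira}.

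The point your Step~2 must state precisely is that the inertia group $I_w$ is itself cyclic and has order $e(w/v)$. Cyclicity holds because, in characteristic zero, $h\mapsto h(t)/t \bmod \mathfrak m_w$ embeds $I_w$ into $k(w)^\times$. Taking $\langle g\rangle=I_w$, the induced valuation $v'$ on $V/\langle g\rangle$ is then unramified over $v$, so $a(v,V/H)=a(v',V/\langle g\rangle)$. Knowing only that $v$ is the image of some valuation over $V/\langle g\rangle$ would give nothing better than $a(v,V/H)+1\geq 1/e(v'/v)$.
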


Let $x$ be a point on a smooth local cover of a toroidal compactification of the unitary modular variety and $T_x$ be the tangent space of $x$.
The stabilizer $\Gamma_x$ of $x$ acts on $T_x$ by a homomorphism $\Gamma_x \rightarrow \GL(T_x)$.
In the applications below, the Reid--Shepherd-Barron--Tai criterion is always applied to the image of the homomorphism $\Gamma_x \rightarrow \GL(T_x)$.
Note that the homomorphism may have a nontrivial kernel, such as scalar matrices.

\subsection{Representations over \texorpdfstring{$E$}{}}\label{subsec:cyclic-reps}

Let $H=\langle g\rangle$ be a cyclic group of order $m$. Over $\bbC$, every irreducible
representation of $H$ is one-dimensional and determined by a character
$g\mapsto \zeta_m^a$. Over $E$, irreducible representations of $H$ are governed by the factorization
of $x^m-1$ in $E[x]$. 
Concretely, every irreducible $E$-representation of $H$ occurs in the $E$-algebra
$E[x]/(x^m-1)$ where $g$ acts by multiplication by $x$, and for each irreducible factor
$f\mid (x^m-1)$ one obtains an irreducible $E$-representation $W_f:=E[x]/(f)$.
We recall the decomposition $x^m-1=\prod_{d\mid m}\Phi_d(x)$ into the cyclotomic polynomials over $\bbQ$.
The behavior of $\Phi_d$ over $E$ is controlled by the inclusion $E\subset \bbQ(\zeta_d)$;
$\Phi_d$ remains irreducible over $E$ unless $E\subset \bbQ(\zeta_d)$, in which case it
splits into two polynomials of degree $\varphi(d)/2$.
More precisely, the following holds.
\begin{lem}[{\cite{behrens2012singularities}*{Subsection 2.1}, \cite{1928_Weisner}*{p.381}}]
  Let $d$ be a positive integer.
  \begin{enumerate}
    \item If $\Phi_d$ is irreducible over $E$, then the set of roots is of the form
      \[
        \{\zeta_d^a \mid a \in (\bbZ/d\bbZ)^\times\}.
      \]
    \item
      The polynomial $\Phi_d$ is reducible over $E$ if and only if $D\mid d$.
  \end{enumerate}
  In the second case, the quadratic character $\omega_E$ associated with $E$ defines a character of $(\bbZ/D\bbZ)^\times$ and the pull back with respect to $(\bbZ/d\bbZ)^\times \rightarrow (\bbZ/D\bbZ)^\times$ defines a character $\omega'_E$ of $(\bbZ/d\bbZ)^\times$.
  Then, we write $\Phi_d^+$ and $\Phi_d^-$ as irreducible polynomials dividing $\Phi_d$, whose roots are given by
    \[
      \{\zeta_d^a  \mid a \in (\bbZ/d\bbZ)^\times,\ \omega_E'(a) = \pm1 \}.
  \]
\end{lem}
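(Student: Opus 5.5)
The lemma describes how the cyclotomic polynomials $\Phi_d$ factor over the imaginary quadratic field $E$. I would prove it with Galois theory of the cyclotomic field $\bbQ(\zeta_d)$, whose Galois group over $\bbQ$ is identified with $(\bbZ/d\bbZ)^\times$ via $\sigma_a\colon\zeta_d\mapsto\zeta_d^a$.

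\emph{Roots of an irreducible factor.} Over $E$, the roots of any irreducible factor of $\Phi_d$ form a single orbit of $\mathrm{Gal}(E(\zeta_d)/E)$ acting on $\zeta_d$. Restriction to $\bbQ(\zeta_d)$ embeds this group into $(\bbZ/d\bbZ)^\times$. Its image is $\mathrm{Gal}(\bbQ(\zeta_d)/E\cap\bbQ(\zeta_d))$. Since $[E:\bbQ]=2$, the field $E\cap\bbQ(\zeta_d)$ is either $\bbQ$ or $E$.
\begin{itemize}
\item If it is $\bbQ$, the image is all of $(\bbZ/d\bbZ)^\times$. The orbit of $\zeta_d$ is then the full set of primitive $d$-th roots, so $\Phi_d$ is irreducible over $E$. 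This gives (1).
\item If it is $E$, the image is the index-two subgroup fixing $E$, and $\Phi_d$ splits into two factors of degree $\varphi(d)/2$.
\end{itemize}
So $\Phi_d$ is reducible over $E$ if and only if $E\subset\bbQ(\zeta_d)$.

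\emph{The divisibility criterion $D\mid d$.} By conductor theory for quadratic fields, the conductor of $E$ equals $|\mathrm{disc}(E)|=D$, and $E\subset\bbQ(\zeta_d)$ if and only if $D\mid d$. Concretely, this follows from the Kronecker--Weber description of abelian fields together with the fact that the minimal cyclotomic field containing $\bbQ(\sqrt{-D})$ is $\bbQ(\zeta_D)$. One can see this by Gauss sums: $\sqrt{-D}$ is a Gauss sum of the primitive character $\omega_E$ mod $D$. For the converse, the conductor-discriminant formula, or ramification, shows that every prime dividing $D$ must ramify in $\bbQ(\zeta_d)$, and the power of $2$ must be large enough. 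This settles (2).

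\emph{Identifying the subgroup in the split case.} Under $\mathrm{Gal}(\bbQ(\zeta_D)/\bbQ)\cong(\bbZ/D\bbZ)^\times$, the subgroup fixing $E$ is the kernel of the quadratic character $\omega_E$. This is the standard statement that $\sigma_a(\sqrt{-D})=\omega_E(a)\sqrt{-D}$, again via the Gauss sum. Pulling back along the restriction $\mathrm{Gal}(\bbQ(\zeta_d)/\bbQ)\to\mathrm{Gal}(\bbQ(\zeta_D)/\bbQ)$, which is the reduction $(\bbZ/d\bbZ)^\times\to(\bbZ/D\bbZ)^\times$, the subgroup fixing $E$ becomes $\ker\omega'_E$. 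The orbit of $\zeta_d$ is then $\{\zeta_d^a : \omega'_E(a)=1\}$, which gives $\Phi_d^+$. Its complement $\{\zeta_d^a:\omega_E'(a)=-1\}$ is the other coset, namely the orbit of $\zeta_d^{a_0}$ for any $a_0$ with $\omega_E'(a_0)=-1$, which gives $\Phi_d^-$.

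The main technical point is the conductor statement $E\subset\bbQ(\zeta_d)\iff D\mid d$, together with $\sigma_a(\sqrt{-D})=\omega_E(a)\sqrt{-D}$. I would cite these as classical (Weisner, or any text on cyclotomic fields) rather than reprove them. If a self-contained argument is wanted, the Gauss sum identity $g(\omega_E)^2=\omega_E(-1)D$ handles both directions up to the ramification check.
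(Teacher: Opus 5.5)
Your proposal is correct and follows the route the paper indicates. The paper gives no proof: it cites Behrens and Weisner, after remarking that reducibility of $\Phi_d$ over $E$ is governed by whether $E\subset\bbQ(\zeta_d)$, and your argument fills in exactly that: reduce to $E\cap\bbQ(\zeta_d)$, apply the conductor criterion $E\subset\bbQ(\zeta_d)\iff D\mid d$, and use the Gauss sum to identify the subgroup fixing $E$ with $\ker\omega'_E$. Your only loose step is the phrase ``the power of $2$ must be large enough.'' Your own conductor remark settles it, since the quadratic Dirichlet character attached to $E$ has conductor $D$ and factors through $(\bbZ/d\bbZ)^\times$ exactly when $D\mid d$.
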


\section{Combinatorics and estimates for cyclotomic polynomials}\label{section_auxiliary_lemma}
\subsection{Auxiliary lemmas}
For two finite multisets $A$ and $B$ of $\bbQ/\bbZ$, we consider the following sum
\[
  S(A,B)\defeq \sum_{x \in A,\ y \in B} \left\{ x-y\right\}.
\]
For a polynomial $f$ all of whose roots are roots of unity, let $X_f$ be the multiset in $\bbQ/\bbZ$ of the exponents of the roots of $f$.
By $A_f$ and $B_f$, we mean non-empty sub-multisets of $X_f$ such that $X_f = A_f \sqcup B_f$ as multisets.
By taking the complex conjugate, $S(A_f,B_f) = S(-B_f,-A_f)$.
Let
\[
  c_{\min}(f,E) \defeq \min_{A_f \sqcup B_f = X_f} S(A_f,B_f)
\]
when there exists a nontrivial bipartition.
A key ingredient of the present paper is to evaluate $c_{\min}(f, E)$.
Let $d$ be a positive integer.
Before evaluating $S(A_f, B_f)$, we introduce a numerical lemma.

\begin{lem}
  \label{lem:C(d)}
  Set $C(d) \defeq \varphi(d)/d = \prod_{p \mid d}\left(1-1/p\right)$
  and $F(d) \defeq dC(d)^2-C(d)-2$.
  Then $F(d)>0$ if either of the following conditions holds:
  \begin{itemize}
    \item $d$ is odd and $d\geq5$;
    \item $d$ is even and is divisible by a prime $\ell\geq7$.
  \end{itemize}
\end{lem}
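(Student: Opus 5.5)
Since $F(d)>0$ is equivalent to $dC(d)^2 > C(d)+2$, and $C(d)\le 1$ always, it suffices to show $dC(d)^2 > 3$, or (when $C(d)$ is smaller) the sharper $dC(d)^2 > 2 + C(d)$ directly. The plan is to write $d=\prod \ell_i^{e_i}$ and use that $dC(d)^2 = \prod_i \ell_i^{e_i}(1-1/\ell_i)^2 = \prod_i \ell_i^{e_i-2}(\ell_i-1)^2$. So $g(d)\defeq dC(d)^2$ is multiplicative, and for each prime power the local factor $g(\ell^e)=\ell^{e-2}(\ell-1)^2$ satisfies $g(\ell^e)\ge g(\ell)=(\ell-1)^2/\ell$, which is $\ge1$ once $\ell\ge3$ and increasing in $\ell$ and $e$. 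Only $\ell=2$ contributes a factor less than $1$, namely $g(2)=1/2$, with $g(4)=1$ and $g(2^e)=2^{e-2}\ge 1$ for $e\ge2$.

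\emph{Odd case.} Here every local factor is $\ge g(3)=4/3$. If $d$ has a prime factor $\ell\ge5$, or is a power $3^e$ with $e\ge2$, I will bound $g(d)$ from below by $g(\ell)\ge 16/5$ (or $g(9)=4$). This beats $C(d)+2\le 3$ unless $C(d)$ is near $1$. To handle that I compare more carefully: when $d=\ell^e$ is a prime power with $\ell\ge5$, one gets $g(d)\ge 16/5 > 2+4/5 = 2+C(5)$, and in general $C(d)+2 \le 3-1/\ell_{\max}$. For composite $d$ with several prime factors the product of factors $\ge 4/3$ only helps. The remaining odd exceptions are $d=1$ and $d=3$, which are excluded by $d\ge5$. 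A short check: $d=5$ gives $16/5-4/5-2=2/5>0$, $d=7$ gives $36/7-6/7-2>0$, $d=9$ gives $4-2/3-2>0$, $d=15$ gives $15\cdot(8/15)^2-8/15-2=64/15-38/15>0$.

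\emph{Even case.} Write $d=2^e m$ with $m$ odd and some prime $\ell\ge7$ dividing $m$. Then $g(d)\ge g(2)\,g(\ell)\ge \tfrac12\cdot 36/7=18/7$, while $C(d)\le \tfrac12$, so $C(d)+2\le 5/2<18/7$. Any additional prime factors contribute factors $\ge 4/3>1$ (odd) or $\ge1$ (higher powers of $2$), so the bound persists. Monotonicity of $(\ell-1)^2/\ell$ in $\ell$ justifies using $\ell=7$ as the worst case.

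The only delicate step is making the odd case uniform rather than a case list. Concretely, I would show $g(d)\ge 16/5$ whenever $d$ is odd and $d\notin\{1,3\}$, using multiplicativity together with $g(3)g(3')\ge g(9)$-type bounds. Then $C(d)+2<3<16/5$ finishes the argument. For $d$ with $3\mid d$ and another odd prime, $g\ge \tfrac43\cdot\tfrac{16}{5}>3$; for $d=3^e$ with $e\ge 2$, $g\ge4$. This covers everything.
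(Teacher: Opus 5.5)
Your proof is correct, but it is organized differently from the paper's.

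\textbf{The paper's route.} It never splits $F$ into its two terms. Instead it shows that $F$ strictly increases when $d$ is multiplied by a prime:
\begin{itemize}
\item by a prime $\ell$ already dividing $d$, the increment is $(\ell-1)dC(d)^2$;
\item by an odd prime $\ell$ not dividing $d$, the increment is $d(\ell-3+1/\ell)C(d)^2+C(d)/\ell$.
\end{itemize}
It then checks only the minimal cases: $F(\ell)=\ell-5+2/\ell$ for $\ell\geq5$, $F(3^e)=(4\cdot 3^e-24)/9$ for $e\geq2$, and $F(2\ell)=(\ell-7)/2+1/\ell$ for $\ell\geq7$.

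\textbf{Your route.} You decouple the two terms. Multiplicativity of $g(d)=dC(d)^2=\prod \ell^{e-2}(\ell-1)^2$ and its local values give a lower bound on $g$: $g(2^e)\geq 1/2$, odd local factors are at least $4/3$, $g(\ell)\geq 16/5$ for $\ell\geq5$, $g(9)=4$, and $g(\ell)\geq 36/7$ for $\ell\geq 7$. You then bound $C(d)$ trivially, by $C(d)<1$ in general and $C(d)\leq 1/2$ when $d$ is even.

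\textbf{Comparison.} Your approach needs no difference computations and gives explicit uniform margins: $F(d)>16/5-3=1/5$ in the odd case and $F(d)\geq 18/7-5/2=1/14$ in the even case. The paper's approach instead yields a monotonicity property of $F$ itself under multiplication by primes. That property can also be used to organize the subsequent finite check of $d=2^{e_2}3^{e_3}5^{e_5}$ in the next lemma.

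\textbf{Clean-up for a final write-up.} Drop the hedging about ``$C(d)$ near $1$'', the comparison with $2+C(5)$, the numerical spot checks, and the unexplained ``$g(3)g(3')\geq g(9)$-type bounds''. Since $16/5>3>C(d)+2$ for every $d>1$, the odd case reduces to one observation. If some prime $\ell\geq5$ divides $d$, then $g(d)\geq g(\ell)\geq 16/5$, because all other local factors are at least $1$. Otherwise $d=3^e$ with $e\geq2$, and $g(d)\geq 4$.
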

\begin{proof}
  Let $\ell$ be a prime divisor of $d$. Since $C(\ell d)=C(d)$, we have $F(\ell d)-F(d)=(\ell-1)dC(d)^2>0$.
  Let $\ell\nmid d$ be an odd prime. Then $C(\ell d) = (1-1/\ell)C(d)$, and hence
  \[
  F(\ell d)-F(d)
  =d\left(\ell-3+\frac{1}{\ell}\right)C(d)^2+\frac{C(d)}{\ell}>0.
  \]
  Therefore $F(\ell d) > F(d)$.

  Suppose that $d$ is odd and $d\geq5$. If $d$ is divisible by a prime $\ell\geq5$, then
  \[
  F(\ell)=\ell\left(1-\frac{1}{\ell}\right)^2-\left(1-\frac{1}{\ell}\right)-2
  =\ell-5+\frac{2}{\ell}>0
  \]
and for the case $d=3^e$ with $e \geq 2$, we have $F(3^e) = (4 \cdot 3^e - 24)/9 > 0$.
By the above discussions, it concludes that $F(d) > 0$ for any odd $d \geq 5$.

  Suppose that $d$ is even and divisible by a prime $\ell\geq7$. 
  The computation $F(2\ell)=2\ell C(2\ell)^2-C(2\ell)-2
  =(\ell-7)/2+1/\ell>0$ implies
  $F(d)\geq F(2\ell)>0$.
\end{proof}
We first consider the case in which $\Phi_d$ is irreducible.
\begin{lem}\label{lemma_RT_sum_irreducible}
  Assume that $\Phi_d$ is irreducible over $E$.
  If $d \not\in \{1,2,3,4,6\}$,
  we have $c_{\min}(\Phi_d,E)> 1$.
  Moreover, we have
  \[
    c_{\min}(\Phi_d, E) \geq
    \begin{dcases}
      \frac{1}{3} & \text{if $d = 3,6$;}\\
      \frac{1}{2} & \text{if $d=4$.}
    \end{dcases}
  \]
\end{lem}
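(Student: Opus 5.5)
Here $X_{\Phi_d}=\{a/d : a\in(\bbZ/d\bbZ)^\times\}$, with $n\defeq\varphi(d)$ elements, and we must bound $S(A,B)$ from below for every nontrivial bipartition $A\sqcup B=X_{\Phi_d}$. The plan is to reduce this to a counting problem on pairs and then invoke Lemma \ref{lem:C(d)}.

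\textbf{Pairing trick.} For $x\neq y$ in $\bbQ/\bbZ$ we have $\{x-y\}+\{y-x\}=1$. Set $|A|=k$ and $|B|=n-k$ with $1\le k\le n-1$. Then
\[
S(A,B)+S(B,A)=k(n-k)\ge n-1 .
\]
This identity by itself does not bound each of the two terms separately, so a one-sided estimate is also needed.

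\textbf{One-sided estimate.} For fixed $y\in B$, the values $\{x-y\}$ with $x\in A$ are distinct elements of $\frac1d\bbZ\cap(0,1)$. More precisely, they lie in the set $\{(a-b)/d : a\in(\bbZ/d\bbZ)^\times\}$, whose elements are distinct. The smallest possible sum of $k$ distinct values $j/d$ with $j\ge1$ is therefore at least $k(k+1)/(2d)$. Summing over $y\in B$ gives
\[
S(A,B)\ge \frac{(n-k)\,k(k+1)}{2d}.
\]
By symmetry we also have $S(A,B)=S(-B,-A)\ge k(n-k)(n-k+1)/(2d)$, obtained by fixing $x\in A$ instead. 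Taking the larger of the two bounds and writing $m=\max(k,n-k)\ge n/2$ gives
\[
S(A,B)\ \ge\ \frac{k(n-k)(m+1)}{2d}\ \ge\ \frac{(n-1)(n/2+1)}{2d}.
\]

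\textbf{Comparison with Lemma \ref{lem:C(d)}.} Put $n=dC$ with $C=C(d)$. The inequality $(n-1)(n+2)>4d$ is equivalent to
\[
d^2C^2+dC-2-4d>0 .
\]
This is not literally $F(d)>0$. So I would instead sharpen the counting: there are exactly $n$ units, and the gaps between consecutive units average $d/n=1/C$. I would refine the bound to $S(A,B)\ge \frac{|B|}{d}\sum_{j=1}^{k}(\text{$j$-th smallest positive gap})$, exploiting that differences of units are spread out. I expect the right normalization of this bound to reduce exactly to $dC^2-C-2>0$, i.e.\ $F(d)>0$, which Lemma \ref{lem:C(d)} supplies for odd $d\ge5$ and for even $d$ with a prime factor $\ge7$.

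\textbf{Exceptional cases.} The even $d$ not covered are those of the form $2^a3^b$ (and $d=10$-type cases with factor $5$), excluding $d\in\{1,2,3,4,6\}$. Examples are $d=8,10,12,18,20,24,30,\dots$. For these I would check directly: either their $\varphi(d)$ is small, so the finitely many bipartitions can be enumerated, or the crude bound above already exceeds $1$. Since $2^a3^b$ with large exponents has $n=d/3$ large, the crude bound handles all but finitely many of them, and a finite computation settles the rest.

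For $d=3,4,6$ we have $n=2$, so the only bipartition is $A=\{x\}$, $B=\{y\}$, and $S=\{x-y\}$. For $d=3$ the two exponents are $1/3,2/3$, so $S\in\{1/3,2/3\}$, giving $S\ge 1/3$. For $d=6$ the exponents are $1/6,5/6$, so $S\in\{1/3,2/3\}$, again giving $S\ge 1/3$. For $d=4$ the exponents are $1/4,3/4$, so $S=1/2$.

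\textbf{Main obstacle.} The hardest step is getting the general lower bound to match $F(d)>0$ precisely, and handling the leftover even cases of the form $2^a3^b5^c$.
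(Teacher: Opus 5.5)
There is a genuine gap, and it sits exactly where you place your ``main obstacle''. Your one-sided estimate $S(A,B)\ge k(n-k)(m+1)/(2d)$ with $m=\max\{k,n-k\}$ is the right bound, and it is the one the paper uses. You then weaken it to $(n-1)(n/2+1)/(2d)$ by minimizing $k(n-k)$ and $m+1$ separately. But these two factors are smallest at opposite ends: $k(n-k)$ at $k=1$, and $m+1$ at $k=n/2$. This loses roughly a factor of two, and your resulting condition $(n-1)(n+2)>4d$ fails for values of $d$ that you intend to hand over to Lemma \ref{lem:C(d)}. For $d=5,14,42$ your bound gives $9/10$, $5/7$, $11/12$ respectively, although $F(d)>0$ in all three cases; in fact $c_{\min}(\Phi_5,E)=6/5$. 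The proposed repair via ``gaps between consecutive units'' is not an argument, so as written the passage from the counting bound to $F(d)>0$ is missing. Keeping your weaker bound would instead require a new analogue of Lemma \ref{lem:C(d)} and a larger finite list, and you supply neither.

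The missing step is to minimize jointly. Put $k'=\min\{k,n-k\}\in[1,n/2]$, so your bound reads $S(A,B)\ge k'(n-k')(n-k'+1)/(2d)$. For $n\ge4$ and $2\le k'\le n/2$ one has $k'(n-k')\ge 2(n-2)$ and $n-k'+1\ge n/2+1$. Hence $k'(n-k')(n-k'+1)\ge n^2-4\ge n(n-1)$, so the minimum is attained at $k'=1$. The paper reaches the same conclusion by studying the cubic $h(x)=x(t-x)(t+1-x)/2$ and comparing $h(1)$ with $h(t/2)$. This gives
\[
S(A,B)\ge \frac{n(n-1)}{2d}=\frac{C(d)\bigl(dC(d)-1\bigr)}{2}=\frac{F(d)}{2}+1,
\]
which exceeds $1$ precisely when $F(d)>0$; this is where Lemma \ref{lem:C(d)} applies verbatim. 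Since $n\le 3$ forces $d\in\{1,2,3,4,6\}$, the only remaining $d$ are even $d=2^a3^b5^c$ with $F(d)\le 0$. Using $C(d)\in\{1/2,1/3,2/5,4/15\}$, these are exactly $d\in\{8,10,12,18,30\}$, which a finite enumeration of bipartitions settles. Your treatment of $d=3,4,6$ is correct.
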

\begin{proof}
  Set $a = \#A_f, b = \#B_f$ and we may assume $a \leq b$.
  We first evaluate
  \[
    \sum_{y \in B_{\Phi_d}} \left\{ x-y\right\}.
  \]
  Note that $x-y$ are different in $\frac{1}{d}\bbZ/\bbZ$ and $x-y \not \equiv 0 \bmod 1$ when $y$ runs over all $y \in B_{\Phi_d}$.
  Hence, we have
  \[
    \sum_{y \in B_{\Phi_d}} \left\{ x-y\right\} \geq \frac{1}{d} + \frac{2}{d} + \cdots + \frac{b}{d} = \frac{b(b+1)}{2d},
  \]
  and
  \[
    S(A_{\Phi_d},B_{\Phi_d}) = \sum_{x \in A_{\Phi_d}, y \in B_{\Phi_d}} \left\{x-y\right\} \geq \frac{ab(b+1)}{2d}.
  \]

  We now estimate the right-hand side.
  Set $t = \varphi(d)$ and $h(x) = x(t-x)(t+1-x)/2$.
  If $t \leq 3$, then $d = 1,2,3,4,6$.
  In these cases, we have $c_{\min}(\Phi_3,E) = S(\{2/3\}, \{1/3\}) = 1/3,c_{\min}(\Phi_6,E) = S(\{1/6\}, \{5/6\}) = 1/3$, and  $c_{\min}(\Phi_4,E) = S(\{1/4\}, \{3/4\}) = 1/2$.
 We assume $t\geq 4$ in the following.
  By assumption, it suffices to evaluate the minimal values of $h(x)$ with $1 \leq x \leq \lfloor t/2\rfloor$.
  By
    $2h'(x) = 3x^2 -2(2t+1)x + t(t+1) = 3(x-(2t+1)/3)^2-(t^2+t+1)/3$,
  we have $2h'(1) = t^2-3t+1 > 0$ and $2h'(t/2) = -t^2/4<0$.
  It suffices to evaluate $h(1)$ and $h(t/2)$.
  Now, $h(1) = t(t-1)/2, h(t/2) = t^2(t+2)/16$, and
    $h(t/2) - h(1) = t(t-2)(t-4)/16$.
  It follows that $h(1)$ is the minimum for $t\geq 4$.
  We evaluate $(t-1)t/2d$.
  From Lemma \ref{lem:C(d)}, if $d$ satisfies one of the conditions in Lemma \ref{lem:C(d)}, we have
  $t(t-1)/2d = C(d)(d \cdot C(d) -1)/2 = F(d)/2 + 1 > 1$.
  The remaining cases are of the form $d = 2^{e_2} 3^{e_3}5^{e_5}$ for some $e_i$.
  A direct check shows that if
  $F(d) \leq 0$,
  then $d \in \{1,2,3,4,6,8,10,12,18,30\}$.
  The remaining finitely many cases are checked by Algorithm \ref{alg:cmin-exceptional}.
\end{proof}

We next consider the case in which $\Phi_d$ is reducible.
\begin{lem}\label{lemma_RT_sum_reducible}
  Let $d$ be a positive integer such that $\Phi_d$ is reducible over $E$.
  If $d$ does not lie in $\{7,8,12,14,15,20,24,30\}$, we have $c_{\min}(\Phi_d^\pm, E)\geq 1$.
  The equality holds only for $d = 9,18$ with $\#A_{\Phi^\pm_d} = 1$ or $\#B_{\Phi^\pm_d} = 1$.
  Moreover, for $d$ with $\varphi(d) \geq 8$, if $\#A_{\Phi^\pm_d}\geq2$ and $\#B_{\Phi^\pm_d}\geq2$, then
  $S(A_{\Phi^\pm_d},B_{\Phi^\pm_d}) \geq 1$.
  The equality holds only for $d=24$ and $E = \bbQ(\sqrt{-6})$.

\begin{table}[htbp]
  \centering
  \caption{Exceptional values of $c_{\min}(\Phi_d^\pm,E)$ needed below}
  \label{table_list_c<=1}
  \renewcommand{\arraystretch}{1.35}
  \setlength{\tabcolsep}{14pt}
  \begin{tabular}{ccc}
    \toprule
    $d$ & $E$ & $c_{\min}(\Phi_d^\pm,E)$ \\
    \midrule
    $7,14$ & $\bbQ(\sqrt{-7})$ & $4/7$ \\
    \addlinespace
    $8$ & $\bbQ(\sqrt{-2})$ & $1/4$ \\
    $8$ & $\bbQ(\sqrt{-1})$ & $1/2$ \\
    \addlinespace
    $9,18$ & $\bbQ(\sqrt{-3})$ & $1$ \\
    \addlinespace
    $12$ & $\bbQ(\sqrt{-3})$ & $1/2$ \\
    $12$ & $\bbQ(\sqrt{-1})$ & $1/3$ \\
    \addlinespace
    $15,30$ & $\bbQ(\sqrt{-3})$ & $6/5$ \\
    $15,30$ & $\bbQ(\sqrt{-15})$ & $11/15$ \\
    \addlinespace
    $20$ & $\bbQ(\sqrt{-1})$ & $6/5$ \\
    $20$ & $\bbQ(\sqrt{-5})$ & $4/5$ \\
    \addlinespace
    $24$ & $\bbQ(\sqrt{-1})$ & $4/3$ \\
    $24$ & $\bbQ(\sqrt{-2})$ & $7/6$ \\
    $24$ & $\bbQ(\sqrt{-3})$ & $3/2$ \\
    $24$ & $\bbQ(\sqrt{-6})$ & $5/6$ \\
    \bottomrule
  \end{tabular}
\end{table}
\end{lem}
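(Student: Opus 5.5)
The plan mirrors the proof of Lemma \ref{lemma_RT_sum_irreducible}, with $t=\varphi(d)$ replaced by $t'=\varphi(d)/2$, the number of roots of $\Phi_d^\pm$. Let $A=A_{\Phi_d^\pm}$ and $B=B_{\Phi_d^\pm}$, write $a=\#A$ and $b=\#B$, and assume $a\le b$; this is harmless because $S(A,B)=S(-B,-A)$ and $-X_{\Phi_d^\pm}=X_{\Phi_d^\mp}$ or $X_{\Phi_d^\pm}$ (according to $\omega_E(-1)=-1$, which always holds for imaginary quadratic $E$, so conjugation swaps $\Phi_d^+$ and $\Phi_d^-$). That symmetric case has the same structure, so the bound will be proved uniformly for both factors.

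\textbf{Step 1 (general bound).} For fixed $x\in A$, the differences $x-y$ with $y\in B$ are distinct nonzero elements of $\frac1d\bbZ/\bbZ$. Hence, exactly as before,
\[
S(A,B)\ge \frac{ab(b+1)}{2d}, \qquad a+b=t'.
\]
Minimizing $h(x)=x(t'-x)(t'+1-x)/2$ over $1\le x\le\lfloor t'/2\rfloor$ by the same derivative computation gives the minimum at $x=1$ when $t'\ge4$. This yields
\[
S\ge \frac{t'(t'-1)}{2d}=\frac{C(d)\bigl(dC(d)-2\bigr)}{8}.
\]
Since this bound is weaker by roughly a factor of $4$ than in the irreducible case, I would introduce the analogue $G(d)=dC(d)^2-2C(d)-8$. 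By the same multiplicativity argument as in Lemma \ref{lem:C(d)} ($G(\ell d)>G(d)$ when a prime is added or a power raised), $G(d)>0$ holds once $d$ has a prime factor $\ell\ge 11$ or $d$ is large. The condition $D\mid d$ restricts $d$ to multiples of discriminants, so only finitely many $d$ (products of $2,3,5,7$ with bounded exponents, and multiples of $D\in\{3,4,7,8,15,20,24,\dots\}$) survive.

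\textbf{Step 2 (the $a,b\ge2$ refinement).} When $a,b\ge2$, the minimum of $h$ on $2\le x\le t'/2$ is at $x=2$. This gives $S\ge (t'-2)(t'-1)/d$, which is about $C(d)^2d/4$. For $\varphi(d)\ge8$ this exceeds $1$ except for a finite list, which I would enumerate together with the list from Step 1.

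\textbf{Step 3 (finite check).} All remaining $d$, with their possible $E$ (those $E$ with $D\mid d$), are handled by exhaustive computation of $c_{\min}$ over all bipartitions, as in Algorithm \ref{alg:cmin-exceptional}. This produces Table \ref{table_list_c<=1}, together with the equality cases $d=9,18$ for $E=\bbQ(\sqrt{-3})$ with a singleton side, and $d=24$ for $E=\bbQ(\sqrt{-6})$ when both sides have size $\ge2$.

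The main obstacle I expect is making the finite list genuinely small and provably complete. The halved root count makes the crude estimate weak, so many moderately sized $d$ (for example $d=21,28,36,40,42,60,\dots$) fall outside the analytic range. For these I would first sharpen Step 1 by using that the roots of $\Phi_d^\pm$ form a coset of an index-$2$ subgroup of $(\bbZ/d\bbZ)^\times$, so differences avoid non-units structurally. If that is insufficient, I would simply extend the computer check to all $d$ with $G(d)\le0$, which is a finite bounded search.
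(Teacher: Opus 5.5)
Your proposal is essentially the paper's proof: the estimate of Lemma \ref{lemma_RT_sum_irreducible} with $t'=\varphi(d)/2$ in place of $t$ reduces everything to the finitely many $d$ with $t'(t'-1)/(2d)\le 1$ (your $G(d)\le 0$), and these $d$, together with the admissible $E$, are settled by the exhaustive enumeration of Algorithm \ref{alg:cmin-exceptional}. Two side claims are wrong but harmless. First, a prime factor $\ell\ge 11$ does not force $G(d)>0$, and adjoining the prime $2$ lowers $G$: $G(11)=-8/11$, and $G(19)>0$ while $G(38)=-8/19$, which is why $11,22,38$ are on the paper's list. Second, on $[2,t'/2]$ the minimum of $h$ need not be at $x=2$; for $t'=6$ one has $h(3)=18<20=h(2)$. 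Your fallback of checking every $d$ with $G(d)\le0$ covers the first slip, and Step 2 is unnecessary anyway: $G(d)>0$ already gives $S>1$ for every bipartition, so the ``moreover'' clause needs only the same finite check.
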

\begin{proof}
  The argument of Lemma \ref{lemma_RT_sum_irreducible} reduces the proof to the following finite set: if $d$ does not lie in
  \[\{7,8,9,11,12,14,15,16,18,20,21,22,24,28,
    30,32,36,38,40,42,48,54,60,66,70,72,78,84,90\}\]
  then $c_{\min}(\Phi_d^\pm, E) > 1$.
  By the finite enumeration described in Algorithm \ref{alg:cmin-exceptional}, we obtain the result.

\end{proof}
  Table \ref{table_list_c<=1} is the complete list of exceptional values of $c_{\min}(\Phi_d^\pm, E)$ needed below.
Let
\[
s_{\min}(f,E) \defeq \min_{\theta}\{S(X_f, \{\theta\}), S(\{\theta\}, X_f)\},
\]
where $\theta$ runs over all exponents of $\OO_E^\times$.

\begin{lem}\label{lem:external-point-bound}
Let $X$ be a finite multiset in $\mathbb Q/\mathbb Z$ of cardinality at least two, and set
\[
c(X)\defeq \min_{X=A\sqcup B}S(A,B),
\]
where $A$ and $B$ run over nonempty submultisets with $X=A\sqcup B$.
Then
\[
\inf_{\theta\in\mathbb Q/\mathbb Z} S(X,\{\theta\})\geq c(X),
\qquad
\inf_{\theta\in\mathbb Q/\mathbb Z} S(\{\theta\},X)\geq c(X).
\]
In particular, $s_{\min}(f,E)\geq c_{\min}(f,E)$ for every $f$.
\end{lem}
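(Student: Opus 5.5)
The plan is to compare $S(X,\{\theta\})$ directly with a one-element bipartition $A=X\setminus\{y\}$, $B=\{y\}$ for a well-chosen $y\in X$. The key observation is that if $y$ is the element of $X$ met first when moving upward from $\theta$ on $\bbR/\bbZ$, then every $\{x-\theta\}$ splits additively.

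\emph{Choice of $y$ and additivity.} Fix $\theta\in\bbQ/\bbZ$. Choose $y\in X$ with $\{y-\theta\}\le\{x-\theta\}$ for all $x\in X$; this is possible since $X$ is finite. For each $x\in X$, the number $\{x-\theta\}-\{y-\theta\}$ lies in $[0,1)$ and is congruent to $x-y$ modulo $1$. Hence
\[
\{x-\theta\}=\{y-\theta\}+\{x-y\}.
\]

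\emph{The first inequality.} Summing over the multiset $X$ gives
\[
S(X,\{\theta\})=\#X\cdot\{y-\theta\}+\sum_{x\in X}\{x-y\}\ \ge\ \sum_{x\in X\setminus\{y\}}\{x-y\}=S(X\setminus\{y\},\{y\}).
\]
Here we remove one copy of $y$; its term is $\{0\}=0$, and other copies of $y$, if any, also contribute $0$. Since $\#X\ge2$, both $A=X\setminus\{y\}$ and $B=\{y\}$ are nonempty and $X=A\sqcup B$. Therefore the right-hand side is at least $c(X)$, and taking the infimum over $\theta$ gives the first inequality.

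\emph{The second inequality.} We reduce to the first by negation. We have $S(\{\theta\},X)=\sum_{x}\{\theta-x\}=S(-X,\{-\theta\})$, so the first part gives $S(\{\theta\},X)\ge c(-X)$. Moreover $c(-X)=c(X)$, because $S(A,B)=S(-B,-A)$ and $(A,B)\mapsto(-B,-A)$ is a bijection between the bipartitions of $X$ and those of $-X$. Alternatively, one can argue directly by choosing $y$ to minimize $\{\theta-y\}$ instead.

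\emph{The consequence for $f$.} Apply the two inequalities to $X=X_f$. The minimum in $s_{\min}(f,E)$ is taken over finitely many $\theta$, each bounded below by $c(X_f)=c_{\min}(f,E)$. This requires $\#X_f\ge2$, which is exactly the condition under which $c_{\min}$ is defined. The only delicate point in the whole argument is the additivity step, which relies on the minimal choice of $y$; everything else is bookkeeping with multisets.
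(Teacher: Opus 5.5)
Your proof is correct and follows the same route as the paper: both compare $S(X,\{\theta\})$ with the one-point bipartition $(X\setminus\{y\},\{y\})$ for some $y\in X$, and your identity $S(X,\{\theta\})=\#X\cdot\{y-\theta\}+S(X\setminus\{y\},\{y\})$ is the explicit form of the paper's observation that $\theta\mapsto S(X,\{\theta\})$ has slope $-\#X$ between consecutive points of $X$, so its infimum is attained at a point of $X$. The second inequality is obtained by negation in both arguments; you also check $c(-X)=c(X)$, which the paper leaves implicit.
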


\begin{proof}
We prove the first inequality.
The function
\[
F(\theta)\defeq S(X,\{\theta\})=\sum_{\alpha\in X}\{\alpha-\theta\}
\]
is piecewise linear on $\mathbb R/\mathbb Z$, and its only break points occur at points of the support of $X$.
On each connected component of the complement of the support of $X$,
each summand $\{\alpha-\theta\}$ has slope $-1$ as a function of
$\theta$.  Hence $F$ has constant slope $-\#X$. Here $\#X$ denotes the cardinality of the multiset $X$, counted with
multiplicity.
Therefore its infimum is attained at a break point, say $\theta=\alpha_0\in X$.
Removing one copy of $\alpha_0$ from $X$, we obtain $F(\alpha_0)=S(X\setminus\{\alpha_0\},\{\alpha_0\})$,
up to zero contributions from other copies equal to $\alpha_0$.
This is one of the values appearing in the definition of $c(X)$, and hence is at least $c(X)$.

For the second inequality, apply the first inequality to the
multiset $-X$ and $S(\{\theta\}, X) = S(-X, \{-\theta\})$. 
\end{proof}

\begin{lem}\label{lem_RT_sum_A_B_empty}
  Let $f$ be an irreducible factor over $E$ of $\Phi_d$ and assume $\deg(f) \geq 2$.
  If $s_{\min}(f,E) \leq 1$, then $d \in \{3,4,6,7,8,12,14,15,20,24,30\}$.
  Table \ref{table_list_S(X,0)<=1} gives the values of $s_{\min}(f,E)$ for such $d$.
\begin{table}[htbp]
  \centering
  \caption{Small values of $s_{\min}(f,E)$ used below}
  \label{table_list_S(X,0)<=1}
  \renewcommand{\arraystretch}{1.35}
  \setlength{\tabcolsep}{14pt}
  \begin{tabular}{ccc}
    \toprule
    $f$ & $E$ & $s_{\min}(f,E)$ \\
    \midrule
      $\Phi_3$ & $\bbQ(\sqrt{-1})$ & $1/2$ \\
      $\Phi_3$ & $E \neq \bbQ(\sqrt{-1}), \bbQ(\sqrt{-3})$ & $1$ \\
      \hline
      $\Phi_4$ & $\bbQ(\sqrt{-3})$ & $2/3$ \\
      $\Phi_4$ & $E \neq \bbQ(\sqrt{-1}), \bbQ(\sqrt{-3})$ & $1$ \\
      \hline
      $\Phi_6$ & $\bbQ(\sqrt{-1})$ & $1/2$ \\
      $\Phi_6$ & $E \neq \bbQ(\sqrt{-1}), \bbQ(\sqrt{-3})$ & $1$\\
      \hline
      $\Phi_{7}^\pm, \Phi_{14}^\pm$ & $\bbQ(\sqrt{-7})$ & $1$ \\
      \hline
      $\Phi_8^\pm$ & $\bbQ(\sqrt{-1})$ & $3/4$ \\
      $\Phi_8^\pm$ & $\bbQ(\sqrt{-2})$ & $1/2$ \\
      \hline
      $\Phi_{12}^\pm$ & $\bbQ(\sqrt{-1})$ & $1/2$ \\
      $\Phi_{12}^\pm$ & $\bbQ(\sqrt{-3})$ & $2/3$ \\
      \hline
      $\Phi_{15}^\pm, \Phi_{30}^\pm$ & $\bbQ(\sqrt{-15})$ & $1$ \\
      \hline
      $\Phi_{20}^\pm$ & $\bbQ(\sqrt{-1})$ & $2$ \\
      $\Phi_{20}^\pm$ & $\bbQ(\sqrt{-5})$ & $1$ \\
      \hline
      $\Phi_{24}^\pm$ & $\bbQ(\sqrt{-1})$ & $3/2$ \\
      $\Phi_{24}^\pm$ & $\bbQ(\sqrt{-2})$ & $2$ \\
      $\Phi_{24}^\pm$ & $\bbQ(\sqrt{-3})$ & $5/3$ \\
      $\Phi_{24}^\pm$ & $\bbQ(\sqrt{-6})$ & $1$\\\hline

  \end{tabular}
\end{table}
\end{lem}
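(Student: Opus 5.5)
The plan is to reduce the statement to the bipartition estimates already proved, and then handle finitely many leftover cases by direct computation. By Lemma \ref{lem:external-point-bound}, $s_{\min}(f,E)\geq c_{\min}(f,E)$ whenever $\deg f\geq 2$. Here $X_f$ has at least two elements, so nontrivial bipartitions exist.

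First take $f=\Phi_d$ irreducible over $E$. Lemma \ref{lemma_RT_sum_irreducible} gives $c_{\min}(\Phi_d,E)>1$ for $d\notin\{1,2,3,4,6\}$. Since $\deg f\geq 2$, the cases $d=1,2$ are excluded, so $s_{\min}(f,E)\leq 1$ forces $d\in\{3,4,6\}$.

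Next take $f=\Phi_d^\pm$ with $D\mid d$. Lemma \ref{lemma_RT_sum_reducible} gives $c_{\min}\geq 1$ outside $\{7,8,12,14,15,20,24,30\}$, with equality only for $d=9,18$. The cases $d=9,18$ require a separate check. There $E=\bbQ(\sqrt{-3})$, $w_E=6$, and $X_f$ consists of three ninths or eighteenths, e.g. $\{1/9,4/9,7/9\}$ for $\Phi_9^+$. For $\theta\in\frac16\bbZ/\bbZ$ we compute $S(X_f,\{\theta\})=\sum_{\alpha}\{\alpha-\theta\}$ directly. These three points form a coset of $\frac13\bbZ/\bbZ$, so the sum equals $1+3\{\alpha_0-\theta\}$ with $\alpha_0$ the smallest translate. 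This is $>1$ because $\theta$ is never in $X_f$. Hence $s_{\min}>1$, and $d=9,18$ drop out. The same reasoning about the slope in Lemma \ref{lem:external-point-bound} shows that equality $s_{\min}=c_{\min}$ can only occur when $\theta$ lies in the support of $X_f$.

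More precisely, the infimum over $\theta$ is attained at some $\theta\in X_f$. Since $\theta$ is constrained to exponents of $\OO_E^\times$, i.e.\ to $\frac{1}{w_E}\bbZ/\bbZ$, the bound becomes strict as soon as $X_f\cap \frac{1}{w_E}\bbZ/\bbZ=\emptyset$. That holds whenever $d\nmid w_E$, i.e.\ in every case except $d\in\{3,4,6\}$. This strictness argument should also clean up any equality cases. The conclusion is that $s_{\min}(f,E)\leq 1$ forces $d$ into the list $\{3,4,6,7,8,12,14,15,20,24,30\}$.

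For the table, $\theta$ ranges over at most six values and $X_f$ has at most $\varphi(d)\leq 8$ elements, so each entry is a finite hand computation of $\min_\theta\min\{S(X_f,\{\theta\}),S(\{\theta\},X_f)\}$. For example, take $\Phi_3$ over $\bbQ(\sqrt{-1})$ with $\theta=1/4$ and $X=\{1/3,2/3\}$. Then $S(X,\{\theta\})=1/12+5/12=1/2$. The remaining entries can be produced the same way, for instance with Algorithm \ref{alg:cmin-exceptional}, using the explicit root sets from the splitting lemma of Subsection \ref{subsec:cyclic-reps}. Irreducible $\Phi_3,\Phi_4,\Phi_6$ require separating the cases $E=\bbQ(\sqrt{-1})$ and $E$ generic ($w_E=2$). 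For the latter, $\theta\in\{0,1/2\}$ gives the value $1$.

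The main obstacle is bookkeeping rather than theory. The work is making sure the strictness argument really removes $d=9,18$ and any other borderline $c_{\min}=1$ cases, and verifying each table entry, including the ones exceeding $1$ such as $d=20,24$. Those are listed only for later use, and require checking both orientations $S(X,\{\theta\})$ and $S(\{\theta\},X)$ for both $\Phi_d^+$ and $\Phi_d^-$.
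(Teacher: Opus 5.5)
Your proposal is correct and is essentially the paper's proof: reduce via $s_{\min}(f,E)\geq c_{\min}(f,E)$ and Lemmas \ref{lemma_RT_sum_irreducible}, \ref{lemma_RT_sum_reducible}, then compute the table by finite enumeration. The paper keeps $9,18$ in the enumeration set $\mathcal D_s$, whereas you dispose of them by hand; your strict-slope remark in fact gives $s_{\min}(f,E)>c_{\min}(f,E)$ for every $f$ with $\deg f\geq 2$, since a root of $f$ lying in $\OO_E^\times$ would force $f$ to be linear. One small slip: the table values come from Algorithm \ref{alg:smin-exceptional}, not Algorithm \ref{alg:cmin-exceptional}, which computes bipartition minima rather than $s_{\min}$.
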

\begin{proof}
    By Lemma \ref{lem:external-point-bound}, we have
  $s_{\min}(f,E) \geq c_{\min}(f,E)$.
    Hence, by Lemmas \ref{lemma_RT_sum_irreducible} and \ref{lemma_RT_sum_reducible}, it suffices to consider the polynomials $\Phi_d$ or $\Phi_d^\pm$ for finitely many $d \in \{3,4,6,7,8,9,12,14,15,18,20,24,30\}$.
    The values in Table \ref{table_list_S(X,0)<=1} then follow from the finite enumeration described in Algorithm \ref{alg:smin-exceptional}.
\end{proof}

\section{Singularities at interior points}

\subsection{Unitary group action on the tangent space}\label{subsec:unitary-action}

We now apply the criterion (Theorem \ref{thm:Reid--Shepherd-Barron--Tai}) to unitary modular varieties.
Since the stabilizer $\Gamma_x$ of a point
$x\in \mathcal{D}_{p,q}$ is finite, $\mathrm{Sh}^0(\Gamma)$ has only finite quotient singularities.

From the realization \eqref{def_realization_D_p,q}, each point $x \in \mathcal{D}_{p,q}$ can be regarded as a negative definite subspace of $L \otimes \bbC$.
We denote by $V_{x}^-$ the negative definite subspace and by $V_{x}^+$ the orthogonal complement.
When $x$ is clear from the context, we abbreviate the subscript $x$.
The unitary groups $\U(V_{x}^+)$ and $\U(V_{x}^-)$ associated with $V_{x}^+$ and $V_{x}^-$ are isomorphic to $\U(p)$ and $\U(q)$, respectively.
After
conjugation we may assume that $\Gamma_x\subset \U(p)\times \U(q)$.
The holomorphic
tangent space at $x$ is canonically identified with
\[
  T_x\mathcal{D}_{p,q}\;\cong\; \Hom_\bbC(V_{x}^-,V_{x}^+)\;\cong\; \bbC^p\otimes (\bbC^q)^\vee.
\]
If $g\in \Gamma_x$ maps to $(a,d)\in \U(p)\times \U(q)$, then the action is given by
  $g\cdot X = aXd^{-1}$ for $X\in \Hom(\bbC^q,\bbC^p)$.
Consequently, if the eigenvalues of $a$ (resp.\ $d$) are $\alpha_1,\dots,\alpha_p$
(resp.\ $\delta_1,\dots,\delta_q$), then the eigenvalues of $g$ on $T_x\mathcal{D}_{p,q}$ are $\alpha_i\delta_j^{-1}$ for $1\le i\le p,\; 1\le j\le q$.

Following \cite{reid1987young}, it is convenient to encode
the Reid--Shepherd-Barron--Tai sum for $T_x\mathcal{D}_{p,q}$ by the difference of exponents. Namely, write
$\alpha_i=\exp(2\pi i x_i)$ and $\delta_j=\exp(2\pi i y_j)$ with $x_i,y_j\in \bbQ/\bbZ$.
Then the age of $g$ on $T_x\mathcal{D}_{p,q}$ equals
\begin{equation}\label{eq:RT-unitary}
  \mathrm{RT}(g)=\sum_{i=1}^p\sum_{j=1}^q \{x_i-y_j\}.
\end{equation}
Let $X_g^+$ and $X_g^-$ denote the multisets of exponents of eigenvalues of $g$ on $V_x^+$ and $V_x^-$, respectively. Then \eqref{eq:RT-unitary} can be written as
$\mathrm{RT}(g)=S(X_g^+,X_g^-)$.

Since $g\in \U(L)$, its action on $L\otimes_{\OO_E}E$ is $E$-linear. In particular, its characteristic polynomial $p_g(T) = \det(T \cdot 1_{p+q} - g)$ has coefficients in $E$.
Since $g$ is of finite order, $p_g(T)$ is a product of irreducible factors over $E$.
By this observation, the computation of singularities is reduced to the auxiliary lemmas of Section \ref{section_auxiliary_lemma}.
Combining Theorem \ref{thm:Reid--Shepherd-Barron--Tai} with \eqref{eq:RT-unitary}, the canonicity of
singularities of $\mathrm{Sh}^0(\Gamma)$ at an interior point reduces to estimating $S(X_g^+,X_g^-)$.

Below, we record that the stabilizers at
interior points act on the tangent space without quasi-reflections.
This follows from the assumption $p,q\geq2$.
It is known that if an arithmetic subgroup contains a quasi-reflection, then it is of type $\O^+(2,n)$ or $\U(1,n)$ \cite{meschiari1972reflections}.
This applies to our case, but to include the statement for $(p,q) = (2,2)$, we give a proof of the absence of quasi-reflections in $\U(L)$.

\begin{prop}\label{prop:no-qref}
  No nontrivial element of a finite stabilizer $\Gamma_x\subset \U(V_{x}^+)\times \U(V_{x}^-)$
  acts as a quasi-reflection on $T_x\mathcal{D}_{p,q}$.
\end{prop}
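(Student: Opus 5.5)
We argue by contradiction, using only the tensor-product structure of the action on $T_x\mathcal{D}_{p,q}$. Let $g\in\Gamma_x$ map to $(a,d)\in\U(p)\times\U(q)$. Since $g$ has finite order, $a$ and $d$ are diagonalizable. By the description above, the eigenvalues of $g$ on $T_x\mathcal{D}_{p,q}\cong\bbC^p\otimes(\bbC^q)^\vee$ are exactly the $pq$ products $\alpha_i\delta_j^{-1}$, counted with multiplicity, for $1\le i\le p$ and $1\le j\le q$. Suppose that $g$ acts as a quasi-reflection. Then exactly one of these $pq$ products differs from $1$; relabel so that $\alpha_1\delta_1^{-1}\neq 1$ and $\alpha_i\delta_j^{-1}=1$ for all $(i,j)\neq(1,1)$.

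The key step is a rectangle argument, which is where $p,q\ge2$ enters. Since $p\ge2$ and $q\ge2$, the indices $(1,2)$, $(2,1)$ and $(2,2)$ all differ from $(1,1)$. Hence
\[
\alpha_1=\delta_2,\qquad \alpha_2=\delta_1,\qquad \alpha_2=\delta_2 .
\]
These give $\alpha_1=\delta_2=\alpha_2=\delta_1$, so $\alpha_1\delta_1^{-1}=1$. This contradicts the choice of $(1,1)$, so no quasi-reflection can occur.

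The same rectangle argument shows more. If the action of $g$ on $T_x$ is the identity, then all $\alpha_i$ and $\delta_j$ equal a single scalar $\lambda$. So $g$ is $\lambda\cdot 1_{p+q}$, which lies in the kernel of $\Gamma_x\to\GL(T_x)$ mentioned after Theorem \ref{thm:Reid--Shepherd-Barron--Tai}. Consequently the statement should be read on the image of $\Gamma_x$ in $\GL(T_x)$: every nontrivial element of this image is not a quasi-reflection. This is the form needed to apply Theorem \ref{thm:Reid--Shepherd-Barron--Tai}.

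I do not expect a genuine obstacle; the argument uses no arithmetic of $E$ or $L$. The only points requiring care are the following.
\begin{enumerate}
\item Eigenvalues must be counted with multiplicity on the tensor product, so that "exactly one eigenvalue $\neq1$" is correctly translated into the index condition above.
\item The case $(p,q)=(2,2)$ is covered, since the argument only needs a $2\times2$ subgrid of indices. This is why the direct proof is given instead of citing \cite{meschiari1972reflections}.
\end{enumerate}
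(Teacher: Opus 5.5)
Your proposal is correct and is essentially the paper's argument: choose the one exceptional index pair $(i_0,j_0)$, pick $i_1\neq i_0$ and $j_1\neq j_0$ (possible since $p,q\ge2$), and chain the three equalities $\alpha_{i_0}=\delta_{j_1}$, $\alpha_{i_1}=\delta_{j_0}$, $\alpha_{i_1}=\delta_{j_1}$ to contradict $\alpha_{i_0}\neq\delta_{j_0}$. The paper runs this for an arbitrary power $g^k$, which your version already covers because $g^k\in\Gamma_x$. Your extra remark that a trivial action forces $g$ to be scalar is also correct.
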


\begin{proof}
  Let $g=(a,d)\in \U(V_{x}^+)\times \U(V_{x}^-)$ be of finite order and suppose that $g^k$ acts as a quasi-reflection
  on $T_x\mathcal{D}_{p,q}$ for some $k\ge 1$.  Write the eigenvalues of $a$ and $d$ as above.
  Then the eigenvalues of $g^k$ on $T_x\mathcal{D}_{p,q}$ are $(\alpha_i\delta_j^{-1})^k=\alpha_i^{k}(\delta_j^{k})^{-1}$.
  The assumption on the quasi-reflection means that all but one of these $pq$ numbers are equal to $1$.
  Thus there exist indices $(i_0,j_0)$ such that $\alpha_i^{k}=\delta_j^{k}$ for all $(i,j)\neq(i_0,j_0)$, and
    $\alpha_{i_0}^{k}\neq \delta_{j_0}^{k}$.
  Choose $i_1\neq i_0$ and $j_1\neq j_0$, which are possible since $p,q\ge 2$.
  Applying the equalities above to the three pairs $(i_0,j_1)$, $(i_1,j_0)$, and $(i_1,j_1)$ gives
    $\alpha_{i_0}^{k}=\delta_{j_1}^{k}$,
    $\alpha_{i_1}^{k}=\delta_{j_0}^{k}$, and 
    $\alpha_{i_1}^{k}=\delta_{j_1}^{k}$.
  Hence $\delta_{j_0}^{k}=\alpha_{i_1}^{k}=\delta_{j_1}^{k}=\alpha_{i_0}^{k}$, contradicting
  $\alpha_{i_0}^{k}\ne\delta_{j_0}^{k}$.
  Therefore no such $k$ exists, and in particular no power of $g$ acts as a quasi-reflection.
\end{proof}

\begin{rem}
  When $(p,q)=(2,2)$ one has the well-known exceptional isomorphism of real Lie groups
    $\SU(2,2)/\{\pm 1\}  \cong \SO^+(2,4)$,
  and consequently the symmetric domain of type $I_{2,2}$ for $\SU(2,2)$ can also be viewed as the
  type IV domain attached to a quadratic space of signature $(2,4)$.
  Although $\O^+(2,4)$ may contain reflections fixing a hyperplane in the orthogonal model, such elements either lie outside the identity component or do not
  act as quasi-reflections on the holomorphic tangent space of $\mathcal{D}_{2,2}$ by Proposition \ref{prop:no-qref}.
\end{rem}

\begin{cor}\label{cor:branch-codimge2}
  The uniformization map $\mathcal{D}_{p,q}\to \mathrm{Sh}^0(\Gamma)$ is unramified in codimension $1$.
\end{cor}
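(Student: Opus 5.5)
The plan is to check directly that the branch locus of $\pi\colon \mathcal{D}_{p,q}\to \mathrm{Sh}^0(\Gamma)$ has codimension at least $2$ in $\mathcal{D}_{p,q}$. Since $\Gamma$ acts properly discontinuously, the map is unramified at $x$ exactly when the image of $\Gamma_x$ in $\GL(T_x\mathcal{D}_{p,q})$ is trivial. So the ramification locus is the union, over the countably many $g\in\Gamma$ that act nontrivially on $\mathcal{D}_{p,q}$, of the fixed loci $\mathcal{D}_{p,q}^g$. This union is locally finite. Hence it suffices to show that each $\mathcal{D}_{p,q}^g$ has codimension $\ge 2$, whenever it is nonempty and $g$ acts nontrivially on $\mathcal{D}_{p,q}$.

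Fix such a $g$ and a point $x\in\mathcal{D}_{p,q}^g$, so that $g\in\Gamma_x$. The element $g$ has finite order, and by Cartan's linearization theorem its action near $x$ is holomorphically conjugate to its linear action on $T_x\mathcal{D}_{p,q}$. Therefore near $x$ the fixed locus $\mathcal{D}_{p,q}^g$ is a complex submanifold whose tangent space is the $1$-eigenspace of $g$ on $T_x\mathcal{D}_{p,q}$. Its codimension equals the number of eigenvalues $\alpha_i\delta_j^{-1}$ that differ from $1$. If $g$ acts trivially on $T_x$, then by linearization it is trivial near $x$, hence trivial on the connected domain, which we have excluded. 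If exactly one eigenvalue differs from $1$, then $g$ is a quasi-reflection, which Proposition \ref{prop:no-qref} rules out. So at least two eigenvalues differ from $1$, and $\operatorname{codim}\mathcal{D}_{p,q}^g\ge 2$.

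Since a countable, locally finite union of closed analytic subsets of codimension $\ge2$ is again a closed analytic subset of codimension $\ge2$, the map $\pi$ is unramified outside codimension $2$. The only subtle point is the linearization step together with the passage from "$g$ acts as the identity on the tangent space" to "$g$ acts trivially". The relevant group is the image of $\Gamma_x$ in $\GL(T_x)$, and elements in the kernel, such as scalars, act trivially on $\mathcal{D}_{p,q}$ and cause no ramification. I expect no real obstacle beyond this, since the key input is exactly Proposition \ref{prop:no-qref}, which uses $p,q\ge2$.
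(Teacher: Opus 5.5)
Your proposal is correct and follows essentially the same route as the paper. The paper's proof cites the standard fact that a finite quotient map is branched along a divisor exactly when some stabilizer contains a quasi-reflection, then applies Proposition~\ref{prop:no-qref}; you reprove that standard fact through Cartan linearization and the fixed-locus codimension count before using the same proposition.
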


\begin{proof}
  A finite quotient map is branched along a divisor precisely when some stabilizer contains a quasi-reflection.
  This cannot happen by Proposition \ref{prop:no-qref}.
\end{proof}
For ball quotients, where $\min\{p,q\}=1$, quasi-reflections do occur and correspond
to complex reflections fixing a divisor \cites{behrens2012singularities,maeda2024reflective}.

\subsection{Ages of unitary group actions}
\label{sec:ages_unitary_actions}
In this subsection, we prove that, outside finitely many low-rank cases, unitary modular varieties have at worst canonical singularities.

\begin{lem}\label{lemma_S_AA_BB}
  Let $f$ and $f'$ be irreducible polynomials dividing $\Phi_d$ and $\Phi_{d'}$ with $\deg(f) \geq 2$ and $\deg(f') \geq 2$, respectively.
  Decompose $X_f=A_f\sqcup B_f$ and $X_{f'}=A_{f'}\sqcup B_{f'}$
  with all four sets $A_f,B_f,A_{f'},B_{f'}$ nonempty.  Then $S(A_f \sqcup A_{f'}, B_{f} \sqcup B_{f'}) \geq 1$.
  The equality holds only if $f=f'$ with $\deg(f) =2$.
\end{lem}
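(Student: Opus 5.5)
We expand the sum bilinearly. Writing $A=A_f\sqcup A_{f'}$ and $B=B_f\sqcup B_{f'}$, we have
\[
S(A,B)=S(A_f,B_f)+S(A_{f'},B_{f'})+S(A_f,B_{f'})+S(A_{f'},B_f),
\]
and every term is nonnegative. The plan is to show that either one diagonal term already contributes at least $1$, or the two diagonal terms together with the cross terms reach $1$.

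\textbf{Case 1: one of $f,f'$ has degree at least $3$.} Say $\deg f\geq3$. If $c_{\min}(f,E)\geq1$ we are done. By Lemmas \ref{lemma_RT_sum_irreducible} and \ref{lemma_RT_sum_reducible} and Table \ref{table_list_c<=1}, the failures are finitely many: $d\in\{7,8,12,14,15,20,24,30\}$ with specific $E$. Even for these, we only need $S(A_f,B_f)\geq1$ when both parts have size at least $2$. The second statement of Lemma \ref{lemma_RT_sum_reducible} gives this when $\varphi(d)\geq8$, with the $d=24$, $E=\bbQ(\sqrt{-6})$ exception. When a part is a singleton, the cross term $S(A_f,B_{f'})$ or $S(A_{f'},B_f)$ is of the form of $s_{\min}$ against a single exponent, up to swapping sides. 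Lemma \ref{lem:external-point-bound} (with arbitrary $\theta$) then bounds it below by $c(X)$ of the other factor. The leftover small cases are handled by a finite enumeration in the spirit of Algorithm \ref{alg:cmin-exceptional}: run over pairs $(f,f')$ from the finite exceptional list and over all bipartitions.

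\textbf{Case 2: $\deg f=\deg f'=2$.} Then every part is a singleton: $A_f=\{x\}$, $B_f=\{y\}$, $A_{f'}=\{x'\}$, $B_{f'}=\{y'\}$. For a degree-$2$ factor the two roots are complex conjugate, or form a conjugate-closed pair as in $\Phi_3,\Phi_4,\Phi_6$ or $\Phi_d^\pm$ of degree $2$. Hence $y=-x$ and $y'=-x'$, and
\[
S=\{2x\}+\{2x'\}+\{x+x'\}+\{x'+x\}.
\]
We will use the identity $\{u\}+\{-u\}=1$ for $u\notin\bbZ$, together with the fact that $\{a\}+\{b\}\geq\{a+b\}$. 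These give
\[
\{2x\}+\{2x'\}+2\{x+x'\}\geq \{2x+2x'\}+2\{x+x'\}\geq 1,
\]
the last step because $2\{u\}\geq\{2u\}$ and one checks that $\{2u\}+2\{u\}\geq1$ unless $\{u\}<1/4$. The case $\{u\}<1/4$ needs a separate small argument, using that $2x$ and $2x'$ are noninteger for the finitely many admissible exponents $x\in\{1/3,1/4,1/6,\dots\}$. Tracking equality should force $x=x'$, i.e. $f=f'$.

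\textbf{Main obstacle.} The bookkeeping of the finitely many exceptional pairs in Case 1, where a singleton part of the exceptional factor combines with a small factor $f'$ (such as $\Phi_3$ or $\Phi_4$ over $\bbQ(\sqrt{-1})$ or $\bbQ(\sqrt{-3})$) to give a small cross contribution. For these I would first try the crude bound $S\geq c_{\min}(f)+c_{\min}(f')+(\text{cross})$ with the tabulated values; if that falls short, I would fall back to a computer enumeration over the finite list of degree-$2$ factors and exceptional $d$.
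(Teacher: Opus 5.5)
Your overall reduction matches the paper's. Since $S(A_f\sqcup A_{f'},B_f\sqcup B_{f'})\geq S(A_f,B_f)+S(A_{f'},B_{f'})$ and the second summand is strictly positive, every $f$ with $c_{\min}(f,E)\geq 1$ is disposed of. The paper then settles all remaining pairs by the exhaustive enumeration of Algorithm \ref{alg:exceptional-pair}. The problem is with the hand arguments you offer in place of that enumeration.

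\textbf{Case 2 rests on a false claim.} Because $E$ is imaginary, $\omega'_E(-1)=-1$, so complex conjugation interchanges $\Phi_d^+$ and $\Phi_d^-$. Hence a factor $\Phi_d^{\pm}$ never contains a conjugate pair. For instance:
\begin{itemize}
\item over $\bbQ(\sqrt{-2})$, $X_{\Phi_8^+}=\{1/8,3/8\}$;
\item over $\bbQ(\sqrt{-1})$, $X_{\Phi_8^+}=\{1/8,5/8\}$ and $X_{\Phi_{12}^+}=\{1/12,5/12\}$;
\item over $\bbQ(\sqrt{-3})$, $X_{\Phi_{12}^+}=\{1/12,7/12\}$.
\end{itemize}
So your formula $\{2x\}+\{2x'\}+2\{x+x'\}$ covers only $\Phi_3,\Phi_4,\Phi_6$. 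It misses $\Phi_8^{\pm}$ and $\Phi_{12}^{\pm}$, which carry the smallest entries $1/4,1/3,1/2$ of Table \ref{table_list_c<=1}. It also misses every mixed pair involving them, e.g.\ $\Phi_8^{\pm}$ with $\Phi_3,\Phi_4,\Phi_6$ over $\bbQ(\sqrt{-2})$. These cases are genuinely tight: with $f=f'=\Phi_8^+$ over $\bbQ(\sqrt{-2})$, the choice $A=\{3/8,3/8\}$, $B=\{1/8,1/8\}$ gives $S=1$.

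Even in the conjugate case your equality analysis is off. Taking $x=x'$ gives $4\{2x\}\geq 4/3$, whereas equality actually occurs for $x'=-x$, e.g.\ $A=\{1/3,2/3\}$, $B=\{2/3,1/3\}$. The subcase $\{x+x'\}<1/4$ is also left open.

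\textbf{Case 1 misuses Lemma \ref{lem:external-point-bound}.} If $A_f=\{x\}$, the cross term $S(\{x\},B_{f'})$ runs over the proper sub-multiset $B_{f'}\subsetneq X_{f'}$. The lemma bounds $S(\{\theta\},X)$ only for a full multiset, so no bound by $c(X_{f'})$ follows.

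Case 1 is in fact simpler than you make it. The crude bound $c_{\min}(f,E)+c_{\min}(f',E)>1$ from Table \ref{table_list_c<=1} settles it, except for $\Phi_7^{\pm},\Phi_{14}^{\pm}$ paired with $\Phi_3,\Phi_6$ over $\bbQ(\sqrt{-7})$, where $4/7+1/3<1$. Also, your list should read $d\in\{7,14,15,20,24,30\}$, since the exceptional factors for $d=8,12$ have degree two.

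As written, the proposal does not prove the lemma. It becomes a proof once the finite enumeration over all bipartitions is made the actual argument for every remaining pair, including the degree-two factors $\Phi_8^{\pm},\Phi_{12}^{\pm}$. That is exactly the paper's proof.
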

\begin{proof}
  Note first that
  \[
    S(A_f \sqcup A_{f'}, B_f \sqcup B_{f'}) \geq S(A_f, B_f) + S(A_{f'},B_{f'}).
  \]
  If either $f$ or $f'$ is outside the exceptional set appearing in Lemmas \ref{lemma_RT_sum_irreducible} and \ref{lemma_RT_sum_reducible}, the assertion follows immediately from those lemmas. It remains to check finitely many exceptional pairs. For these pairs, we enumerate all decompositions $X_f=A_f\sqcup B_f$ and $X_{f'}=A_{f'}\sqcup B_{f'}$ and compute 
$S(A_f\sqcup A_{f'},B_f\sqcup B_{f'})$.
  This finite enumeration, described in Algorithm \ref{alg:exceptional-pair}, gives the claimed lower bound and shows that equality occurs only when $f=f'$ and $\deg f = 2$.
\end{proof}

\begin{lem}\label{lemma_S_XA_B}
  Let $f$ and $f'$ be irreducible polynomials dividing $\Phi_d$ and $\Phi_{d'}$ with $\deg(f) \geq 2$ and $\deg(f') \geq 2$, respectively.
  If $S(A_f \sqcup X_{f'}, B_f)$
    or $S(A_f,B_f \sqcup X_{f'})$ is at most $1$, then $\deg(f) = 2$.
  In particular, $\#A_f = \#B_f = 1$.
\end{lem}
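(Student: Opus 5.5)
We decompose the sum and reduce to the cyclotomic lemmas already proved. By symmetry (complex conjugation gives $S(A,B)=S(-B,-A)$, and $-X_{f'}$ is again the exponent multiset of an irreducible factor of $\Phi_{d'}$ over $E$), it suffices to treat $S(A_f\sqcup X_{f'},B_f)$. By bilinearity of $S$ in its two arguments,
\[
  S(A_f\sqcup X_{f'},B_f)=S(A_f,B_f)+S(X_{f'},B_f)=S(A_f,B_f)+\sum_{y\in B_f}S(X_{f'},\{y\}).
\]
Both summands are nonnegative.

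The first step applies Lemma~\ref{lem:external-point-bound} to the multiset $X_{f'}$, which has cardinality $\deg f'\geq 2$. For every $y\in\bbQ/\bbZ$ this gives $S(X_{f'},\{y\})\geq c(X_{f'})=c_{\min}(f',E)$. Hence
\[
  S(A_f\sqcup X_{f'},B_f)\geq c_{\min}(f,E)+\#B_f\cdot c_{\min}(f',E).
\]
We then argue by contradiction: assume the total is at most $1$ but $\deg f\geq3$.

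If $f$ is not in the exceptional list of Lemmas~\ref{lemma_RT_sum_irreducible} and~\ref{lemma_RT_sum_reducible}, then $c_{\min}(f,E)\geq1$, with equality only for $d=9,18$. The positive contribution from $X_{f'}$ then pushes the sum strictly above $1$. To see this, note that $S(X_{f'},\{y\})=0$ would force all exponents of $f'$ to equal $y$, which is impossible since the roots of an irreducible $f'$ are distinct and $\deg f'\geq2$.

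What remains is a finite list of pairs: $f$ among the exceptional factors with $\deg f\geq3$ (the entries of Table~\ref{table_list_c<=1}), and $f'$ of degree at least $2$. When $f'$ is non-exceptional, $c_{\min}(f',E)\geq1$ already gives a sum $>1$ together with $S(A_f,B_f)>0$. This leaves finitely many pairs $(f,f')$, both from the exceptional lists. For these I would enumerate all bipartitions $X_f=A_f\sqcup B_f$ together with the sum $S(X_{f'},B_f)$ computed exactly rather than through the crude bound, in the style of Algorithm~\ref{alg:exceptional-pair}, and check that the sum exceeds $1$ whenever $\deg f\geq3$.

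The main obstacle is this finite check. For example, when $f=\Phi_8^\pm$ over $\bbQ(\sqrt{-2})$ we have $c_{\min}=1/4$, and $f'$ could be $\Phi_3$, $\Phi_4$ or $\Phi_6$, each with small $c_{\min}$. The crude estimate $\#B_f\cdot c_{\min}(f',E)$ may then fail, so exact computation of $\sum_{y\in B_f}S(X_{f'},\{y\})$ is needed. Here one should use that $B_f$ consists of distinct points, so the values $S(X_{f'},\{y\})$ cannot all be small simultaneously.

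Finally, once $\deg f=2$ is established, the two nonempty parts of $X_f$ must each have exactly one element, which gives $\#A_f=\#B_f=1$.
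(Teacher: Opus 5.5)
This is essentially the paper's proof: you use the same splitting $S(A_f\sqcup X_{f'},B_f)=S(A_f,B_f)+S(X_{f'},B_f)$ with $S(X_{f'},B_f)\geq \#B_f\cdot c_{\min}(f',E)$ from Lemma~\ref{lem:external-point-bound}, remove non-exceptional factors via Lemmas~\ref{lemma_RT_sum_irreducible} and~\ref{lemma_RT_sum_reducible}, and finish with an exact finite check, and your conjugation reduction for the second sum is fine. However, you leave that check undone and misplace where it is hard: $\Phi_8^\pm$ over $\bbQ(\sqrt{-2})$ has degree $2$, so it never enters the check as $f$, whereas the crude bound $c_{\min}(f,E)+c_{\min}(f',E)$ (note $c_{\min}(f',E)\geq 1/4$ always and $\geq 1/3$ for $E\neq\bbQ(\sqrt{-2})$) already disposes of every case except $f\in\{\Phi_7^\pm,\Phi_{14}^\pm\}$ with $f'\in\{\Phi_3,\Phi_6\}$ over $\bbQ(\sqrt{-7})$, where direct evaluation gives minimum $9/7$, attained for instance at $f=\Phi_7^+$, $A_f=\{2/7,4/7\}$, $B_f=\{1/7\}$ with $S(X_{\Phi_3},\{1/7\})=5/7$, so the saving comes from where the single point of $B_f$ sits relative to $X_{f'}$, not from $\#B_f$ being large.
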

\begin{proof}
  We consider the sum $S(A_f \sqcup X_{f'}, B_f)$; the other case is similar.
  We have
  \[
    S(X_{f'},B_f) \geq \#B_f \cdot c_{\min}(f',E) \geq c_{\min}(f',E) \geq
    \begin{cases}
      1/4 & \text{if $E = \bbQ(\sqrt{-2})$;}\\
      1/3 & \text{if $E \neq \bbQ(\sqrt{-2})$.}
    \end{cases}
  \]
  In particular, if $S(A_f, B_f) >3/4$, we have
  $S(A_f \sqcup X_{f'}, B_f)> 1$.
  By Lemmas \ref{lemma_RT_sum_irreducible} and \ref{lemma_RT_sum_reducible}, a polynomial $f$ with $S(A_f, B_f) \leq 3/4$ and $\deg(f) > 2$ is either $\Phi_7^{\pm}$, $\Phi_{14}^\pm, \Phi_{15}^\pm$ or $\Phi_{30}^\pm$ and then $E = \bbQ(\sqrt{-7})$ or $\bbQ(\sqrt{-15})$ and $E \neq \bbQ(\sqrt{-2})$.
  Since $c_{\min}(X_{\Phi_{15}^\pm}, \bbQ(\sqrt{-15})) = c_{\min}(X_{\Phi_{30}^\pm}, \bbQ(\sqrt{-15})) = 11/15 > 2/3$, the sum is greater than $1$ and hence it suffices to consider the case $f = \Phi_7^\pm, \Phi_{14}^\pm$.
  By $c_{\min}(X_{\Phi_{7}^\pm}, \bbQ(\sqrt{-7})) = c_{\min}(X_{\Phi_{14}^\pm}, \bbQ(\sqrt{-7})) = 4/7$, as $c_{\min}(f',\bbQ(\sqrt{-7})) \leq 3/7$, we may assume $f' = \Phi_3$ or $\Phi_6$.
 For each of the remaining pairs $f\in\{\Phi_7^\pm,\Phi_{14}^\pm\}$ and $f'\in\{\Phi_3,\Phi_6\}$,
over $E=\Q(\sqrt{-7})$, direct evaluation gives
\[
  \min_{\substack{X_f=A_f\sqcup B_f\\
                  A_f\neq\emptyset,\ B_f\neq\emptyset}}
  \min\bigl\{
    S(A_f\sqcup X_{f'},B_f),
    S(A_f,B_f\sqcup X_{f'})
  \bigr\}
  =\frac97>1.
\]
This excludes $\deg(f)>2$ and proves the lemma.
\end{proof}

\begin{lem}\label{lemma_S_XX}
  Let $f$ and $f'$ be irreducible polynomials dividing $\Phi_d$ and $\Phi_{d'}$ with $\deg(f) \geq 2$ and $\deg(f') \geq 2$, respectively.
  Then,
  $S(X_f, X_{f'}) \geq 1$.
  The equality holds only if $f=f'$ with $\deg(f)=2$.
\end{lem}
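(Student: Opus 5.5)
The plan is to reduce $S(X_f,X_{f'})$ to the single-point sums controlled by Lemma \ref{lem:external-point-bound} and Lemma \ref{lem_RT_sum_A_B_empty}, and then to settle the finitely many remaining pairs by direct enumeration.

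First, split $X_{f'}$ into its points:
\[
  S(X_f,X_{f'})=\sum_{y\in X_{f'}} S(X_f,\{y\}).
\]
Each term is at least $\inf_\theta S(X_f,\{\theta\})\geq c_{\min}(f,E)$ by Lemma \ref{lem:external-point-bound}, so $S(X_f,X_{f'})\geq \deg(f')\,c_{\min}(f,E)$. Symmetrically, splitting $X_f$ gives $S(X_f,X_{f'})\geq \deg(f)\,c_{\min}(f',E)$.

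If $f$ lies outside the exceptional lists of Lemmas \ref{lemma_RT_sum_irreducible} and \ref{lemma_RT_sum_reducible}, then $c_{\min}(f,E)>1$. For $\Phi_9^\pm,\Phi_{18}^\pm$ over $\bbQ(\sqrt{-3})$ we have $c_{\min}=1$. In both situations $\deg(f')\geq 2$ gives $S(X_f,X_{f'})\geq 2\,c_{\min}(f,E)\geq 2>1$. The same holds with the roles of $f$ and $f'$ exchanged. So it suffices to treat pairs in which both $f$ and $f'$ come from the finite exceptional list: $\Phi_3,\Phi_4,\Phi_6$, together with $\Phi_d^\pm$ for $d\in\{7,8,12,14,15,20,24,30\}$ over the relevant fields $E$ of Table \ref{table_list_c<=1}.

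Many of these pairs are still disposed of by the crude bound. Since $c_{\min}\geq 1/2$ whenever $\deg f'\geq 4$ except for $\Phi_8^\pm$ over $\bbQ(\sqrt{-2})$, we get $\deg(f')c_{\min}(f,E)>1$ for many combinations. For the remaining ones I would use the sharper bound
\[
  S(X_f,X_{f'})\geq \deg(f')\cdot \min_{\theta\in X_{f'}} S(X_f,\{\theta\}),
\]
estimating the inner term with the actual exponents, which are explicit roots of unity, rather than with $c_{\min}$.

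The main obstacle is the leftover pairs of small degree over $\bbQ(\sqrt{-1})$, $\bbQ(\sqrt{-2})$ and $\bbQ(\sqrt{-3})$. Examples are $f,f'\in\{\Phi_3,\Phi_4,\Phi_6,\Phi_8^\pm,\Phi_{12}^\pm\}$ with $f=f'$ allowed. Here I would simply enumerate $S(X_f,X_{f'})$ directly, in the spirit of Algorithm \ref{alg:exceptional-pair}, since the sets are tiny.

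The key check is the equality case. For $f=f'=\Phi_3$ one gets $S(\{1/3,2/3\},\{1/3,2/3\})=\{1/3\}+\{2/3\}=1$, and similarly for $\Phi_4$, $\Phi_6$ and the degree-two factors $\Phi_8^\pm,\Phi_{12}^\pm$. One then verifies that every pair with $f\neq f'$, or with $\deg f>2$, gives a strict inequality. For distinct degree-two factors, e.g. $\Phi_3$ and $\Phi_6$, the differences avoid $0$ and the sum exceeds $1$; for instance $S(X_{\Phi_3},X_{\Phi_6})=2$. This gives the bound and the equality statement.
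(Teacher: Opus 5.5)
Your proposal is correct and is essentially the paper's argument. The bound $S(X_f,X_{f'})\geq\deg(f')\,c_{\min}(f,E)$, and its symmetric version, from Lemma \ref{lem:external-point-bound} reduces everything to the degree-two factors $\Phi_3,\Phi_4,\Phi_6,\Phi_8^\pm,\Phi_{12}^\pm$, which are then settled by direct enumeration as in Algorithm \ref{alg:exceptional-pair}.

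The paper reaches that list in one step by using the threshold $c_{\min}>1/2$ for either factor. Every factor of degree at least three has $c_{\min}\geq 4/7$ by Table \ref{table_list_c<=1} and Lemmas \ref{lemma_RT_sum_irreducible} and \ref{lemma_RT_sum_reducible}. So your intermediate ``sharper bound'' step and your garbled remark about $\deg f'\geq 4$ are not needed.
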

\begin{proof}
  Since $\deg(f), \deg(f') \geq 2$, if $c_{\min}(f,E)$ or $c_{\min}(f',E)$ is greater than $1/2$, the claim holds.
  Hence, we may assume $f,f' = \Phi_3,\Phi_4,\Phi_6,\Phi_8^\pm,\Phi_{12}^\pm$ by Lemma \ref{lemma_RT_sum_reducible}.
  Then the statement follows from the finite enumeration described in Algorithm \ref{alg:exceptional-pair}.
\end{proof}

With the above preliminaries, we compute the singularities.
Let $\gamma$ be an element of the stabilizer $\Gamma_x$ and $p_\gamma(T) = \det(T \cdot 1_{p+q} - \gamma)$ be the characteristic polynomial.
For each occurrence of an irreducible factor $f$ of
$p_\gamma(T)$, let $W_f$ be an abstract copy of the
corresponding irreducible $E[\gamma]$-module.
We choose $\gamma$-equivariant complex embeddings of
$W_{f,\bbC}:=W_f\otimes_E\bbC$ into $L\otimes_E\bbC$
such that their images form a direct-sum decomposition
compatible with $V_x^+\oplus V_x^-$.
Such embeddings can be chosen by taking bases of the
positive and negative parts of every complex eigenspace
and distributing the eigenvalue occurrences among the
copies of each irreducible factor.
Set
$A_f:=\{\text{exponents on }W_{f,\bbC}\cap V_x^+\}$
and
$B_f:=\{\text{exponents on }W_{f,\bbC}\cap V_x^-\}$.
Then $X_f=A_f\sqcup B_f$; either part may be empty.
Repeated factors are counted separately.
The chosen complex embeddings need not be defined over $E$.
\begin{prop}\label{prop_two_nonlinear_factor}
  If $p_\gamma(T) = \det(T \cdot 1_{p+q} - \gamma)$ has at least two irreducible nonlinear 
  factors $f_1$ and $f_2$ over $E$, then
  $\mathrm{RT}(\gamma) \geq 1$.
  The equality holds only if $(p,q) = (2,2)$ and $f_1=f_2$.
\end{prop}
\begin{proof}
  Let $f_1$ and $f_2$ be such nonlinear factors.
  Consider the representation on $L \otimes E$ of the cyclic group generated by $\gamma$.
  The polynomials $f_i$ define subrepresentations $W_i$ of $L \otimes E$.
  In the following, containment statements such as $W_i\subset V^\pm$ mean the corresponding statement for the complexification $W_{i,\bbC}$ inside $L\otimes_{E}\bbC$.
  Recall that the point $x$ defines subspaces $V^+$ and $V^-$ of $L \otimes \bbC$.
  By relabeling $f_1$ and $f_2$ if necessary, the problem is reduced to the following cases:
  \begin{enumerate}
    \item Both $W_i$ are not contained in $V^{\pm}$.
    \item $W_1$ is contained in $V^+$ or $V^-$, but the other $W_2$ is not.
    \item $W_1$ is contained in $V^+$ and $W_2$ is contained in $V^-$.
    \item Both $W_i$ are contained in the same $V^+$ or $V^-$.
  \end{enumerate}
  For the first and the third cases, the statements follow from Lemma \ref{lemma_S_AA_BB} and Lemma \ref{lemma_S_XX}.

  We consider the second case.
  Suppose that $W_1 \subset V^+$.
  Let $A$ (resp.\ $B$) be the set of exponents of the eigenvalues in $W_2 \cap V^+$ (resp.\ $W_2 \cap V^-$).
  The proof for the case $W_1 \subset V^-$ is similar.
  By definition, $\mathrm{RT}(\gamma) \geq S(A \sqcup X_{f_1}, B)$.
  By Lemma \ref{lemma_S_XA_B} and $q \geq 2$, we may assume $\#
  B = 1$ and $W_2 \cap V^- \neq V^-$.
  If there exists an eigenvalue on $V^- / (W_2 \cap V^-)$ that is not a unit in $\OO_E$, there exists a polynomial $f_3$ dividing $p_\gamma(T)$ such that the eigenvalue is a root of $f_3$.
  Define $W_3$ similarly.
  If $W_3$ is not contained in $V^-$, then the pair $(W_2, W_3)$ is reduced to the first case by $W_3 \cap V^- \neq 0$. 
  If $W_3$ is contained in $V^-$, then the pair $(W_1, W_3)$ is reduced to the third case.
  Hence, we may assume that all the eigenvalues on $V^- / (W_2 \cap V^-)$ are units in $\OO_E$.
  For the exponent $a$ of such a unit, we have
  \begin{align*}
    \mathrm{RT}(\gamma) &\geq S(A \sqcup X_{f_1}, B \sqcup \{a\}) > S(A,B) + S(X_{f_1}, B \sqcup \{a\})\\
    &\geq c_{\min}(f_2, E) + c_{\min}(f_1,E) + s_{\min}(f_1,E).
  \end{align*}
  Here, we use $S(A,\{a\}) > 0$ by definition of $A$ and $a$.
  By $c_{\min}(f,E) \geq 1/4$ and $s_{\min}(f,E) \geq 1/2$, we obtain $\mathrm{RT}(\gamma) > 1$.

  It remains to consider the last case.
  It suffices to treat the case $W_1,W_2\subset V^+$.
  By the same argument as in the second case, we may assume that all the eigenvalues on $V^-$ are units in $\OO_E$.
  Then a direct calculation gives
  \[
    \mathrm{RT}(\gamma) \geq S(X_{f_1} \sqcup X_{f_2}, \Theta^-) \geq q \cdot (s_{\min}(f_1,E) + s_{\min}(f_2,E)) \geq q \cdot 2 \cdot 1/2 \geq 2 > 1,
  \]
  where $\Theta^-$ denotes the multiset of exponents of eigenvalues on $V^-$.
  This completes the proof.
\end{proof}

\begin{prop}\label{prop_one_nonlinear_factor}
  Suppose the irreducible decomposition $p_\gamma(T) = \prod_i f_i$ with $\deg(f_1)\geq 2$, $\deg(f_i)=1$ for $i\neq 1$ and $(p,q) \neq (2,2), (2,3)$.
  Then $\mathrm{RT}(\gamma) \geq 1$.
\end{prop}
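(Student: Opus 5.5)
Throughout, write $W_1\subset L\otimes_{\OO_E}E$ for the $f_1$-isotypic piece, and put $A:=A_{f_1}$, $B:=B_{f_1}$ for the exponents on $W_{1,\bbC}\cap V^+$ and $W_{1,\bbC}\cap V^-$. Every other factor of $p_\gamma$ is linear over $E$ with a root-of-unity root, so its root is a unit of $\OO_E$. Let $U^+$ and $U^-$ be the multisets of exponents of these units on $V^+$ and $V^-$. They have cardinalities $p-\#A$ and $q-\#B$, and all their elements lie in $\frac{1}{w_E}\bbZ/\bbZ$. By \eqref{eq:RT-unitary},
\[
\mathrm{RT}(\gamma)=S(A,B)+S(A,U^-)+S(U^+,B)+S(U^+,U^-).
\]
I would split into the same three configurations as in Proposition \ref{prop_two_nonlinear_factor}.

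\textbf{Case $B=\emptyset$, i.e.\ $W_1\subset V^+$.} Here $\#U^-=q\ge 2$. Dropping the last term gives $\mathrm{RT}(\gamma)\ge S(X_{f_1},U^-)\ge q\cdot s_{\min}(f_1,E)$. Lemma \ref{lem_RT_sum_A_B_empty} gives $s_{\min}(f_1,E)\ge 1/2$, hence $\mathrm{RT}(\gamma)\ge 1$.

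\textbf{Case $A=\emptyset$, i.e.\ $W_1\subset V^-$.} Symmetrically, $\mathrm{RT}(\gamma)\ge S(U^+,X_{f_1})\ge p\cdot s_{\min}(f_1,E)\ge 1$, since $p\ge 2$.

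\textbf{Case $A,B\neq\emptyset$.} This is the main case. If $c_{\min}(f_1,E)\ge 1$ we are done, because the other three terms are nonnegative. By Lemmas \ref{lemma_RT_sum_irreducible} and \ref{lemma_RT_sum_reducible}, this leaves finitely many exceptional $f_1$: $\Phi_3,\Phi_4,\Phi_6$ and the $\Phi_d^\pm$ of Table \ref{table_list_c<=1}, each over a specific field $E$. For each, the number of unit eigenvalues is $N-\deg f_1$.

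I would first use that scalars act trivially on $T_x$. Replacing $\gamma$ by $\zeta^{-1}\gamma$ for $\zeta\in\OO_E^\times$ does not change $\mathrm{RT}$, and it permutes the factors (sending $f_1$ to another exceptional polynomial). So I may normalize, for example so that some unit exponent equals $0$.

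Next I would observe a monotonicity. Fix $f_1$, the bipartition $(A,B)$, and which unit values occur in $U^\pm$. Then $\mathrm{RT}$ is a nondecreasing function of the multiplicities in $U^\pm$. Each extra copy of a unit exponent $u$ in $U^-$ adds $S(A,\{u\})+S(U^+,\{u\})\ge 0$, and similarly for $U^+$. Hence it suffices to verify $\mathrm{RT}\ge 1$ at the minimal admissible sizes. Since $(p,q)\ne(2,2),(2,3)$ and $p\le q$, these are $(p,q)\in\{(2,4),(3,3)\}$, giving $N=6$. The sole exception is when $\deg f_1=2$ and the units are forced onto one side. The one admissible case with $N=5$ is excluded, which is exactly why $(2,3)$ must be removed.

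At the minimal sizes, everything is a finite enumeration over:
\begin{itemize}
\item exponents in $\frac{1}{w_E}\bbZ/\bbZ$;
\item bipartitions of $X_{f_1}$;
\item distributions of the units between $V^+$ and $V^-$.
\end{itemize}
I would carry this out in the style of Algorithms \ref{alg:cmin-exceptional} and \ref{alg:exceptional-pair}.

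\textbf{Main obstacle.} I expect the hardest part to be the degree-$2$ factors $\Phi_3,\Phi_4,\Phi_6$ over $\bbQ(\sqrt{-1})$ and $\bbQ(\sqrt{-3})$, and $\Phi_8^\pm,\Phi_{12}^\pm$. For these, a single unit can contribute very little, e.g.\ $\{1/3-1/4\}=1/12$. So one cannot argue unit by unit; the cross term $S(U^+,U^-)$ must be used. When all units are equal, this term vanishes, but then the scalar normalization reduces to units with exponent $0$. In that situation $\{a-0\}+\{0-b\}=\{a\}+\{-b\}$ is large for $a,b\ne 0$. When the units are not all equal, $S(U^+,U^-)\ge 1/w_E$ per mismatched pair, and the mismatch count grows with $p+q$. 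I would check that these two effects together exceed the deficit $1-c_{\min}(f_1,E)$ in each table entry.
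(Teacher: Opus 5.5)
Your argument is correct, and its skeleton matches the paper's:

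\begin{itemize}
\item the empty-side cases follow from $s_{\min}(f_1,E)\geq 1/2$ together with at least two unit eigenvalues on the opposite side;
\item otherwise $\mathrm{RT}(\gamma)\geq c_{\min}(f_1,E)$, which reduces the problem to finitely many exceptional pairs $(E,f_1)$;
\item those pairs are settled by a finite check.
\end{itemize}

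The real difference is how the unit part of the check is made finite. The paper bounds the unit multisets in advance. Each unit on one side pairs with a nonempty part of $X_{f_1}$ on the other side, so it contributes at least $\min_{x\neq y}\{x-y\}$. Hence any configuration of age at most $1$ has $|U^\pm|\leq M_{E,f}$, and Algorithm \ref{alg:low-age-types} lists all such configurations. That enumeration is larger, but it also yields the full list of low-age types used in Tables \ref{list_canonical_sing} and \ref{list_noncanonical_sing}.

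Your monotonicity argument reduces the check to the two signatures $(2,4)$ and $(3,3)$: every term of $\mathrm{RT}$ is nonnegative, so deleting a unit eigenvalue never increases it. This gives a much smaller enumeration. Combined with Example \ref{ex:exceptional-interior}, whose age is $(p+q)/6$, it also shows directly why exactly $(2,2)$ and $(2,3)$ must be excluded. The check at these two signatures does go through for every exceptional $(E,f_1)$. Equality is attained, for example, by $\Phi_3$ with all units equal to $1/2$.

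To make the reduction airtight, do not fix the set of unit values that occur. With the support fixed, the minimal admissible sizes depend on that support; over $\bbQ(\sqrt{-3})$, up to six distinct unit values can sit on one side. Use instead that deleting any unit, including the last copy of a given value, lowers $\mathrm{RT}$. Then check that the deletion process inside $\{2\leq p\leq q\}\setminus\{(2,2),(2,3)\}$ can only stop at $(2,4)$ or $(3,3)$. This holds because every exceptional $f_1$ has degree at most $4$ and $A,B\neq\emptyset$, so $\#A,\#B\leq 3$. Without that bound you could get stuck at a state $(p,p)$ with $p=\#A$ and $U^+=\emptyset$.

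This argument should replace your unclear sentence about the ``sole exception''. Your closing heuristics about cross terms and mismatch counts then become unnecessary. They are also partly inaccurate: $S(U^+,U^-)$ gives nothing when the mismatched units lie on the same side. The finite check at $N=6$ does all the work.
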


\begin{proof}
Let $f$ be the unique nonlinear irreducible factor.
If $A_f=\emptyset$ or $B_f=\emptyset$, the opposite sign
contains at least two unit eigenvalues. Hence
\[
  \mathrm{RT}(\gamma)\geq 2 \cdot s_{\min}(f,E)\geq1
\]
by Lemma \ref{lem_RT_sum_A_B_empty}.
Otherwise, $\mathrm{RT}(\gamma)\geq c_{\min}(f,E)$.
It remains to consider the independently specified finite
input $\mathcal F_{\mathrm{one}}$ in Appendix
\ref{app:finite-computations}.

The finite enumeration in Algorithm \ref{alg:low-age-types} shows that the age strictly less than one
can occur only for $(p,q)=(2,2)$ or $(2,3)$.
\end{proof}

\begin{lem}\label{lem:unit-eigenvalues-interior}
Let $A$ and $B$ be multisets in $\frac{1}{w_E}\bbZ/\bbZ$ with $\#A=p$ and $\#B=q$. 
Suppose that the multiset $A \sqcup B$ contains at least two different rational numbers.
Then $S(A,B) \geq p/w_E$.
\end{lem}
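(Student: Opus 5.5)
The idea is to bound $S(A,B)$ row by row, writing
\[
  S(A,B)=\sum_{x\in A} R(x),\qquad R(x)\defeq\sum_{y\in B}\{x-y\}.
\]
Throughout I use the standing assumption $p\le q$ of the paper. It is genuinely needed: for $A=\{0,\dots,0,1/w_E\}$ and $B=\{0,\dots,0\}$ one gets $S(A,B)=q/w_E$, so the inequality fails when $q<p$.

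The only arithmetic input is the following observation. All elements lie in $\frac{1}{w_E}\bbZ/\bbZ$. Hence for any $x,y$ with $x\neq y$ in $\bbQ/\bbZ$, the difference $x-y$ is a nonzero element of $\frac{1}{w_E}\bbZ/\bbZ$, so $\{x-y\}\geq 1/w_E$. Every summand of $S(A,B)$ is nonnegative, and I will drop summands freely.

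\textbf{Case 1.} Suppose that for every $x\in A$ (counted with multiplicity) the multiset $B$ contains some $y$ with $y\neq x$. That single summand gives $R(x)\geq 1/w_E$. Summing over the $p$ elements of $A$ yields $S(A,B)\geq p/w_E$.

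\textbf{Case 2.} Otherwise there is some $x_0\in A$ such that every element of $B$ equals $x_0$, so $B=\{x_0,\dots,x_0\}$ with $q$ copies. Since $A\sqcup B$ contains at least two different values, $A$ must contain some $x_1\neq x_0$. Each of the $q$ summands $\{x_1-x_0\}$ in $R(x_1)$ is at least $1/w_E$. Therefore
\[
  S(A,B)\geq R(x_1)\geq q/w_E\geq p/w_E,
\]
where the last inequality uses $p\le q$.

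The argument is elementary; the main point of care is Case 2, which is the only place where the hypothesis $p\le q$ enters.
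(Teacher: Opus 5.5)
Your proof is correct and is essentially the paper's argument with the roles of $A$ and $B$ swapped. The paper splits according to whether some column sum $S(A,\{\theta\})$ with $\theta\in B$ vanishes, which forces $A$ to be scalar, while you split according to whether some row sum vanishes, which forces $B$ to be scalar. In both versions the standing assumption $p\le q$ enters in exactly one branch. Your observation that this hypothesis is genuinely needed (via $A=\{0,\dots,0,1/w_E\}$, $B=\{0,\dots,0\}$) is also correct, since the lemma as stated does not repeat it.
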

\begin{proof}
For $\theta\in B$, set $F_A(\theta)\defeq S(A,\{\theta\})=\sum_{a\in A}\{a-\theta\}$.
If $F_A(\theta)=0$ for some $\theta \in B$, then every element of $A$ is equal to $\theta$.
By the assumption, there exists $\theta'\in B$ such that $\theta \neq \theta'$.
Hence, we have $S(A,B) \geq p\{\theta-\theta'\} \geq p/w_E$.
If $F_A(\theta)$ is nonzero for any $\theta \in B$, then $S(A,B) \geq \sum_\theta F_A(\theta)\geq q/w_E \geq p/w_E$, which implies the statement.
\end{proof}

\subsection{Canonical singularities}
Now, we study when $\mathrm{Sh}^0(\Gamma)$ has canonical or, moreover, terminal singularities.

\begin{thm}\label{thm_classif_sing_disc<-4}
Any unitary modular variety $\mathrm{Sh}^0(\Gamma)$ has at worst canonical singularities if $p\geq w_E$ and $(p,q) \neq (2,2), (2,3)$.
\end{thm}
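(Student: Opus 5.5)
By Proposition \ref{prop:no-qref}, no nontrivial element of a stabilizer $\Gamma_x$ acts as a quasi-reflection on $T_x\mathcal{D}_{p,q}$. The Reid--Shepherd-Barron--Tai criterion (Theorem \ref{thm:Reid--Shepherd-Barron--Tai}) therefore applies to the image $H$ of $\Gamma_x\to\GL(T_x\mathcal{D}_{p,q})$. The plan is to show that $\mathrm{RT}(\gamma)\geq 1$ for every $\gamma\in\Gamma_x$ acting nontrivially on $T_x\mathcal{D}_{p,q}$. Since every point of $\mathrm{Sh}^0(\Gamma)$ is locally $T_x\mathcal{D}_{p,q}/H$, this gives canonicity at every point.

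By \eqref{eq:RT-unitary}, $\mathrm{RT}(\gamma)=S(X_\gamma^+,X_\gamma^-)$. I would split into cases according to the factorization of the characteristic polynomial $p_\gamma(T)$ over $E$ into irreducible factors, each dividing some $\Phi_d$.
\begin{enumerate}
\item If $p_\gamma$ has at least two nonlinear irreducible factors, Proposition \ref{prop_two_nonlinear_factor} gives $\mathrm{RT}(\gamma)\geq 1$. Its equality case, $(p,q)=(2,2)$, is excluded but harmless in any event.
\item If $p_\gamma$ has exactly one nonlinear factor, Proposition \ref{prop_one_nonlinear_factor} gives $\mathrm{RT}(\gamma)\geq 1$, using $(p,q)\neq(2,2),(2,3)$.
\item If all factors are linear, every eigenvalue of $\gamma$ is a root of unity lying in $E$, hence a unit of $\OO_E$. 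So all exponents lie in $\frac{1}{w_E}\bbZ/\bbZ$, and $A=X_\gamma^+$, $B=X_\gamma^-$ are multisets there with $\#A=p$, $\#B=q$.
\end{enumerate}

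In the third case, if $A\sqcup B$ consists of a single value $\theta$, then $\gamma$ acts on $L\otimes\bbC$ as the scalar $e^{2\pi i\theta}$. Its action $X\mapsto aXd^{-1}$ on the tangent space is then trivial, so $\gamma$ lies in the kernel of $\Gamma_x\to\GL(T_x)$ and contributes nothing. Otherwise $A\sqcup B$ contains at least two distinct values, and Lemma \ref{lem:unit-eigenvalues-interior} gives $S(A,B)\geq p/w_E\geq 1$ by the hypothesis $p\geq w_E$. This is the only place where the hypothesis $p\geq w_E$ enters.

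The substantive work is already contained in the cited propositions, which rely on the finite enumerations. The step I expect to need the most care is the reduction itself. One must check that the eigenvalue bookkeeping $X_f=A_f\sqcup B_f$ is compatible with the decomposition $V_x^+\oplus V_x^-$. One must also check that an element $\gamma$ with trivial image in $\GL(T_x)$ is exactly a scalar, which follows from the eigenvalue description $\alpha_i\delta_j^{-1}$ since $p,q\geq 2$. With these checks, combining the three cases with Theorem \ref{thm:Reid--Shepherd-Barron--Tai} proves the statement.
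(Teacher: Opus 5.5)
Your proposal is correct and follows the paper's proof essentially verbatim. Both exclude quasi-reflections via Proposition \ref{prop:no-qref}, handle non-unit eigenvalues with Propositions \ref{prop_two_nonlinear_factor} and \ref{prop_one_nonlinear_factor}, and treat the all-unit case by Lemma \ref{lem:unit-eigenvalues-interior} together with $p\geq w_E$; your explicit disposal of the scalar case, which that lemma's hypothesis requires, is a detail the paper leaves implicit by taking $\gamma$ to act nontrivially.
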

\begin{proof}
Take $\gamma\in \Gamma_x$ whose action on the tangent space is nontrivial. 
By Proposition \ref{prop:no-qref}, it suffices to show $\mathrm{RT}(\gamma) \geq 1$.
When the characteristic polynomial $p_\gamma$ has a root that does not lie in $\OO_E^\times$, the statement follows from Propositions \ref{prop_two_nonlinear_factor} and \ref{prop_one_nonlinear_factor}. 
When all eigenvalues are units of $\OO_E$, the desired inequality follows from Lemma \ref{lem:unit-eigenvalues-interior}.
\end{proof}

\begin{rem}
For $p=2$ and $E=\mathbb Q(\sqrt{-3})$, the possible
noncanonical cyclic quotient types in the interior were
independently classified by Watson
\cite{watson2026singularities}*{Theorem 7.2.5}.
For $q\geq 6$, his list consists of
\[
  \frac{1}{3}(1,1),\qquad
  \frac{1}{6}(1,1),\qquad
  \frac{1}{6}(1,1,1,1),
\]
up to adjoining zero weights.  This agrees with the
corresponding $p=2$, $E=\mathbb Q(\sqrt{-3})$ entries in
Tables \ref{list_canonical_sing}
and \ref{list_noncanonical_sing} below.
Watson's result concerns the interior of the type
$I_{2,q}$ case,
whereas our classification treats arbitrary
imaginary quadratic fields and is combined below with the analysis of singularities on the boundaries.
\end{rem}

\begin{ex}\label{ex:exceptional-interior}
We give explicit realizations of the noncanonical interior
singularities occurring for $(p,q)=(2,2)$ and $(2,3)$.
Let $\zeta_3\defeq\exp(2\pi\sqrt{-1}/3)$
and put
\[
H\defeq
\begin{pmatrix}
0&-\sqrt{-D}\\
\sqrt{-D}&0
\end{pmatrix},
\qquad
A\defeq
\begin{pmatrix}
-1&-1\\
1&0
\end{pmatrix}.
\]
Then $A^*HA=H$ and $A^3=I_2$.

For $2\leq p\leq q$, let $L_{p,q}\defeq\OO_E^{p+q}$
and endow it with the Hermitian form $h_{p,q}
\defeq
H
\oplus
\langle 1\rangle^{p-1}
\oplus
\langle -1\rangle^{q-1}$.
Thus $h_{p,q}$ has signature $(p,q)$. Define $\gamma_{p,q}
\defeq
\diag\bigl(A,-1_{p+q-2}\bigr)
\in \U(L_{p,q})$.
The element $\gamma_{p,q}$ has order six.
Let $v_-\defeq\zeta_3e_0+e_1$ and $v_+\defeq\zeta_3^2e_0+e_1$.
A direct calculation gives $Av_-=\zeta_3v_-$, $
Av_+=\zeta_3^2v_+$,
and $h_{p,q}(v_-,v_-)=-\sqrt{3D}<0$, $h_{p,q}(v_+,v_+)=\sqrt{3D}>0$.
Hence
\[
V^-:=
\bbC v_-
\oplus
\bigoplus_{j=p+1}^{p+q-1}\bbC e_j
\]
is a negative-definite $q$-dimensional subspace, whereas
\[
V^+:=
\bbC v_+
\oplus
\bigoplus_{j=2}^{p}\bbC e_j
\]
is its 
orthogonal complement.
Therefore $x_{p,q}:=V^-\in\mathcal D_{p,q}$
is an interior point fixed by $\gamma_{p,q}$.
The multisets of exponents of the eigenvalues of $\gamma_{p,q}$
on $V^+$ and $V^-$ are
\[
X^+_{\gamma_{p,q}}
=
\left\{
\frac23,
\smash{\underbrace{\frac12,\ldots,\frac12}_{p-1}}
\right\},
\qquad
X^-_{\gamma_{p,q}}
=
\left\{
\frac13,
\smash{\underbrace{\frac12,\ldots,\frac12}_{q-1}}
\right\}.
\]
Consequently,
\[
\mathrm{RT}(\gamma_{p,q})
=
S\bigl(
X^+_{\gamma_{p,q}},
X^-_{\gamma_{p,q}}
\bigr)=
\frac13
+\frac{q-1}{6}
+\frac{p-1}{6}=
\frac{p+q}{6}.
\]
For $(p,q)=(2,2), (2,3)$, the singularities are of type
\[
\frac{1}{6}(2,1,1,0),\qquad \frac{1}{6}(2,1,1,1,0,0),
\]
respectively.
Hence these are noncanonical singularities.
\end{ex}

The preceding results determine the possible nonterminal singularities at interior points.
The corresponding interior quotient types are recorded below.
The following tables list the types of 
a
generator $\gamma$ with $0<\mathrm{RT}(\gamma)\leq1$.

\renewcommand{\arraystretch}{1.2}
\begin{center}
\refstepcounter{table}\label{list_canonical_sing}
\textbf{Table \thetable.} 
List of possible nonterminal but canonical cyclic quotient singularities at interior points.
The entry ``General'' means that the type is not restricted to a special imaginary quadratic field.
\par\smallskip
\begin{tabular}{c|c|c}
    $(p,q)$ & Singularities & $E$ \\
    \hline
    $(2,2)$ & $\frac{1}{3}(2,1,0,0)$ & General \\
    $(2,2)$ & $\frac{1}{4}(2,1,1,0)$ & General \\
    $(2,2)$ & $\frac{1}{4}(3,1,0,0)$ & $\bbQ(\sqrt{-1}), \bbQ(\sqrt{-2})$ \\
    $(2,2)$ & $\frac{1}{4}(1,1,1,1)$ & $\bbQ(\sqrt{-1}), \bbQ(\sqrt{-2})$ \\
    $(2,2)$ & $\frac{1}{6}(3,2,1,0)$ & $\bbQ(\sqrt{-3})$ \\
    $(2,2)$ & $\frac{1}{6}(5,1,0,0)$ & $\bbQ(\sqrt{-3})$ \\
    $(2,2)$ & $\frac{1}{8}(4,3,1,0)$ & $\bbQ(\sqrt{-1})$ \\
    $(2,2)$ & $\frac{1}{12}(5,3,3,1)$ & $\bbQ(\sqrt{-6})$ \\
    $(2,2)$ & $\frac{1}{12}(6,5,1,0)$ & $\bbQ(\sqrt{-3})$ \\
    $(2,4)$ & $\frac{1}{6}(2,1,1,1,1,0,0,0)$ & General \\
    $p=2$ & $\frac{1}{2}(1,1,\underbrace{0,\ldots,0}_{2q-2})$ & General \\
    $p=2$ & $\frac{1}{6}(2,2,1,1,\underbrace{0,\ldots,0}_{2q-4})$ & $\bbQ(\sqrt{-3})$ \\
    $p=2, q\geq2$ & $\frac{1}{8}(3,3,1,1,\underbrace{0,\ldots,0}_{2q-4})$ & $\bbQ(\sqrt{-2})$ \\
    $p=2, q\geq2$ & $\frac{1}{12}(5,5,1,1,\underbrace{0,\ldots,0}_{2q-4})$ & $\bbQ(\sqrt{-1})$ \\
    $p=2,4$ or $q=4$ & $\frac{1}{4}(1,1,1,1,\underbrace{0,\ldots,0}_{pq-4})$ & $\bbQ(\sqrt{-1})$ \\
    $p=2, q\geq3$ or $p=3,6$ or $q=6$ & $\frac{1}{6}(1,1,1,1,1,1,\underbrace{0,\ldots,0}_{pq-6})$ & $\bbQ(\sqrt{-3})$ \\
    $(3,3)$ & $\frac{1}{6}(2,1,1,1,1,0,0,0,0)$ & General \\
    $p=3$ or $q=3$ & $\frac{1}{3}(1,1,1,\underbrace{0,\ldots,0}_{pq-3})$ & $\bbQ(\sqrt{-3})$
\end{tabular}
\end{center}

\renewcommand{\arraystretch}{1.2}
\begin{center}
\refstepcounter{table}\label{list_noncanonical_sing}
\textbf{Table \thetable.} List of possible noncanonical cyclic quotient singularities at interior points.
\par\smallskip
\begin{tabular}{c|c|c}
    $(p,q)$ & Singularities & $E$ \\
    \hline
    $(2,2)$ & $\frac{1}{6}(2,1,1,0)$ & General \\
    $(2,3)$ & $\frac{1}{6}(2,1,1,1,0,0)$ & General \\
    $p=m$ or $q=m$ with $m\leq3$ & $\frac{1}{4}(\underbrace{1,\ldots,1}_m,\underbrace{0,\ldots,0}_{pq-m})$ & $\bbQ(\sqrt{-1})$ \\
    $p=m$ or $q=m$ with $m\leq3$ & $\frac{1}{6}(\underbrace{1,\ldots,1}_m,\underbrace{0,\ldots,0}_{pq-m})$ & $\bbQ(\sqrt{-3})$\\
    $p=2$
& $\frac{1}{3}(1,1,\underbrace{0,\ldots,0}_{2q-2})$
& $\bbQ(\sqrt{-3})$\\
$p\mid4$ or $q\mid4$
& $\frac{1}{6}(1,1,1,1,\underbrace{0,\ldots,0}_{pq-4})$
& $\bbQ(\sqrt{-3})$\\
$p=5$ or $q=5$
& $\frac{1}{6}(1,1,1,1,1,\underbrace{0,\ldots,0}_{pq-5})$
& $\bbQ(\sqrt{-3})$
\end{tabular}
\end{center}

\section{Singularities at boundary points}\label{sec:boundary-singularities}
\subsection{Toroidal compactifications}
Here, we recall the basic facts of toroidal compactifications for $\mathrm{Sh}^0(\Gamma)$ \cites{ash2010smooth,kasparian2021toroidal}.
The Baily--Borel compactification $\mathrm{Sh}^0(\Gamma)^*$ is obtained by adding the
$\Gamma$-orbits of rational boundary components of $\mathcal{D}_{p,q}$; it is a normal projective
variety and is minimal among algebraic compactifications of $\mathrm{Sh}^0(\Gamma)$.
Toroidal compactifications are refinements of $\mathrm{Sh}^0(\Gamma)^*$ obtained by replacing neighborhoods of cusps by torus embeddings.

In the present type $I_{p,q}$ case, rational boundary components are indexed by $\Gamma$-orbits
of totally isotropic $E$-subspaces $I\subset L\otimes_{\OO_E}E$.
Equivalently, they correspond to $\bbQ$-parabolic subgroups of the $\bbQ$-group
$G(\Q) = \U(L)(\Q)$; 
for fixed $s=\dim_E I$ (with $1\le s\le \min\{p,q\}$),
the associated maximal parabolic is the stabilizer $P_I\defeq\Stab(I)$.
For the explicit description of $P_I$, see
\cite{kasparian2021toroidal}*{Section 4.1}.

Let us fix one totally isotropic subspace $I$.
Let $P=P_I$ be such a $\bbQ$-parabolic subgroup and set $\Gamma_P\defeq\Gamma\cap P$.
Write $N_P$ for the unipotent radical of $P$ and $U_P\defeq \mathrm{Z}(N_P)$ for its center.
Choose a Levi subgroup $L_P$ of $P$. As in the refined Langlands decomposition,
\[
 P=N_P\rtimes L_P,
 \qquad L_P\cong\GL_E(I)\times G_h(P),
\]
and there is a central exact sequence
\[
 1\longrightarrow U_P\longrightarrow N_P
 \longrightarrow V_P\longrightarrow1,
\]
where $V_P:=N_P/U_P$ is abelian. 
The group $G_h(P)$ acts on the smaller Hermitian symmetric
domain $F_P\cong\mathcal D_{p-s,q-s}$; see  \cite{kasparian2021toroidal}*{Section 2.2}.
This induces a (real-analytic) horospherical decomposition of the symmetric domain
\begin{equation}\label{eq:horospherical}
  \mathcal{D}_{p,q}\ \cong\ \Lie(U_P)\times \Lie(V_P)\times C_P\times F_P,
\end{equation}
where $C_P\subset \sqrt{-1}\Lie(U_P)$ is an open self-adjoint (strongly convex) cone.
Set $\Upsilon_P:=\Gamma\cap U_P$ and
$T_P(\bbC):=(\Upsilon_P\otimes_{\bbZ}\bbC)/\Upsilon_P$.
Let $\mathcal B_P$ denote the base of the holomorphic
Siegel-domain fibration associated with $P$.
Its underlying real-analytic manifold is identified with
$\Lie(V_P)\times F_P$ via the horospherical decomposition.
We equip $\mathcal B_P$ with the complex structure induced
by the holomorphic Siegel-domain realization of
$\mathcal D_{p,q}$;
see \cite{kasparian2021toroidal}*{Section 4.4}.
After quotienting by $\Upsilon_P$, the fibers are open
subsets of $T_P(\bbC)$.
The factor $C_P$ in \eqref{eq:horospherical} records the positivity
condition in the Siegel-domain realization of $\mathcal{D}_{p,q}$:
it is the open self-adjoint cone in
$\sqrt{-1}\Lie(U_P)$ appearing in the horospherical coordinates;
see \cite{kasparian2021toroidal}*{Section 4.4}.
In the case of type $I_{p,q}$, one may identify $\Lie(U_P)$ (noncanonically) with the real vector space
of $s\times s$ skew-Hermitian complex matrices  and the cone $C_P$
with the cone of positive definite Hermitian forms. 
In particular, for $s=1$ we have $C_P=\bbR_{>0}$.

Below, we explain the fan decomposition.
Let $\overline{C}_P$ denote the rational closure of the cone $C_P$ in $\sqrt{-1}\Lie(U_P)$.
A (rational polyhedral) fan $\Sigma_P$ is a decomposition of $\overline{C}_P$ into relatively open
rational polyhedral cones, satisfying the usual compatibility conditions.
Given $\Sigma_P$, the lattice $\Upsilon_P$ and the torus $T_P$, one obtains a toric variety $TV(\Sigma_P)$ containing $T_P$ as a dense open subset.
The fan $\Sigma_P$ must be compatible with the $\Gamma$-action and with adjacencies of boundary components.
A collection $\Sigma=\{\Sigma_P\}_P$ of fans, one for each $\Gamma$-rational boundary component, up to conjugacy,
satisfying these compatibilities is called a $\Gamma$-admissible collection.
Admissible collections exist; they are not unique, and can be chosen so that the resulting compactification is
projective.

The quotient $\Upsilon_P\backslash\mathcal D_{p,q}$
is an open subset of a $T_P$-torsor over $\mathcal B_P$.
Given $\Sigma_P$, applying the torus embedding
$T_P\hookrightarrow TV(\Sigma_P)$ relatively over
$\mathcal B_P$ gives the partial toroidal compactification
$(\Upsilon_P\backslash\mathcal D_{p,q})_{\Sigma_P}$.
Since $\Gamma_P$ normalizes $\Upsilon_P$,
the action of $\Gamma_P$ on this partial compactification
factors through $\Gamma_P/\Upsilon_P$.
Taking these quotients gives the toroidal
compactification
$\overline{\mathrm{Sh}^0(\Gamma)}^\Sigma$.
There is a canonical proper morphism
  $\pi_{\Sigma}:\ \overline{\mathrm{Sh}^0(\Gamma)}^{\Sigma}\to \mathrm{Sh}^0(\Gamma)^*$
extending the identity on $\mathrm{Sh}^0(\Gamma)$.

The key structural fact is that every point of $\overline{\mathrm{Sh}^0(\Gamma)}^{\Sigma}$ has a neighborhood contained in a quotient of a toric
variety. More precisely, locally near a boundary stratum associated with $P$,
the space $\overline{\mathrm{Sh}^0(\Gamma)}^{\Sigma}$ is obtained
from a toric chart in $TV(\Sigma_P)$, a neighborhood in the complex
vector space
coming from $\Lie(V_P)$, and a neighborhood in the
smaller boundary symmetric domain $F_P$, by taking a finite quotient \cite{kasparian2021toroidal}*{Section 2.3}. 
Even if $TV(\Sigma_P)$ is smooth (e.g. if $\Sigma_P$ is smooth), the action of the stabilizer of each point may
produce quotient singularities.
We call a $\Gamma$-admissible fan \emph{inversion-free} if,
whenever an element $\gamma$ of $\Gamma_P$ preserves a cone
$\sigma$ setwise, 
$\gamma$ fixes $\sigma$ pointwise.
Equivalently, it fixes the primitive generator of every ray of
$\sigma$.
According to \cites{ash2010smooth,tai1982kodaira}, after replacing $\Sigma$ by a regular inversion-free
$\Gamma$-admissible refinement, every point $x$ 
of $\overline{\mathrm{Sh}^0(\Gamma)}^{\Sigma}$ lying over a rational boundary component that corresponds to $P$ has an analytic neighborhood of the form $(U_{\sigma}\times \mathcal V_P\times \Omega)/G_x$,
where
\begin{itemize}
    \item $U_{\sigma} = \Spec \bbC[\sigma^\vee\cap M_P]$ is a smooth affine toric chart associated with a cone $\sigma\in \Sigma_P$ where $M_P$ is the character lattice of the torus $T_P$;
    \item $\mathcal V_P$ is a sufficiently small open subset of
$\Mat_{p-s,s}(\bbC)\oplus\Mat_{s,q-s}(\bbC)$
in a local holomorphic trivialization of the base over $F_P$;.
    \item $\Omega$ is a sufficiently small open neighborhood in the smaller Hermitian
    symmetric domain $F_P$;
\item $G_x = \Stab_{\Gamma_P/\Upsilon_P}(\widetilde{x})$ for a chosen lift $\widetilde{x}\in U_{\sigma}\times \mathcal V_P\times \Omega$.
    \end{itemize}
    Moreover, if an element of $G_x$ preserves the chart $U_\sigma$, then it preserves each toric boundary divisor attached to a ray of $\sigma$. In particular, after choosing monomial coordinates on $U_\sigma$ and local analytic coordinates on $\mathcal V_P$ and $\Omega$, the singularities in such a neighborhood are finite quotient singularities induced by the image of $G_x$ in
    $\GL\bigl(T_{\tilde{x}}(U_\sigma\times\mathcal V_P\times\Omega)\bigr)$.

Let
\[
  Z_L
  :=
  \ker\bigl(\U(L)\to \Aut(\mathcal{D}_{p,q})\bigr)
  =
  \OO_E^\times\cdot\id_L,
  \qquad
  \Gamma':=\langle\Gamma,Z_L\rangle.
\]
        We next compare the local charts associated with $\Gamma$ and $\Gamma'$, in the case of $\U(p,q)$.
For $\Lambda\in\{\Gamma,\Gamma'\}$, put $\Upsilon_P(\Lambda):=\Lambda\cap U_P$
and $Y_P(\Lambda)
  :=
  \bigl(
    \Upsilon_P(\Lambda)\backslash\mathcal{D}_{p,q}
  \bigr)_{\Sigma_P}$.
After replacing $\Sigma_P$ by a common
regular refinement for the two lattices $\Upsilon_P(\Gamma)\subset\Upsilon_P(\Gamma')$,
we use the same fan $\Sigma_P$ in the definitions of
$Y_P(\Gamma)$ and $Y_P(\Gamma')$.
Let $\varpi_P:Y_P(\Gamma)\to Y_P(\Gamma')$
be the natural finite morphism. 

\begin{prop}
\label{prop:boundary-no-qref}
Fix a rational boundary component associated with $P$ and a ray
$\rho\in\Sigma_P$. Let $D_\rho\subset Y_P(\Gamma)$ and
$D'_\rho\subset Y_P(\Gamma')$ be the corresponding boundary divisors,
and put
\[
  m_\rho
  :=
  \left[
    \mathbb R\rho\cap\Upsilon_P(\Gamma')
    :
    \mathbb R\rho\cap\Upsilon_P(\Gamma)
  \right].
\]
Then the following assertions hold.

\begin{enumerate}
\item
We have $\varpi_P^*D'_\rho=m_\rho D_\rho$.
In particular, $\varpi_P$ is ramified along $D_\rho$ if and only if
$m_\rho>1$. In that case, its inertia group is cyclic of order
$m_\rho$.
\item
For every point $\widetilde{x}\in Y_P(\Gamma')$, the stabilizer of $\widetilde{x}$ does not act as a quasi-reflection.
\end{enumerate}
\end{prop}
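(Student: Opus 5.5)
The plan is to prove both parts by working in a local toric chart. We take a regular cone $\sigma\in\Sigma_P$ having $\rho$ as a ray and write $U_\sigma=\Spec\bbC[\sigma^\vee\cap M_P]$ for the lattice $\Upsilon_P(\Gamma)$. For $\Upsilon_P(\Gamma')$ the same cone gives $U'_\sigma=\Spec\bbC[\sigma^\vee\cap M'_P]$, where $M'_P\subset M_P$ is the dual of the finer cocharacter lattice $\Upsilon_P(\Gamma')\supset\Upsilon_P(\Gamma)$.

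\textbf{Part (1).} Over a point of $\mathcal B_P$, the map $\varpi_P$ is the toric quotient $U_\sigma\to U'_\sigma$ by the finite group $\Upsilon_P(\Gamma')/\Upsilon_P(\Gamma)$. Along $D_\rho$ we use the divisorial valuation $v_\rho$, given by pairing with the primitive generator of $\rho$. Let $u_\rho$ and $u'_\rho$ be the primitive generators of $\rho$ in $\Upsilon_P(\Gamma)$ and in $\Upsilon_P(\Gamma')$. Then $u_\rho=m_\rho u'_\rho$. A local equation of $D'_\rho$ is a character $\chi'\in M'_P$ with $\langle \chi',u'_\rho\rangle=1$, so its pullback vanishes to order $\langle\chi',u_\rho\rangle=m_\rho$ along $D_\rho$. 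Regularity of $\sigma$ for both lattices, which we arranged by the common refinement, lets us choose such characters as part of a basis. This gives $\varpi_P^*D'_\rho=m_\rho D_\rho$.

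The inertia group along $D_\rho$ is the subgroup of $\Upsilon_P(\Gamma')/\Upsilon_P(\Gamma)$ fixing $D_\rho$ pointwise. Since the torus acts on the generic point of $D_\rho$ through $T_P/\bbC^\times u_\rho$, this subgroup is the image of $\bbR\rho\cap\Upsilon_P(\Gamma')$. That image is cyclic of order $m_\rho$, generated by $u'_\rho$.

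\textbf{Part (2).} Take $\widetilde x\in U'_\sigma\times\mathcal V_P\times\Omega$ and $g$ in its stabilizer. Suppose $g$ acts as a quasi-reflection on the tangent space $T=T_{U_\sigma}\oplus T_{\mathcal V_P}\oplus T_\Omega$. Because the fan is inversion-free, $g$ fixes every ray of $\sigma$. On the coordinates transverse to the boundary divisors, $g$ then acts by roots of unity determined by its image in $\Gamma_P/\Upsilon_P$. On the torus directions it acts by translation, so these directions contribute eigenvalue $1$.

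I would then split $g$ according to its image in the Levi quotient $\GL_E(I)\times G_h(P)$:
\begin{itemize}
\item If $g$ acts nontrivially on $F_P\cong\mathcal D_{p-s,q-s}$, its action on $T_\Omega\oplus T_{\mathcal V_P}$ is a tensor-type action. Here $T_{\mathcal V_P}=\Mat_{p-s,s}\oplus\Mat_{s,q-s}$ carries the action $(a,d,c)\mapsto a X c^{-1}\oplus cYd^{-1}$. The argument of Proposition \ref{prop:no-qref} then forces at least two eigenvalues different from $1$, using $s\ge1$ and $\max(p-s,q-s)\ge1$.
\item If $g$ acts trivially on $F_P$, then $g$ acts on $I$ through $c$, and the eigenvalues on $T_{\mathcal V_P}$ are $\alpha_i\gamma_k^{-1}$ and $\gamma_k\delta_j^{-1}$. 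A single nontrivial eigenvalue would again contradict the two-index argument.
\item The remaining possibility is that $g$ acts trivially on $T_{\mathcal V_P}\oplus T_\Omega$, so all of $g$'s nontriviality sits on the toric normal direction. Then $g$ is unipotent modulo scalars in $Z_L$, and a power lies in $U_P$ times $Z_L$. Since $Z_L\subset\Gamma'$ acts trivially on $\mathcal D_{p,q}$, and the part in $\Upsilon_P(\Gamma')$ acts by a torus translation, $g$ acts trivially on $U'_\sigma$. This is the point where passing to $\Gamma'$ is used.
\end{itemize}

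I expect the main obstacle to be this last case: checking that an element of $\Gamma'_P$ inducing trivial action on the base directions acts on the toric chart by an element of $T_P(\Gamma')$. I would first try to show this by writing $g=nl$ with $n\in N_P$ and $l$ in the Levi. The triviality on $\mathcal V_P\times\Omega$ should force $l$ to be central, hence scalar in $\OO_E^\times$, and $n$ to lie in $U_P$. With the lattice $\Upsilon_P(\Gamma')$ absorbing such $n$, the action is trivial.
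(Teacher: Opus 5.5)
Your part (1) is essentially the paper's argument: the inertia group along $D_\rho$ is $(\mathbb R\rho\cap\Upsilon_P(\Gamma'))/(\mathbb R\rho\cap\Upsilon_P(\Gamma))$, and the normal toric parameters satisfy $q'_\rho=q_\rho^{m_\rho}$. For part (2) the paper argues globally. The fixed locus of a quasi-reflection is a divisor through $\widetilde x$. It cannot be a boundary divisor, by Ma's theorem that $Y_P(\Gamma')\to\overline{\Gamma}\backslash\mathcal D_{p,q}^{\Sigma}$ has no boundary ramification divisor. Otherwise its general point is interior, which contradicts Corollary \ref{cor:branch-codimge2}. Your direct eigenvalue computation is a legitimate alternative, but as written it rests on a false step. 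You claim that $g$ acts on the torus directions by translation, so that they contribute only the eigenvalue $1$. For $s\geq2$ this fails. The element $g$ acts on $T_P$ by an affine map whose linear part is $X\mapsto a_\gamma Xa_\gamma^*$ on $\Lie(U_P)_{\bbC}\cong\Mat_s(\bbC)$. Its eigenvalues are $c_kc_l^{-1}$, where $c_1,\dots,c_s$ are the eigenvalues of $a_\gamma$ (your $c$), and these are nontrivial whenever $a_\gamma$ is not scalar.

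This breaks your case analysis exactly for the maximal isotropic components $s=p$ with $q\in\{p,p+1\}$.
\begin{itemize}
\item If $q=p+1$, then $F_P$ is a point and $T_{\mathcal V_P}=\Mat_{s,1}$ carries the eigenvalues $c_k\overline{\delta_1}$, where $\delta_1$ is the eigenvalue of $d_\gamma\in\U(1)$. For $a_\gamma=\diag(-1,1,\dots,1)$ and $d_\gamma=1$, exactly one of them is nontrivial. So your second case gives no contradiction, since the two-index argument needs two indices on each side.
\item If $p=q=s$, then $\mathcal V_P=0$ and $F_P$ is a point. Your third case then imposes no condition and does not force the Levi part to be scalar, and the nontriviality of $g$ may lie in the torus-orbit directions rather than the normal ones.
\end{itemize}
The repair is the mechanism of Lemma \ref{lem:boundary-interior-comparison}. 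The element $g$ preserves the orbit $O(\tau)\ni u_0$ and, by inversion-freeness, is trivial on $\mathrm{Span}_{\bbC}(\tau)$. Hence every nontrivial eigenvalue of $X\mapsto a_\gamma Xa_\gamma^*$ survives on $T_{u_0}O(\tau)\cong\Lie(U_P)_{\bbC}/\mathrm{Span}_{\bbC}(\tau)$. By the block-triangular form \eqref{equation_action_d_gamma}, all nontrivial eigenvalues of the Levi element $\delta$ at the interior point $y$ then occur on $T_{\widetilde x}$. By Proposition \ref{prop:no-qref} there are either at least two of them or none. In the latter case the Levi part is a scalar in $\OO_E^\times$. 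Your final step ($z^{-1}g\in\Upsilon_P(\Gamma')$, which is where $Z_L\subset\Gamma'$ enters) then shows that $g$ acts trivially. With this correction your local argument becomes a self-contained substitute for the appeal to Ma's theorem.
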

\begin{proof}
The statements follow essentially from \cite{ma2022boundary}*{Theorem 1} and the proof therein. 
For completeness, we write the proof below.
Let $\pi:\U(p,q)\to\mathrm{PU}(p,q)$ be the natural projection and
$\overline{\Gamma}:=\pi(\Gamma)=\pi(\Gamma')$.
As explained above, after identifying $U_P$ with its image under
$\pi$, the translation lattice of the arithmetic group
$\overline{\Gamma}$ is precisely $\Upsilon_P(\Gamma')$.

We first prove (1). Since
$\Upsilon_P(\Gamma)\subset\Upsilon_P(\Gamma')$ has finite index, along the ray $\rho$, the inertia group of the morphism
\[
\varpi_P:
(\Upsilon_P(\Gamma)\backslash\mathcal D_{p,q})_{\Sigma_P}
\longrightarrow
(\Upsilon_P(\Gamma')\backslash\mathcal D_{p,q})_{\Sigma_P}
\]
is $(\mathbb R\rho\cap\Upsilon_P(\Gamma'))/(\mathbb R\rho\cap\Upsilon_P(\Gamma))$;
see also \cite{ma2022boundary}*{Lemma 2}.
This group is cyclic of order $m_\rho$.
If $q_\rho$ and $q'_\rho$ are local toric parameters normal to
$D_\rho$ and $D'_\rho$, respectively, then, after multiplication
by a unit, $q'_\rho=q_\rho^{m_\rho}$.
Hence $\varpi_P^*D'_\rho=m_\rho D_\rho$.

We next prove (2).
Consider the natural map $Y_P(\Gamma')
\to
\overline{\Gamma}\backslash\mathcal D_{p,q}^{\Sigma}$.
 By \cite{ma2022boundary}*{Theorem 1}, this
morphism has no boundary divisor as a ramification divisor.
Suppose that an element of the stabilizer of
$\widetilde{x}$ acts as a quasi-reflection on
$T_{\widetilde{x}}Y_P(\Gamma')$.
Its fixed locus then contains a divisor through
$\widetilde{x}$.
The divisor is not contained in the boundary.
The general point of the divisor belongs to
the interior. 
The quotient map would then be ramified in codimension
one in the interior, contradicting Corollary
\ref{cor:branch-codimge2}.
Thus no nontrivial element of the stabilizer acts as a
quasi-reflection.
\end{proof}

As Gritsenko--Hulek--Sankaran \cite{Gritsenko2007kodaira} and Behrens  \cite{behrens2012singularities} implicitly assumed, we use the following lemma to reduce the problem to $\Gamma'$. 
\begin{lem}
\label{lem:irregular singularity to non-irregular}
    If $\overline{\mathrm{Sh}^0(\Gamma')}^\Sigma$ has at worst canonical singularities, then $\overline{\mathrm{Sh}^0(\Gamma)}^\Sigma$ does also.
\end{lem}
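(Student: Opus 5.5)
We compare the two compactifications through the finite morphism $\overline{\mathrm{Sh}^0(\Gamma)}^\Sigma \to \overline{\mathrm{Sh}^0(\Gamma')}^\Sigma$. Since $Z_L=\OO_E^\times\cdot\id_L$ acts trivially on $\mathcal D_{p,q}$, the interiors are isomorphic: $\mathrm{Sh}^0(\Gamma)=\mathrm{Sh}^0(\Gamma')$. The only difference is at the boundary. There the translation lattices $\Upsilon_P(\Gamma)\subset\Upsilon_P(\Gamma')$ may differ, because an element $\zeta\cdot u$ with $\zeta\in\OO_E^\times$ and $u$ unipotent can lie in $\Gamma'$ while $u\notin\Gamma$. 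We argue locally at a boundary point $x$ and use the chart description $(U_\sigma\times\mathcal V_P\times\Omega)/G_x$, with a common regular inversion-free refinement $\Sigma_P$ for both lattices, as arranged before Proposition \ref{prop:boundary-no-qref}.

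\textbf{Step 1: reduce to a finite group action on one chart.} Let $\widetilde x\in Y_P(\Gamma)$ be a lift of $x$ and $\widetilde x'=\varpi_P(\widetilde x)\in Y_P(\Gamma')$. Let $H$ be the stabilizer of $\widetilde x$ in $\Gamma_P/\Upsilon_P(\Gamma)$, so the germ of $\overline{\mathrm{Sh}^0(\Gamma)}^\Sigma$ at $x$ is $(Y_P(\Gamma),\widetilde x)/H$. The map $\varpi_P$ is the quotient of $Y_P(\Gamma)$ by the finite abelian group $\Upsilon_P(\Gamma')/\Upsilon_P(\Gamma)$. By Proposition \ref{prop:boundary-no-qref}(1), its local inertia at $\widetilde x$ is generated by the cyclic inertia groups $\mu_{m_\rho}$ along the toric divisors $D_\rho$ through $\widetilde x$.

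On the smooth chart $U_\sigma\times\mathcal V_P\times\Omega$ with monomial coordinates, the tangent space splits as $T=T_{\mathrm{tor}}\oplus T_{\mathrm{rest}}$. In the coordinates $q_\rho$, the inertia group acts diagonally by roots of unity on the $q_\rho$-directions and trivially on $T_{\mathrm{rest}}$. Hence the germ $(Y_P(\Gamma),\widetilde x)$ is the germ $(Y_P(\Gamma'),\widetilde x')$ after the substitution $q'_\rho=q_\rho^{m_\rho}$.

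\textbf{Step 2: compare ages.} Let $g$ be a nontrivial element of $H$, acting on $T_{\widetilde x}$ with exponents $\theta_\rho$ on the $q_\rho$-directions and exponents $\eta_k$ on $T_{\mathrm{rest}}$. Its image $g'$ in the stabilizer of $\widetilde x'$ acts with exponents $m_\rho\theta_\rho$ on the $q'_\rho$-directions and the same $\eta_k$. By inversion-freeness, $g$ fixes each ray of $\sigma$, so it preserves each coordinate hyperplane $q_\rho=0$ and acts on $q_\rho$ by a character. The key inequality is
\[
\{\theta\}\ \ge\ \tfrac{1}{m}\{m\theta\}\quad\text{for }m\ge1,
\]
but this goes the wrong way for a termwise comparison, so it cannot be used directly. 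Instead we use the characterization of canonicity through discrepancies, which avoids comparing ages term by term.

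\textbf{Step 3: conclude via Kawamata log ramification.} Write $f\colon X=\overline{\mathrm{Sh}^0(\Gamma)}^\Sigma\to X'=\overline{\mathrm{Sh}^0(\Gamma')}^\Sigma$. This map is finite and is ramified in codimension one only along boundary divisors, with ramification indices $m_\rho$ (Proposition \ref{prop:boundary-no-qref}(1) and Corollary \ref{cor:branch-codimge2}). Therefore
\[
K_X+\Delta=f^*(K_{X'}+\Delta'),
\]
where $\Delta$ and $\Delta'$ are the reduced boundaries. If $X'$ is canonical, then since $\Delta'$ is a reduced divisor which is toroidal, hence snc on the smooth charts, and $Y_P(\Gamma')/G$ is canonical, the pair $(X',\Delta')$ is log canonical, with discrepancy $\ge 0$ for exceptional divisors not centered in $\Delta'$. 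Under a finite morphism, discrepancies transform by $a(E,X,\Delta)+1=r\,(a(E',X',\Delta')+1)$, where $r$ is the ramification index (Koll\'ar--Mori, Prop.\ 5.20).

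This gives canonicity of $X$ away from the boundary, and plt/lc statements along it. To upgrade to canonicity of $X$ itself at boundary points, we return to Step 1 and check the Reid--Shepherd-Barron--Tai sums directly. The stabilizer of $\widetilde x$ in $\Gamma$ embeds into that in $\Gamma'$ modulo the inertia. Taking for each $g$ a lift $g'$, which is a product of $g$ with an element of the inertia group acting only on the $q_\rho$-directions, one chooses that inertia element to make the $q'$-exponents of $g'$ small. Then
\[
\mathrm{RT}(g)\ \ge\ \mathrm{RT}(g'')
\]
for a suitable element $g''$ of the $\Gamma'$-stabilizer acting on the same chart. We expect this matching of exponents on the toric directions to be the main obstacle. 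The fallback is the observation that elements of $Z_L$ act on $q_\rho$ by $\zeta\bar\zeta=1$, which would make $m_\rho=1$ after a suitable choice of $\Sigma$.
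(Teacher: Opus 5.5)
There is a genuine gap. It comes from transferring Proposition~\ref{prop:boundary-no-qref}(1) from the chart-level map $\varpi_P\colon Y_P(\Gamma)\to Y_P(\Gamma')$ to the global map $f$. The map $f$ is not ramified along the boundary; it is an isomorphism.

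To see this, take $u\in\Upsilon_P(\Gamma')=\Gamma'\cap U_P$ and write $u=\gamma z$ with $\gamma\in\Gamma$ and $z\in Z_L$. Then $\gamma=uz^{-1}\in\Gamma_P$ acts on $\mathcal D_{p,q}$, hence on $Y_P(\Gamma)$, exactly as translation by $u$. So the whole deck group $\Upsilon_P(\Gamma')/\Upsilon_P(\Gamma)$ of $\varpi_P$ is induced by elements of $\Gamma_P/\Upsilon_P(\Gamma)$. Consequently the quotient of $Y_P(\Gamma)$ by $\Gamma_P/\Upsilon_P(\Gamma)$ factors through $Y_P(\Gamma')$. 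Since $\Gamma'_P=\Gamma_P Z_L$ and $Z_L$ acts trivially, what remains is the quotient of $Y_P(\Gamma')$ by $\Gamma'_P/\Upsilon_P(\Gamma')$. These local identifications glue, so the two compactifications are the same variety and the lemma is immediate; this is the paper's argument.

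You could have read this off from your own setup. The map $f$ is finite, it is the identity on the interiors (hence birational), and the target is normal. By Zariski's Main Theorem $f$ is then an isomorphism, so it cannot have ramification index $m_\rho>1$ along any divisor.

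As a result, Steps 2 and 3 chase a discrepancy that does not exist, and in any case they do not reach the conclusion. Step 2 is abandoned. Step 3 only yields log canonical/plt information from the Koll\'ar--Mori formula. The decisive age comparison is left as an ``expected obstacle.''

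That comparison cannot work on $T_{\widetilde x}Y_P(\Gamma)$ as stated. Whenever $m_\rho>1$ for a divisor $D_\rho$ through $\widetilde x$, the group $H$ contains the elements $uz^{-1}$ above, and these act as quasi-reflections there. So the Reid--Shepherd-Barron--Tai criterion does not apply to $H$ on that chart; this is exactly why the paper passes to $\Gamma'$.

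The fallback also fails, for two reasons. First, $Z_L$ itself acts trivially on everything, so it is not the source of $m_\rho>1$; the elements $uz^{-1}$ are. Second, $m_\rho$ cannot be removed by choosing $\Sigma$: for $s=1$ the cone $\overline{C}_P$ is a single ray, and $m_\rho=[\Upsilon_P(\Gamma'):\Upsilon_P(\Gamma)]$ for every fan.
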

\begin{proof}
        The claim follows by the same argument as in \cite{ma2025irregular}*{Proposition 7.4}.
        In fact, for the same rational fan, the two toroidal
quotients are naturally isomorphic. Locally, the finite map
$Y_P(\Gamma)\to Y_P(\Gamma')$ is the quotient by
$\Upsilon_P(\Gamma')/\Upsilon_P(\Gamma)$.
Since $\Gamma'=\Gamma Z_L$ and $Z_L$ acts trivially on the
domain, these local identifications agree
on overlaps and therefore glue.
\end{proof}

By this lemma, in the rest of this section, we may assume that $\Gamma$ contains $Z_L$ for the analysis of the singularities along the boundary.
Henceforth, we assume that $Z_L\subset\Gamma$.

\subsection{Boundary actions}
Let $p,q$ be positive integers and let
$1\leq s\leq\min\{p,q\}$. An $s$-dimensional totally isotropic
$E$-subspace $I\subset L_E$ determines a rational boundary component.
At a toroidal boundary point $\widetilde x=(u_0,v_0,\omega_0)$ before taking quotients, we have
\[
T_{\widetilde x}(U_\sigma\times\mathcal V_P\times\Omega)
\cong T_{u_0}U_\sigma
\oplus\Mat_{p-s,s}(\mathbb C)
\oplus\Mat_{s,q-s}(\mathbb C)
\oplus T_{\omega_0}\mathcal D_{p-s,q-s}.
\]

Choose a weak Witt decomposition
$L\otimes_{\mathcal O_E}E=I\oplus M\oplus I^\vee$
over $E$, and then choose complex bases after fixing
$E\hookrightarrow\mathbb C$, adapted to the isotropic subspace defining the boundary component, with $\dim I=s$ and with $M$ of signature $(p-s,q-s)$ with the corresponding hermitian form $h$. Choosing the Levi subgroup associated with the above Witt
decomposition $L_P\cong \GL_E(I)\times \U(M)$, for an element $\gamma$ of the stabilizer,
let $(a_\gamma,d_\gamma)\in \GL_E(I)\times\U(M)$
denote its Levi component.  
Under a suitable choice of a basis, $\gamma$ is given as a matrix
\begin{equation}\label{description_gamma}
  \gamma =
  \begin{pmatrix}
    a_{\gamma} & -b_{\gamma}^* h d_\gamma & c_{\gamma} (a_{\gamma}^*)^{-1} - 2^{-1}b_\gamma^* h b_\gamma (a_\gamma^*)^{-1}\\
    0 & d_{\gamma} & b_\gamma (a_\gamma^{*})^{-1} \\
    0 & 0 & (a_{\gamma}^*)^{-1}
  \end{pmatrix}\in \U(L)(\bbR),
\end{equation}
where
$a_\gamma\in\GL_s(\bbC)$,
$b_\gamma\in\Mat_{p+q-2s,s}(\bbC)$,
$c_\gamma\in\Mat_s(\bbC)$ with $c_\gamma^*=-c_\gamma$,
and $d_\gamma^*h d_\gamma=h$.
The entries $b_\gamma,c_\gamma$ may affect the eigenvalues in
directions normal to the toric boundary, since translations
in horospherical coordinates become multiplicative factors
in toric coordinates.
Their effects are included in the differential
$D^{\mathrm{tor}}_{\gamma,\widetilde{x}}$ below.
To obtain a lower bound for the Reid--Shepherd-Barron--Tai sum,
we discard only the nonnegative contributions of these normal
directions, as justified in
Lemma \ref{lem:boundary-interior-comparison}.

Let $\gamma$ be an element of the stabilizer of $\tilde{x}$.
Since the local stabilizer in $\Gamma_P/\Upsilon_P$ is finite,
some positive power of $\gamma$ belongs to $\Upsilon_P$.
Consequently, its Levi components $a_\gamma$ and $d_\gamma$
have finite order.
Since $\gamma$ fixes $\omega_0$, we may choose complex bases
of $M$ such that $\omega_0=0$,
$h=\operatorname{diag}(1_{p-s},-1_{q-s})$, and
\[
d_\gamma=\operatorname{diag}(d_{\gamma,1},d_{\gamma,2}),
\qquad
d_{\gamma,1}\in\U(p-s),\quad d_{\gamma,2}\in\U(q-s).
\]
Write $M=M^+\oplus M^-$ for the corresponding positive
and negative subspaces.
In the Grassmannian realization
\eqref{def_realization_D_p,q}, use the holomorphic coordinates
given by the space spanned by the columns of
\[
\begin{pmatrix}
Z & v_-\\
v_+ & \omega\\
0 & 1_{q-s}\\
1_s & 0
\end{pmatrix}
\]
relative to $I\oplus M^+\oplus M^-\oplus I^\vee$.
Here $Z\in\Mat_s(\bbC)$,
$v_+\in\Mat_{p-s,s}(\bbC)$,
$v_-\in\Mat_{s,q-s}(\bbC)$, and
$\omega\in\Mat_{p-s,q-s}(\bbC)$.
Write
$b_\gamma=\binom{b_{\gamma,+}}{b_{\gamma,-}}$.
The following transformation formulas are obtained directly
from \eqref{description_gamma} in the Grassmannian coordinates
of \eqref{def_realization_D_p,q}.
Indeed, applying $\gamma$ to the left, we have
\[
\gamma
\begin{pmatrix}
Z & v_-\\
v_+ & \omega\\
0 & 1_{q-s}\\
1_s & 0
\end{pmatrix}
\begin{pmatrix}
a_\gamma^* & 0\\
-d_{\gamma,2}^*b_{\gamma,-} & d_{\gamma,2}^*
\end{pmatrix}
=
\begin{pmatrix}
Z' & v_-'\\
v_+' & \omega'\\
0 & 1_{q-s}\\
1_s & 0
\end{pmatrix}.
\]
The first two row blocks give
\[
\begin{aligned}
\omega'
 &=d_{\gamma,1}\omega d_{\gamma,2}^*,\\
v_+'
 &=d_{\gamma,1}v_+a_\gamma^*
   +b_{\gamma,+}-\omega'b_{\gamma,-},\\
v_-'
 &=a_\gamma v_-d_{\gamma,2}^*
   -b_{\gamma,+}^*\omega'+b_{\gamma,-}^*.
\end{aligned}
\]
The coordinate is transformed by
\[
Z'
=
a_\gamma Za_\gamma^*
-b_{\gamma,+}^*d_{\gamma,1}v_+a_\gamma^*
+c_\gamma-\frac12 b_\gamma^*h b_\gamma
-v_-'b_{\gamma,-}.
\]
In particular, its linear part in $Z$ is
$Z\mapsto a_\gamma Za_\gamma^*$.
For a unipotent element, at $\omega=0$ the
coordinates transform as $(v_+,v_-)\mapsto
(v_++b_+,\,v_-+b_-^*)$.
Thus the real unipotent parameter
$b=\binom{b_+}{b_-}$ corresponds to the holomorphic
coordinates $(b_+,b_-^*)$ at $\omega=0$.

After the relative torus embedding,
the differential of $\gamma$ at $\widetilde{x}$
therefore has the block form
\begin{equation}\label{equation_action_d_gamma}
d\gamma_{\widetilde x}=
\begin{pmatrix}
D^{\mathrm{tor}}_{\gamma,\widetilde x}&*&*&*\\
0&(v_+\mapsto d_{\gamma,1}v_+a_\gamma^*)&0&*\\
0&0&(v_-\mapsto a_\gamma v_-d_{\gamma,2}^*)&*\\
0&0&0&(\xi\mapsto d_{\gamma,1}\xi d_{\gamma,2}^*)
\end{pmatrix}
\end{equation}
where $D^{\mathrm{tor}}_{\gamma,\tilde{x}}\colon
T_{u_0}U_\sigma\to T_{u_0}U_\sigma$
denotes the differential of the induced action.
By $\mathrm{RT}_{\Mat_s(\bbC)}(a_\gamma)$, we mean the Reid--Shepherd-Barron--Tai sum of the action $X \mapsto a_\gamma X a_\gamma^*$ for $X \in \Mat_s(\bbC)$.

\begin{lem}
\label{lem:boundary-interior-comparison}
    With the notation above, let $\widetilde{x}=(u_0,v_0,\omega_0)
\in U_\sigma\times\mathcal V_P\times\Omega$
be a lift of a boundary point and let $\gamma$ be a stabilizer with the Levi part $\delta$. Then there exists a point $y$ in the interior such that $y$ is fixed by $\delta$.
The Reid--Shepherd-Barron-Tai sum can be evaluated as
\[
\mathrm{RT}_{T_{\widetilde{x}}
(U_\sigma\times\mathcal V_P\times\Omega)}(\gamma)
\geq
\mathrm{RT}_{T_y\calD_{p,q}}(\delta).
\]
If the right-hand side is zero, then $\gamma$ acts trivially on
$T_{\widetilde{x}}
(U_\sigma\times\mathcal V_P\times\Omega)$.
\end{lem}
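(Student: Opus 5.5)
The plan is to construct $y$ from the Levi part directly. Then I would compare the block-triangular form \eqref{equation_action_d_gamma} term by term with the eigenvalue description of $\delta$ on $T_y\calD_{p,q}$ from Subsection \ref{subsec:unitary-action}. In the chosen Witt basis, $\delta=\diag(a_\gamma,d_\gamma,(a_\gamma^*)^{-1})$. Here $a_\gamma$ and $d_\gamma=\diag(d_{\gamma,1},d_{\gamma,2})$ have finite order, and $\omega_0=0$.

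\emph{Step 1: the fixed point.} Since $a_\gamma$ has finite order, I would change the basis of $I$ by some $g\in\GL_s(\bbC)$ and the basis of $I^\vee$ by $(g^*)^{-1}$. This keeps the pairing between $I$ and $I^\vee$ standard and makes $a_\gamma$ unitary; then $(a_\gamma^*)^{-1}=a_\gamma$. With $e_i,f_i$ the dual bases, the vectors $e_i+f_i$ and $e_i-f_i$ span complementary definite subspaces of $I\oplus I^\vee$, one positive and one negative, with the same normalization as in \eqref{def_realization_D_p,q}. Put
\[
y:=M^-\oplus\operatorname{span}\{e_i-f_i\}
\]
(or the sign choice that is negative definite). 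This $y$ is negative definite of dimension $q$, so $y\in\calD_{p,q}$, and it is $\delta$-stable because $a_\gamma$ acts diagonally on $I\oplus I^\vee$ and $d_\gamma$ preserves $M^\pm$. The exponent multisets are $X^+_\delta=X_{a}\sqcup X_{d_1}$ and $X^-_\delta=X_a\sqcup X_{d_2}$. Hence, by \eqref{eq:RT-unitary},
\[
\mathrm{RT}_{T_y\calD_{p,q}}(\delta)=S(X_a,X_a)+S(X_{d_1},X_a)+S(X_a,X_{d_2})+S(X_{d_1},X_{d_2}).
\]

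\emph{Step 2: matching the blocks.} The differential \eqref{equation_action_d_gamma} is block upper triangular, so its eigenvalues are those of the diagonal blocks. The maps $v_+\mapsto d_{\gamma,1}v_+a_\gamma^*$, $v_-\mapsto a_\gamma v_-d_{\gamma,2}^*$ and $\xi\mapsto d_{\gamma,1}\xi d_{\gamma,2}^*$ have exponents $x-y$ running over $X_{d_1}\times X_a$, $X_a\times X_{d_2}$ and $X_{d_1}\times X_{d_2}$. This recovers the last three terms exactly. For the toric block I would filter $T_{u_0}U_\sigma$ by the tangent space of the torus orbit $O(\sigma)\ni u_0$ and the normal directions indexed by the rays of $\sigma$.

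Torus translations commute with the torus action, so $\gamma$ acts on the orbit directions $\Lie(U_P)_\bbC/\operatorname{span}_\bbC\sigma$ through the linear part $Z\mapsto a_\gamma Za_\gamma^*$. Since the fan is inversion-free, $\gamma$ fixes $\sigma$ pointwise, so this linear action is trivial on $\operatorname{span}\sigma$, and those directions contribute exponent $0$ to $S(X_a,X_a)$ anyway. Thus the orbit part contributes exactly $S(X_a,X_a)$. The normal directions contribute eigenvalues that are roots of unity, possibly twisted by the translation parts $b_\gamma,c_\gamma$, and hence a nonnegative amount, which we discard. Summing the blocks gives the inequality.

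\emph{Step 3: the equality case, which I expect to be the main obstacle.} If $\mathrm{RT}_{T_y}(\delta)=0$, then $\delta$ acts trivially on $T_y\calD_{p,q}$. Then $\delta$ is a scalar unit; since we assume $Z_L\subset\Gamma$, we may multiply $\gamma$ by it and reduce to $\gamma\in N_P$ with trivial Levi part. All diagonal blocks of $d\gamma_{\widetilde x}$ except the toric normal directions are then the identity. Moreover $\gamma$ has finite order on the chart, so $d\gamma_{\widetilde x}$ is diagonalizable and trivial on those blocks. Fixing $v_0$ forces $b_\gamma=0$ by the transformation formulas, so $\gamma\in U_P$. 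It remains to show that $\gamma$ acts trivially in the normal toric directions. An element of $U_P\cap\Gamma$ acts as a torsion translation of $T_P$ that fixes a point of $O(\sigma)$. I would use, as in the proof of Proposition \ref{prop:boundary-no-qref}, that $\Upsilon_P(\Gamma')$ is the full translation lattice of $\overline\Gamma$. This shows that such a translation lies in $\Upsilon_P$, so it is trivial in $\Gamma_P/\Upsilon_P$ and $\gamma$ acts trivially on the whole tangent space. The delicate point is the bookkeeping of the translation parts against the lattice, and I would lean on \cite{ma2022boundary} there.
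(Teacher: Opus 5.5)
Your proposal is correct and follows the paper's proof essentially step for step. It uses the same fixed point $y=J^-\oplus M^-$ built from $e_i\pm f_i$; your explicit unitarization of $a_\gamma$ makes precise a point the paper passes over. It uses the same block-by-block comparison, with inversion-freeness identifying the orbit directions with $\Mat_s(\bbC)/\mathrm{Span}_\bbC(\tau)$ (take the orbit $O(\tau)$ actually containing $u_0$ rather than $O(\sigma)$). And it uses the same reduction via $Z_L\subset\Gamma$ in the equality case.

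The last step is less delicate than you fear. Once $z^{-1}\gamma$ has trivial Levi part and $b_\gamma=0$, it lies in $\Gamma\cap U_P=\Upsilon_P$ by definition. The chart action factors through $\Gamma_P/\Upsilon_P$, so $z^{-1}\gamma$ acts trivially, and no torsion-translation bookkeeping or appeal to \cite{ma2022boundary} is needed.
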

\begin{proof}
Write $\delta=
\diag\bigl(a_\gamma,d_\gamma,(a_\gamma^*)^{-1}\bigr)$.
Taking a basis of $I \otimes_E \bbC$ and its dual basis of $I \otimes_E \bbC$, $\delta$ acts on $I \otimes_E \bbC$ and $I^\vee \otimes \bbC$ by $a_\gamma$.
The subspaces
\[
J^\pm:=\{(u,\pm u)\mid u\in\bbC^s\}
\subset (I\oplus I^\vee)\otimes_E\bbC
\]
are therefore $\delta$-invariant and are positive and negative
definite, respectively.
Using the decomposition $M=M^+\oplus M^-$ chosen above, put
\[
V_y^+:=J^+\oplus M^+,\qquad
V_y^-:=J^-\oplus M^-.
\]
Then $y:=V_y^-\in\mathcal D_{p,q}$ is fixed by $\delta$.
The decomposition $T_y\mathcal D_{p,q}
\cong\operatorname{Hom}_{\bbC}(V_y^-,V_y^+)$
shows that its four diagonal actions are
\[
X\mapsto a_\gamma Xa_\gamma^*,\qquad
v_+\mapsto d_{\gamma,1}v_+a_\gamma^*,\qquad
v_-\mapsto a_\gamma v_-d_{\gamma,2}^*,\qquad
\xi\mapsto d_{\gamma,1}\xi d_{\gamma,2}^*.
\]
Consequently,
\[
\mathrm{RT}_{T_y\mathcal D_{p,q}}(\delta)
=
\mathrm{RT}_{\Mat_s(\bbC)}(a_\gamma)
+\mathrm{RT}_{T_{v_0}\mathcal V_P}(\gamma)
+\mathrm{RT}_{T_{\omega_0}F_P}(\gamma).
\]

We claim $\mathrm{RT}_{T_{u_0}U_\sigma}(\gamma) \geq \mathrm{RT}_{\Mat_s(\bbC)}(a_\gamma)$.
If the claim holds, we have
\begin{align*}
\mathrm{RT}_{T_y\mathcal D_{p,q}}(\delta)
&=
\mathrm{RT}_{\Mat_s(\bbC)}(a_\gamma)
+\mathrm{RT}_{T_{v_0}\mathcal V_P}(\gamma)
+\mathrm{RT}_{T_{\omega_0}F_P}(\gamma)\\
&\leq 
\mathrm{RT}_{T_{u_0}U_\sigma}(\gamma)
+\mathrm{RT}_{T_{v_0}\mathcal V_P}(\gamma)
+\mathrm{RT}_{T_{\omega_0}F_P}(\gamma)
= 
\mathrm{RT}(\gamma).
\end{align*}
Let $O(\tau)\subset U_\sigma$ be the torus orbit containing $u_0$,
where $\tau\preceq\sigma$ is the corresponding face. As usual,
\[
T_{u_0}O(\tau)
\cong
\Lie(U_P)_{\bbC}/\mathrm{Span}_{\bbC}(\tau).
\]
Since the fan is chosen as inversion-free, $\gamma$
acts trivially on $\mathrm{Span}_{\bbC}(\tau)$. Hence every nontrivial
eigenvalue of the action
$X\mapsto a_\gamma Xa_\gamma^*$ on
$\Lie(U_P)_{\bbC}\cong\Mat_s(\bbC)$ also contributes on
$T_{u_0}O(\tau)$. The remaining eigenvalues of $d_{\gamma_{\widetilde{x}}}$
come from directions normal to $O(\tau)$ and have
nonnegative contribution to the Reid--Shepherd-Barron--Tai sum. This proves the required
inequality. 

Suppose that $\mathrm{RT}_{T_y\calD_{p,q}}(\delta) = 0$.
Then, $a_\gamma Xa_\gamma^*=X$ for every $X\in\Mat_s(\bbC)$. 
Taking
$X=1_s$ gives $a_\gamma^*=a_\gamma^{-1}$, and hence
$a_\gamma X=Xa_\gamma$ for every $X$, so
$a_\gamma=\lambda \cdot 1_s$ for some $|\lambda|=1$.
Similarly, we have
$d_{\gamma,1}=\lambda \cdot 1_{p-s}$ and
$d_{\gamma,2}=\lambda \cdot 1_{q-s}$.
Thus the Levi component of $\gamma$ is scalar.
Since $\gamma\in\U(L)$, we have $\lambda\in\OO_E^\times$, so $z:=\lambda \cdot \id_L\in Z_L\subset\Gamma$. 
By replacing $\gamma$ by $z^{-1}\gamma$, we have $\gamma\in N_P$. 
Its image in $V_P=N_P/U_P$ acts by translation on
the $\mathcal V_P$-coordinate; since it fixes $\widetilde{x}$, this translation is zero. Therefore $\gamma\in U_P\cap\Gamma=\Upsilon_P$, and in particular $d\gamma_{\widetilde{x}}=\id$.
\end{proof}

We now prove the canonicity of the singularities on the boundaries.

\begin{thm}\label{thm:nonterminal}
Let $\Sigma$ be a regular inversion-free $\Gamma$-admissible fan as above.
Assume $p\geq w_E$ and $(p,q)\neq(2,2), (2,3)$.
Then a toroidal compactification $\overline{\mathrm{Sh}^0(\Gamma)}^{\Sigma}$ has at worst canonical singularities along its boundary.
\end{thm}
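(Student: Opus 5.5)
By Lemma \ref{lem:irregular singularity to non-irregular} we may assume $Z_L\subset\Gamma$. The plan is to reduce the boundary statement to the interior statement, Theorem \ref{thm_classif_sing_disc<-4}, via Lemma \ref{lem:boundary-interior-comparison}.

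Fix a boundary point of $\overline{\mathrm{Sh}^0(\Gamma)}^{\Sigma}$ lying over the rational boundary component attached to $P=P_I$. Choose a lift $\widetilde x=(u_0,v_0,\omega_0)\in U_\sigma\times\mathcal V_P\times\Omega$. The local analytic structure is then $(U_\sigma\times\mathcal V_P\times\Omega)/G_x$, where $U_\sigma$ is smooth because $\Sigma$ is regular. So it suffices to apply the Reid--Shepherd-Barron--Tai criterion, Theorem \ref{thm:Reid--Shepherd-Barron--Tai}, to the image $H$ of $G_x$ in $\GL(T_{\widetilde x})$.

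The criterion has two hypotheses to check.
\begin{enumerate}
\item \emph{$H$ contains no quasi-reflections.} By Lemma \ref{lem:irregular singularity to non-irregular} we may take $\Gamma=\Gamma'$, and then this is Proposition \ref{prop:boundary-no-qref}(2).
\item \emph{Every nontrivial element has age at least $1$.} Let $\gamma\in G_x$ act nontrivially on $T_{\widetilde x}$, and let $\delta$ be its Levi part. Lemma \ref{lem:boundary-interior-comparison} gives an interior point $y\in\mathcal D_{p,q}$ fixed by $\delta$, and
\[
\mathrm{RT}_{T_{\widetilde x}}(\gamma)\geq \mathrm{RT}_{T_y\mathcal D_{p,q}}(\delta).
\]
Since $\gamma$ acts nontrivially, the last clause of that lemma shows the right-hand side is nonzero. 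Hence $\delta$ acts nontrivially on $T_y\mathcal D_{p,q}$.
\end{enumerate}

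The main point is to check that $\delta$ satisfies the hypotheses used in the proof of Theorem \ref{thm_classif_sing_disc<-4}. Concretely, $\delta$ must be a finite-order element of $\U(L\otimes E)$ whose characteristic polynomial has coefficients in $E$, with the eigenvalue multisets on $V_y^\pm$ described as in Section \ref{sec:ages_unitary_actions}.

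\begin{itemize}
\item \emph{Finite order.} Some power of $\gamma$ lies in $\Upsilon_P\subset U_P$, and $U_P$ has trivial Levi component. Hence $a_\gamma$ and $d_\gamma$ have finite order, and so does $\delta=\diag(a_\gamma,d_\gamma,(a_\gamma^*)^{-1})$.
\item \emph{Rationality over $E$.} $\delta$ is the Levi component, taken in the $E$-rational Levi decomposition $L_P\cong\GL_E(I)\times\U(M)$ coming from the Witt decomposition over $E$, of an element of $P(\bbQ)$. Therefore $\delta\in\U(L\otimes E)$, and $p_\delta=p_\gamma$ has coefficients in $E$.
\end{itemize}

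The proof of Theorem \ref{thm_classif_sing_disc<-4} uses only these properties, not that $\delta$ preserves the lattice $L$. It runs through Propositions \ref{prop_two_nonlinear_factor} and \ref{prop_one_nonlinear_factor} and Lemma \ref{lem:unit-eigenvalues-interior}. In the all-unit case the eigenvalues of $\delta$ are roots of unity lying in $E$, hence in $\OO_E^\times$. Since $\delta$ acts nontrivially on $T_y\mathcal D_{p,q}$, the multiset of exponents contains two distinct values. Using $p\geq w_E$ and $(p,q)\neq(2,2),(2,3)$, we get $\mathrm{RT}_{T_y}(\delta)\geq 1$. Combining with the displayed inequality gives $\mathrm{RT}(\gamma)\geq1$ for every nontrivial $\gamma$, so the boundary singularities are canonical.

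I expect the main obstacle to be the integrality and rationality bookkeeping for $\delta$: $\delta$ need not lie in $\Gamma$ or $\U(L)$. One must confirm that the interior argument only uses the $E$-rational factorization of $p_\delta$ and the finiteness of its order. The units-only case is the one needing care: all eigenvalues must be $w_E$-th roots of unity, so the exponents lie in $\frac1{w_E}\bbZ/\bbZ$.
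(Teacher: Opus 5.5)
Your proposal is correct and follows the paper's proof. Both arguments reduce to $Z_L\subset\Gamma$, exclude quasi-reflections by Proposition \ref{prop:boundary-no-qref}(2), and bound $\mathrm{RT}(\gamma)$ below by the interior age of the Levi part $\delta$ via Lemma \ref{lem:boundary-interior-comparison} before invoking Theorem \ref{thm_classif_sing_disc<-4}. Your check that the interior argument only needs $\delta$ to be a finite-order element of $\U(L\otimes E)$, and not an element of $\Gamma$, makes explicit a point the paper leaves implicit.
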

\begin{proof}
By Lemma \ref{lem:irregular singularity to non-irregular},
we may assume $Z_L\subset\Gamma$.
Let $\gamma$ be an element with nontrivial action on the tangent space at a boundary point associated with an isotropic subspace of dimension $s$.
Proposition \ref{prop:boundary-no-qref} (2) excludes
quasi-reflections, so it suffices to prove
$\mathrm{RT}(\gamma)\geq1$.
This inequality follows from Lemma \ref{lem:boundary-interior-comparison} and Theorem \ref{thm_classif_sing_disc<-4}.
\end{proof}

\section{Arthur's multiplicity formula and archimedean \texorpdfstring{$A$}{}-packets for \texorpdfstring{$\U(p,q)$}{}}
\label{sec:Archimedean packets in the present case}
In this section, we recall the representation-theoretic input needed to prove Theorem \ref{mainthm:general type}.
We work under the assumption $N=p+q\equiv 1 \pmod{w_E}$ from Theorem \ref{mainthm:general type}; in particular, $N$ is odd.
We follow the notation in \cite{2025_HMY_Kodaira_dimension_ball_quotients}*{Section 3-7}; see also \cite{KMSW}.

\subsection{Arthur's multiplicity formula}

In this subsection, we recall the multiplicity formula in the generic
case used below, where all $d_i=1$. \cites{Mok_2015, KMSW}.
Throughout Sections \ref{sec:Archimedean packets in the present case}, \ref{sec:The Kodaira dimension}, $v$ denotes a place of $\bbQ$, and we write $E_v\defeq E\otimes_{\bbQ}\bbQ_v$.

Let $V$ be an $N$-dimensional Hermitian space over $E$.
Arthur's endoscopic classification gives a decomposition of the discrete automorphic spectrum into near-equivalence classes
\[
L^2_\disc(\U(V)(\bbQ)\backslash \U(V)(\bbA)) \cong \bigoplus_\psi L^2_\psi(\U(V))
\]
where $\psi$ runs over all equivalence classes of global $A$-parameters, that is a formal unordered sum
\[
\psi = \bigoplus_i \pi_i \boxtimes S_{d_i}
\]
such that
\begin{itemize}
    \item $\pi_i$ is an irreducible conjugate-selfdual cuspidal automorphic representation of $\GL_{N_i}(\bbA_E)$ with sign $(-1)^{N-d_i}$;

    \item $S_{d_i}$ is the unique $d_i$-dimensional irreducible algebraic representation of $\SL_2(\bbC)$;
    \item $(\pi_i, d_i) \neq (\pi_j, d_j)$ if $i \neq j$;
    \item $\sum_i N_id_i = N$.
\end{itemize}

Let $\calS_\psi$ be the global component group of $\psi$, that is formally defined by a free $\bbZ/2\bbZ$-module
\[
\calS_{\psi} = \bigoplus_i (\bbZ/2\bbZ)e_i,
\]
where $e_i$ is the symbol corresponding to $\pi_i \boxtimes S_{d_i}$.
The global $A$-parameter gives rise to a character $\vep_\psi$ of $\calS_\psi$ as in \cite{Mok_2015}*{(2.5.5)}.
Note that if $d_i = 1$ for all $i$, then $\vep_\psi$ is trivial.
For a place $v$ of $\bbQ$, the localization $\psi_v$ of $\psi$ at $v$ is an $A$-parameter for $\U_N$ and provides a finite set of semisimple representations of $\U(V_v)$.
We fix a global Whittaker datum \cite{KMSW}*{Subsection 0.2.2} and the split Hermitian form $h_{E/\bbQ}$ defined in \cite{2025_HMY_Kodaira_dimension_ball_quotients}*{Subsection 3.1}.
The localization of the Whittaker datum at $v$ yields a map
\[
\iota_v \colon \Pi(\psi_v) \rightarrow \Irr(\calS_{\psi_v}),
\]
where $\calS_{\psi_v}$ is the component group of the $A$-parameter $\psi_v$.
Let $z_v$ be the central element in $\calS_{\psi_v}$.
At a finite nonsplit place, the central value
$\eta_v(z_v)$ records the invariant of the Hermitian
space relative to the chosen reference form.
At the real place, our normalization gives
$\eta_\infty(z_\infty)=(-1)^q$ for $\U(p,q)$.
For $\eta_v \in \Irr(\calS_{\psi_v})$, we denote by $\pi_v(\psi_v, \eta_v)$ the representation of $\U(V_v)$ corresponding to the character $\eta_v$.
For convenience, if $\eta_v$ does not lie in the image of $\iota_v$, set $\pi_v(\psi_v,\eta_v)=0$.

There exists a natural homomorphism $\Delta_v \colon \calS_{\psi} \rightarrow \calS_{\psi_v}$ (see \cite{KMSW}*{Subsection 1.3.5} and \cite{2025_HMY_Kodaira_dimension_ball_quotients}*{(5.5)}).
For any $(\eta_v)_v \in \prod_v\Irr(\calS_{\psi_v})$, we obtain a representation
\[
\pi(\psi,\eta) = \bigotimes_v \pi_v(\psi_v, \eta_v)
\]
of $\U(V)(\bbA)$.
\begin{thm}[Arthur's multiplicity formula {\cites{Mok_2015,KMSW}}]\label{AMF}
With the notation above, one has
\[
L^2_\psi(\U(V)) \cong \bigoplus_\eta \pi(\psi,\eta),
\]
where $\eta = (\eta_v)_v$ runs over all characters of $\prod_v\calS_{\psi_v}$ such that $\eta_v = \mathbf{1}$ for almost all $v$ and $\prod_v \eta_v \circ \Delta_v = \vep_\psi$.
\end{thm}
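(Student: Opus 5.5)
The plan is not to reprove the multiplicity formula from scratch, but to deduce the displayed statement from the endoscopic classification as established by Mok \cite{Mok_2015} for the quasi-split unitary group $\U_N$ and by Kaletha--Minguez--Shin--White \cite{KMSW} for its inner forms. The work lies in checking that the normalizations fixed above match theirs.

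\textbf{Step 1: realize $\U(V)$ as an inner form.} Realize $\U(V)$ as a pure (more precisely, extended pure) inner twist of the quasi-split group $\U_N$ attached to the split Hermitian form $h_{E/\bbQ}$. Since $N$ is odd under our standing assumption $N\equiv 1 \pmod{w_E}$, the relevant Kottwitz set is well behaved. The Hermitian space $V$ then determines a collection of local invariants, one for each place $v$, with the product of these invariants equal to one. Under the Kottwitz isomorphism, each local invariant is a character of the center of the local dual group. This identifies the character by which $z_v$ must act: $\eta_v(z_v)$ is the local invariant at finite nonsplit places, and $(-1)^q$ at $v=\infty$ for $\U(p,q)$.

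\textbf{Step 2: local classification and global statement.} The local Whittaker normalization, obtained by localizing the fixed global Whittaker datum, supplies the maps $\iota_v\colon\Pi(\psi_v)\to\Irr(\calS_{\psi_v})$. With these maps, \cite{KMSW}*{Theorem 1.7.1} gives a decomposition of $L^2_\disc$ into the spaces $L^2_\psi(\U(V))$. The theorem describes each $L^2_\psi(\U(V))$ as the sum of the $\pi(\psi,\eta)$ over those $\eta$ satisfying $\prod_v\eta_v\circ\Delta_v=\vep_\psi$.

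\textbf{Step 3: the twisted central condition.} The inner-form version of the theorem carries an extra central condition, namely that $\prod_v\eta_v$ restricted to the image of the global center must equal the product of the local invariants. By Step 1, this condition is automatically encoded in our convention that $\pi_v(\psi_v,\eta_v)=0$ whenever $\eta_v$ is not in the image of $\iota_v$. This is why the formula can be stated without the extra condition.

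\textbf{Step 4: the case needed here.} We only need the case where all $d_i=1$. In this case $\vep_\psi$ is trivial by the remark after \cite{Mok_2015}*{(2.5.5)}, since every symplectic root number appearing in its definition is attached to a trivial $\SL_2$-factor. For generic parameters, the results of \cite{KMSW} are unconditional, because the needed local classification and the twisted weighted fundamental lemma are available. The nongeneric cases that \cite{KMSW} leave partly conditional are therefore not used.

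\textbf{Main obstacle.} I expect the main difficulty to be the bookkeeping of normalizations, not any new argument. The sign conventions for the real invariant, the choice of Whittaker datum, and the identification of $\Delta_v$ with \cite{2025_HMY_Kodaira_dimension_ball_quotients}*{(5.5)} must all be matched. Otherwise the local characters $\eta_v$ could be off by the central character attached to the Hermitian space. I would check these by comparing with the computation in \cite{2025_HMY_Kodaira_dimension_ball_quotients}*{Sections 3--5}, which treats exactly this setting in signature $(1,N-1)$, and then verifying that the only change for signature $(p,q)$ is the value $(-1)^q$ at infinity.
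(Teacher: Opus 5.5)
Your proposal takes essentially the paper's approach: the paper gives no independent proof and simply imports the formula from \cites{Mok_2015,KMSW} in the generic case $d_i=1$, with the same normalizations you list (localized Whittaker datum, $\eta_v(z_v)$ equal to the local invariant and to $(-1)^q$ at $\infty$, the convention $\pi_v(\psi_v,\eta_v)=0$ off the image of $\iota_v$, triviality of $\vep_\psi$, and the product formula for the central element). Drop the claim in Step~4 that the generic case of \cite{KMSW} is unconditional, since the deduction does not need it and your sketch does not justify it; invoke the theorem as stated in those references.
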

\subsection{Archimedean places}

By the highest weight theory, 
irreducible representations of the maximal compact subgroup $\U(p) \times \U(q)$ of $\U(p,q)$ are classified by highest weights
\[
(\lambda_1,\ldots,\lambda_p,\lambda_{p+1},\ldots,\lambda_{N}) \in \bbZ^{N}, \qquad \lambda_1 \geq \cdots \geq \lambda_p,\ \lambda_{p+1} \geq \cdots \geq \lambda_{N}.
\]
For each highest weight $\lambda = (\lambda_1,\ldots,\lambda_p,\lambda_{p+1},\ldots,\lambda_{N})$, we obtain the unique irreducible lowest-weight representation $L(\lambda)$ of $\U(p,q)$ with lowest $K$-type $\lambda$.

The module $L(\lambda)$ is unitarizable if $\lambda_p - \lambda_{p+1} \geq N -p' - q'$ where $p' \defeq \#\{i \mid \lambda_i = \lambda_p, 1 \leq i \leq p\}$ and $q' \defeq \#\{i \mid \lambda_i = \lambda_{p+1}, p+1 \leq i \leq N\}$.
Note that $L(\lambda)$ is a discrete series (resp. limit of discrete series) representation if $\lambda_{p} - \lambda_{p+1} > N-1$ (resp. $\lambda_p - \lambda_{p+1} = N-1$) \cite{EHW_83}*{Theorem 2.4, Proposition 3.1}.
By (limit of) holomorphic discrete series representations, we mean a representation that is both (limit of) discrete series and of lowest weight.
The following statement is a special case of \cite{Horinaga_LW_unitary}*{Theorem 1.2}.
For $t \in \bbZ$, let $\chi_t$ be the character of $\bbC^\times$ defined by $\chi_t(z) = (z/\overline{z})^{t/2} = z^t/(z\overline{z})^{t/2}$.

\begin{lem}\label{A_packet_hol_LDS}
    Let $\psi = \bigoplus_{i=1}^N \chi_{t_i} \otimes S_{1}$ be an $A$-parameter with $t_1\geq \cdots \geq t_N$.
    Let $\lambda = (\lambda_1, \ldots,\lambda_{N})$ be a weight such that $\lambda_1 = \cdots = \lambda_p > \lambda_{p+1} = \cdots = \lambda_{N}$ and $\lambda_p - \lambda_{p+1} = N-1$.
    \begin{enumerate}
        \item The $A$-packet $\Pi(\psi)$ contains $L(\lambda)$ if and only if the $A$-parameter $\psi$ satisfies
            \begin{align*}
        &\bigsqcup_{i=1}^{N}\left\{\frac{t_i}{2}\right\} = \left\{\lambda_1 + \frac{p-q-1}{2}, \ldots, \lambda_{p} - \frac{N-1}{2}\right\} \bigsqcup
        \left\{\lambda_{p+1}+\frac{N-1}{2}, \ldots,\lambda_{N}+\frac{p-q+1}{2}
        \right\}.
        \end{align*}
        \item If $\Pi(\psi)$ contains $L(\lambda)$, then the character $\eta$ corresponding to $L(\lambda)$ is given by 
        \[
        \eta(e_i) = 
        \begin{cases}
            (-1)^{i + (N+1)/2} & \text{if}\ i \leq p;\\
            (-1)^{i+ (N-1)/2} & \text{if}\ p+1 \leq i \leq N.
        \end{cases}
        \]
    \end{enumerate}
\end{lem}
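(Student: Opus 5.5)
The plan is to deduce the statement from \cite{Horinaga_LW_unitary}*{Theorem 1.2}, after checking that its hypotheses reduce to the present situation. We also record the two ingredients that make the reduction transparent: infinitesimal characters for part (1), and the Whittaker-normalized parametrization of tempered packets for part (2). The first observation is that all $d_i=1$. Hence $\psi_\infty=\bigoplus_i\chi_{t_i}$ is a tempered (bounded) $L$-parameter of $\U(p,q)$, and $\Pi(\psi_\infty)$ is the corresponding tempered $L$-packet, with $\iota_\infty$ the Shelstad--Kaletha parametrization relative to the fixed Whittaker datum. Since the parameter is tempered with integral infinitesimal character, its members are (limits of) discrete series. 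It therefore suffices to locate $L(\lambda)$ among the limits of discrete series with the correct Harish-Chandra parameter.

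For (1), I will compute the Harish-Chandra parameter of $L(\lambda)$. With $\rho=\bigl(\tfrac{N-1}{2},\dots,-\tfrac{N-1}{2}\bigr)$, the lowest-weight module with lowest $K$-type $\lambda$ has parameter $\lambda+\rho_c-\rho_n$ in the holomorphic chamber. Its entries are $\lambda_i+\tfrac{p-q+1}{2}-i$ for $i\le p$ and $\lambda_{p+j}+\tfrac{N+1}{2}-j$ for $1\le j\le q$, which are exactly the two displayed sets. The infinitesimal character of $\psi_\infty$ is $(t_1/2,\dots,t_N/2)$, so necessity follows from matching infinitesimal characters.

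For sufficiency, the condition $\lambda_p-\lambda_{p+1}=N-1$ makes the last entry of the first block equal to the first entry of the second block. This is the single singular, noncompact, simple root of the holomorphic chamber. The Knapp--Zuckerman nonvanishing criterion says a limit of discrete series is nonzero unless a compact simple root is singular. Here only a noncompact root is singular, so the holomorphic limit of discrete series with this parameter is nonzero. It equals $L(\lambda)$ by \cite{EHW_83}, and it lies in the $L$-packet attached to $\psi_\infty$.

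For (2), I will use the description of $\iota_\infty$ on (limits of) discrete series of $\U(p,q)$. Order the $t_i$ decreasingly. A member of the packet is determined by which positions of the Harish-Chandra parameter lie in the $\U(p)$-block versus the $\U(q)$-block. The character $\eta$ is then read off from these positions by an alternating rule, normalized so that the generic (Whittaker) member corresponds to the trivial character; this normalization also gives $\eta(z_\infty)=(-1)^q$. The holomorphic member corresponds to the pattern in which the first $p$ positions go to $\U(p)$ and the last $q$ to $\U(q)$. Substituting this pattern into the alternating formula should give $\eta(e_i)=(-1)^{i+(N\pm1)/2}$ in the two ranges. As a check, the product over all $i$ should be consistent with $(-1)^q$.

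The main obstacle is this sign bookkeeping. The offset $(N\pm1)/2$ depends on the chosen Whittaker datum, on the split form $h_{E/\bbQ}$, and on how the repeated value $t_p=t_{p+1}$ (the coincidence forced by the limit condition) is ordered. I would fix these conventions exactly as in \cite{2025_HMY_Kodaira_dimension_ball_quotients}*{Section 3} and then quote \cite{Horinaga_LW_unitary}*{Theorem 1.2} in the case where all the $t_i$ occur with $d_i=1$. The only remaining check is that the limit case, with one repeated value, is covered by that theorem.
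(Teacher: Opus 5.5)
Your proposal is correct and takes the same route as the paper, which gives no independent argument and simply records the lemma as a special case of \cite{Horinaga_LW_unitary}*{Theorem 1.2}. Your additional checks are sound but are not needed beyond that citation: for (1), the Harish-Chandra parameter $\lambda+\rho_c-\rho_n$ in the holomorphic chamber together with Knapp--Zuckerman nonvanishing for the singular noncompact simple root; for (2), the identity $\prod_i\eta(e_i)=(-1)^q$, which does hold for $N$ odd.
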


\section{The Kodaira dimension}
\label{sec:The Kodaira dimension}
\subsection{Existence of low weight cusp forms}
We will show the existence of low weight cusp forms using Arthur's multiplicity formula.
As we have seen Theorem \ref{AMF} and Lemma \ref{A_packet_hol_LDS}, our task is to find a global $A$-parameter with the global sign condition $\vep_\psi = \prod_{v} \eta_v \circ \Delta_v$.
It is purely combinatorial under the existence of certain cuspidal representations \cite{2025_HMY_Kodaira_dimension_ball_quotients}*{Theorem A.1}.
We consider the combinatorial counterpart to construct global $A$-parameters with desired properties.
Let $B(E)$ be the integer defined in \eqref{def_B(E)}.
\begin{lem}\label{lem_partition}
    Let $\eta$ be the character as in Lemma \ref{A_packet_hol_LDS} (2).
    If $p+q\equiv 1 \pmod{w_E}$ and $p+q \geq B(E)$, there exists a partition $A\sqcup \bigsqcup_{j \in J} B_j$ of the multiset 
    \[
    \{(3p+q-3)/2,\ldots,(p+q-1)/2\} \sqcup \{(p+q-1)/2,\ldots,(p-q+1)/2\}
    \]
    satisfying the following properties:
    Let $e_i$ be the symbol corresponding to 
    \[
    \begin{cases}
        (3p+q-1)/2-i & \text{if $1 \leq i \leq p$;}\\
        (3p+q-1)/2-i + 1 & \text{if $p<i \leq p+q$,}
    \end{cases}
    \]
    in the multiset.
    Let $e(a),e(b)$ be the symbol corresponding to $a \in A$ and $b \in B_j$, respectively.
    Then, the partition satisfies
    \begin{itemize}
        \item $A$ is singleton such that $\eta(e(a)) = (-1)^q$ and $a$ is a multiple of $w_E$ for $a \in A$;
        \item $\#B_j = 2$ for any $j \in J$;
        \item for $\{k,\ell\} = B_i$, $\eta(e(k)) = \eta(e(\ell))$;
        \item for $\{k,\ell\} = B_i$, $k+\ell$ is a multiple of $w_E$ with
        \[
        |k-\ell| \geq \begin{cases}
            8 & \text{if $D = 3$;}\\
            6 & \text{if $D=4, 7$;}\\
            4 & \text{if $D=8,11,15,23$;}\\
            2 & \text{if $D \geq 19$ and $D \neq 23$.}
        \end{cases}
        \]
    \end{itemize}
\end{lem}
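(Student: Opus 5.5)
The first step is to make the sign condition explicit. Write $m\defeq (N-1)/2$, which is an integer because $N$ is odd. In the first block the value attached to $e_i$ is $v=m+p-i$, and in the second block it is $v=m+p+1-i$. Substituting into Lemma \ref{A_packet_hol_LDS} (2) and using $(-1)^N=-1$, I expect both cases to give the same formula
\[
\eta(e(v))=(-1)^{p+1+v}.
\]
So $\eta$ depends only on the parity of the value $v$, and the two copies of $m$ carry the same sign. The conditions of the lemma then become purely arithmetic:
\begin{itemize}
\item the singleton $a$ must be a multiple of $w_E$ with $(-1)^{p+1+a}=(-1)^q$, i.e.\ $a$ even; this is automatic since $w_E\in\{2,4,6\}$;
\item each pair $\{k,\ell\}$ must satisfy $k\equiv\ell\pmod 2$, $k+\ell\equiv0\pmod{w_E}$, and $|k-\ell|\geq g(E)$, where $g(E)\in\{2,4,6,8\}$ is the gap in the statement.
\end{itemize}
The multiset itself is the integer interval $I=[(p-q+1)/2,\,(3p+q-3)/2]$, which has $N-1$ elements, together with one extra copy of $m$. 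Since $N\equiv1\pmod{w_E}$, we also have $m\equiv0\pmod{w_E/2}$.

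The construction will pair elements inside consecutive translated blocks. For each value of $(w_E,g(E))$ I will write down one block of consecutive integers with an admissible perfect pairing, and tile most of $I$ by translates of it. Two examples:
\begin{itemize}
\item For $w_E=2$, $g=2$, take blocks $\{x,\dots,x+3\}$ paired as $(x,x+2),(x+1,x+3)$; for $g=4$, take blocks of length $8$ paired as $(x+j,x+j+4)$.
\item For $w_E=4$ and $w_E=6$, translates must be by multiples of $w_E$ (or of $2w_E$) so that the condition $k+\ell\equiv0\pmod{w_E}$ is preserved. Here I take blocks of length $2w_E$ or $4w_E$ and pair $x$ with an element $y\equiv -x\pmod{w_E}$ of the same parity at distance at least $g$. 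The residues $0$ and $w_E/2$ pair with themselves modulo $w_E$; the others pair as $r\leftrightarrow w_E-r$.
\end{itemize}
I will verify once, by hand or by a short enumeration in the style of the appendix algorithms, that such a block pairing exists for each of the four cases.

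What remains is the non-tiled part: the two ends of $I$ (whose residues mod $w_E$ depend on $p,q$), the duplicated value $m$, and the choice of the singleton $a$. Since $m\equiv0\pmod{w_E/2}$, the two copies of $m$ can be paired with $m\pm\tfrac{w_E}{2}$-type partners, or more simply one copy of $m$ (or a nearby multiple of $w_E$) is taken as $a$, and the other copy is absorbed into a modified block. I would place $a$ and the extra $m$ in a single exceptional window of bounded length around $m$, and tile everything else. The lengths of the end segments depend only on the residues of $p$ and $q$ modulo $w_E$ (or $2w_E$), so there are finitely many boundary configurations. I would settle each of them by writing an explicit pairing of an initial and a final window of bounded size.

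The main obstacle is this finite boundary analysis, especially for $w_E=6$ with $g=8$ and $w_E=4$ with $g=6$. There the windows must be long enough to absorb misaligned residues while keeping gaps at least $g$. The hypothesis $N\geq B(E)$ should be exactly what guarantees room for these windows. My first attempt would be a computer enumeration over all residue classes of $(p,q)$ modulo $2w_E$ with $N$ just above $B(E)$, producing explicit pairings. Once an admissible pairing exists for each residue class at its smallest admissible $N$, larger $N$ follow by inserting further full blocks in the middle.
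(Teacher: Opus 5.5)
Your reduction is correct and is the most valuable part of the proposal. With $m=(N-1)/2$, both blocks give $\eta(e(v))=(-1)^{p+1+v}$. Hence the singleton condition is automatic, and so is the sign condition on pairs, because $w_E$ is even and $w_E\mid k+\ell$ already forces $k\equiv\ell\pmod 2$.

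The paper never writes $\eta$ out, but otherwise its proof has the architecture you describe. It passes from $(p,q)$ to $(p+B(E)-1,q)$ or $(p,q+B(E)-1)$ by shifting a given partition by $(B(E)-1)/2$, which is a multiple of $w_E$. It then pairs a new block of $B(E)-1$ consecutive integers at one end. The finitely many base cases are settled by explicit partitions: based on \cite{2025_HMY_Kodaira_dimension_ball_quotients}*{Table 2} when $w_E=2$, and constructed directly for $w_E=4,6$. So your tiling-plus-enumeration plan is essentially that argument.

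What your reduction buys in addition is that no enumeration is needed at all. The interval $[m-q+1,m+p-1]$ has length $N-1\equiv0\pmod{w_E}$. So each residue class modulo $w_E$ meets it in a progression of $c=(N-1)/w_E$ terms with difference $w_E$. Pairs can only be formed inside the class $0$, inside the class $w_E/2$, or between the classes $r$ and $-r$. The extra copy of $m$ lies in the class $0$ or the class $w_E/2$.

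If $m\equiv0\pmod{w_E}$, take $a=m$; then $c$ is even. Match the $i$-th term of a progression with its $(i+c/2)$-th term (classes $0$ and $w_E/2$), resp.\ cyclically with the $(i+c/2)$-th term of the partner progression (classes $r$ and $-r$). This gives gaps at least $\frac{c}{2}w_E$, resp.\ $(\frac{c}{2}-1)w_E+2$.

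If $m\equiv w_E/2\pmod{w_E}$, then $c$ is odd. Take $a$ to be the smallest element of the class $0$ and match its remaining $c-1$ terms by a shift of $\frac{c-1}{2}$ places. Match the $c+1$ sorted terms of the class $w_E/2$ (with $m$ repeated) by a shift of $\frac{c+1}{2}$ places. Match the classes $\pm r$ by a permutation moving every index by at least $\frac{c-1}{2}$. The gaps are then at least $\frac{c-1}{2}w_E$, resp.\ $\frac{c-3}{2}w_E+2$.

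A short check shows that $N\geq B(E)$ is exactly what makes all of these gaps at least $g(E)$. This works uniformly in $(p,q)$, with no boundary windows.

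If you keep the tiling route instead, several details need repair.
\begin{itemize}
\item A partner of a copy of $m$ must be congruent to $m$ modulo $w_E$, since $2m\equiv0$. So the ``$m\pm\frac{w_E}{2}$-type partners'' never work.
\item The field $\bbQ(\sqrt{-7})$, with $w_E=2$ and gap $6$, is a fifth case and needs blocks of length $12$.
\item An enumeration modulo $2w_E$ does not match the block lengths $8,12,16,24$ that your scheme needs whenever $g(E)>w_E$. For instance, four consecutive integers admit no pairing with gaps at least $4$. The natural period is $B(E)-1$, as in the paper.
\item When $p$ is smaller than the part of your window lying above $m$, the end configuration depends on $p$ itself rather than only on its residue. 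This still leaves finitely many cases.
\end{itemize}
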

\begin{proof}
    We first reduce the statement to the cases \[
B(E)\leq p+q\leq2B(E)-2,\qquad
p+q\equiv1\pmod{w_E},\qquad p,q\geq1.
\]
    Suppose that for a given $(p,q)$, there exists a partition $A \sqcup \bigsqcup_j B_j$.
    We construct a partition for $(p+B(E)-1, q)$.
    The case $(p,q+B(E)-1)$ is similar.
    Consider the set
    \begin{align*}
    \left\{\frac{3p+q-6}{2}+\frac{3B(E)}{2},\ldots,\frac{p+q-2}{2}+\frac{B(E)}{2}\right\} \bigsqcup \left\{\frac{p+q-2}{2}+\frac{B(E)}{2},\ldots,\frac{p-q}{2}+\frac{B(E)}{2}\right\}.
    \end{align*}
    Let $A'$ and $B_{j+(B(E)-1)/2}'$ be the sets obtained by adding $(B(E)-1)/2$ to $A$ and $B_j$.
    The remaining set is
\[\left\{\frac{3p+q-6}{2} + \frac{3B(E)}{2},\ldots,\frac{3p+q-2}{2}+\frac{B(E)}{2}\right\}\].
    By the definition of $B(E)-1$, we may find a partition $B_1' \sqcup \cdots \sqcup B_{(B(E)-1)/2}'$ such that the resulting partition $A' \sqcup \bigsqcup_j B_j'$ satisfies the assumptions in the lemma.

    When $\max\{p,q\} \leq B(E)-1$ and $E \neq \bbQ(\sqrt{-1}), \bbQ(\sqrt{-3})$, one can construct partitions based on \cite{2025_HMY_Kodaira_dimension_ball_quotients}*{Table 2}.
    For the cases $E = \bbQ(\sqrt{-1}), \bbQ(\sqrt{-3})$, the statement follows from the explicit constructions of the partition $A \sqcup \bigsqcup_j B_j$ for finitely many cases $\max\{p,q\} \leq B(E)-1$.  
\end{proof}

As an application of Lemma \ref{lem_partition}, we show the existence of low weight cusp forms.

\begin{thm}
\label{thm:low weight cusp form}
Assume that $p+q\equiv 1 \pmod{w_E}$ and $p+q \geq B(E)$.
Then, there exists a nonzero cusp form of weight $p+q-1$ with respect to $\U(L)$.
\end{thm}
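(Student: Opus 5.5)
The plan is to build a global $A$-parameter $\psi$ that is generic, i.e.\ all $d_i=1$. Its archimedean component should be the parameter of Lemma \ref{A_packet_hol_LDS} attached to the weight $\lambda$ with $\lambda_1=\cdots=\lambda_p$, $\lambda_{p+1}=\cdots=\lambda_N$ and $\lambda_p-\lambda_{p+1}=N-1$. Then $L(\lambda)$ is a holomorphic limit of discrete series, and it corresponds to scalar weight $N-1=p+q-1$.

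We normalize $\lambda$ so that the multiset $\{t_i/2\}$ in Lemma \ref{A_packet_hol_LDS}(1) is exactly the multiset appearing in Lemma \ref{lem_partition}. Lemma \ref{lem_partition} then supplies a partition $A\sqcup\bigsqcup_j B_j$, which we use to define
\[
\psi=\chi_{2a}\boxplus\bigoplus_{j}\pi_j ,
\]
with one summand for each block:
\begin{itemize}
\item For the singleton $A=\{a\}$, take a conjugate-selfdual Hecke character $\chi$ of $\bbA_E^\times/E^\times$ with infinity type $\chi_{2a}$. Such a character exists, with the correct sign, because $w_E\mid a$. This is the standard condition that the infinity type be trivial on $\OO_E^\times$.
\item For each pair $B_j=\{k,\ell\}$, take a conjugate-selfdual cuspidal representation $\pi_j$ of $\GL_2(\bbA_E)$ with infinity type $\chi_{2k}\oplus\chi_{2\ell}$ and sign $(-1)^{N-1}=+1$. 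We would obtain it by automorphic induction from a suitable Hecke character, or from a weight $|k-\ell|+1$ cusp form. The conditions $w_E\mid k+\ell$ and the lower bound on $|k-\ell|$ are exactly what guarantees existence (cuspidality and the right parity), as in \cite{2025_HMY_Kodaira_dimension_ball_quotients}*{Theorem A.1}.
\end{itemize}
The $\pi_j$ should be chosen pairwise distinct and unramified outside a chosen finite set.

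Next we verify the multiplicity formula (Theorem \ref{AMF}). Since every $d_i=1$, $\vep_\psi$ is trivial, so we need characters $\eta_v$ with $\eta_\infty$ the character from Lemma \ref{A_packet_hol_LDS}(2), $\pi_v(\psi_v,\eta_v)\neq0$ unramified (fixed by the relevant compact open) at almost all $v$, and $\prod_v\eta_v\circ\Delta_v=1$. Here $\calS_\psi$ has one generator per summand. The image of each pair generator under $\Delta_\infty$ is $e(k)e(\ell)$, and $\eta_\infty$ takes the value $\eta(e(k))\eta(e(\ell))=1$ on it by the third bullet of Lemma \ref{lem_partition}. The singleton generator is evaluated by $\eta_\infty(e(a))=(-1)^q$. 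This has to be matched against the finite places: the product of the $\eta_v$ on $z_v$ must realize the Hermitian space $V=L\otimes E$, and by our normalization $\eta_\infty(z_\infty)=(-1)^q$. At finite places we choose $\eta_v$ to be the character determined by the local invariant of $V_v$ on the component of the singleton, and trivial on the pairs. Because the $\pi_j$ are chosen unramified, or at least generic with a nonzero vector for the level, at the finitely many places where $V_v$ is nonsplit or $L_v$ is not self-dual, this gives a nonzero local component. The global product formula for the Hermitian invariants then makes the sign condition hold. The resulting automorphic representation $\pi(\psi,\eta)$ has holomorphic archimedean component $L(\lambda)$, and it is cuspidal because $\psi$ is generic with at least two summands and not a Speh-type residual parameter. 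Taking $K$-fixed vectors for a compact open $K$ with $G(\bbQ)\cap K\subset\U(L)$ then gives a nonzero cusp form of weight $p+q-1$ for $\U(L)$.

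The main obstacle is the local matching at finite places, namely ensuring that the chosen $\eta_v$ at ramified or nonsplit places yield representations with nonzero fixed vectors under the level of $\U(L)$, while the global product still equals $\vep_\psi$. I would first handle this as in \cite{2025_HMY_Kodaira_dimension_ball_quotients}: put all the sign discrepancy on the one-dimensional summand $\chi$, whose local packets are singletons on both the quasi-split and the non-quasi-split forms, and take the $\pi_j$ unramified at all bad places.
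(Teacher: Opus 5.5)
Your proposal is correct and follows the paper's proof essentially step for step: the same generic parameter $\chi^{2a/w_E}\oplus\bigoplus_i(\pi^{\BC}_{\ell_i-k_i+1}\otimes\chi^{(k_i+\ell_i)/w_E})$ built from the partition of Lemma~\ref{lem_partition}, the whole sign discrepancy carried by the singleton and absorbed by the product formula $\prod_v\eta_v(z_v)=1$, cuspidality from genericity, and the finite-level questions delegated to \cite{2025_HMY_Kodaira_dimension_ball_quotients}. The one step you state only as a choice, namely that the $\GL_2$ constituents are unramified at the bad places, is realized in the paper as follows: one takes level-$D$ newforms with nebentypus $\omega_{E/\bbQ}$ and $a_p\neq\pm p^{(k-1)/2}$ for $p\mid D$, so that by \cite{2012_Loeffer-Weinstein} their base change is unramified and these constituents add nothing to $\calS_{\psi_p}$, and one then twists by an everywhere unramified Hecke character, which is exactly what the condition $w_E\mid k+\ell$ permits.
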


\begin{proof}
Set $N = p+q$.
By \cite{2025_HMY_Kodaira_dimension_ball_quotients}*{Theorem A.1}, even in the case $D$ even, there exists a normalized Hecke eigenform $f(z) = \sum_{n} a_n \exp(2\pi \sqrt{-1}\ nz) \in S_k(D, \omega_{E/\bbQ})$ such that $a_p \neq \pm p^{(k-1)/2}$ for any $p \mid D$ if 
\[
    k \geq 
    \begin{cases}
            8 & \text{if $D = 3$;}\\
            6 & \text{if $D=4, 7$;}\\
            4 & \text{if $D=8,11,15,23$;}\\
            2 & \text{if $D \geq 19$ and $D \neq 23$.}
    \end{cases}
\]
We denote by $\pi_k = \bigotimes'_v \pi_{k,v}$ the cuspidal representation of $\GL_2(\bbA)$ generated by such $f$.
The base change lift $\pi_k^\BC$ to $\GL_2(\bbA_E)$ is still cuspidal by \cite{2025_HMY_Kodaira_dimension_ball_quotients}*{Theorem A.1} (see also \cite{2025_HMY_Kodaira_dimension_ball_quotients}*{Section 7.1}).
By \cite{2012_Loeffer-Weinstein}*{Proposition 2.8}, the representation $\pi_{k,p}$ of $\GL_2(\bbQ_p)$ is a principal series representation induced from $\mu_p \boxtimes \mu_p^{-1}\omega_{E/\bbQ,p}$ so that $\mu_p$ is unramified with $\mu_p(p) = a_p(f)/p^{(k-1)/2} \in S^1$.
The base change lift $\pi_{k,p}^\BC$ has the $L$-parameter $\mu_p|_{L_{E_v}} \oplus \mu_p^{-1}|_{L_{E_v}}$, where $L_{E_v}$ is the Weil-Deligne group of $E_v$.
Hence, the $L$-parameter of $\pi_{k,p}^\BC$ is unramified even for $p=2$.
For the archimedean place $\infty$, the $L$-parameter of $\pi_{k,\infty}^\BC$ is given as $\chi_{k-1} \oplus \chi_{1-k}$.

We now construct a global $A$-parameter.
Suppose that $N\equiv 1 \pmod{w_E}$ and $N \geq B(E)$.
Take a partition $A \sqcup \bigsqcup_i B_i$ in Lemma \ref{lem_partition}, and put $A = \{a\}, B_i = \{k_i, \ell_i\}$ with $k_i<\ell_i$.
We then define a global $A$-parameter by
\[
\psi = 
\chi^{2a/w_E} \boxtimes S_1  \oplus \bigoplus_i (\pi^{\BC}_{\ell_i-k_i+1} \otimes \chi^{(k_i+\ell_i)/w_E}) \boxtimes S_1.
\]
Here, $\chi$ is a character of $E^\times \backslash\bbA_E^\times$ such that
\[
\chi_v|_{\OO_{E_v}^\times} = \mathbf{1}, \qquad \chi_\infty(z) = \chi_{w_E}(z) = (z/\overline{z})^{w_E/2}
\]
and $\pi^\BC$ is the base change lift of $\pi$ to $\GL_2(\bbA_E)$.
The archimedean component $\psi_\infty$ of $\psi$ is given by
\begin{align}
\psi_\infty &= 
\chi^{2a/w_E}_{w_E} \boxtimes S_1 \oplus \bigoplus_i ((\chi_{k_i-\ell_i} \oplus \chi_{\ell_i-k_i})\otimes \chi_{w_E}^{(k_i+\ell_i)/w_E})\boxtimes S_1 \notag\\
&= 
\chi_{2a} \boxtimes S_1 \oplus \bigoplus_i (\chi_{2k_i} \oplus \chi_{2\ell_i})\boxtimes S_1.
\label{description_psi_infty}
\end{align}
From $\psi_\infty$, we may associate symbols $e_{1,\infty}, \ldots, e_{N, \infty}$ to each representation in $\psi_\infty$ so that the corresponding subscript of $\chi$ in \eqref{description_psi_infty} is weakly decreasing when the subscripts $i$ increase.
By construction, the representation $\pi(\psi_\infty,\eta) \in \Pi_{\psi_\infty}$ is the weight $N-1$ lowest weight representation of $\U(p,q)$.
In fact, the multiset of half the subscripts of $\chi$ in \eqref{description_psi_infty} is equal to the multiset $A \sqcup \bigsqcup_j B_j$.
Hence, the corresponding multiset $\bigsqcup_i \{t_i/2\}$ in Lemma \ref{A_packet_hol_LDS} is the same as $A \sqcup \bigsqcup_j B_j$.
The multiset $A \sqcup \bigsqcup_j B_j$ is the same as that for the limit of the holomorphic discrete series representation with $\lambda_1 = \cdots = \lambda_p = N-1$ and $\lambda_{p+1} = \cdots = \lambda_N = 0$.

Let $\eta$ be the character in Lemma \ref{A_packet_hol_LDS}.
Define the symbol $e_{i,\bbA}$ for $i=1,2,\ldots$ to a representation in $\psi$.
If necessary, we reorder the indices $i$ of $e_{i,\bbA}$ so that the maximal subscript of $\Delta_\infty(e_{i,\bbA})$ decreases when $i$ increases.
The component group for $\psi$ is
\[
\bigoplus_i(\bbZ/2\bbZ) e_i.
\]
Let $i_0$ be the index of $e_{i_0,\bbA}$ corresponding to $\chi^{2a/w_E} \boxtimes S_1$.
By definition, we have
\[
\eta \circ \Delta_\infty(e_{i_0, \bbA}) = (-1)^q = \eta_\infty(z_\infty), \qquad \eta \circ \Delta_\infty(e_{i,\bbA}) = 1
\]
for any $i \neq i_0$.
For a finite place $p$ with $p \nmid D$, the localization $\psi_p$ provides an unramified $A$-parameter.
If $p$ splits in $E$, since $\U(L)(\bbQ_p)$ is isomorphic to $\GL_{N}(\bbQ_p)$, the component group $\calS_{\psi_p}$ is $\{1\}$ and so the corresponding character $\eta_p$ is always trivial.
If $p$ is an unramified nonsplit, the component group $\calS_{\psi_p}$ is $\{1, z_{p}\}$ and we choose the character $\eta_v$ so that $\eta_v(z_{p})$ is $+1$ (resp. $-1$) if the Hermitian form $L_p$ is isomorphic to $h_{E_v/\bbQ_p}(N)$ (resp.\ if not).
For $p \mid D$, 
by the same discussion as in \cite{2025_HMY_Kodaira_dimension_ball_quotients}*{Lemma 7.7}, the $A$-parameter $\psi_p$ is also unramified and $\calS_{\psi_p} = \{1,z_{\psi_p}\}$.
By construction, the character $\eta_v \circ \Delta_v$ is given by
\[
\eta_v \circ \Delta_v(e_{i,\bbA})
=
\begin{cases}
    1 & \text{if $i \neq i_0$;}\\
    \eta_v(z_v) & \text{if $i = i_0$.}
\end{cases}
\]
By the product formula $\prod_v \eta_v(z_v) = 1$, we have $\eta \circ \Delta = \mathbf{1} = \vep_\psi$.
Hence, the representation $\pi(\psi, \eta)$ is automorphic and cuspidal by Theorem \ref{AMF} and the genericity of $A$-parameter $\psi$ (cf. \cite{1984_Wallach_constant_term}).

Let $\pi = \otimes_v' \pi_v$ be the automorphic representation $\pi(\psi,\eta)$.
For any finite place $v$, the representation $\pi_v$ is unramified.
By the construction of $\psi$ and the proof of \cite{2025_HMY_Kodaira_dimension_ball_quotients}*{Lemma 7.7}, the space $\pi^K$ is nonzero for any open compact subgroup $K$ of $\U(L)(\bbA_\fini)$.
The isomorphism between automorphic forms and modular forms in \cite{2025_HMY_Kodaira_dimension_ball_quotients}*{Lemma 4.3} yields a nonzero cusp form of weight $N-1$ (see also \cite{2025_HMY_Kodaira_dimension_ball_quotients}*{Subsection 7.2}).
\end{proof}

\subsection{An application to the Kodaira dimension}
We keep the notation in Proposition \ref{prop:boundary-no-qref}.
Note that the ray $\rho$ is regular for $\Gamma$ precisely when $m_\rho=1$; otherwise it is an
irregular ray for $\Gamma$.

\begin{proof}[Proof of Theorem \ref{mainthm:general type}]
Set $N=p+q$.  By Theorem \ref{thm:low weight cusp form}, there exists a
nonzero cusp form $F$ of weight $N-1$ with respect to $\U(L)$. 
We first assume that $Z_L\subset\Gamma$.  In this case $\Gamma=\Gamma'$.
After passing to 
$\mathrm{PU}(p,q)$, Proposition \ref{prop:boundary-no-qref} (2) shows that 
there is no boundary divisor as a ramification divisor.
Together with
Corollary \ref{cor:branch-codimge2}, Hirzebruch--Mumford proportionality \cite{mumford1977hirzebruch}
gives
\begin{equation}\label{eq:canonical-center-contained}
K_{\overline{\mathrm{Sh}^0(\Gamma)}^{\Sigma}}
 \sim_{\bbQ}N\L-\Delta,
\end{equation}
where $\L$ denotes the automorphic $\bbQ$-line bundle.
Since $F$ is cuspidal, its extension to the toroidal compactification
vanishes along every irreducible component of the boundary
$\Delta$ with order at least one.  Hence $(N-1)\L-\Delta$ is effective.

We next consider the case $Z_L\not\subset\Gamma$.  
Since $F$ is a cusp form for $\Gamma'\subset\U(L)$, pulling back along the natural finite map and using Proposition \ref{prop:boundary-no-qref} (1), we obtain, for every ray $\rho$,
$\ord_{D_\rho}(\varpi_P^*F)=m_\rho\ord_{D_\rho'}(F)\geq m_\rho$.
Thus, on the cover,
\begin{align*}
\div(\varpi_P^*F)-\sum_\rho D_\rho
&=\left(\div(\varpi_P^*F)-\sum_\rho m_\rho D_\rho\right)
  +\sum_\rho(m_\rho-1)D_\rho
\geq0.
\end{align*}
These local identities descend to the whole toroidal compactification, and hence $(N-1)\L-\Delta$ is effective.

Finally, $\L$ is big by \cite{Baily1966compactification} 
and
Hirzebruch--Mumford proportionality \cite{mumford1977hirzebruch}.  Since the canonical bundle can be written as the sum of an effective bundle and a big bundle
\[
K_{\overline{\mathrm{Sh}^0(\Gamma)}^{\Sigma}}
 \sim_{\bbQ}\bigl((N-1)\L-\Delta\bigr)+\L,
\]
it is also big.  By Theorem \ref{mainthm:singularity}, the fan $\Sigma$ may
be chosen so that the toroidal compactification has at worst canonical
singularities.  Hence pluricanonical forms extend to a resolution, and
$\overline{\mathrm{Sh}^0(\Gamma)}^{\Sigma}$ is of general type.
\end{proof}

\section{Remarks on the singularities arising from other group actions}
\label{sec:other_groups}
The purpose of this section is to improve the results on singularities of orthogonal modular varieties \cite{Gritsenko2007kodaira} or ball quotients \cite{behrens2012singularities}.
As in Lemma \ref{lem:irregular singularity to non-irregular}, for each theorem it suffices to consider the case in which $\Gamma$ contains the center.
In this section we retain the notation $\mathrm{RT}$ for the Reid--Shepherd-Barron--Tai sums for the case of orthogonal modular varieties or ball quotients.
For simplicity, we will omit the notation on $\Sigma$.
\subsection{The case of \texorpdfstring{$\O^+(2,n)$}{}}

We follow the method of \cite{Gritsenko2007kodaira} with more detailed computations.
As a consequence of Theorems \ref{thm_GHS_interior}, \ref{thm_orth_interior}, \ref{thm_GHS_0_dim_cusp} and \ref{thm_oethogonal_dim_1_cusp}, we refine \cite{Gritsenko2007kodaira}*{Theorem 2}.

\begin{thm}
\label{thm:orthogonal_refinement}
    Let $L$ be a lattice of signature $(2,n)$ with $n \geq 6$, and let $\Gamma < \mathrm{O}^+(L)$ be a subgroup of finite index.
        Then there exists a toroidal compactification $\overline{\calF_L(\Gamma)}$ of $\calF_L(\Gamma)=\Gamma\backslash\mathcal{D}_L$ such that $\overline{\calF_L(\Gamma)}$ has canonical singularities.
\end{thm}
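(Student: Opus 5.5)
The plan follows the structure of Gritsenko--Hulek--Sankaran. We split into the interior, the $0$-dimensional cusps and the $1$-dimensional cusps, and in each case sharpen the estimate of the Reid--Shepherd-Barron--Tai sum using the cyclotomic bounds of Section \ref{section_auxiliary_lemma} with $E$ replaced by $\bbQ$. Since $\Phi_d$ is irreducible over $\bbQ$, only Lemma \ref{lemma_RT_sum_irreducible} and the analogue of Lemma \ref{lem_RT_sum_A_B_empty} are needed. As in Lemma \ref{lem:irregular singularity to non-irregular}, we first reduce to $-1\in\Gamma$, and we choose a regular inversion-free admissible fan.

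\textbf{Interior.} For $x\in\calD_L$ the stabilizer acts on $T_x\calD_L\cong \bbC\otimes V^-$, where $V^-$ has dimension $n$. Let $g$ have eigenvalue $\zeta$ on the positive line and eigenvalues $\zeta\exp(2\pi i y_j)$ on the complementary $n$ directions. Then
\[
\mathrm{RT}(g)=\sum_j\{y_j-\theta\},
\]
where $\theta$ is the exponent of $\zeta$ and the $y_j$ run over exponents of eigenvalues of $g$ on $L\otimes\bbR$ orthogonal to the positive plane.

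The characteristic polynomial of $g$ over $\bbQ$ factors into $\Phi_d$'s, with $\zeta$ and $\bar\zeta$ roots of one factor $\Phi_{d_0}$. There are two cases.
\begin{enumerate}
\item If $\varphi(d_0)\ge 3$, the remaining roots of $\Phi_{d_0}$ already give a contribution of at least $c_{\min}(\Phi_{d_0},\bbQ)>1$, outside the small list in Lemma \ref{lemma_RT_sum_irreducible}.
\item If $d_0\in\{1,2,3,4,6\}$, the other factors contribute at least $s_{\min}$ each. The rational eigenvalues $\pm1$ contribute $1/2$ each when they differ from $\zeta$, and $0$ when they equal $\zeta$, but then $\zeta=\pm1$. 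An element with $\zeta=\pm1$ and eigenvalue $\mp1$ of multiplicity one is a reflection, which GHS excludes, or which yields canonical singularities via the quasi-reflection analysis.
\end{enumerate}
Counting dimensions, an element with $\mathrm{RT}<1$ must have all but few eigenvalues equal to $\zeta$. The finitely many remaining types, with low-degree factors $\Phi_3,\Phi_4,\Phi_6,\Phi_8,\Phi_{12},\dots$, are listed by a finite enumeration as in Algorithm \ref{alg:low-age-types}. We check that $n\ge6$ forces age $\ge1$. The example $\frac16(1,1,1,1,1)$ at $n=5$ shows which configuration becomes dangerous: $\zeta=\zeta_6$ and five eigenvalues $\zeta_6^{2}$, giving age $5/6$. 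For $n\ge6$, the sixth direction contributes at least $1/6$.

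\textbf{Boundary.} We mimic Lemma \ref{lem:boundary-interior-comparison}. At a $0$-dimensional cusp (isotropic line $I$), the Levi part is $\GL(I)\times\O(M)$ with $M$ of signature $(1,n-1)$, and $a_\gamma=\pm1$. The toric directions carry the action of $\O(M)$ on $\Lie(U_P)\otimes\bbC\cong M_\bbC$, up to the sign $a_\gamma$. Because the fan is inversion-free, each cone that $\gamma$ preserves is fixed pointwise, so the eigenvalue-$1$ directions in $\mathrm{Span}(\tau)$ are discarded, while the nontrivial eigenvalues survive on $T_{u_0}O(\tau)$. We then construct an interior point fixed by the Levi part $\delta$, using the positive and negative combinations of $I$ and $I^\vee$ as in the unitary case, and show $\mathrm{RT}(\gamma)\ge \mathrm{RT}_{T_y}(\delta)$. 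This reduces to the interior bound, with quasi-reflections excluded by Ma's boundary result as in Proposition \ref{prop:boundary-no-qref}. At a $1$-dimensional cusp (isotropic plane) the Levi is $\GL_2\times\O(n-2)$. The same comparison works, with the torus now one-dimensional, and GHS's Theorem 3 argument upgraded by our interior estimate.

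The main obstacle is the interior enumeration near $n=6$. GHS used crude bounds valid only for $n\ge9$, so we must treat the cases $n=6,7,8$ exactly. We would first try bounding via $c_{\min}$ and $s_{\min}$ together with the dimension count, and finish with a finite computer check of all char-polynomial types of degree $n+2\le10$.
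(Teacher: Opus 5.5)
Your proposal has a genuine gap. It never deals with the feature that separates $\O^+(2,n)$ from the unitary case $p,q\ge2$: stabilizers really do contain quasi-reflections.

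\textbf{Interior.} Reflections in vectors of negative norm (for example $(-2)$-vectors of an even lattice) lie in $\O^+(L)$. So the plain Reid--Shepherd-Barron--Tai criterion, which assumes a group without quasi-reflections, cannot be applied to $\Gamma_x$. One needs the modified criterion of Gritsenko--Hulek--Sankaran (their Lemma 2.14): if $k$ is minimal with $g^k$ a quasi-reflection, one must show $\mathrm{RT}'(g^\ell)\ge1$ for all $1\le\ell<k$, with the exponent on the reflecting direction doubled. You dismiss this as ``which GHS excludes, or which yields canonical singularities via the quasi-reflection analysis''. However, GHS do not exclude such elements, and their treatment (Proposition 2.15) only reaches $n\ge7$. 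The case $n=6$ is exactly where new input is required. You also consider only $g$ that is itself a reflection, not $g$ with some power a quasi-reflection. The missing ingredient is a refinement of GHS's Theorem 2.12: the reflecting direction comes from a one-dimensional $g$-summand $\calV_{n_{j_0}}$ of $L_\bbQ$ with $n_{j_0}\in\{1,2\}$, and its eigenvalue adds to GHS's lower bound for $\mathrm{RT}'(g^\ell)$. This is combined with an induction on the order of the character $\alpha$ and a check of $n_0\in\{3,4,5,6,8,10,12\}$.

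\textbf{Boundary.} You claim quasi-reflections are excluded by Ma's result as in Proposition~\ref{prop:boundary-no-qref}. Part (2) of that proposition rests on Corollary~\ref{cor:branch-codimge2} (no interior quasi-reflections), which is false for $\O(2,n)$. Closures of reflective divisors meet the boundary, so boundary stabilizers do contain quasi-reflections, and the modified criterion is needed there as well.

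The Levi comparison also fails at $1$-dimensional cusps. The toric eigenvalue $\exp_e(t)$ is not determined by $\delta=(U_\gamma,X_\gamma,Z_\gamma)$; you only get $\mathrm{RT}(\gamma)=\mathrm{RT}_{T_y}(\delta)+\{t\}$. When a power of $\delta$ acts as a quasi-reflection on $T_y$ (e.g.\ $\delta$ a reflection, with $\mathrm{RT}_{T_y}(\delta)=1/2$), the interior bound gives nothing. You then need separate control of $\exp_e(t)$:
\begin{itemize}
\item $Z_\gamma=\pm1$ and $X_\gamma^2=1$ force $\exp_e(2t)=1$;
\item if $\gamma^k$ is a quasi-reflection, then $\xi^{2k}=\exp_e(kt)=1$ for $\xi=c\tau_0+d$, and the nontrivial eigenvalue lies on a one-dimensional summand of $I^\perp/I$.
\end{itemize}
This is what gives $\mathrm{RT}'(\gamma^\ell)\ge\frac13+\frac13+\frac{n-3}{6}$ in the critical case. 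GHS's restriction $n\ge9$ came from these cusps (their Corollary 2.31). The interior without quasi-reflections is already their Theorem 2.10 for $n\ge6$. So you have located the main obstacle in the wrong place.

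\textbf{Interior step (1) is also false as stated.} The contribution of $\Phi_{d_0}$ is $S(X_{\Phi_{d_0}}\setminus\{\theta,-\theta\},\{\theta\})$. This is not a bipartition sum, so it is not bounded below by $c_{\min}$. For $d_0=5$ and $\theta=1/5$ it equals $1/5+2/5=3/5<1$, so the case $\varphi(d_0)=4$ needs separate treatment.
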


We also show that the bound $n\geq6$ is sharp: there exist $L$ and $\Gamma$ such that $\overline{\calF_L(\Gamma)}$ has noncanonical singularities for $2\leq n \leq 5$.

\subsubsection{Interior}
Let $L$ be a lattice of signature $(2,n)$ and $\mathcal{D}_L$ be the associated Hermitian symmetric space.
For a congruence subgroup $\Gamma$ of $\O^+(2,n)$ with $-1\in\Gamma$, Gritsenko--Hulek--Sankaran proved a criterion for canonicity of $\calF_L(\Gamma)\defeq\Gamma\backslash\mathcal{D}_L$.
For $x\in\mathcal D_L$, let
$\Gamma_x:=\operatorname{Stab}_\Gamma(x)$, and denote its image
in $\mathcal F_L(\Gamma)$ by $[x]$.
\begin{thm}[{\cite{Gritsenko2007kodaira}*{Theorem 2.10, Proposition 2.15}}]\label{thm_GHS_interior}
    \begin{enumerate}
                \item[(1)] If no nontrivial $g\in\Gamma_x$ acts as a quasi-reflection and $n\geq6$, then $\calF_L(\Gamma)$ has at worst a canonical singularity at $[x]$.
        \item[(2)] If some $g \in \Gamma_x$ acts as a quasi-reflection and $n \geq 7$, then $\calF_L(\Gamma)$ has at worst a canonical singularity at $[x]$.
    \end{enumerate}
\end{thm}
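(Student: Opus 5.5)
Since this is \cite{Gritsenko2007kodaira}*{Theorem 2.10, Proposition 2.15}, I would reprove it in the same pattern as Section \ref{sec:ages_unitary_actions}, with the $E$-rational structure replaced by the $\bbQ$-rational one.

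\textbf{Linear algebra of the action.} A point $x\in\mathcal D_L$ is a line $\bbC\omega$ with $(\omega,\omega)=0$ and $(\omega,\overline\omega)>0$. Its orthogonal complement $N_x:=\langle\omega,\overline\omega\rangle^\perp\subset L\otimes\bbC$ is the complexification of a negative definite real $n$-space, and
\[
T_x\mathcal D_L\cong\Hom(\bbC\omega,N_x).
\]
For $g\in\Gamma_x$, write $g\omega=\xi\omega$, so $g\overline\omega=\overline\xi\,\overline\omega$. If the eigenvalues of $g$ on $N_x$ are $\lambda_1,\dots,\lambda_n$, those on $T_x$ are $\lambda_j\xi^{-1}$. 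Hence
\[
\mathrm{RT}(g)=\sum_{j=1}^n\{\mu_j-\theta\},
\]
where $\theta$ and $\mu_j$ are the exponents of $\xi$ and $\lambda_j$.

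The characteristic polynomial of $g$ is defined over $\bbQ$, so it is a product of cyclotomic polynomials $\Phi_d$. Let $d$ be the order of $\xi$. Then all Galois conjugates $\xi^k$, $k\in(\bbZ/d\bbZ)^\times$, occur as eigenvalues. Apart from $\xi$ and $\overline\xi$ (counted once each), they all occur in $N_x$. Since $-1\in\Gamma$, I may replace $g$ by $-g$ when convenient; this shifts $\theta$ by $1/2$ without changing the action.

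\textbf{Case $\varphi(d)$ large.} The conjugates $\xi^k$ with $k\neq\pm1$ contribute $\sum_k\{(k-1)/d\}$ to $\mathrm{RT}(g)$. By the counting of Lemma \ref{lemma_RT_sum_irreducible}, or directly via Lemma \ref{lem:C(d)}, this is $\ge1$ unless $d$ lies in a finite explicit list. For the remaining small $d$ (with $\varphi(d)\le 4$, or just above), I would enumerate. The eigenvalues on $N_x$ not forced by $\Phi_d$ are roots of other cyclotomic factors, and each such factor with exponent set $X$ contributes $S(X,\{\theta\})\ge s_{\min}$ in the sense of Lemma \ref{lem:external-point-bound}. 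Hence either a second nontrivial factor already gives age $\ge1$, or all remaining $\lambda_j$ are equal. Using $-1\in\Gamma$, I reduce to $\lambda_j=1$ for those $j$, or to $\lambda_j=\xi$ (in which case they contribute $0$). Then the contribution of the remaining eigenvalues $\lambda_j=1$ is $(n-\varphi(d)+2)\{-\theta\}$. For $d\in\{3,4,6\}$ this is at least $(n-\varphi(d)+2)/6$ after choosing the best sign, and $n\ge6$ makes it $\ge1$ for all the small cases.

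\textbf{Case $\xi=\pm1$.} Normalize $\xi=1$ using $-1$. The age is then $\sum_j\{\mu_j\}$ over the eigenvalues on $N_x$. This is a sum over a union of full Galois orbits $X_{\Phi_d}$ (with $\varphi(d)\ge2$) plus copies of $1/2$. Each $\Phi_d$ with $d>2$ contributes $\varphi(d)/2\ge1$. So the age is $<1$ only when $g$ has the single eigenvalue $-1$ on $N_x$, i.e.\ $g$ is a reflection, which is a quasi-reflection. This proves (1) by the Reid--Shepherd-Barron--Tai criterion (Theorem \ref{thm:Reid--Shepherd-Barron--Tai}).

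\textbf{Part (2) and the main obstacle.} For (2), let $R\subset\Gamma_x$ be the subgroup generated by quasi-reflections. By Chevalley--Shephard--Todd, $T_x/R$ is smooth, and $\Gamma_x/R$ acts on it without quasi-reflections. The main obstacle is computing ages for this induced action. The coordinates on $T_x/R$ are $R$-invariants: the coordinate along a reflection eigenline becomes its square, so its exponent doubles. I therefore expect to redo the case analysis with possibly one or two eigenvalue exponents doubled, which costs exactly one dimension, hence $n\ge7$. The delicate subcase is $g$ agreeing with a reflection up to an element of order $3$, $4$ or $6$. There I would enumerate the doubled-exponent configurations by hand, as in \cite{Gritsenko2007kodaira}*{Proposition 2.15}.
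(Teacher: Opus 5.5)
The paper quotes this statement from \cite{Gritsenko2007kodaira} without proof, so the comparison is with the GHS argument, which the paper re-runs and sharpens in the proof of Theorem~\ref{thm_orth_interior}.

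\textbf{Part (1).} Your outline follows that argument, but one step is false as stated. The dichotomy ``either a second nontrivial factor already gives age $\geq 1$, or all remaining $\lambda_j$ are equal'' does not hold.
\begin{itemize}
\item The paper's $s_{\min}$ only covers $\theta$ the exponent of a unit, i.e.\ $\theta\in\{0,1/2\}$ over $\bbQ$. For $\xi$ of order $3,4,6$ you only have $c(X)$ from Lemma~\ref{lem:external-point-bound}.
\item Under $c(X)$, the factors $\Phi_3,\Phi_4,\Phi_6$ and single eigenvalues $\pm1$ contribute only fractions. For example, $\Phi_4$ against $\theta=2/3$ gives $7/12+1/12=2/3$. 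Mixtures of such factors are not covered by your reduction.
\item Eigenvalues $\lambda_j=\xi$ on $N_x$ contribute $0$, so your count $(n-\varphi(d)+2)\{-\theta\}$ is not a lower bound as written.
\end{itemize}
The repair is an additive count when $\varphi(d)=2$:
\begin{itemize}
\item each eigenvalue $\pm1$ on $N_x$ contributes at least $1/6$;
\item each extra pair $\xi,\overline{\xi}$ contributes $\{-2\theta\}\geq 1/3$;
\item each other quadratic factor contributes at least $1/3$;
\item each factor of degree $\geq 4$ contributes at least $1$, by Lemmas~\ref{lemma_RT_sum_irreducible} and~\ref{lem:external-point-bound}.
\end{itemize}
This gives $\mathrm{RT}(g)\geq \min\{1,n/6\}$, which is exactly where $n\geq 6$ comes from (cf.\ the example $\frac16(1,\ldots,1)$ in Section~\ref{subsec:An example showing the optimality of the bound}).

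\textbf{Part (2) has a genuine gap.} First, your mechanism is wrong for the subgroup $R$ generated by all quasi-reflections in $\Gamma_x$. $R$ is a finite reflection group that need not be abelian. For instance, if $\Gamma=\O^+(L)$ and $L$ has an $A_2(-1)$ summand orthogonal to $\omega$, then $R\supset W(A_2)\cong S_3$, whose basic invariants have degrees $2$ and $3$. So the coordinates of $T_x/R$ are not squares of eigen-coordinates, and elements of $\Gamma_x/R$ need not be diagonal in them. ``One or two exponents doubled'' is therefore unjustified.

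The missing idea is the reduction to cyclic subgroups: if $T_x/C$ is canonical for every cyclic $C\subset\Gamma_x$, then so is $T_x/\Gamma_x$. The reason is as follows.
\begin{itemize}
\item The inertia group of any exceptional valuation over $T_x/\Gamma_x$ is cyclic.
\item Taking $C$ to be that inertia group, $\phi\colon T_x/C\to T_x/\Gamma_x$ is unramified along the valuation.
\item Then $K_{T_x/C}=\phi^*K_{T_x/\Gamma_x}+\mathrm{Ram}_\phi$ with $\mathrm{Ram}_\phi\geq 0$ gives $a(E,T_x/\Gamma_x)\geq a(E',T_x/C)$.
\end{itemize}
For cyclic $C=\langle g\rangle$ the quasi-reflections act on $T_x$ as reflections of order $2$, so $C$ contains at most one. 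Its reflection subgroup is $\langle g^k\rangle$, exactly one exponent doubles, and one needs the modified sum $\mathrm{RT}'(g^\ell)\geq 1$ for $1\leq \ell<k$. This is how \cite{Gritsenko2007kodaira}*{Lemma 2.14} is used in the proof of Theorem~\ref{thm_orth_interior}.

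Second, even granting this reduction, proving $\mathrm{RT}'(g^\ell)\geq1$ is the substance of \cite{Gritsenko2007kodaira}*{Proposition 2.15}. It needs:
\begin{itemize}
\item the structure of such $g$ from Theorem~\ref{thm:GHS_2.12} and Proposition~\ref{prop_decomp_L_quasi_ref}: a one-dimensional $\calV_{n_{j_0}}$ with $n_{j_0}\in\{1,2\}$ carrying the reflection, and $n_j\mid k$ or $n_j\mid 2k$ elsewhere;
\item the bound for $\varphi(n_0)>4$;
\item a case analysis for $n_0\in\{3,4,5,6,8,10,12\}$.
\end{itemize}
You defer all of this. The heuristic that doubling ``costs exactly one dimension'' is not an argument; indeed Theorem~\ref{thm_orth_interior} shows that $n\geq 6$ already suffices in (2).
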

Item (1) of the theorem above is sharp.
In this subsection, we refine the second case as follows:
\begin{thm}\label{thm_orth_interior}
    If some $g \in \Gamma_x$ acts as a quasi-reflection and $n \geq 6$, then $\calF_L(\Gamma)$ has at worst a canonical singularity at $[x]$.
\end{thm}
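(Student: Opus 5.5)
The plan is to follow the structure of \cite{Gritsenko2007kodaira}*{Section 2}, i.e.\ the proof of Proposition 2.15 there, and to sharpen the numerical estimates so that $n=6$ is also covered.

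\emph{Reduction.} Let $H\subset\Gamma_x$ be the image acting on $T_x\mathcal D_L\cong\bbC^n$, and let $R\subset H$ be the normal subgroup generated by quasi-reflections. As in GHS, these are induced by reflections $\pm\sigma_r$ in vectors $r$ of $L$ orthogonal to $x$. They act on $T_x$ with eigenvalue $-1$, and $T_x/R\cong\bbC^n$ by Chevalley's theorem. The singularity at $[x]$ is then $\bbC^n/(H/R)$. We apply Theorem~\ref{thm:Reid--Shepherd-Barron--Tai} to the induced action of $H/R$ on the new coordinates. Following GHS, the invariant coordinate along a reflection direction is $t=z^2$. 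So for $g\in H$ not in $R$, the RT sum of $gR$ is computed from the eigenvalues of $g$ with the exponents in the $R$-directions doubled. Concretely, write $T_x=W_R\oplus W'$ with $W_R$ spanned by the reflecting directions. Then $\mathrm{RT}(gR)=\mathrm{RT}_{W'}(g)+\sum_{W_R}\{2\theta_j\}$, minimized over representatives $g$ of the coset.

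\emph{Eigenvalue structure.} The element $g$ lies in $\O(2)\times\O(n)$ acting on $T_x=\bbC\otimes\bbC^n$ through the character $\zeta$ of the $\O(2)$-part on the positive plane. Its eigenvalues on $T_x$ are therefore $\zeta^{-1}\mu_j$, where the $\mu_j$ are the eigenvalues of $g$ on the negative definite $n$-space $V_x^-$. That space is defined over $\bbQ$ up to the positive plane, so the characteristic polynomial of $g$ on $L\otimes\bbQ$ is a product of cyclotomic polynomials $\Phi_d$. This places us in the setting of the $c_{\min}$ and $s_{\min}$ estimates of Section~\ref{section_auxiliary_lemma}, with $E$ replaced by $\bbQ$, where every $\Phi_d$ is irreducible. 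The plan is to split by the order $m$ of $\zeta$ on the positive plane. If $\zeta=\pm1$ the situation is essentially real, and one argues as in GHS Lemma 2.12–2.14. If $\zeta$ has order $\ge3$, then $\zeta$ and $\bar\zeta$ are roots of a single $\Phi_m$ with $\varphi(m)\ge2$. The remaining $\varphi(m)-2$ roots of $\Phi_m$ lie in $V_x^-$ and contribute an RT sum that can be bounded below by Lemma~\ref{lemma_RT_sum_irreducible}.

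\emph{Main obstacle.} The hardest step is the borderline count when $n=6$ and $\zeta=\pm1$, i.e.\ $g$ acting on $T_x$ essentially as $\pm(\text{eigenvalues of }g|_{V_x^-})$. Doubling exponents in the $R$-directions can lower the sum. GHS needed $n\ge7$ because they bounded the number of $-1$'s absorbed by $R$ crudely. My first attempt will be this. Write $V_x^-=V_{R}\oplus V'$ with $\dim V_R=k$. If $V'$ contains a nontrivial cyclotomic block $\Phi_d$ with $d\ge3$, then its contribution is at least $\varphi(d)/2\cdot$(average exponent), which is $\ge1$ once $d\ge3$. The doubled part cannot cancel this, since doubling maps exponents $1/2$ to $0$ only. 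If all eigenvalues on $V'$ are $\pm1$, then $g$ restricted to $V'$ is itself a product of reflections. Multiplying $g$ by an element of $R$ or by $-1$ then reduces to either the trivial action or an action with at least two eigenvalues $-1$ outside $W_R$, giving $\mathrm{RT}\ge1$. The remaining small cases, namely $\varphi(d)\le 4$ with $d\in\{3,4,5,6,8,10,12\}$ combined with $\zeta$ of order $3,4,6$ and $n=6$, form a finite list. I would check this list by an enumeration analogous to Algorithm~\ref{alg:exceptional-pair}, and I expect the verification at $n=6$ to succeed exactly because the known noncanonical example $\frac16(1,1,1,1,1)$ needs $n=5$.
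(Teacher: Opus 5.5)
\paragraph{Verdict.} There is a genuine gap: your reduction to $H/R$ is not valid in general, and the case that actually needs $n=6$ is deferred without the idea that makes it work.

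\paragraph{The reduction.} Take $R$ to be the subgroup generated by all quasi-reflections in the stabilizer. The coordinates on $\bbC^n/R$ are then basic invariants of $R$. The action of $gR$ on them is not, in general, obtained by doubling exponents of $g$ along reflecting directions. For example, suppose the stabilizer contains the reflections in two roots $r_1,r_2\perp x$ spanning an $A_2$ lattice. Then $R$ acts on their span through $S_3$, whose basic invariants have degrees $2$ and $3$. An element acting by $-1$ on that plane acts on these invariants with exponents $0,\tfrac12$, which no representative of the coset produces by doubling. The paper avoids this by treating one cyclic group at a time, via Lemma 2.14 of \cite{Gritsenko2007kodaira}. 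If $k$ is minimal with $g^k$ a quasi-reflection, the reflection subgroup of $\langle g\rangle$ is $\langle g^k\rangle$, of order two with a single reflecting line. Only that one exponent is doubled, and what must be shown is $\mathrm{RT}'(g^\ell)\geq1$ for $1\leq\ell<k$, with $\mathrm{RT}'$ as in \eqref{def_sigma'}.

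\paragraph{The core estimate.} You have placed the difficulty in the wrong case. The case $\zeta=\pm1$ is the easy one. A block $\Phi_d$ with $\varphi(d)\geq2$ already contributes at least $s_{\min}(\Phi_d,\bbQ)\geq1$. Otherwise $g^2$ acts trivially, so $k=1$ and there is nothing to check; no count in $n$ enters. The hard case is $\zeta$ of order $r\geq3$, which you leave to an enumeration you only expect to succeed.

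Your claim there, that the remaining $\varphi(m)-2$ roots of $\Phi_m$ are controlled by Lemma \ref{lemma_RT_sum_irreducible}, is false. Measured against the single exponent of $\bar\zeta$, those roots do not form a bipartition. For $m=5$ they can contribute only $\tfrac15+\tfrac25=\tfrac35$.

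\paragraph{The missing idea.} It is Proposition \ref{prop_decomp_L_quasi_ref}: the reflecting direction comes from a rational line $\calV_{n_{j_0}}$ with $n_{j_0}\in\{1,2\}$. After doubling, its exponent under $g^\ell$ is therefore $\{2\ell a_{1,0}\}$. This is nonzero exactly when $\zeta^{2\ell}\neq1$, and for $\gcd(\ell,r)=1$ it is at least $1/3$ when $\zeta$ has order $3$, $4$ or $6$. This is the extra term in \eqref{ineq_sigma'}, and it is what GHS did not use; they did not lose anything through a crude count of $-1$'s absorbed by $R$.

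For $\gcd(\ell,r)=1$ this term is precisely the element that completes the other roots of $\Phi_{n_0}$ to a bipartition. So Lemma \ref{lemma_RT_sum_irreducible} does apply once it is included, giving $>1$ for $n_0\notin\{3,4,6\}$. For $n_0\in\{3,4,6\}$, combine it with the divisibility constraints of Proposition \ref{prop_decomp_L_quasi_ref} on the other blocks. For instance, when $n_0=6$ these force every further linear block to have eigenvalue $-1$, each contributing at least $1/3$. Together these give $\mathrm{RT}'(g^\ell)\geq1$. The exponents $\ell$ with $\gcd(\ell,r)>1$ still need separate treatment; the paper handles them by induction on the order of $\alpha$.
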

We then have a refinement of \cite{Gritsenko2007kodaira}*{Corollary 2.16}.
\begin{cor}\label{cor_orthogonal_interior}
    The orthogonal modular variety $\calF_L(\Gamma)$ has at worst canonical singularities if $n \geq 6$.
\end{cor}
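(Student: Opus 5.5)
The corollary should follow by combining the two interior statements pointwise. Fix $[x]\in\calF_L(\Gamma)$ and let $\Gamma_x$ be the stabilizer of a lift $x\in\mathcal D_L$. Canonicity is a local analytic property, so it suffices to show that the germ $(\calF_L(\Gamma),[x])\cong (T_x\mathcal D_L/\overline{\Gamma_x},0)$ is canonical for every $x$. Here $\overline{\Gamma_x}$ is the image of $\Gamma_x$ in $\GL(T_x\mathcal D_L)$, and the kernel $\{\pm1\}$ acts trivially. As in the convention of Section \ref{sec:other_groups}, we may assume $-1\in\Gamma$, since adjoining the center does not change the quotient.

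The argument then splits into two cases according to whether $\Gamma_x$ contains an element acting as a quasi-reflection on $T_x\mathcal D_L$. If no nontrivial element of $\Gamma_x$ acts as a quasi-reflection, then Theorem \ref{thm_GHS_interior}(1), applied with $n\geq6$, gives canonicity at $[x]$. This is essentially the Reid--Shepherd-Barron--Tai criterion (Theorem \ref{thm:Reid--Shepherd-Barron--Tai}) combined with the GHS estimate $\mathrm{RT}(g)\geq1$. If some $g\in\Gamma_x$ does act as a quasi-reflection, then Theorem \ref{thm_orth_interior} gives canonicity at $[x]$ for $n\geq6$. In that proof one would pass to the subgroup $R\subset\overline{\Gamma_x}$ generated by quasi-reflections. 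By Chevalley--Shephard--Todd, $T_x\mathcal D_L/R$ is smooth, and $\overline{\Gamma_x}/R$ acts on it without quasi-reflections, so the criterion applies to that induced action.

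These two cases exhaust all points. Hence every point of $\calF_L(\Gamma)$ is at worst a canonical singularity, and $\calF_L(\Gamma)$ has at worst canonical singularities for $n\geq 6$.

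There is no real obstacle at the level of the corollary itself; all the content sits in Theorem \ref{thm_orth_interior}. The delicate case there is lowering $n\geq7$ to $n\geq6$. This requires a finer age bound for the quotient $\overline{\Gamma_x}/R$ acting on $T_x\mathcal D_L/R$, where the linearized coordinates involve squares of reflection eigencoordinates, and the extra eigenvalue contributions must be tracked carefully. The only thing to check in the corollary is that the case split is exhaustive, which it is.
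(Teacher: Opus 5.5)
Your proposal is correct and follows the paper's route. The corollary is obtained pointwise by splitting according to whether $\Gamma_x$ contains a quasi-reflection, using Theorem \ref{thm_GHS_interior}(1) in the first case and Theorem \ref{thm_orth_interior} in the second, and all the real content (the modified sum $\mathrm{RT}'$ and the improvement from $n\geq7$ to $n\geq6$) lies in the latter theorem.
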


Recall the notation as in \cite{Gritsenko2007kodaira}.
In this subsection, we assume $n>2$ for simplicity.
Fix $x \in \mathcal{D}_L$ and consider the stabilizer $\Gamma_x$.
For $g \in \Gamma_x$, the $g$-module $L_\bbQ = L \otimes \bbQ$ decomposes as
\[
\calV_{n_0} \oplus \bigoplus_{j \geq 1} \calV_{n_j}
\]
where $\calV_n = W_{\Phi_n}$.
Let $\{a_{i,j} \mid i = 1,\ldots,\varphi(n_j)\} \subset \bbQ/\bbZ$ be the set of exponents of the eigenvalues of $g$ on $\calV_{n_j}$.
Then, there exists $i_0$ such that the multiset of exponents of the action of $g$ on the tangent space $T_x \mathcal{D}_L$ is given by
\[
\{a_{i,0} + a_{i_0,0} \mid i \neq i_0,i_0'\} \sqcup \bigsqcup_{j \geq 1}\{a_{i, j} + a_{i_0,0} \mid i = 1,\ldots,\varphi(n_j)\}
\]
where $i_0'$ is the unique integer such that $a_{i_0',0}=-a_{i_0,0}$.
Without loss of generality, we may assume $i_0 = 1, i_0'=2$.
Suppose some power of $g$ acts as a quasi-reflection on the tangent space $T_x\mathcal{D}_L$.
Let $k$ be the least positive integer such that the action of $g^k$ is a quasi-reflection.
\begin{thm}[{\cite{Gritsenko2007kodaira}*{Theorem 2.12}}]
\label{thm:GHS_2.12}
    With the above notation, as a $g$-module, the decomposition $L_\bbQ = \calV_{n_0} \oplus \bigoplus_{j>0} \calV_{n_j}$ satisfies either of the following:
    \begin{itemize}
        \item $(n_0,k)=n_0$ and $2(n_j,k)=n_j$ for some $j>0$;
        \item $2(n_0,k)=n_0$ and $(n_j,k)=n_j$ for some $j>0$.
    \end{itemize}
\end{thm}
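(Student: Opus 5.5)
The approach is to analyse the eigenvalues of $g^k$ on $T_x\mathcal{D}_L$ using the exponent description recorded just before the statement. Write $m=\mathrm{ord}(g)$. The exponents of $g^k$ on the tangent space are the multiset
\[
\{k(a_{i,0}+a_{1,0}) \mid i\neq 1,2\}\sqcup\bigsqcup_{j\geq1}\{k(a_{i,j}+a_{1,0})\mid 1\le i\le \varphi(n_j)\}.
\]
Since $g^k$ is a quasi-reflection, all but exactly one of these are $0$ in $\bbQ/\bbZ$.

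\textbf{Step 1.} I would first determine $ka_{1,0}$. The exponents $a_{i,0}$ run over all $\zeta_{n_0}$-primitive exponents $i/n_0$ with $i\in(\bbZ/n_0\bbZ)^\times$; for $\O^+(2,n)$ there is no splitting over $\bbQ$. So the $\calV_{n_0}$-block contributes the $\varphi(n_0)-2$ values $k(a+a_{1,0})$, where $a$ ranges over primitive exponents other than $\pm a_{1,0}$.

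The ambient space has dimension $n\geq3$. Hence either some block $\calV_{n_j}$ with $j\ge1$ exists, or $\varphi(n_0)=n+2\ge 5$.

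Suppose two exponents in a single block, say $k(a+a_{1,0})$ and $k(a'+a_{1,0})$, both vanish. Then $k(a-a')=0$. When this holds for all but one pair, it forces $k\cdot(\text{differences of primitive } n_j\text{-th exponents})=0$, i.e.\ $n_j\mid k$ or $n_j\mid 2k$. The latter arises because the differences include $2\cdot$ a primitive exponent once we compare $a$ with $-a$ when $a_{1,0}$ is involved. I would make this precise by comparing $k(a+a_{1,0})$ with $k(-a+a_{1,0})$, whose difference is $2ka$.

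\textbf{Step 2.} Count nonzero eigenvalues. Exactly one exponent is nonzero, so at most one block can contain a nonvanishing exponent, and inside that block only one exponent is nonzero. Consider the block containing it:
\begin{itemize}
\item If it is the $\calV_{n_0}$ block, then all other $j>0$ satisfy $k(a_{i,j}+a_{1,0})=0$ for all $i$. Hence $n_j\mid k$ up to the factor coming from $ka_{1,0}$. For $n_0$, the vanishing of all but one of $k(a+a_{1,0})$ gives $2(n_0,k)=n_0$, so $ka_{1,0}=1/2$. Then $ka_{i,j}=1/2$ for all $i$, which yields $2(n_j,k)=n_j$. This requires rephrasing, so the case analysis should track whether $ka_{1,0}$ is $0$ or $1/2$.
\item If $ka_{1,0}=0$, then $(n_0,k)=n_0$, all $j$-blocks vanish except one with a single nonzero exponent, and the sign symmetry $a\leftrightarrow -a$ forces $2ka_{i,j}=0$ with $ka_{i,j}\neq0$ for some $i$. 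This gives $2(n_j,k)=n_j$.
\item If $ka_{1,0}=1/2$, then $2(n_0,k)=n_0$. Vanishing elsewhere gives $ka_{i,j}=1/2$ on the other blocks, and the quasi-reflecting block must have $(n_j,k)=n_j$, its unique nonzero exponent being $0+1/2$.
\end{itemize}

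\textbf{Step 3.} The main obstacle is showing that $ka_{1,0}\in\{0,1/2\}$, that is, $2ka_{1,0}=0$. I would prove it as follows. Since $n\geq3$, there are at least two tangent eigenvalues from some single block. Within the $\calV_{n_0}$ block the pairs $a,-a$ both appear unless $\varphi(n_0)=2$; vanishing of both $k(a+a_{1,0})$ and $k(-a+a_{1,0})$ gives $2ka_{1,0}=0$. Otherwise I use that $\calV_{n_j}$ for $j\ge1$ is defined over $\bbQ$, so its exponents are closed under negation. This gives two vanishing exponents $k(\pm a+a_{1,0})$ in some block whenever that block has at least two vanishing exponents. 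The remaining small cases, with all blocks of size $\le 2$ and few vanishing values, are handled by direct inspection, using $n>2$.
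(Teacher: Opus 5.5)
The paper does not prove this statement; it quotes it from Gritsenko--Hulek--Sankaran. Your strategy is the standard one: use that exponents come in pairs $\pm a$ to force $2ka_{1,0}=0$, then split into $ka_{1,0}=0$ and $ka_{1,0}=1/2$. In effect this shows that $g^k$ acts on $L$ as $\pm$ a reflection.

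\textbf{The gap.} Your Step~3, which you rightly call the crux and on which Step~2 depends, is not proved. The assertion ``since $n\geq 3$, there are at least two tangent eigenvalues from some single block'' is false. Take $g=c\oplus(-1)\oplus 1_{n-1}$ on $P\oplus\langle-2\rangle^{\oplus n}$, with $P$ the $A_2$ lattice and $c$ its isometry of order $3$. Then $n_0=3$, so $\calV_{n_0}$ contributes no tangent exponent once $\pm a_{1,0}$ is removed. Every other block is one-dimensional, yet $g^3$ is a quasi-reflection. In such configurations no block contains two vanishing exponents, so your mechanism never starts. Leaving these cases to ``direct inspection'' leaves the key step open. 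Also, the first bullet of Step~2, where the nonzero exponent lies in the $\calV_{n_0}$-block, never occurs once Step~3 is known, and should be dropped.

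\textbf{The repair.} Apply your symmetry to the whole multiset at once rather than block by block.
Let $M=\{a_{i,0}\}_{i\neq 1,2}\sqcup\bigsqcup_{j\geq1}\{a_{i,j}\}_i$. It is stable under $a\mapsto -a$, because each $\calV_{n_j}$ is, and so is $\calV_{n_0}$ with $\pm a_{1,0}$ removed.
The quasi-reflection hypothesis says $kM=\{-ka_{1,0}\}^{n-1}\sqcup\{\nu\}$ with $\nu\neq -ka_{1,0}$.
\begin{itemize}
\item If $ka_{1,0}\neq -ka_{1,0}$, negation-stability forces the value $ka_{1,0}$ to occur $n-1\geq 2$ times in $kM$, but only one slot is available. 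Hence $2ka_{1,0}=0$.
\item Comparing $kM$ with its negation again gives $\nu=-\nu$, so $\nu=ka_{1,0}+1/2$.
\item For a primitive $n_j$-th exponent $a$, $ka$ has exact denominator $n_j/(n_j,k)$. So $ka_{1,0}\in\{0,1/2\}$ means $n_0/(n_0,k)\in\{1,2\}$ and $ka=ka_{1,0}$ on all of $\calV_{n_0}$. That block therefore contributes only zeros, and the exceptional exponent lies in some $\calV_{n_j}$ with $j>0$.
\item If $ka_{1,0}=0$, then $(n_0,k)=n_0$ and $ka_{i,j}=\nu=1/2$ gives $2(n_j,k)=n_j$.
\item If $ka_{1,0}=1/2$, then $2(n_0,k)=n_0$ and $ka_{i,j}=\nu=0$ gives $(n_j,k)=n_j$.
\end{itemize}
This is exactly the statement, and the hypothesis $n\geq 3$ enters only through $n-1\geq 2$.
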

We need the following refinement of the preceding theorem.
\begin{prop}\label{prop_decomp_L_quasi_ref}
    With the above notation, as a $g$-module, the decomposition $L_\bbQ = \calV_{n_0} \oplus \bigoplus_{j>0} \calV_{n_j}$ satisfies either of the following:
    \begin{description}
        \item[Case 1] There exists $j_0 > 0$ such that $k \in 2\bbZ+1, n_{j_0} = 2$ and $(n_{j}, k) = n_{j}$ for any $j \neq j_0$;
        \item[Case 2]  There exists $j_0 > 0$ such that $n_{j_0} = 1$ and $2(n_{j}, k) = n_{j}$ for any $j \neq j_0$.
    \end{description}
\end{prop}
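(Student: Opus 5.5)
Throughout, $g^k$ is a quasi-reflection: all but one of the exponents
\[
\{a_{i,0}+a_{1,0}\}_{i\ne1,2}\sqcup\{a_{i,j}+a_{1,0}\}_{j\geq1,\,i}
\]
become $0$ after multiplication by $k$, and exactly one does not. The plan is to use this exponent description directly, rather than only the divisibility conclusions of Theorem~\ref{thm:GHS_2.12}, and to read off the structure of every block $\calV_{n_j}$. First, let $\sigma$ be the set of indices $(i,j)$ with $k(a_{i,j}+a_{1,0})\equiv 0$. Since each $\calV_{n_j}$ is a $\bbQ$-irreducible block, its exponents form a full Galois orbit $\{b/n_j : b\in(\bbZ/n_j)^\times\}$. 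Hence $k a_{i,j}$, as $i$ varies, runs over the primitive $(n_j/(n_j,k))$-th roots, with each value occurring equally often. The condition $k a_{i,j}\equiv -k a_{1,0}$ pins $k a_{i,j}$ to a single value. So for $\calV_{n_j}$ to be entirely in $\sigma$ we need $n_j/(n_j,k)\in\{1,2\}$, and more precisely $k a_{i,j}\equiv -k a_{1,0}$ for all $i$.

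Next, I would locate the unique exceptional exponent. It lies either in $\calV_{n_0}$ (at some $i\ne1,2$) or in some $\calV_{n_{j_0}}$ with $j_0>0$. The first alternative would make $\calV_{n_0}$ contain both an exceptional and a non-exceptional value of $k a_{i,0}$ among $i\neq1,2$. The exponents $k a_{i,0}$ of $\calV_{n_0}$ are equidistributed over the primitive $(n_0/(n_0,k))$-th roots. So exactly one value of $ka_{i,0}$ would differ from $-ka_{1,0}$, up to the removed pair $a_{1,0},a_{2,0}=-a_{1,0}$. A counting argument then forces $\varphi(n_0)$ to be small. I would check by hand that this contradicts the existence of an exceptional index, using $n>2$ and the fact that $k$ is minimal. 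The expected conclusion is that the exceptional exponent sits in a block $\calV_{n_{j_0}}$ with $j_0>0$ and $\varphi(n_{j_0})=1$, i.e.\ $n_{j_0}\in\{1,2\}$. Indeed, a block with $\varphi\geq2$ containing exactly one exceptional value is impossible by the equidistribution.

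Then I would split according to $n_{j_0}$.
\begin{itemize}
\item If $n_{j_0}=2$, the exponent is $1/2$. The condition $k(1/2+a_{1,0})\not\equiv0$ while $k(a_{i,j}+a_{1,0})\equiv0$ elsewhere should force $k$ odd and $ka_{1,0}\equiv0$. Every other block then satisfies $ka_{i,j}\equiv0$, i.e.\ $(n_j,k)=n_j$. This is Case~1.
\item If $n_{j_0}=1$, the exponent is $0$, so $ka_{1,0}\not\equiv0$. Combined with $2ka_{1,0}\equiv0$, which I would derive from $\calV_{n_0}$ containing $\pm a_{1,0}$ (or from another $\pm$ pair), this gives $ka_{1,0}\equiv1/2$. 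The other blocks then satisfy $ka_{i,j}\equiv1/2$, i.e.\ $2(n_j,k)=n_j$. This is Case~2.
\end{itemize}
The statement's ``for any $j\ne j_0$'' includes $j=0$, so I must also verify the condition for $\calV_{n_0}$ itself, using $a_{i,0}$ for $i\ne1,2$ together with $ka_{2,0}=-ka_{1,0}$.

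The main obstacle is the first step: excluding the case where the exceptional eigenvalue lies in $\calV_{n_0}$, and handling degenerate situations. These include $\varphi(n_0)=2$, where no other indices remain in $\calV_{n_0}$, and repeated blocks with the same $n_j$. For these I would rely on Theorem~\ref{thm:GHS_2.12} to restrict $n_0$ and then treat the residual small cases directly.
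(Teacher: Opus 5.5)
\textbf{Gap in your $n_{j_0}=2$ branch.} Knowing $2ka_{1,0}\equiv 0$ only gives $ka_{1,0}\in\{0,\tfrac12\}$. The exceptional exponent $k(\tfrac12+a_{1,0})$ is then nonzero in two situations: $ka_{1,0}\equiv 0$ with $k$ odd, which is your Case~1, \emph{and} $ka_{1,0}\equiv\tfrac12$ with $k$ even. In the second situation every other block has $ka_{i,j}\equiv\tfrac12$, i.e.\ $2(n_j,k)=n_j$. But $n_{j_0}=2$ and $k$ is even, so neither Case~1 nor Case~2 holds.

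This configuration really occurs. Let $L=\langle1\rangle^{\oplus2}\oplus\langle-1\rangle^{\oplus2}\oplus\langle-1\rangle$ with basis $e_1,\dots,e_5$. Define $g$ by $ge_1=e_2$, $ge_2=-e_1$, $ge_3=e_4$, $ge_4=-e_3$, $ge_5=-e_5$, and put $\omega=e_1-\sqrt{-1}e_2$. Then $(\omega,\omega)=0$, $(\omega,\overline{\omega})=2>0$ and $g\omega=\sqrt{-1}\,\omega$, so $g\in\O^+(L)$ fixes $[\omega]$. The blocks are $\calV_4,\calV_4,\calV_2$ with $n_0=4$. On $T_{[\omega]}\mathcal{D}_L\cong\Hom(\mathbb{C}\omega,\omega^\perp/\mathbb{C}\omega)$, $g$ has eigenvalues $1,-1,\sqrt{-1}$. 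Hence $g$ is not a quasi-reflection, $g^2$ is, so $k=2$, and no block has $n_j=1$. Adding further copies of $\langle-1\rangle^{\oplus2}$ with the same rotation gives counterexamples for every odd $n\geq3$.

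So no argument can give the dichotomy as written. The correct second alternative is $n_{j_0}\in\{1,2\}$ with $(n_{j_0},k)=n_{j_0}$, i.e.\ $g^k$ acts trivially on the one-dimensional block $\calV_{n_{j_0}}$. The paper's proof makes the same slip when it calls the second case ``analogous'': $g^k=+1$ on a one-dimensional block does not force $n_{j_0}=1$. Since $2(n_0,k)=n_0$ with $k$ even forces $4\mid n_0$, the missing subcase does not appear in the $n_0=6$ computation in the proof of Theorem~\ref{thm_orth_interior}. It must, however, be handled for $n_0=4,8,12$.

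\textbf{Two smaller points.} First, your derivation of $2ka_{1,0}\equiv0$ ``from $\calV_{n_0}$ containing $\pm a_{1,0}$'' does not work as stated, because $a_{1,0}$ and $a_{2,0}$ are exactly the exponents removed from the tangent space. Argue instead as follows. The multiset of $ka_{i,j}$ over all indices other than $(1,0),(2,0)$ is stable under negation, since $g^k$ is real and a $\pm$ pair was removed. It has $n\geq3$ elements, and all but one equal $-ka_{1,0}$. If $ka_{1,0}\not\equiv -ka_{1,0}$, it would contain at least $n-1$ copies of each value, forcing $2(n-1)\leq n$, a contradiction.

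Second, with this in hand, the step you single out as the main obstacle is immediate. The exponents $ka_{i,0}$ run with equal multiplicities over the primitive roots of order $n_0/(n_0,k)$, and one of them, $ka_{1,0}$, has order at most $2$. So all of them equal $ka_{1,0}$, and every block-$0$ tangent exponent is $2ka_{1,0}\equiv0$. No analysis of small $\varphi(n_0)$ or of the minimality of $k$ is needed. Apart from the faulty branch, your route of reading everything off the exponents is essentially the paper's argument. The paper instead obtains the scalar action of $g^k$ on $\calV_{n_0}$ from Theorem~\ref{thm:GHS_2.12}, so your version is just more self-contained.
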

\begin{proof}
    In the first case of Theorem \ref{thm:GHS_2.12}, since $n_0$ divides $k$, the action of $g^k$ on $\calV_{n_0}$ is trivial.
    The action of $g^k$ on $\calV_{n_j}$ for $j > 0$ is trivial except for a unique $j = j_0$.
    Hence, for $j \neq j_0$, we have $n_j \mid k$.
    Since the action of $g^k$ on $\calV_{n_0}$ is trivial, $g^k$ acts as a quasi-reflection on $\calV_{n_{j_0}}$.
    We then have $\dim_\bbQ \calV_{n_{j_0}} = \varphi(n_{j_0}) = 1$ which implies that $n_{j_0} = 2$.
    This is exactly the case 1.
    The second case is analogous, since the action of $g^k$ on $\calV_{n_0}$ is scalar $-1$.
\end{proof}

Since $g^k$ acts as a quasi-reflection, we need to consider another criterion for the canonical singularities.
Let $\zeta_{2k}^{a_1},\ldots,\zeta_{2k}^{a_n}$ be the eigenvalues of $g$ on $T_x\mathcal{D}_L$ such that $0 \leq a_i < 2k$ for any $i$.
Since $g^k$ acts as a quasi-reflection, we have $ka_n/2k = 1/2$ and $ka_i/2k = 0$ in $\bbQ/\bbZ$ for any $i=1,\ldots,n-1$.
We define
\begin{equation}\label{def_sigma'}
\mathrm{RT}'(g^\ell) \defeq \left\{\frac{\ell a_n}{k}\right\} + \sum_{i=1}^{n-1} \left\{\frac{\ell a_i}{2k}\right\}. 
\end{equation}
Consider the character $\alpha$ of the cyclic group generated by $g$ defined by
$\alpha(g) = e^{2\pi i a_{2,0}}$
as defined in \cite{Gritsenko2007kodaira}*{p.525}.
We now prove Theorem \ref{thm_orth_interior}.

\begin{proof}[Proof of Theorem \ref{thm_orth_interior}]
    Suppose that $g^k$ acts as a quasi-reflection and $k$ is the least positive integer such that $g^k$ does so.
    We may assume that $k >1$ and $j_0 = 1$ as in Proposition \ref{prop_decomp_L_quasi_ref}.
    By \cite{Gritsenko2007kodaira}*{Lemma 2.14}, it suffices to show $\mathrm{RT}'(g^\ell) \geq 1$ for any $1 \leq \ell <k$ under the assumption that $n \geq 6$. 
    
        We argue by induction on the order of $\alpha$.
    If the order of $\alpha$ is $1$, this is the first case as in Proposition \ref{prop_decomp_L_quasi_ref}.
    If there is no $j$ such that $\varphi(n_j) \geq 2$, then $n_j = 1$ or $2$ for any $j$.
    Since $g^k$ acts as a quasi-reflection, we have $n_j = 1$ for $j \neq j_0$ and hence $k=1$, which contradicts $k>1$.
    If there is some $j=j'$ such that $\varphi(n_{j'}) \geq 2$, by definition, $\mathrm{RT}'(g)$ has the lower bound
    \[
    \mathrm{RT}'(g) \geq \mathrm{RT}_{\calV_{n_j}}(g) \geq s_{\min}(\Phi_{n_j}, \bbQ) \geq 1
    \]
    where we define $s_{\min}(f,\bbQ)$ in a similar way as $s_{\min}(f,E)$.
        We may apply the same argument to every $g^\ell$, since the character $\alpha$ associated with $g^\ell$ is trivial.
    If $\alpha$ has order $2$, the same argument proves the claim.

        Suppose that $\alpha$ has order $r\geq3$.
        For $\ell$ with $(\ell,r)\neq1$, the character corresponding to $g^\ell$ has order strictly less than $r$.
    By the inductive assumption, we have $\mathrm{RT}'(g^\ell) \geq 1$ for any $(r,\ell) > 1$.
    Suppose $(r,\ell) = 1$.
    By Proposition \ref{prop_decomp_L_quasi_ref}, the action of $g^k$ on $\calV_{n_0}$ is $\pm 1$.
    Furthermore, we may assume $j_0$ as in Proposition \ref{prop_decomp_L_quasi_ref} is $1$.
    Hence, for each case as in Proposition \ref{prop_decomp_L_quasi_ref} we have the lower bound
    \begin{equation}\label{ineq_sigma'}
    \mathrm{RT}'(g^\ell) \geq \sum_{i \geq 2} \bigl\{\ell( a_{i,0}+a_{1,0})\} + \bigl\{2\ell a_{1,0}\}.
    \end{equation}
    By \cite{Gritsenko2007kodaira}*{Proposition 2.7}, $\mathrm{RT}'(g^\ell) \geq 1$ if $\varphi(n_0) > 4$.
    It remains to consider $\varphi(n_0) = 2,3,4$, i.e., $n_0 = 3,4,5,6,8,10,12$.
    A direct calculation using \eqref{ineq_sigma'} and $n \geq 6$ gives $\mathrm{RT}'(g^\ell)\geq1$ for every $\ell$ with $(n_0,\ell)=1$ when $n_0 = 5,8,10,12$.
    For $n_0 = 3,4,6$, we need further computations.
    We will prove the case $n_0 = 6$ and the remaining cases are similar.
    The contribution of $\calV_{n_1}$ to $\mathrm{RT}'(g^\ell)$ is at least $1/3$.
    If $\varphi(n_j) \geq 4$ for some $j>1$, then $\mathrm{RT}'(g^\ell) \geq 1$.
    If $\varphi(n_j) = 1$, then $n_j = 2$ by Proposition \ref{prop_decomp_L_quasi_ref} Case 2 and $2(6, k) = 6$.
    In this case, the contribution of $\calV_{n_j}$ with $j>1$ is at least $1/3$.
        If $\varphi(n_j)=2$, then $n_j=6$ by Proposition \ref{prop_decomp_L_quasi_ref}, Case 2, and $(6,k)=3$.
    In this case, the contribution of $\calV_{n_j}$ to $\mathrm{RT}'(g^\ell)$ is at least $1/3$.
    Hence if $n \geq 5$, then $\mathrm{RT}'(g^\ell) \geq 1$.
    This completes the proof.
\end{proof}

\begin{rem}
    The proof of the above Theorem follows essentially the same argument as in \cite{Gritsenko2007kodaira}.
        The only difference is that one takes into account the one-dimensional subrepresentation $\calV_{n_{j_0}}$ in Proposition \ref{prop_decomp_L_quasi_ref} and the eigenvalue of $g$ on it.
    This additional contribution yields a refinement of \cite{Gritsenko2007kodaira}*{Corollary 2.16}.
\end{rem}

\subsubsection{Boundary}
We now study the boundary of the toroidal compactification $\overline{\calF_L(\Gamma)}$.
We choose the same fan as in \cite{kondo1993kodaira} and \cite{Gritsenko2007kodaira}.
By the construction of toroidal compactifications as in \cite{ash2010smooth}, the boundary components correspond to isotropic subspaces of $L_\bbQ$. 
Since $L_\bbQ$ has the signature $(2,n)$, the dimensions of isotropic subspaces $I$ are at most two.
When $\dim_\bbQ I=1$, the corresponding boundary component $F_I$ lies over a zero-dimensional cusp of the Baily--Borel--Satake compactification of $\calF_L(\Gamma)$.
The following result handles this case.
\begin{thm}[{\cite{Gritsenko2007kodaira}*{Corollary 2.21}, \cite{ma2018kodaira}*{Theorem A.1}}]\label{thm_GHS_0_dim_cusp}
    If $\dim_\bbQ I = 1$, then the boundary $F_I$ has at worst canonical singularities.
\end{thm}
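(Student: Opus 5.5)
The plan is to follow the scheme of Lemma~\ref{lem:boundary-interior-comparison}, adapted to the orthogonal setting, where a $0$-dimensional cusp has a particularly simple local structure. For $\dim_\bbQ I=1$ the Siegel domain realization is the tube domain $\Lie(U_P)\otimes\bbC\supset\Lie(U_P)+\sqrt{-1}C_P$ of dimension $n$. There is no $\mathcal V_P$ or $F_P$ factor, and $U_P\cong I^\perp/I$ is a lattice of signature $(1,n-1)$. Near a boundary point over this cusp, the toroidal compactification is therefore locally $U_\sigma/G_x$, where $U_\sigma$ is a smooth affine toric chart of dimension $n$. The group $\Gamma_P/\Upsilon_P$ acts through its Levi part $(\pm1,\delta)\in\GL(I)\times\O(I^\perp/I)$, combined with translations. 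By the analogue of Lemma~\ref{lem:irregular singularity to non-irregular} we may assume $-1\in\Gamma$, so that the $\GL(I)$-factor can be normalized away.

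The first step is to fix a point $u_0$ in the torus orbit $O(\tau)$, $\tau\preceq\sigma$, and to take $\gamma\in G_x$. We split
\[
T_{u_0}U_\sigma \cong \bigl(\text{normal directions to } O(\tau)\bigr)\oplus \Lie(U_P)_\bbC/\mathrm{Span}_\bbC(\tau).
\]
Since the fan is inversion-free, $\gamma$ fixes each ray of $\tau$. In the normal directions $\gamma$ acts by roots of unity coming from the translation part, and we discard these only as nonnegative contributions. On the quotient $Q=\Lie(U_P)_\bbC/\mathrm{Span}_\bbC(\tau)$, $\gamma$ acts through $\delta$. Since $\delta$ preserves the rational subspace $\mathrm{Span}_\bbQ(\tau)$ pointwise, $Q$ is the complexification of a rational $\langle\delta\rangle$-module of dimension $n-\dim\tau$. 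Its characteristic polynomial is a product of cyclotomic polynomials $\Phi_d$ over $\bbQ$. Each $\Phi_d$ contributes the age $\varphi(d)/2$, because the exponents $a/d$ with $(a,d)=1$ pair off as $x,1-x$. Hence any factor with $\varphi(d)\ge2$ already gives $\mathrm{RT}(\gamma)\ge1$, and so do two eigenvalues $-1$.

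The remaining cases are: (i) $\delta$ acts on $Q$ with a single eigenvalue $-1$ and otherwise trivially, and (ii) $\delta$ acts trivially on $Q$. In case (ii), $\gamma$ moves only normal toric directions. Here we use inversion-freeness together with the fact that $\gamma$ fixes $\tau$ pointwise, and argue as at the end of Lemma~\ref{lem:boundary-interior-comparison}: $\delta$ is then unipotent, hence trivial since it has finite order, so $\gamma$ is a translation lying in $\Upsilon_P$ and acts trivially. Case (i) is the main obstacle. There $\gamma$ acts, up to the normal directions, as a quasi-reflection, so Theorem~\ref{thm:Reid--Shepherd-Barron--Tai} cannot be applied directly. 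I would first try to show that the normal directions then carry a nontrivial eigenvalue. If they do not, then $\delta$ is a reflection of $I^\perp/I$ fixing $\tau$, which lifts to a reflection in $\Gamma$; its fixed divisor is then the closure of an interior branch divisor. Following \cite{Gritsenko2007kodaira}*{Lemma 2.14}, as in the proof of Theorem~\ref{thm_orth_interior}, I would pass to the quotient by the subgroup generated by quasi-reflections, which is smooth. It then remains to check the modified sum $\mathrm{RT}'(\gamma^\ell)\ge1$ with the same cyclotomic estimate, now applied to the complement of the reflection direction. This is where the dimension hypothesis and the detailed analysis of \cite{ma2018kodaira}*{Theorem A.1} enter.
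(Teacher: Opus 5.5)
The paper gives no argument of its own for this statement; it is quoted from \cite{Gritsenko2007kodaira}*{Corollary 2.21} and \cite{ma2018kodaira}*{Theorem A.1}. Most of your set-up is sound:
\begin{itemize}
\item the orbit directions $Q=\Lie(U_P)_\bbC/\mathrm{Span}_\bbC(\tau)$ carry the rational action of the linear part $\delta$;
\item the normal directions can be discarded as nonnegative contributions;
\item a block $\Phi_d$ with $d\geq2$ has age $\varphi(d)/2$;
\item case (ii) is handled correctly.
\end{itemize}

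The gap is case (i), which you leave open, and your proposed first step would not close it. Knowing that some normal direction has a nontrivial eigenvalue is not enough: an eigenvalue $e^{2\pi i/r}$ with $r\geq3$ contributes only $1/r$, and $1/2+1/r<1$. What is missing is the unipotence argument of case (ii), applied to $\gamma^2$. In case (i), $\delta^2$ is trivial on $Q$ and on $\mathrm{Span}(\tau)$, hence unipotent of finite order, hence $\delta^2=1$. So, after the $\pm1$ normalization, $\gamma^2\in\Gamma\cap U_P=\Upsilon_P$ acts trivially on the chart. Every eigenvalue of $\gamma$, normal ones included, is therefore $\pm1$. Either a normal eigenvalue is $-1$, giving $\mathrm{RT}(\gamma)\geq1$, or $\gamma$ is a quasi-reflection.

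For the quasi-reflection case you appeal to ``the dimension hypothesis'' and to Ma's analysis. The statement has no dimension hypothesis, and none is needed: the same trick finishes the proof for every $n$.
\begin{enumerate}
\item Let $m$ be minimal with $g^m$ a quasi-reflection. Its nontrivial eigenvalue cannot lie in a normal direction, since otherwise $\delta^m$ would be unipotent and $g^m\in\Upsilon_P$. So it is a rational eigenvalue $-1$ of $\delta^m$ on $Q$, and every normal eigenvalue $\lambda$ satisfies $\lambda^m=1$.
\item On a $\Phi_d$-block, $\delta^m$ acts by eigenvalues of exact order $d/(d,m)$. Hence $Q_\bbQ=W_{\Phi_2}\oplus Q'$, where $m$ is odd and every block of $Q'$ is of type $\Phi_d$ with $d\mid m$.
\item The reflection direction $W_{\Phi_2}$ has eigenvalue $-1=\zeta_{2m}^m$, so it contributes $\{\ell m/m\}=0$ to $\mathrm{RT}'(g^\ell)$.
\item Suppose $\delta^\ell$ were trivial on $Q'$ for some $1\leq\ell<m$, and put $e=\gcd(\ell,m)$. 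Then $\delta^e$ is trivial on $Q'$, so $\delta^{2e}$ is unipotent and $g^{2e}$ is trivial. Since $\lambda^{2e}=\lambda^m=1$ with $m$ odd, we get $\lambda^e=1$. Thus $g^e$ would be a quasi-reflection with $e<m$, a contradiction.
\item Hence $\delta^\ell$ acts on some block of $Q'$ through a full Galois orbit of primitive roots of odd order at least $3$. This contributes at least $1$ to $\mathrm{RT}'(g^\ell)$.
\end{enumerate}

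With these two observations your outline becomes a complete proof.
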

We investigate the singularity for the case  $\dim_\bbQ I = 2$.
The following theorem improves the bound in \cite{Gritsenko2007kodaira}*{Corollary 2.31}, which treats the case $n\geq9$.
\begin{thm}\label{thm_oethogonal_dim_1_cusp}
    If $\dim_\bbQ I = 2$ and $n \geq 6$, then the boundary $F_I$ has at worst canonical singularities.
\end{thm}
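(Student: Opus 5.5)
We follow the strategy of \cite{Gritsenko2007kodaira}*{Section 2.3} for one-dimensional cusps, but sharpen their lower bounds for the Reid--Shepherd-Barron--Tai sum using the cyclotomic estimates of Section \ref{section_auxiliary_lemma} with $E$ replaced by $\bbQ$.

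\textbf{Setup.} Fix a totally isotropic plane $I\subset L_\bbQ$ and put $W\defeq I^\perp/I$, a negative definite lattice of rank $n-2$. Near the boundary component $F_I$, a point of the partial compactification has local coordinates $(z_1,z_2,\underline{z}_3)$:
\begin{itemize}
\item $z_1$ is the toric coordinate, i.e.\ the one-dimensional torus normal to the boundary, and $U_P$ is one-dimensional;
\item $z_2$ is the coordinate on the modular curve attached to $\GL(I)$;
\item $\underline{z}_3\in W\otimes\bbC$ are the coordinates coming from $V_P$.
\end{itemize}
Let $g$ be an element of the stabilizer of a boundary point $\widetilde x$. By the analogue of Proposition \ref{prop:boundary-no-qref} (2), we may assume $-1\in\Gamma$ and exclude quasi-reflections in codimension one on the boundary. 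Elements acting as quasi-reflections in the interior directions are handled as in Theorem \ref{thm_orth_interior}.

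\textbf{Computing the action.} Write the Levi part of $g$ as $(A,B)\in\GL(I)\times\O(W)$; both have finite order. On the $z_1$-direction $g$ acts by $\det A$ (times a root of unity from the unipotent part), on $z_2$ by the derivative of a Möbius action, and on $\underline z_3$ by $B\otimes(\text{eigenvalue of }A)$.

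As in Lemma \ref{lem:boundary-interior-comparison}, we then produce an interior point $y$ fixed by the Levi part $\delta$ and aim to show
\[
\mathrm{RT}_{T_{\widetilde x}}(g)\geq \mathrm{RT}_{T_y\calD_L}(\delta).
\]
This reduces the problem to Corollary \ref{cor_orthogonal_interior} for the $\delta$-action. That corollary gives canonicity at $y$ for $n\geq 6$, i.e.\ $\mathrm{RT}\geq1$ for $\delta$, or the modified bound $\mathrm{RT}'\geq1$ in the quasi-reflection case.

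\textbf{Case split for the comparison.} The comparison is less clean than in the unitary case, because the toric direction only sees $\det A$ rather than the full conjugation action. We therefore split into cases according to whether $z_2$ is a fixed point of the modular curve with nontrivial stabilizer, i.e.\ $A$ of order $1,2,3,4$ or $6$.
\begin{itemize}
\item \emph{$A=\pm1$.} The sum reduces to the action of $B$ on $W\otimes\bbC$, possibly twisted by $-1$, plus the toric contribution. Here we bound $\mathrm{RT}$ using $c_{\min}$ and $s_{\min}$ over $\bbQ$ (Lemmas \ref{lemma_RT_sum_irreducible} and \ref{lem_RT_sum_A_B_empty} for $E=\bbQ$), applied to the decomposition of $W_\bbQ$ into cyclotomic pieces $\calV_{n_j}$.
\item \emph{$A$ of order $3,4,6$.} The eigenvalues $\zeta,\bar\zeta$ of $A$ twist every eigenvalue of $B$. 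Summing over the $n-2\geq4$ directions of $\underline z_3$, together with the $z_2$-contribution, gives $\mathrm{RT}\geq1$ once $n-2\geq4$, except for finitely many eigenvalue patterns.
\end{itemize}
The finitely many remaining patterns, where $n_j\in\{1,2,3,4,6\}$ and $n=6$ or $7$, are checked by direct enumeration, as in Algorithm \ref{alg:low-age-types}.

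\textbf{Main obstacle.} I expect the hardest step to be the borderline case $n=6$ where $A=\pm1$, $g$ is nontrivial only on $\underline z_3$ and $z_1$, and $B$ has eigenvalues $\pm1$ or cube roots of unity on a small subspace. This is exactly where \cite{Gritsenko2007kodaira} lost the range $6\leq n\leq 8$. My first attempt would be to show that if $g$ acts nontrivially with $\mathrm{RT}<1$, then $g$, or some power of it, is a quasi-reflection. That would contradict the boundary analogue of Proposition \ref{prop:boundary-no-qref}, or it would fall under the $\mathrm{RT}'$ criterion in the refined form of Proposition \ref{prop_decomp_L_quasi_ref}, where the one-dimensional summand $\calV_{n_{j_0}}$ provides the extra contribution needed.
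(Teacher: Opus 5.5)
There is a genuine gap: the reduction to the interior via $\mathrm{RT}_{T_{\widetilde x}}(g)\geq\mathrm{RT}_{T_y\calD_L}(\delta)$ does not work for $\O^+(2,n)$. In the unitary case, Proposition \ref{prop:boundary-no-qref} (2) rests on Corollary \ref{cor:branch-codimge2}, i.e.\ on the absence of interior quasi-reflections. For $\O^+(2,n)$ this fails: $\Gamma$ may contain reflections whose mirrors meet the boundary over $F_I$. So there is no analogue of (2) beyond the fact that the toric boundary divisor is not a branch divisor.

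Consequently, canonicity at $y$ does \emph{not} give $\mathrm{RT}_{T_y\calD_L}(\delta)\geq1$. If a power of $\delta$ is a quasi-reflection at $y$, the interior results only give $\mathrm{RT}'(\delta^\ell)\geq1$ for $\ell$ below the minimal exponent $k_\delta$, and $\mathrm{RT}(\delta)$ itself can be $1/2$. Such bounds do not pass through your comparison inequality. The criterion of \cite{Gritsenko2007kodaira}*{Lemma 2.14} must be applied with the minimal $k$ for which $g^k$ is a quasi-reflection at $\widetilde x$. Since the comparison discards the toric eigenvalue $\exp_e(t)$, nothing relates this $k$ to $k_\delta$, or even guarantees that $g$ has a quasi-reflection power when $\delta$ does. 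In that case you would need an honest $\mathrm{RT}(g)\geq1$, which $\mathrm{RT}(\delta)\geq1/2$ cannot supply. (Also, $\delta\notin\Gamma$ in general, so Corollary \ref{cor_orthogonal_interior} could at best be used elementwise.)

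The missing ingredient is direct control of the toric eigenvalue, Lemma \ref{lem_action_exp_e}. It says $\exp_e(2t)=1$ if $Z_\gamma=\pm1$ and $X_\gamma^2=1$, and $\exp_e(t)=1$ if $\xi^{-2}=1$ and $\xi^{-1}X_\gamma=1$, where $\xi=c\tau_0+d$. With it, the paper argues entirely at $\widetilde x$.

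\emph{No quasi-reflection power} (Lemma \ref{lem_interior_orthogonal_>1}). For $\xi\neq\pm1$, linear factors of $X_\gamma$ contribute $\geq1/6$, quadratic ones $\geq1/3$, larger ones $\geq1$, and $\xi^{-2}$ contributes $\geq1/3$. This gives $n/6\geq1$ uniformly, so no enumeration is needed. For $\xi=\pm1$, nonlinear factors contribute $\geq s_{\min}\geq1$. In the all-linear case, Lemma \ref{lem_action_exp_e} forces all eigenvalues to be $\pm1$, so at least two equal $-1$. Your announced main obstacle is therefore misplaced: with $A=\pm1$, a $\Phi_3$-factor of $B$ already contributes $s_{\min}(\Phi_3,\bbQ)=1$. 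The real difficulty is the all-linear case, which cannot be closed without knowing $\exp_e(t)=\pm1$.

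\emph{$\gamma^k$ a quasi-reflection, $k$ minimal.} Lemma \ref{lem_xi^2k=1} shows $\xi^{2k}=\exp_e(kt)=1$ and that the nontrivial eigenvalue lies in $I^\perp/I$. Hence it equals $-1$ and $\gamma$ acts with order $2k$. Lemma \ref{lem_boundary_orthonal_exp=1} then shows the affected summand has $\varphi(n_j)=1$. It gives $\mathrm{RT}'(\gamma^\ell)\geq\frac13+\frac13+\frac{n-3}{6}$ when $\xi^{-\ell}\neq\pm1$, while the all-$\pm1$ case with $\xi^{-\ell}=\pm1$ is excluded by $\ell<k$.

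These boundary-specific steps are what your proposal lacks. They cannot be imported from Proposition \ref{prop_decomp_L_quasi_ref}, which concerns the decomposition of $L_\bbQ$ at an interior point.
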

Let $F = F_I$ be the boundary component corresponding to $I$ and $P(F)$ be the parabolic subgroup of $\O^+(L_\bbQ)$ stabilizing $I$.
Any element $\gamma$ of $P(F)$ is of the form
\begin{equation}\label{eqn_gamma_P_I}
\gamma=
\begin{pmatrix}
    U_\gamma & V_\gamma & W_\gamma \\
    0 & X_\gamma & Y_\gamma\\
    0&0&Z_\gamma
\end{pmatrix}, \qquad  U_\gamma,Z_\gamma \in \GL_2(\bbQ),\ X_\gamma \in \GL_{n-2}(\bbQ)
\end{equation}
with suitable relations described in \cite{Gritsenko2007kodaira}*{Lemma 2.25}.
Through the realization of $F$ by the Siegel domain of the third kind, we may view $F$ as a quotient of $\bbC^\times \times \bbC^{n-2} \times \frakH$, where $\frakH$ is the upper half space.
After the torus embedding, a boundary point is $(0,\underline{w}, \tau)$.
For $(0,\underline{\omega_0},\tau_0)$ and an element $\gamma$ of its stabilizer, the action of $\gamma$ on the tangent space can be written as
\[
\begin{pmatrix}
    \exp_e(t) & 0 & 0\\
    * & (c\tau_0 + d)^{-1} X_\gamma & 0\\
    * & * & (c\tau_0+d)^{-2}
\end{pmatrix}, \qquad \abcd= Z_\gamma
\]
with some element $\exp_e(t)\in S^1$; for details, see \cite{Gritsenko2007kodaira}*{p.539} and \cite{kondo1993kodaira}*{Subsection 8.2}.
Since $Z_\gamma$ fixes $\tau_0$, $c\tau_0+d$ is a fourth or sixth root of unity, not necessarily primitive.
\begin{lem}\label{lem_action_exp_e}
    We have the following.
    \begin{enumerate}
        \item 
    If $Z_\gamma = \pm 1$ and $X_\gamma^2 = 1$, then $\exp_e(2t) = 1$.
    \item 
    If $(c\tau_0 + d)^{-2} = 1$ and $(c\tau_0 + d)^{-1}X_\gamma =1$, then $\exp_e(t) = 1$.
    \end{enumerate}
\end{lem}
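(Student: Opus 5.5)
The plan is to read off $\exp_e(t)$ from the action of suitable powers or twists of $\gamma$ on the toric fibre coordinate. The key fact is that an element of $P(F)\cap\Gamma$ with trivial Levi part, fixing the boundary point, lies in the centre $U(F)\cap\Gamma$ of the unipotent radical. That centre is exactly the lattice quotiented out in the partial compactification, so it acts trivially on the torus $\bbC^\times$ and on its embedding. The argument mirrors the last paragraph of the proof of Lemma \ref{lem:boundary-interior-comparison}, using the description \eqref{eqn_gamma_P_I} and the relations of \cite{Gritsenko2007kodaira}*{Lemma 2.25}. Among those relations, $U_\gamma$ is determined by $Z_\gamma$ through the duality between $I$ and $L/I^\perp$; in suitable bases $U_\gamma=(Z_\gamma^{t})^{-1}$, up to the identification in loc.\ cit.

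For (1), I would consider $\gamma^2$. By hypothesis $Z_{\gamma^2}=Z_\gamma^2=1$, hence $U_{\gamma^2}=1$, and $X_{\gamma^2}=X_\gamma^2=1$. So $\gamma^2$ is unipotent and lies in the unipotent radical $N(F)$. It still fixes the boundary point $(0,\underline{\omega_0},\tau_0)$. The image of $\gamma^2$ in $N(F)/U(F)$ acts on the $\bbC^{n-2}$-coordinate by a translation, since with trivial Levi part the action is $\underline{w}\mapsto \underline{w}+(\text{const})$. Because the point is fixed, this translation is zero, so $\gamma^2\in U(F)\cap\Gamma$. Such an element acts trivially on the quotient by the lattice of $U(F)$ and hence on the torus coordinate. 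The upper-left entry of the tangent action of $\gamma^2$ is $\exp_e(t)^2=\exp_e(2t)$, so $\exp_e(2t)=1$.

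For (2), first note that $(c\tau_0+d)^{-2}=1$ gives $c\tau_0+d=\pm1$. An element of $\SL_2$ fixing $\tau_0\in\frakH$ with automorphy factor $\pm1$ must equal $\pm 1$. Indeed, the stabilizer of $\tau_0$ is conjugate into $\SO(2)$ with $c\tau_0+d$ the eigenvalue, so the element is the scalar $\pm1$. Hence $Z_\gamma=\varepsilon\cdot 1$ with $\varepsilon=\pm1$, and the second hypothesis gives $X_\gamma=\varepsilon\cdot 1$. Consequently $U_\gamma=\varepsilon\cdot1$, and the Levi part of $\gamma$ is the scalar $\varepsilon$. Since we may assume $-1\in\Gamma$ (as in the convention at the start of this section, analogous to Lemma \ref{lem:irregular singularity to non-irregular}), replace $\gamma$ by $\varepsilon\gamma$. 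This does not change the action on $\mathcal D_L$ or on the partial compactification, because $-1$ acts trivially. Now $\varepsilon\gamma$ has trivial Levi part and fixes the boundary point, so the argument of (1) shows $\varepsilon\gamma\in U(F)\cap\Gamma$. It therefore acts trivially on the torus coordinate, giving $\exp_e(t)=1$.

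The main obstacle is making precise that the first diagonal entry $\exp_e(t)$ of the tangent action is multiplicative in $\gamma$, i.e.\ a character on the stabilizer. A secondary point is that elements of $U(F)\cap\Gamma$ induce exactly $1$ on it, and not merely a root of unity. I would check both directly from the formula for the action on the Siegel domain of the third kind in \cite{Gritsenko2007kodaira}*{p.539} and \cite{kondo1993kodaira}*{Subsection 8.2}. At a boundary point with $\gamma$ acting by $\pm1$ on $U(F)$, the torus coordinate transforms by multiplication by a root of unity depending only on the translation data, and this is compatible with composition. Once this is in place, both statements reduce to the unipotent triviality argument above.
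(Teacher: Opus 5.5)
Your proposal is correct. For (2) it is the paper's argument: use $-1\in\Gamma$ to make the Levi part trivial, observe that the fixed point forces $\gamma\in U(F)$, and conclude that $\gamma$ acts trivially. You also supply a step the paper leaves implicit, namely that an element of $\SL_2(\bbR)$ fixing $\tau_0$ with $c\tau_0+d=\pm1$ is itself $\pm1$. This is what guarantees that $U_\gamma$, $X_\gamma$, $Z_\gamma$ are the same scalar, so that multiplying by $-1$ really produces a unipotent element.

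For (1) the routes differ. The paper simply defers to the latter half of the proof of \cite{Gritsenko2007kodaira}*{Proposition 2.28}. You instead apply the same unipotent mechanism to $\gamma^2$, which in effect deduces (1) from (2) applied to $\gamma^2$, since $Z_{\gamma^2}=1$ and $X_{\gamma^2}=X_\gamma^2=1$. This makes the lemma self-contained and shows that both parts come from one mechanism. The price is a comparison between the torus entries of $\gamma$ and $\gamma^2$. The multiplicativity you flag as the main obstacle is in fact immediate. Since $\gamma$ fixes the point, $d(\gamma^2)=(d\gamma)^2$ there. Because the tangent action displayed before the lemma is lower triangular, the $(1,1)$ entry of $d(\gamma^2)$ is $\exp_e(t)^2=\exp_e(2t)$. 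No separate inspection of the Siegel-domain formulas in \cite{kondo1993kodaira} is needed for this.

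One detail is worth writing out, though the paper glosses over it too: why a vanishing translation forces trivial image in $N(F)/U(F)$. At $\tau_0$ the translation induced by an element of $N(F)/U(F)$ on the $\bbC^{n-2}$-fibre has the form $\underline{\lambda}\tau_0+\underline{\mu}$ with $\underline{\lambda},\underline{\mu}$ real. Since $\operatorname{Im}\tau_0>0$, its vanishing forces $\underline{\lambda}=\underline{\mu}=0$.
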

\begin{proof}
    The first statement follows from the latter half of the proof of \cite{Gritsenko2007kodaira}*{Proposition 2.28}.
    The latter statement is merely a definition of $\exp_e(t)$.
    In fact, by $-1 \in \Gamma$, we may assume $c\tau_0 + d =1$ and $X_\gamma = 1$.
    Since $\gamma$ stabilizes some $(0, \underline{\omega_0}, \tau_0)$, we have $\gamma \in U(F)$ in the sense of \cite{Gritsenko2007kodaira}*{Lemma 2.25}.
    Then, the action of $\gamma$ is trivial.
\end{proof}
As a consequence, we may compute the Reid--Shepherd-Barron--Tai sum.
\begin{lem}\label{lem_interior_orthogonal_>1}
Assume that the action of $\gamma$
is nontrivial.
    If $n \geq 6$ and no power of $\gamma$ acts as a quasi-reflection at $(0,\underline{\omega_0},\tau_0)$, then $\mathrm{RT}(\gamma) \geq 1$.
\end{lem}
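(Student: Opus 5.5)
The plan is to follow the strategy of \cite{Gritsenko2007kodaira}*{Section 2.4} and to improve its estimates with the finer bounds for $c_{\min}$ and $s_{\min}$ from Section \ref{section_auxiliary_lemma}, as was done for the interior. The differential of $\gamma$ at $(0,\underline{\omega_0},\tau_0)$ is block lower triangular, so its eigenvalues are
\[
\exp_e(t),\qquad (c\tau_0+d)^{-1}\times(\text{eigenvalues of }X_\gamma),\qquad (c\tau_0+d)^{-2}.
\]
Write $\zeta:=(c\tau_0+d)^{-1}$, which is a fourth or sixth root of unity. Then $\mathrm{RT}(\gamma)$ is at least the sum of $\{\theta_\zeta+x\}$ over the exponents $x$ of $X_\gamma$, plus $\{2\theta_\zeta\}$ and the exponent of $\exp_e(t)$. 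Here $X_\gamma\in\O(n-2)$ (up to the definite lattice $I^\perp/I$) has finite order, and its characteristic polynomial is a product of cyclotomic polynomials $\Phi_d$ over $\bbQ$. The task therefore reduces to bounding $S(\{\theta_\zeta\},X)$-type sums, shifted by $\theta_\zeta$, for multisets $X$ that are unions of $X_{\Phi_d}$, with $\#X=n-2\geq4$.

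I would split into cases according to $\zeta$.

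\textbf{Case $\zeta=1$.} Here $(c\tau_0+d)^{-2}=1$. If $X_\gamma=1$, then Lemma \ref{lem_action_exp_e}(2) makes the action trivial, which is excluded. Otherwise $X_\gamma$ has a nontrivial eigenvalue. If some factor $\Phi_d$ with $\varphi(d)\geq2$ occurs, its contribution is $\sum_{x\in X_{\Phi_d}}\{x\}=\varphi(d)/2\geq1$. If only $\Phi_1,\Phi_2$ occur, each eigenvalue $-1$ contributes $1/2$. Since no power of $\gamma$ is a quasi-reflection, there are either at least two eigenvalues $-1$, or one eigenvalue $-1$ together with a nontrivial $\exp_e(t)$. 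In the second situation, Lemma \ref{lem_action_exp_e}(1) gives $\exp_e(t)=-1$, which contributes another $1/2$.

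\textbf{Case $\zeta=-1$.} Here $(c\tau_0+d)^{-2}=1$, and each eigenvalue $1$ of $X_\gamma$ becomes $-1$ and contributes $1/2$. If $X_\gamma=-1$, Lemma \ref{lem_action_exp_e}(2) applied to $-\gamma$ (using $-1\in\Gamma$) reduces to the trivial case. Otherwise, a pair of eigenvalues $1$ already gives $\mathrm{RT}\geq1$, and a pair of non-real conjugate eigenvalues contributes $1$ after the shift. The only possibility left is that $X_\gamma$ has eigenvalues in $\{\pm1\}$ with exactly one eigenvalue $1$. In that case $-\gamma$ falls under the previous case, and the pattern is a quasi-reflection pattern, which is excluded unless $\exp_e(t)\neq\pm1$.

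\textbf{Case $\zeta$ of order $3,4,6$.} Now $(c\tau_0+d)^{-2}$ contributes at least $1/3$. For each factor $\Phi_d$ of $X_\gamma$, the shifted sum $\sum_{x\in X_{\Phi_d}}\{x+\theta_\zeta\}$ is at least $s_{\min}(\Phi_d,\bbQ)$, with the table values $1/2$, $2/3$, or $1$ for the small $d$ (compare Lemma \ref{lem_RT_sum_A_B_empty} over $\bbQ$). For linear factors, each eigenvalue $\pm1$ shifted by a primitive fourth or sixth root of unity contributes at least $1/6$ (respectively at least $1/4$ for order $4$). With $n-2\geq4$ eigenvalues this gives
\[
\mathrm{RT}(\gamma)\geq \tfrac13+4\cdot\tfrac16=1
\]
in the worst case, namely $\zeta$ of order $6$ and all eigenvalues equal to the unit $\zeta^{-1}$-opposite. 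I would enumerate these finitely many configurations, exactly as in the interior proof of Theorem \ref{thm_orth_interior}, using $n\geq6$ to obtain at least four eigenvalues of $X_\gamma$.

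The main obstacle will be the configurations in which all eigenvalues of $\zeta X_\gamma$ have exponent $1/6$ or $1/3$, such as $\zeta$ of order $6$ and $X_\gamma=\pm1$. Here the counting must be sharp: one has to check that $\{2\theta_\zeta\}$ and the four shifted exponents really total at least $1$, and to decide whether $\exp_e(t)$ can add something. This is exactly where the optimality example $\frac16(1,1,1,1,1)$ in dimension $5$ lives. To handle it, I would first pair $\gamma$ with $-\gamma$, which is allowed since $-1\in\Gamma$, to normalize $\zeta$ to order $3$ or $6$ and $X_\gamma$ to have a prescribed sign. Then I would run the direct calculation in this reduced case.
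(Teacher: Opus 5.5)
Your proposal is correct and follows essentially the paper's route: for $\xi=c\tau_0+d=\pm1$, nonlinear factors of $X_\gamma$ contribute at least one, and the all-linear case reduces via Lemma \ref{lem_action_exp_e} to eigenvalues $\pm1$, where the non-quasi-reflection hypothesis forces two eigenvalues $-1$; for $\xi$ of order $3,4,6$, $\xi^{-2}$ gives at least $\tfrac13$, linear and quadratic factors give at least $\tfrac16$ and $\tfrac13$, and factors of degree $\geq4$ give at least one, so $\mathrm{RT}(\gamma)\geq\tfrac13+\tfrac{n-2}{6}\geq1$. Two small corrections: after a shift by a root of order $3$ or $6$ a quadratic factor can contribute only $\tfrac13$ (e.g.\ $\Phi_3$ shifted to $\{0,\tfrac13\}$), so the relevant bound is $c_{\min}$ via Lemma \ref{lem:external-point-bound} rather than $s_{\min}(\Phi_d,\bbQ)$, which only allows shifts by $0,\tfrac12$; and since your displayed count already reaches $1$ in the worst configuration, your closing ``obstacle'' paragraph and the normalization by $-\gamma$ are unnecessary.
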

\begin{proof}
Put $\xi=c\tau_0+d$.
Consider the characteristic polynomial of $X_\gamma$.
If $\xi\neq\pm1$, $\xi$ has order $3$, $4$, or $6$.
A factor of degree $\geq 4$ of the characteristic polynomial contributes at least one
by Lemmas
\ref{lemma_RT_sum_irreducible} and
\ref{lem:external-point-bound}.
Otherwise, every linear factor contributes
at least $1/6$, and every quadratic factor at least $1/3$.
The 
eigenvalue $\xi^{-2}$ contributes at least $1/3$.
Hence $\mathrm{RT}(\gamma)\geq(n-2)/6+1/3=n/6\geq1$.

If $\xi=\pm1$, every nonlinear 
factor contributes at least one by Lemma \ref{lem_RT_sum_A_B_empty}. 
If all factors are linear, $X_\gamma^2=1$ and $\exp_e(t)=\pm1$ by Lemma \ref{lem_action_exp_e},
so all eigenvalues on the tangent space are $\pm1$.
Since the action is nontrivial and is not a
quasi-reflection, at least two eigenvalues are $-1$.
Thus $\mathrm{RT}(\gamma)\geq1$.
\end{proof}
We consider the case where some power of $\gamma$ acts as a quasi-reflection.
Let $k$ be the smallest positive integer such that $\gamma^k$ acts as a quasi-reflection on the tangent space.
Let $\xi = c\tau_0 +d$ and $r_\xi \in \bbQ/\bbZ$ be the exponent of $\xi$.
We decompose $I^\perp/I$ as a $X_\gamma$-module and denote it by
\[
I^\perp/I \cong \bigoplus_j \calV_{n_j}.
\]
We investigate the $\gamma$-module structure of the tangent space.
\begin{lem}\label{lem_xi^2k=1}
In the situation above, the action of
$\xi^{-k}X_\gamma^k$ on $I^\perp/I$
is nontrivial, and
$\xi^{2k}=\exp_e(kt)=1$.
\end{lem}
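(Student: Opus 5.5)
The plan is to read off the eigenvalues of $\gamma^k$ from the lower triangular form of the action on the tangent space at $(0,\underline{\omega_0},\tau_0)$. There $\gamma$ acts with diagonal blocks $\exp_e(t)$, $\xi^{-1}X_\gamma$ and $\xi^{-2}$, so $\gamma^k$ has eigenvalues
\[
\exp_e(kt),\qquad \xi^{-k}\cdot(\text{eigenvalues of }X_\gamma^k),\qquad \xi^{-2k}.
\]
Since $\gamma^k$ is a quasi-reflection, all but exactly one of these $n$ eigenvalues equal $1$. I will show that the exceptional eigenvalue must lie in the middle block $\xi^{-k}X_\gamma^k$. The two equalities $\xi^{2k}=1$ and $\exp_e(kt)=1$ then follow at once, since those eigenvalues are no longer allowed to be the exceptional one.

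\textbf{Excluding the middle block.} Suppose first that $\xi^{-k}X_\gamma^k$ acts trivially on $I^\perp/I$, so that $X_\gamma^k=\xi^k\cdot 1$.
\begin{itemize}
\item If $\xi^{2k}=1$, then $\xi^k=\pm1$, so $Z_{\gamma^k}=\pm1$ and $X_{\gamma^k}^2=1$. The derivative is $(c\tau_0+d)^{-1}$ for $\gamma^k$, that is, $\xi^{-k}$. By Lemma \ref{lem_action_exp_e}(2) applied to $\gamma^k$ (using $-1\in\Gamma$ to normalize the sign), $\exp_e(kt)=1$. Then $\gamma^k$ acts trivially, which contradicts its being a quasi-reflection.
\item Otherwise $\xi^{-2k}\neq1$. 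Then $\xi^k$ is not $\pm1$, while $X_\gamma^k=\xi^k\cdot 1$ with $X_\gamma$ rational, hence so are its eigenvalues' symmetric functions. This forces $n-2=\dim(I^\perp/I)$ to be a multiple of $\varphi(\operatorname{ord}\xi^k)\geq2$. More simply: $X_\gamma^k$ is a rational matrix equal to the non-real scalar $\xi^k$, which is impossible because a rational scalar matrix has rational scalar.
\end{itemize}
Hence the middle block is nontrivial.

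\textbf{The remaining eigenvalues.} Because the middle block is nontrivial, the unique exceptional eigenvalue of $\gamma^k$ lies there. The other two diagonal entries must therefore equal $1$, giving $\xi^{-2k}=1$ and $\exp_e(kt)=1$.

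\textbf{Main obstacle.} The delicate point is the triangular structure: the off-diagonal $*$ entries could make $\gamma^k$ non-semisimple. Since $\gamma$ has finite order on the local chart, though, it is diagonalizable, and its eigenvalues are exactly the diagonal entries. The rationality argument for $X_\gamma^k$ also needs care. $X_\gamma\in\GL_{n-2}(\bbQ)$, so $X_\gamma^k$ being a scalar forces that scalar to be rational, hence $\pm1$. This then gives $\xi^k=\pm1$ and returns us to the first subcase.
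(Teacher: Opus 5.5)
Your proof is correct and follows the paper's argument: assuming the middle block $\xi^{-k}X_\gamma^k$ is trivial, the rationality (the paper says realness) of $X_\gamma^k$ forces $\xi^k=\pm1$. Then Lemma~\ref{lem_action_exp_e}(2) applied to $\gamma^k$ gives $\exp_e(kt)=1$, so $\gamma^k$ acts trivially, a contradiction; and the quasi-reflection property forces the two one-dimensional blocks to be trivial. Your split into the subcases $\xi^{2k}=1$ and $\xi^{2k}\neq1$ just unfolds the paper's remark that a real root of unity is $\pm1$, and your aside about $Z_{\gamma^k}=\pm1$ and $X_{\gamma^k}^2=1$ is unnecessary, since only $\xi^{-2k}=1$ and $\xi^{-k}X_\gamma^k=1$ (the hypotheses of part (2)) are used.
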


\begin{proof}
If $\xi^{-k}X_\gamma^k$ is trivial, then
$X_\gamma^k=\xi^k \cdot 1_{n-2}$.
Since $X_\gamma^k$ is a real matrix and
$I^\perp/I$ is nonzero, $\xi^k$ would be
real, hence $\xi^{2k}=1$.
Applying Lemma \ref{lem_action_exp_e} (2) to $\gamma^k$, we have
$\exp_e(kt)=1$.
Hence, $\gamma^k$ acts trivially on the tangent space, 
contrary to the assumption
that $\gamma^k$ is a quasi-reflection.
Thus $\xi^{-k}X_\gamma^k$ is nontrivial.
Since a quasi-reflection has only one
nontrivial eigenvalue, both remaining
one-dimensional blocks are trivial.
This gives $\xi^{2k}=\exp_e(kt)=1$.
\end{proof}

Since $\xi^k=\pm1$, the matrix $\xi^{-k}X_\gamma^k$ is a nontrivial real matrix.
Its unique nontrivial eigenvalue is
therefore real and hence equals $-1$.
Thus $\gamma^{2k}$ acts trivially on the tangent space.
By the minimality of $k$, the action of $\langle \gamma \rangle$ factors through the cyclic group of order $2k$.

Fix $\ell$ with $1 \leq \ell < k$.
We denote the $X_{\gamma^\ell} = X_\gamma^\ell$-module structure of $I^\perp/I$ by $\bigoplus_m \calV_{n_m'}$.

\begin{lem}\label{lem_boundary_orthonal_exp=1}
    Suppose that the action of $\xi^{-k}X_\gamma^k$ on $I^\perp/I$ is nontrivial.
    If $\exp_e(kt) = 1$ and $n \geq 5$, then $\mathrm{RT}'(\gamma^\ell) \geq 1$.
\end{lem}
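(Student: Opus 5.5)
By Lemma \ref{lem_xi^2k=1} and the discussion after it, we have $\xi^{2k}=1$, $\exp_e(kt)=1$, and the unique nontrivial eigenvalue of $\gamma^k$ on the tangent space is $-1$. It lies on the block $\xi^{-k}X_\gamma^k$ acting on $I^\perp/I$. Hence all eigenvalue exponents of $\gamma$ lie in $\frac{1}{2k}\bbZ/\bbZ$, and $\mathrm{RT}'(\gamma^\ell)$ is defined as in \eqref{def_sigma'}: the distinguished eigenvalue $\zeta_{2k}^{a_n}$ sits in the $I^\perp/I$ block and is counted with doubled exponent. The plan mirrors the interior argument (Theorem \ref{thm_orth_interior}). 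Fix $\ell$ with $1\le\ell<k$ and bound $\mathrm{RT}'(\gamma^\ell)$ from below blockwise, using the block-triangular form of the action. The contributions are:
\begin{itemize}
\item[(a)] the toric direction $\exp_e(\ell t)$, which we simply discard as $\ge 0$;
\item[(b)] the $n-2$ eigenvalues $\xi^{-\ell}\mu$, where $\mu$ runs over the eigenvalues of $X_\gamma^\ell$ on $I^\perp/I$;
\item[(c)] the eigenvalue $\xi^{-2\ell}$.
\end{itemize}

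First, decompose $I^\perp/I=\bigoplus_j\calV_{n_j}$ as a rational $X_\gamma$-module, in analogy with Proposition \ref{prop_decomp_L_quasi_ref}. The quasi-reflection condition forces $\xi^{-k}X_\gamma^k$ to have exactly one eigenvalue $-1$ and all others $1$, with $\xi^k=\pm1$. So either $\xi^k=1$ and exactly one summand $\calV_{n_{j_0}}$ has $n_{j_0}=2$ while $n_j\mid k$ for all other $j$, or $\xi^k=-1$ and exactly one summand has $n_{j_0}=1$ while $2(n_j,k)=n_j$ for all other $j$. The argument mirrors Theorem \ref{thm:GHS_2.12}, using rationality of $X_\gamma$ to force $\varphi(n_{j_0})=1$.

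Next, split on the order of $\xi$. If $\xi=\pm1$ (so $\xi^\ell=\pm1$ on that block), then any summand with $\varphi(n_j)\ge 2$ contributes at least $s_{\min}(\Phi_{n_j},\bbQ)\ge1$ for $\gamma^\ell$, by Lemma \ref{lem_RT_sum_A_B_empty} over $\bbQ$ (shift by a unit $\pm1$). If no such summand exists, all $n_j\in\{1,2\}$, which forces $k=1$; but $k>1$ may be assumed, since otherwise there is no $\ell$ to check. If $\xi$ has order $3$, $4$ or $6$, we argue by induction on the order of $\xi^\ell$, exactly as in the induction on the order of $\alpha$ in Theorem \ref{thm_orth_interior}. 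When $(\ell,\operatorname{ord}\xi)>1$ the order drops and we fall into an earlier case. When $(\ell,\operatorname{ord}\xi)=1$, the eigenvalue $\xi^{-2\ell}$ contributes at least $1/3$. The distinguished eigenvalue in $\calV_{n_{j_0}}$ contributes $\{\ell a_n/k\}$, and since $\xi^{-\ell}$ is nontrivial this is at least $1/3$ (respectively $1/6$). Each remaining linear summand contributes at least $1/6$ and each quadratic summand at least $1/3$. Any summand of degree $\ge4$ contributes $\ge1$ by Lemmas \ref{lemma_RT_sum_irreducible} and \ref{lem:external-point-bound}.

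Summing gives roughly $\mathrm{RT}'(\gamma^\ell)\ge 1/3+1/6+(n-3)/6$, and this is $\ge 1$ once $n\ge5$. The main obstacle is the borderline configuration: $\xi$ a primitive sixth (or fourth) root of unity with every $\calV_{n_j}$ linear, where the crude bound $1/6$ per eigenvalue may fall short. There I would use the constraint $2(n_j,k)=n_j$ (or $n_j\mid k$) together with $\xi^k=\pm1$ to pin down $\xi^{-\ell}\mu$ exactly. Then I would check directly, as in the case $n_0=6$ in Theorem \ref{thm_orth_interior}, that each such eigenvalue has exponent at least $1/3$, or that the $\xi^{-2\ell}$ term compensates.
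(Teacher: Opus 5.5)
Your reduction matches the paper's: the nontrivial eigenvalue of $\gamma^k$ lies in a rational summand $\calV_{n_{j_0}}$ with $\varphi(n_{j_0})=1$, you discard the toric direction, and you bound the remaining blocks separately. The decisive estimate, however, is not closed. Your final sum is $\frac13+\frac16+\frac{n-3}{6}=\frac n6$, which equals $\frac56$ at $n=5$. So the claim ``$\geq1$ once $n\geq5$'' is false, and the borderline configuration you flag is left open.

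The missing step is one you half-state yourself. Because $\varphi(n_{j_0})=1$, $X_\gamma$ acts on $\calV_{n_{j_0}}$ by $\pm1$, so the eigenvalue of $\gamma^\ell$ on the distinguished line is $\pm\xi^{-\ell}$. Since $\mathrm{RT}'$ counts this eigenvalue with doubled exponent, its contribution is the exponent of $\xi^{-2\ell}$. Whenever $\xi^{-\ell}\neq\pm1$, this is a nontrivial root of unity of order $2$ or $3$, so the contribution is at least $1/3$; your fallback ``respectively $1/6$'' never occurs. Adding $\geq 1/3$ from the $\frakH$-direction eigenvalue $\xi^{-2\ell}$ and $\geq 1/6$ from each of the other $n-3$ directions gives $\frac13+\frac13+\frac{n-3}{6}=\frac{n+1}{6}\geq1$ for $n\geq5$. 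This is exactly the paper's bound, and no further configuration check is needed.

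The other branch also needs repair.
\begin{itemize}
\item Split according to whether $\xi^{-\ell}=\pm1$, as the paper does, rather than by the order of $\xi$ with an induction. The induction is not well defined, since $k$ and $\mathrm{RT}'$ are attached to $\gamma$ and not to $\gamma^{d}$. It is also unnecessary, because the estimate above only uses $\xi^{-\ell}\neq\pm1$.
\item When $\xi^{-\ell}=\pm1$, you must decompose $I^\perp/I$ under $X_\gamma^\ell$, not under $X_\gamma$. An $X_\gamma$-summand with $\varphi(n_j)\geq2$ can become scalar under $X_\gamma^\ell$ and then contribute $0$; for example $\calV_3$ with $\xi=1$ and $3\mid\ell<k$. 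So $s_{\min}(\Phi_{n_j},\bbQ)\geq1$ does not apply to it.
\item If some $X_\gamma^\ell$-summand is nonlinear, it contributes at least $1$. If all are linear, the conclusion is not $k=1$. Instead, $X_\gamma^{2\ell}=1$, so by Lemma \ref{lem_action_exp_e} all eigenvalues of $\gamma^\ell$ on the tangent space are $\pm1$. Then $\gamma^{2\ell}$ acts trivially, contradicting that the action of $\gamma$ has order $2k$ with $1\leq\ell<k$.
\end{itemize}
A minor point: when $\xi^k=-1$, the distinguished summand may be $\calV_2$ with $k$ even rather than $\calV_1$. Only $\varphi(n_{j_0})=1$ is actually used, so this does not affect the argument.
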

\begin{proof}
    We claim that if $\varphi(n_j) \geq 2$, then the action of $\xi^{-k}X_\gamma^k$ is trivial on $\calV_{n_j}$.
    Since $\gamma^k$ acts as a quasi-reflection, either $k/n_j - kr_\xi$ or $k(n_j-1)/n_j - k r_\xi$ is an integer.
    Note that $2kr_\xi \in \bbZ$.
    We consider the case that $k/n_j - kr_\xi \in \bbZ$ since the other case can be handled similarly.
Since $k/n_j-kr_\xi\in\bbZ$ and $2kr_\xi\in\bbZ$,
doubling the first relation gives $2k/n_j\in\bbZ$.
Hence $n_j\mid2k$.
    If $n_j$ is odd, then $n_j$ divides $k$ and $kr_\xi \in \bbZ$.
    Hence for any $a \in (\bbZ/n_j\bbZ)^\times$, we have $ak/n_j - kr_\xi \in \bbZ$.
    This implies that the action of $\xi^{-k}X_\gamma^k$ is trivial on $\calV_{n_j}$.
    If $n_j$ is even, then any $a \in (\bbZ/n_j\bbZ)^\times$ can be represented by an odd integer.
    Hence we have
    \[
    \frac{ka}{n_j} - kr_\xi = \frac{k(a-1)}{n_j} + \frac{k}{n_j} - kr_\xi \in \bbZ
    \]
    and the action of $\xi^{-k}X_\gamma^k$ on $\calV_{n_j}$ is trivial.
    This shows the claim.
        It follows that if $\xi^{-k}X_{\gamma}^k$ acts nontrivially on $\calV_{n_j}$, then $\varphi(n_j)=1$.
    We may assume that such $j$ is $1$ and $\calV_{n_1} = \calV_{n_1'}$.

    Suppose that $\xi^{-\ell} = \pm 1$.
        We have already shown that $\mathrm{RT}'(\gamma^\ell)\geq1$ if $\varphi(n_m')\geq2$ for some $m$.
    When $\varphi(n_m') = 1$ for any $m$, the order of $X_\gamma^\ell$ is at most two.
    We have $\exp_e(\ell t) = \pm 1$ by Lemma \ref{lem_action_exp_e} and then all the eigenvalues of $\gamma^\ell$ on the tangent space are $\pm 1$.
    By $\ell < k$, there is no such case.

    Suppose that $\xi^{-\ell} \neq \pm 1$.
        The eigenvalue of $\xi^{-\ell}X_\gamma^\ell$ on $\calV_{n_1'}$ is never $\pm1$, and its contribution to $\mathrm{RT}'$ is at least $1/3$.
    By the same argument as in Lemma \ref{lem_interior_orthogonal_>1}, we have
    \[
    \mathrm{RT}'(\gamma^\ell) \geq \frac13 + \frac13 + \frac{n-3}{6} = \frac{n+1}{6} \geq 1.
    \]
    Here, the first (resp.\ second) $1/3$ is the contribution from $\xi^{-2\ell}$ (resp.\ $\calV_{n_1'}$).
\end{proof}

\begin{proof}[Proof of Theorem \ref{thm_oethogonal_dim_1_cusp}]
 The case in which no power acts as a quasi-reflection
is treated by Lemma \ref{lem_interior_orthogonal_>1}.
Otherwise, Lemmas \ref{lem_xi^2k=1} and
\ref{lem_boundary_orthonal_exp=1} give the required inequalities for $\mathrm{RT}'(\gamma)$
The modified Reid--Shepherd-Barron--Tai criterion \cite{Gritsenko2007kodaira}*{Lemma 2.14} proves the assertion.
\end{proof}

\subsection{Examples of noncanonical singularities}
\label{subsec:An example showing the optimality of the bound}
Let $P$ be the positive-definite even lattice with Gram matrix
\[
  \begin{pmatrix}
    2 & -1\\
    -1 & 2
  \end{pmatrix}
\]
with respect to a basis $e_1,e_2$, and put $L_n
  :=
  P\oplus \langle -2\rangle^{\oplus n}$.
Then $L_n$ has signature $(2,n)$.  Define
\[
  c
  :=
  \begin{pmatrix}
    0 & -1\\
    1 & -1
  \end{pmatrix}
  \in \O^+(P)
\]
and $g_n
  :=
  c\oplus
  \bigl(-\id_{\langle-2\rangle^{\oplus n}}\bigr)
  \in \O^+(L_n)$.
We have $c^3=1$, and hence $g_n$ has order $6$.

Let 
$\omega=e^{2\pi i/3}$ and $w=e_1-\omega e_2$.
A direct calculation gives $c(w)=\omega w$, $(w,w)=0$, and $(w,\overline w)=3>0$.
Thus $[w]\in\mathcal{D}_{L_n}$ and $g_n$ fixes $[w]$.
Put $W=\mathbb Cw$.  Since $W^\perp/W
  \cong
  \langle-2\rangle^{\oplus n}\otimes\mathbb C$,
we have 
  $T_{[w]}\mathcal{D}_{L_n}
  \cong
  \Hom\bigl(W,W^\perp/W\bigr)$.
The element $g_n$ acts on $W$ by $\omega$ and on
$W^\perp/W$ by $-1$.  Therefore it acts on the tangent
space by the scalar $-\omega^{-1}
  =
  e^{2\pi i/6}$.
Consequently, the cyclic quotient singularity at $[w]$ is
of type
\[
  \frac{1}{6}(\underbrace{1,\ldots,1}_{n}).
\]

Choose a torsion-free normal arithmetic
subgroup $\Gamma_0\triangleleft \O^+(L_n)$
and set $\Gamma_n:=\langle\Gamma_0,g_n\rangle$.
Then $\Stab_{\Gamma_n}([w])=\langle g_n\rangle$,
and the image of $[w]$ in
$\Gamma_n\backslash\mathcal{D}_{L_n}$ is a noncanonical singularity if $2\leq n \leq 5$.

\subsection{The case of \texorpdfstring{$\U(1,n)$}{}}

The singularities of ball quotients were studied systematically
by Behrens \cite{behrens2012singularities}.  More recently,
Watson \cite{watson2026singularities} classified the possible noncanonical
cyclic quotient types for ball quotients over the Eisenstein
integers, both in the interior and the boundary.
In particular, his results exhibit noncanonical singularities
in arbitrarily large dimensions over
$\mathbb Q(\sqrt{-3})$.

For this subsection, let $L$ be a
Hermitian $\OO_E$-lattice of signature $(1,n)$.
Let $\mathbb{B}_L
  :=
  \mathcal{D}_{1,n}$
be the associated $n$-dimensional complex ball, and let
$\Gamma\subset \U(L)$ be an arithmetic subgroup.  We write $X_L(\Gamma):=\Gamma\backslash\mathbb B_L$.
When boundary singularities are considered, $X_L(\Gamma)^\Sigma$
denotes the unique toroidal compactification.
The preceding method yields the following refinement of \cite{behrens2012singularities}.

\begin{thm}\label{thm_ball_quot_singularity}
    Assume that $E \neq \bbQ(\sqrt{-1}), \bbQ(\sqrt{-2}), \bbQ(\sqrt{-3})$.
        If the dimension of $X_L(\Gamma)$ satisfies
        \[
        \dim X_L(\Gamma) \geq 
        \begin{cases}
            6 & \text{if $E = \bbQ(\sqrt{-7})$};\\
            5 & \text{if $E \neq \bbQ(\sqrt{-7})$,}
        \end{cases}
        \]
        then the unique toroidal compactification
$\overline{X_L(\Gamma)}$ has at worst canonical singularities.
\end{thm}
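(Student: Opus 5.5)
The plan is to follow the template of Theorems \ref{thm_classif_sing_disc<-4} and \ref{thm:nonterminal}, adapted to signature $(1,n)$. For $E\neq\bbQ(\sqrt{-1}),\bbQ(\sqrt{-2}),\bbQ(\sqrt{-3})$ we have $w_E=2$. By Lemma \ref{lem:irregular singularity to non-irregular} we may assume $Z_L=\{\pm1\}\subset\Gamma$.

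\textbf{Interior points.} Write $x\in\mathbb B_L$ as a negative definite $n$-plane $V^-$ with positive line $V^+$. An element $\gamma\in\Gamma_x$ then has exponents $\{\alpha\}$ on $V^+$ and $X^-_\gamma$ on $V^-$, and the tangent exponents are $\{\alpha-y\}_{y\in X^-_\gamma}$.

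Unlike the case $p\ge2$, quasi-reflections do occur here. I would therefore use the modified criterion of \cite{Gritsenko2007kodaira}*{Lemma 2.14}, as in the proof of Theorem \ref{thm_orth_interior}. Let $k$ be minimal such that $\gamma^k$ is a quasi-reflection. It suffices to bound $\mathrm{RT}'(\gamma^\ell)$, defined as in \eqref{def_sigma'}, for $1\le\ell<k$, and $\mathrm{RT}(\gamma)$ when no power of $\gamma$ is a quasi-reflection.

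Factor $p_\gamma$ over $E$ and split into two cases.

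First suppose $V^+$ lies in an irreducible summand $W_f$ with $\deg f\ge2$. The summand contributes $S(A_f,B_f)\ge c_{\min}(f,E)$, where $A_f$ is a singleton. The other summands contribute $S(\{\alpha\},X_{f'})\ge s_{\min}(f',E)$, or at least $1/2$ for each linear factor $\pm1$ with exponent different from $\alpha$.

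Otherwise $\alpha\in\{0,1/2\}$. Each nonlinear factor then contributes $s_{\min}(f',E)$, which is at least $1$ outside Table \ref{table_list_S(X,0)<=1}. Each eigenvalue $-\alpha$-shifted by $1/2$ contributes $1/2$.

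By Lemmas \ref{lemma_RT_sum_irreducible} and \ref{lemma_RT_sum_reducible}, with $w_E=2$, the only values of $c_{\min}$ and $s_{\min}$ below $1$ come from the following:
\begin{itemize}
\item $\Phi_3$ and $\Phi_6$, with $c_{\min}=1/3$ and $s_{\min}=1$;
\item $\Phi_4$, with $c_{\min}=1/2$ and $s_{\min}=1$;
\item $\Phi_7^\pm$ and $\Phi_{14}^\pm$ over $\bbQ(\sqrt{-7})$, with $c_{\min}=4/7$;
\item $\Phi_{15}^\pm$ and $\Phi_{30}^\pm$ over $\bbQ(\sqrt{-15})$, with $c_{\min}=11/15$;
\item $\Phi_{20}^\pm$ over $\bbQ(\sqrt{-5})$, with $c_{\min}=4/5$;
\item $\Phi_{24}^\pm$ over $\bbQ(\sqrt{-6})$, with $c_{\min}=5/6$.
\end{itemize}
When $V^+\subset W_f$ with small $c_{\min}$, the remaining $n+1-\deg f$ eigenvalues each add a definite amount. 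Each linear one adds at least $1/2$ if it differs from $\alpha$, but may add $0$ if it equals $\alpha$, which is exactly what produces quasi-reflections. This gives an inequality of the form $\mathrm{RT}\ge c_{\min}(f)+(\text{number of mismatched eigenvalues})/2$.

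The dimension hypothesis ($n\ge5$, or $n\ge6$ for $\bbQ(\sqrt{-7})$, where $\Phi_7^\pm$ has degree $3$ and $c_{\min}=4/7$) is designed to force enough such contributions. The powers $\gamma^\ell$ have to be handled by induction on the order of the character on $V^+$, exactly as in the proof of Theorem \ref{thm_orth_interior}. The remaining cases form a finite enumeration in the style of Algorithm \ref{alg:low-age-types}, run over these few $f$ and small $n$.

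\textbf{Boundary.} Here $s=1$ and $F_P$ is a point. By \eqref{equation_action_d_gamma}, the tangent space is the toric direction together with $\Mat_{1,n-1}$, acted on by $a_\gamma v_- d_{\gamma,2}^*$, where $a_\gamma\in\OO_E^\times=\{\pm1\}$ and $d_{\gamma,2}\in\U(n-1)$. The differential $D^{\mathrm{tor}}$ is $a_\gamma\bar a_\gamma=1$ up to a root of unity coming from the translation part. I would carry Lemma \ref{lem:boundary-interior-comparison} over to this setting, or argue directly. Since $a_\gamma=\pm1$, the $v_-$ block is $\pm d_{\gamma,2}$, which is defined over $E$ on a lattice of rank $n-1$. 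Its age is then bounded below by the $s_{\min}$ contributions. Linear factors give $\pm1$ eigenvalues, which force either age $\ge1$ or a quasi-reflection, and the quasi-reflection case is treated by the same $\mathrm{RT}'$ device as in Lemma \ref{lem_boundary_orthonal_exp=1}.

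\textbf{Main obstacle.} I expect the hardest step to be the interior quasi-reflection case. There the modified sums $\mathrm{RT}'(\gamma^\ell)$ involve the halved exponent, so the clean bounds $c_{\min}$ and $s_{\min}$ do not apply directly. My first attempt is to mimic Proposition \ref{prop_decomp_L_quasi_ref}, classifying the summand on which $\gamma^k$ acts nontrivially as a single linear factor, and then bound the rest with the tables.
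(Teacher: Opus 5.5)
Your architecture matches the paper's: reduce to $Z_L\subset\Gamma$, split into interior/boundary and into whether some power of $\gamma$ is a quasi-reflection, handle the powers $\gamma^\ell$ with the modified criterion (which the paper takes from \cite{behrens2012singularities}*{Lemma 8}), and use Tables \ref{table_list_c<=1} and \ref{table_list_S(X,0)<=1}. However, the one estimate that produces the dimension bounds is wrong.

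In your first interior case, $V^+\subset W_{f_0}$ with $\deg f_0\geq 2$. So the exponent $\alpha$ on $V^+$ is \emph{not} in $\{0,\tfrac{1}{2}\}$. A linear eigenvalue $y\in\{0,\tfrac{1}{2}\}$ on $V^-$ then contributes $\{\alpha-y\}$, which can be far below $\tfrac{1}{2}$:
\begin{itemize}
\item for $\Phi_3$ with $\alpha=\tfrac{2}{3}$, each $-1$ contributes $\{\tfrac{2}{3}-\tfrac{1}{2}\}=\tfrac{1}{6}$;
\item for $\Phi_7^+$ over $\bbQ(\sqrt{-7})$ with $\alpha=\tfrac{4}{7}$, each $-1$ contributes $\tfrac{1}{14}$.
\end{itemize}
So your inequality $\mathrm{RT}\geq c_{\min}(f)+(\text{number of mismatched eigenvalues})/2$ is false. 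In these cases it would already give age $\geq 1$ for $n\geq 3$. That is contradicted by the construction of Example \ref{ex:exceptional-interior} with $p=1$. There, $\diag(A,-1_{n-1})$ on $H\oplus\langle -1\rangle^{n-1}$ exists for every $E$, and no power of it is a quasi-reflection on $T_x\mathbb B_L$. Its age is $\tfrac{1}{3}+\tfrac{n-1}{6}=\tfrac{n+1}{6}<1$ at $n=4$, so it is a genuine noncanonical point.

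What is needed instead is the following bound, taken for each exceptional $f_0$ and each root $\alpha$ of $f_0$ separately, after the other nonlinear factors have been disposed of as in Lemma \ref{lem_ball_interior_non-q-ref}:
\[
\mathrm{RT}(\gamma)\geq S\bigl(\{\alpha\},X_{f_0}\setminus\{\alpha\}\bigr)+(n+1-\deg f_0)\cdot\min\bigl(\{\alpha\},\{\alpha-\tfrac{1}{2}\}\bigr).
\]
The two terms must be minimized jointly over $\alpha$. If you replace the first by $c_{\min}(f_0,E)$ and the second by its separate minimum, you only get $\tfrac{4}{7}+\tfrac{n-2}{14}$ for $\Phi_7^+$, i.e.\ $n\geq 8$.

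Also, for a non-unit $\alpha$, the bound on $S(\{\alpha\},X_{f'})$ is $c_{\min}(f',E)$ by Lemma \ref{lem:external-point-bound}. It is not $s_{\min}(f',E)$, which is defined only for unit exponents.

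With the joint bound, the worst cases are:
\begin{itemize}
\item $\Phi_7^\pm$ and $\Phi_{14}^\pm$, giving $\tfrac{5}{7}+\tfrac{n-2}{14}=\tfrac{n+8}{14}$;
\item $\Phi_3$ and $\Phi_6$, giving $\tfrac{n+1}{6}$.
\end{itemize}
These are exactly the thresholds $6$ and $5$ in the statement. The degree-four factors are harmless; for example, $\Phi_{15}^-$ with $\alpha=\tfrac{14}{15}$ gives $\tfrac{11}{15}+\tfrac{13(n-3)}{30}$.

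The same small contributions govern the quasi-reflection case. There the paper first uses Proposition \ref{prop_ball_q-ref} to make the summand on which $\gamma^k$ acts nontrivially one-dimensional, and then obtains $\tfrac{n+9}{14}$ and $\tfrac{n+1}{6}$.

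At the boundary, you leave the toric eigenvalue (your ``root of unity coming from the translation part'') uncontrolled. When $d_\gamma^2=1$ you need Lemma \ref{lem_action_translation}, i.e.\ \cite{behrens2012singularities}*{Proposition 4}, to know it is $\pm 1$. Otherwise a single $-1$ in the $v$-block together with a nontrivial toric eigenvalue can have age below $1$.
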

\begin{proof}
    The statement follows from Lemmas \ref{lem_ball_interior_non-q-ref}, \ref{lem_ball_interior_q-ref}, \ref{lem_ball_boundary_non-q-ref} and \ref{lem_ball_boundary_q-ref} below combining \cite{behrens2012singularities}*{Lemma 8}.
\end{proof}

In some specific cases of the main theorem in \cite{2025_HMY_Kodaira_dimension_ball_quotients}, the bottleneck is the singularity of ball quotients.

\begin{cor}
    Let $\Gamma\subset\U(L)$.
    Assume that $D>3$ is odd and $n\geq6$ is even with $n+1 \geq B(E)$.
        Then, up to scaling, there exist only finitely many pairs $(L,E)$ for which $X_L(\Gamma)$ is not of general type.
\end{cor}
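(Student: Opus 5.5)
The plan is to deduce the corollary by combining Theorem \ref{thm_ball_quot_singularity} with the general-type criterion of \cite{2025_HMY_Kodaira_dimension_ball_quotients}, which, as remarked just before the statement, is conditional only on the singularities being canonical. First I would check that the hypotheses place us in the range of Theorem \ref{thm_ball_quot_singularity}. Since $D>3$ is odd, $E\neq\bbQ(\sqrt{-1}),\bbQ(\sqrt{-2}),\bbQ(\sqrt{-3})$. Moreover $\dim X_L(\Gamma)=n\geq 6$, which covers both the bound $6$ for $E=\bbQ(\sqrt{-7})$ and the bound $5$ otherwise. Hence the unique toroidal compactification $\overline{X_L(\Gamma)}$ has at worst canonical singularities. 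By Lemma \ref{lem:irregular singularity to non-irregular}, or rather its ball-quotient analogue used in the proof of Theorem \ref{thm_ball_quot_singularity}, this holds for every arithmetic $\Gamma\subset\U(L)$, and not only for those containing $Z_L$.

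Second, I would produce the low weight cusp form. For $D>3$ we have $w_E=2$. Since $n$ is even, $N=n+1$ is odd, so $N\equiv 1\pmod{w_E}$. Together with $N\geq B(E)$, the construction of Lemma \ref{lem_partition} and Theorem \ref{thm:low weight cusp form}, which for $p=1$ is exactly the construction of \cite{2025_HMY_Kodaira_dimension_ball_quotients}, gives a nonzero cusp form $F$ of weight $N-1=n$ for $\U(L)$, hence for $\Gamma$.

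Third, I would run the argument from the proof of Theorem \ref{mainthm:general type} with one modification. For ball quotients, Corollary \ref{cor:branch-codimge2} fails, because quasi-reflections exist. The canonical class is therefore
\[
K\sim_{\bbQ} N\L-\Delta-\tfrac12 B,
\]
where $B$ is the branch divisor, a union of reflective divisors. To get bigness, one needs $F$ to vanish suitably along $B$, or else $B$ must be empty or controlled. Here I would invoke \cite{maeda2024reflective}: up to scaling there are only finitely many reflective Hermitian lattices $(L,E)$ in this range, i.e.\ lattices admitting modular forms whose divisor is supported on reflective divisors. Away from these finitely many lattices, the argument of \cite{2025_HMY_Kodaira_dimension_ball_quotients} shows that $(N-1)\L-\Delta-\tfrac12B$ is $\bbQ$-effective after multiplying $F$ by suitable forms, or adjusting it. Hence $K$ is big on $\overline{X_L(\Gamma)}$. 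Canonicity from the first step then lets the pluricanonical forms extend to a resolution, so $X_L(\Gamma)$ is of general type.

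I expect the main obstacle to be this third step. One must make precise which statement of \cite{2025_HMY_Kodaira_dimension_ball_quotients} handles the branch divisor, and verify that its exceptional finite set depends only on $(L,E)$ up to scaling and not on $\Gamma$. I would first try to quote their main theorem in the form ``general type provided $\overline{X_L(\Gamma)}$ has canonical singularities and $L$ is outside a finite reflective list''. Under that form, the corollary follows immediately from Theorem \ref{thm_ball_quot_singularity}.
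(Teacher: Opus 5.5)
Your proposal is correct and follows the paper's implicit argument: the corollary is the finiteness theorem of \cite{2025_HMY_Kodaira_dimension_ball_quotients} with its singularity input (the paper calls singularities the bottleneck) replaced by Theorem~\ref{thm_ball_quot_singularity}, and you correctly check that theorem's hypotheses ($D>3$ odd rules out $\bbQ(\sqrt{-1}),\bbQ(\sqrt{-2}),\bbQ(\sqrt{-3})$, and $n\geq 6$). Your third-step description of how the branch divisor is absorbed is not accurate as stated, since multiplying $F$ by further forms raises the weight and the branch divisor is instead controlled by the reflective-obstruction argument inside \cite{2025_HMY_Kodaira_dimension_ball_quotients}; this does not matter, because quoting that theorem directly, as you propose, suffices.
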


\begin{cor}
    Suppose that $E \neq \bbQ(\sqrt{-1}), \bbQ(\sqrt{-2}), \bbQ(\sqrt{-3})$ and $\Gamma$ is contained in the discriminant kernel of $\U(L)$.
    If $n + 1 \geq B(E)$ with $n\geq 6$ even, then $X_L(\Gamma)$ is of general type.
\end{cor}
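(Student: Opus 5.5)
The corollary should follow by combining Theorem \ref{thm_ball_quot_singularity} with the general-type criterion of \cite{2025_HMY_Kodaira_dimension_ball_quotients}, in the same way that Theorem \ref{mainthm:general type} is deduced from Theorem \ref{mainthm:singularity} and Theorem \ref{thm:low weight cusp form}. Here $\Gamma$ lies in the discriminant kernel, $E\neq\bbQ(\sqrt{-1}),\bbQ(\sqrt{-2}),\bbQ(\sqrt{-3})$, and $w_E=2$. Since $n$ is even, $N=n+1$ is odd, so $N\equiv 1\pmod{w_E}$. Together with $N\geq B(E)$, this gives the low-weight cusp form input.

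\textbf{Step 1: low-weight cusp form.} I will construct a nonzero cusp form $F$ of weight $N-1=n$ for $\U(L)$. This uses the argument of Theorem \ref{thm:low weight cusp form} specialised to $p=1$, $q=n$, which is exactly the main construction of \cite{2025_HMY_Kodaira_dimension_ball_quotients}. The combinatorial partition of Lemma \ref{lem_partition} is available from \cite{2025_HMY_Kodaira_dimension_ball_quotients}*{Table 2}.

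\textbf{Step 2: canonical singularities.} Because $n\geq 6$, Theorem \ref{thm_ball_quot_singularity} shows that $\overline{X_L(\Gamma)}$ has at worst canonical singularities, and this holds also when $E=\bbQ(\sqrt{-7})$. Hence pluricanonical forms on $\overline{X_L(\Gamma)}$ extend to any resolution.

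\textbf{Step 3: canonical class.} Unlike the case $p\geq 2$, the ball case has quasi-reflections, so the ramification must be accounted for. Hirzebruch--Mumford proportionality gives
\[
K\sim_{\bbQ}N\L-\Delta-\tfrac12 B,
\]
where $B$ is the branch divisor coming from reflective divisors. This is the main obstacle: I need $F$, or a suitable product of $F$ with a power of itself, to vanish along $B$ to sufficient order.

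Here the discriminant kernel hypothesis is what I would use. Following \cite{2025_HMY_Kodaira_dimension_ball_quotients} and \cite{maeda2024reflective}, the reflections in the discriminant kernel are of a restricted form when $E\neq\bbQ(\sqrt{-1}),\bbQ(\sqrt{-3})$, namely order-$2$ reflections in vectors of specific norm. For such a reflection $\sigma$, the cusp form $F$ of weight $n$ either vanishes on the mirror, or the character of $F$ forces that vanishing. I would check the parity: $\sigma$ acts on $\L$ at the fixed divisor by $-1$, and $n$ is even. So the weight-$n$ automorphy relation alone does not force vanishing, and I would instead use the extra sign from the construction, namely the $\eta$ character. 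If this works, $(N-1)\L-\Delta-\tfrac12B$ is effective, which is the argument used in the ball paper. Failing that, I would use the bound on the quasi-reflection branch contribution already proved in \cite{2025_HMY_Kodaira_dimension_ball_quotients} for the discriminant kernel.

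\textbf{Step 4: conclusion.} With the effectivity from Step 3, we have $K\sim_{\bbQ}\bigl((N-1)\L-\Delta-\tfrac12B\bigr)+\L$. Since $\L$ is big, $K$ is big, and by Step 2 the variety $X_L(\Gamma)$ is of general type.
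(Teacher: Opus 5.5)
Your architecture matches the paper's. The corollary is the general-type theorem of \cite{2025_HMY_Kodaira_dimension_ball_quotients} for subgroups of the discriminant kernel, whose bottleneck was the canonical-singularity bound, now supplied by Theorem \ref{thm_ball_quot_singularity}. That theorem needs $n\geq 6$ when $E=\bbQ(\sqrt{-7})$ and $n\geq 5$ otherwise, which is why the statement asks for $n\geq 6$ even. The low-weight cusp form and the control of the branch divisor are taken from that paper unchanged. The discriminant-kernel hypothesis is used only there, since Theorem \ref{thm_ball_quot_singularity} holds for every arithmetic $\Gamma$.

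The part to fix is the first idea in your Step 3, which cannot work. The character $\eta$ is a character of the component group $\calS_\psi$; it picks out a member of the $A$-packet in Arthur's multiplicity formula and is not an automorphy character on $\Gamma$. The form built as in Theorem \ref{thm:low weight cusp form} comes from a representation with $\pi^K\neq 0$ for every open compact $K$ and lowest $K_\infty$-type $(N-1,0,\ldots,0)$. It therefore transforms under $\Gamma$ by the plain weight-$n$ factor, and, as you observe, this does not force vanishing along the mirrors of order-two reflections. So the control of $B$ cannot be extracted from the Arthur data. It has to be imported from the treatment of reflections in the discriminant kernel in \cite{2025_HMY_Kodaira_dimension_ball_quotients}. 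Your ``fallback'' is therefore the actual argument, and with it your proof coincides with the paper's.
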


In this subsection below, we assume $E \neq \bbQ(\sqrt{-1}), \bbQ(\sqrt{-2}), \bbQ(\sqrt{-3})$ except for Theorem \ref{thm:noncanonical-sing-ball-quotient}.
We first consider the singularities in the interior.
\begin{lem}\label{lem_ball_interior_non-q-ref}
Assume that the action of $\gamma$
is nontrivial.
    If no power of $\gamma$ acts as a quasi-reflection and 
    \[
    n \geq 
    \begin{cases}
        6 & \text{if $E = \bbQ(\sqrt{-7})$;}\\
        5 & \text{if $E \neq \bbQ(\sqrt{-7})$,}
    \end{cases}
    \]
    then $\mathrm{RT}(\gamma)\geq1$.
\end{lem}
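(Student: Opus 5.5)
Here $p=1$, so $V_x^+$ is one-dimensional and the stabilizer element $\gamma$ acts on $T_x\mathbb B_L\cong\Hom(V^-,V^+)$ with exponents $\{x_1-y_j\}_{j=1}^n$. Thus $\mathrm{RT}(\gamma)=S(\{x_1\},X^-_\gamma)$, where $x_1$ is the exponent of the eigenvalue on $V^+$. Since $E\neq\bbQ(\sqrt{-1}),\bbQ(\sqrt{-3})$, we have $w_E=2$, so $\OO_E^\times=\{\pm1\}$. Because the homothety $-1\in Z_L$ acts trivially on $\mathbb B_L$, we may replace $\gamma$ by $-\gamma$ and assume that the $V^+$-eigenvalue is $\pm1$ only when it is a root of a linear factor. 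The plan is to factor $p_\gamma(T)$ over $E$ into irreducibles $f$ and to sum the contributions $S(\{x_1\},X_f\cap X^-)$, splitting into cases according to the factor $f_0$ containing the $V^+$-eigenvalue.

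\emph{Case A: $f_0$ is linear.} Then $x_1\in\{0,1/2\}$, and after multiplying by $-1$ we may assume $x_1=0$. Every other factor $f$ lies entirely in $V^-$ and contributes $S(\{0\},X_f)\ge s_{\min}(f,E)$. By Lemma \ref{lem_RT_sum_A_B_empty}, any nonlinear factor outside the list $\Phi_3,\Phi_4,\Phi_6,\Phi_7^\pm,\Phi_8^\pm,\Phi_{12}^\pm,\Phi_{14}^\pm,\Phi_{15}^\pm,\Phi_{20}^\pm,\Phi_{24}^\pm$ contributes more than $1$. For our fields, Table \ref{table_list_S(X,0)<=1} shows that the remaining ones contribute at least $1/2$, and in fact at least $1$ except for $\Phi_8^\pm$ over $\bbQ(\sqrt{-2})$, which is excluded. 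Each eigenvalue $-1$ contributes $1/2$. So either two such contributions already give $\mathrm{RT}\ge1$, or all but at most one eigenvalue on $V^-$ equals $1$. In the latter case $\gamma$ is a quasi-reflection, or a power of it is, which contradicts the hypothesis. The hypothesis must be used this way: if all but one eigenvalue are trivial, $\gamma$ itself is a quasi-reflection. If the eigenvalues are $1$ except for one nonlinear factor with $s_{\min}=1$, we are already done.

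\emph{Case B: $f_0$ is nonlinear.} Write $X_{f_0}=\{x_1\}\sqcup B$. Then $\mathrm{RT}\ge S(\{x_1\},B)+\sum_{f\ne f_0}S(\{x_1\},X_f)$. Here $S(\{x_1\},B)$ is one of the bipartition sums, so it is at least $c_{\min}(f_0,E)$, which is $>1$ unless $d$ is exceptional (Lemmas \ref{lemma_RT_sum_irreducible}, \ref{lemma_RT_sum_reducible}). For exceptional $f_0$ (that is, $\Phi_3,\Phi_4,\Phi_6$, $\Phi_7^\pm,\Phi_{14}^\pm$ for $\bbQ(\sqrt{-7})$, $\Phi_{15}^\pm,\Phi_{30}^\pm$ for $\bbQ(\sqrt{-15})$, $\Phi_{20}^\pm$ for $\bbQ(\sqrt{-5})$, and $\Phi_{24}^\pm$ for $\bbQ(\sqrt{-6})$) the deficit must be made up by the remaining eigenvalues. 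Each eigenvalue $\pm1$ on $V^-$ contributes $\{x_1\},\{x_1-1/2\}\ge 1/d$, and in fact at least $1/6$ for $d\le6$. Other nonlinear factors contribute at least $s_{\min}$ through Lemma \ref{lem:external-point-bound}. Since $x_1$ is a primitive $d$-th root exponent with $d\ge3$, each unit eigenvalue contributes at least $\min(\{x_1\},\{x_1+1/2\})$.

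The main obstacle is the small $d$ bookkeeping. For $f_0=\Phi_3$ or $\Phi_6$ we get $1/3+(n-1)/6\ge1$ once $n\ge5$. For $\Phi_4$ we get $1/2+(n-1)/4$. For $\Phi_7^\pm$ over $\bbQ(\sqrt{-7})$ we get $4/7+(n-3)\cdot 1/7$ in the worst case, where the $1/7$-contributions come from units with $x_1=1/7$ or from $-1$ with $x_1=4/7$. This is exactly where $n\ge6$ is needed rather than $5$, which explains the dichotomy in the statement. Each exceptional $f_0$ therefore reduces to a finite check: we enumerate the choice of $x_1\in X_{f_0}$ and the optimal distribution of the $n+1-\deg f_0$ remaining eigenvalues among $\{0,1/2\}$ and other small factors. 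I would carry this out in the style of Algorithm \ref{alg:low-age-types}, and for the minimizing configurations confirm $\mathrm{RT}\ge1$ under the stated bounds on $n$.
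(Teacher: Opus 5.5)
Your overall route is the paper's. You split according to the factor $f_0$ of $p_\gamma$ that carries the $V^+$-eigenvalue. You settle the linear case with $s_{\min}(f,E)\geq1$ (Lemma~\ref{lem_RT_sum_A_B_empty}) and by counting eigenvalues $-1$. In the nonlinear case you start from $c_{\min}(f_0,E)$ and do bookkeeping for the exceptional $f_0$. Case A is fine, but Case B rests on two estimates that are false as stated.

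First, in Case B $x_1$ is the exponent of a root of the nonlinear $f_0$, not a unit exponent. So Lemma~\ref{lem:external-point-bound} only gives $S(\{x_1\},X_f)\geq c_{\min}(f,E)$ for a further nonlinear factor $f$ in $V^-$, not $\geq s_{\min}(f,E)$. For example, take $f_0=\Phi_3$, $x_1=2/3$, and a second copy of $\Phi_3$ in $V^-$: that copy contributes $\{2/3-1/3\}+\{2/3-2/3\}=1/3$, whereas $s_{\min}(\Phi_3,E)=1$. Hence the subcase ``exceptional $f_0$ plus further nonlinear factors'' is not disposed of, and you need the paper's treatment:
\begin{itemize}
\item when $\deg f_0\geq3$, Lemma~\ref{lemma_S_XA_B}, whose proof includes the direct value $9/7$ for $\Phi_7^\pm,\Phi_{14}^\pm$ against $\Phi_3,\Phi_6$;
\item when $\deg f_0=2$, the direct checks that another quadratic factor contributes at least $1/3$, a cubic one together with $f_0$ at least $4/3$, and one of degree $\geq4$ at least $11/15$. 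These checks are what actually justify your count $1/3+(n-1)/6$.
\end{itemize}

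Second, the unit bound $\{x_1\},\{x_1-1/2\}\geq 1/d$ fails for odd $d$. One only has $\{x_1-1/2\}\geq 1/(2d)$, with equality at $x_1=(d+1)/(2d)$, and this is exactly the critical configuration. Take $f_0=\Phi_7^+$ (exponents $1/7,2/7,4/7$) and $x_1=4/7$. The $f_0$-part is $3/7+2/7=5/7$. If all $n-2$ remaining eigenvalues are $-1$, each contributes $\{4/7-1/2\}=1/14$, so $\mathrm{RT}(\gamma)=(n+8)/14$. This is $<1$ for $n=5$ and $\geq1$ exactly when $n\geq6$; it is the paper's computation and the true source of the dichotomy.

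Your ``$4/7+(n-3)/7$'' is wrong in three ways:
\begin{itemize}
\item the value $4/7$ occurs for $\Phi_7^-$ at $x_1=6/7$, where each unit contributes at least $5/14$, so it cannot be paired with unit contributions of $1/7$;
\item it miscounts the remaining eigenvalues, which number $n-2$, not $n-3$;
\item it is not a valid lower bound: for $n=7$ the configuration above has $\mathrm{RT}=15/14<8/7$.
\end{itemize}
The threshold $n\geq6$ comes out right only by coincidence. With per-eigenvalue bounds $c_{\min}$ and $1/(2d)$, and an explicit enumeration of $x_1\in X_{\Phi_7^\pm}$ (with $\Phi_{14}^\pm$ handled via $\gamma\mapsto-\gamma$), your sketch becomes the paper's proof.
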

\begin{proof}
Decompose $L_E$ as
\[
L_E = \bigoplus_f W_f
\]
as the $\gamma$-module.
Let $\{a_{f,1},\ldots,a_{f,\deg(f)}\} \subset \bbQ/\bbZ$ be the set of exponents of the roots of $f$.
There exist a polynomial $f_0$ and $i_0$ such that the (multi-)set of exponents of the eigenvalues of the action of $\gamma$ on the tangent space is given by
\[
\left(\bigsqcup_{f}\{ a_{f_0,i_0} - a_{f, j} \mid 1 \leq j \leq \deg(f)\} \right) \setminus \{0\}
\]
and $\mathrm{RT}(\gamma)$ is the sum of $\{a_{f_0,i_0} - a_{f, j}\}$.
For notational simplicity, we may assume $i_0=1$.
By Table \ref{table_list_c<=1}, if $\deg(f_0) \geq 5$, then $\mathrm{RT}(\gamma) \geq c_{\min}(f_0, E) \geq 1$.
We treat each possible value of $\deg(f_0)$ separately.

For the case $\deg(f_0) = 4$, it remains to consider the cases 
\[
(f_0, E) = (\Phi_{15}^\pm, \bbQ(\sqrt{-15})), (\Phi_{30}^\pm, \bbQ(\sqrt{-15})), (\Phi_{20}^\pm, \bbQ(\sqrt{-20})), (\Phi_{24}^\pm, \bbQ(\sqrt{-24})).
\]
We treat only $(f_0,E)=(\Phi_{15}^\pm,\bbQ(\sqrt{-15}))$; the remaining cases are analogous.
The contribution of $f_0$ to $\mathrm{RT}(\gamma)$ is at least $11/15 > 2/3$.
Hence, by Lemma \ref{lemma_S_XA_B}, if $\deg(f) \geq 2$ for some $f \neq f_0$, then $\mathrm{RT}(\gamma) > 1$ and it concludes that we may assume $a_{f,1} = 0,1/2$ for $f \neq f_0$.
The case $(\Phi_{15}^+,\bbQ(\sqrt{-15}))$ satisfies $\mathrm{RT}(\gamma)>1$, since the exponents of the roots of $\Phi_{15}^+$ are $\{1/15,2/15,4/15,8/15\}$.
For the case $(\Phi^-_{15}, \bbQ(\sqrt{-15}))$, we have $\mathrm{RT}(\gamma) > 1$ except for $a_{f_0,1} = 14/15$.
In this case, since other $a_{f,j}$ is $0$ or $1/2$, the contribution of $f \neq f_0$ is at least $13/30$ and we have
\[
\mathrm{RT}(\gamma) \geq \frac{11}{15} + \frac{13}{30}(n-3) > 1
\]
due to the assumption $n \geq 5$.

For the case $\deg(f_0) = 3$, it remains to consider the cases
\[
(f_0,E) = (\Phi_7^{\pm}, \bbQ(\sqrt{-7})), (\Phi_{14}^{\pm}, \bbQ(\sqrt{-7})).
\]
Similar to the above case, if $\deg(f) \geq 2$ for some $f \neq f_0$, then $\mathrm{RT}(\gamma) > 1$.
When $f_0=\Phi_7^\pm$, the minimum occurs when $a_{f_0,1}=4/7$, and then
\[
\mathrm{RT}(\gamma) \geq \left\{\frac47 - \frac17\right\} + \left\{\frac47-\frac27\right\} + (n-2)\left\{\frac47 -\frac12\right\} = \frac{n+8}{14} \geq 1
\]
due to $n \geq 6$.
Here the exponent set of $\Phi_7^+$ is $\{1/7,2/7,4/7\}$.
The other case is similar.

Suppose that $\deg(f_0)=2$.
If another factor $f$ has degree at least
four, Lemma \ref{lem:external-point-bound}
and Table \ref{table_list_c<=1} give a
contribution at least
$c_{\min}(f,E)\geq11/15$ from that factor.
Together with the contribution of at
least $1/3$ from $f_0$, $\mathrm{RT}(\gamma)$ is greater
than one.
In this remaining case, $f_0$ is one of
$\Phi_3,\Phi_4,\Phi_6$.
Its contribution is at least $1/3$.
Every remaining linear factor has exponent $0$ or $1/2$
and contributes at least $1/6$.
For each remaining quadratic factor
$\Phi_3,\Phi_4,\Phi_6$, direct summation of its two
roots gives a contribution at least $1/3$.
The total dimension outside $W_{f_0}$ is $n-1$.
Consequently,
$\mathrm{RT}(\gamma)\geq1/3+(n-1)/6=(n+1)/6\geq1$.
This completes the case $\deg(f_0)=2$.

For a factor of degree three, the only
remaining case is $E=\bbQ(\sqrt{-7})$,
with $f=\Phi_7^\pm$ or $\Phi_{14}^\pm$.
Directly summing the exponents for
$f_0=\Phi_3,\Phi_4,\Phi_6$ gives a
combined contribution of at least $4/3$.
Hence we may assume $\deg(f)\leq2$
for every $f\neq f_0$.

For the case $\deg(f_0) = 1$, we have $\mathrm{RT}(\gamma) \geq 1$ if $\deg(f) \geq 2$ for some $f \neq f_0$.
When every $f$ satisfies $\deg(f)=1$, any nontrivial eigenvalue contributes $1/2$ to $\mathrm{RT}(\gamma)$.
Since $\gamma$ is not a quasi-reflection, there are at least two nontrivial eigenvalues.
Hence we have $\mathrm{RT}(\gamma) \geq 1$.
\end{proof}

Suppose that there exists a positive integer $k$ such that $\gamma^k$ acts as a quasi-reflection.
We may choose $k$ to be minimal.
Let $m_f$ be the positive integer such that $f = \Phi_{m_f}$ or $\Phi_{m_f}^\pm$.
The following is essentially due to \cite{behrens2012singularities}*{Proposition 2} and its proof \cite{behrens2012singularities}*{p.399 (3)}.

\begin{prop}[{\cite{behrens2012singularities}*{Proposition 2}}]\label{prop_ball_q-ref}
    As a $\gamma$-module, if $n\geq2$, the decomposition $L_E=\bigoplus_f W_f$ satisfies one of the following two conditions:
    \begin{itemize}
        \item There exists $f_1 \neq f_0$ such that $2(m_{f_1}, k) = m_{f_1}$ and $(m_f, k) = m_f$ for any $f \neq f_1$;
        \item There exists $f_1 \neq f_0$ such that $(m_{f_1}, k) = m_{f_1}$ and $2(m_f, k) = m_f$ for any $f \neq f_1$.
    \end{itemize}
    Furthermore, $m_{f_1} = 2$ (resp. $m_{f_1} = 1$) for the first case (resp. the second case).
\end{prop}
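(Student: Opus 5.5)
The plan is to follow the structure of the proof of Proposition \ref{prop_decomp_L_quasi_ref}, adapted from $\O^+(2,n)$ to $\U(1,n)$. Fix $x\in\mathbb B_L$ and the line $V^+=\bbC v$ spanned by a positive vector. Write $\lambda_0=\exp(2\pi\sqrt{-1}\,a_{f_0,1})$ for the eigenvalue of $\gamma$ on $v$; here $f_0$ is the irreducible factor over $E$ having $\lambda_0$ as a root. Then
\[
T_x\mathbb B_L\cong\Hom(V^+,V^-),
\]
and $\gamma$ acts on it with eigenvalues $\lambda_0^{-1}\mu$, where $\mu$ runs over the eigenvalues on $V^-$.

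Since $\gamma^k$ is a quasi-reflection, all but one of the numbers $\lambda_0^{-k}\mu^k$ equal $1$. Hence on a hyperplane of $V^-$ the element $\gamma^k$ acts by the scalar $\lambda_0^k$, and on the remaining line it acts by some $\nu\neq\lambda_0^k$. Consequently $\gamma^k$ acts on $L\otimes_E\bbC$ with exactly two eigenvalues: $\lambda_0^k$, with multiplicity $n$, and $\nu$, with multiplicity $1$.

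Next we use that $\gamma^k$ is defined over $E$. Its characteristic polynomial $(T-\lambda_0^k)^n(T-\nu)$ has coefficients in $E$, and $n\ge2$ gives a multiplicity mismatch between the two roots. Applying Galois conjugation over $E$ shows that $\lambda_0^k$ and $\nu$ each lie in $E$, and hence in $\OO_E^\times$. Moreover, the eigenspace of $\nu$ is an $E$-rational line $W$, and it is orthogonal to the $\lambda_0^k$-eigenspace because $\gamma^k$ is unitary. As an $E[\gamma]$-module, this line must be a single summand $W_{f_1}$ with $\deg f_1=1$.

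Behrens' argument then uses $\gamma^k\in\U(L)$ acting on a line with $\lambda_0^k\cdot\overline{\lambda_0^k}=1$, together with the fact that the reflection-type element $\gamma^k\cdot(\lambda_0^{-k})$ is a genuine reflection on $L_E$. This forces $\nu\lambda_0^{-k}=-1$ after rescaling by the unit $\lambda_0^{k}$, since $E\neq\bbQ(\sqrt{-1}),\bbQ(\sqrt{-3})$ leaves only $\pm1$ as units; that is, $\lambda_0^k=\pm1$ and $\nu=\mp1$.

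We then translate this into the conditions on the $m_f$. Every root $\zeta$ of a factor $f\neq f_1$ satisfies $\zeta^k=\lambda_0^k$.
\begin{itemize}
\item If $\lambda_0^k=1$, then $m_f\mid k$, i.e.\ $(m_f,k)=m_f$, for all $f\neq f_1$. Also $\nu=-1$ is a root of $f_1$ raised to the $k$-th power, and the line $W_{f_1}$ is $E$-rational. The root $\zeta_1$ of $f_1$ lies in $\OO_E^\times=\{\pm1\}$, and $\zeta_1^k=-1$ forces $\zeta_1=-1$ with $k$ odd. Hence $m_{f_1}=2$ and $2(m_{f_1},k)=m_{f_1}$. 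This is the first case.
\item If $\lambda_0^k=-1$, then each root $\zeta$ of $f\neq f_1$ has order exactly dividing $2k$ but not $k$, which is the condition $2(m_f,k)=m_f$. Meanwhile $\zeta_1^k=1$ with $\zeta_1=\pm1$, and the requirement $(m_{f_1},k)=m_{f_1}$ combined with the parity constraint gives $m_{f_1}=1$ when $\zeta_1=1$. This is the second case; the subcase $\zeta_1=-1$ with $k$ even is absorbed by the same divisibility statement.
\end{itemize}

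Finally, $f_1\neq f_0$. Indeed, $W_{f_1}\subset V^-$ is the distinguished negative line, while $f_0$ carries the positive line $v$. If they coincided, the one-dimensional summand would be positive, a contradiction.

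The main obstacle is the rationality step: showing that the exceptional eigenline is an $E$-rational summand and pinning $\nu/\lambda_0^k=-1$. This requires $n\ge2$ and the restriction to units $\pm1$. I would first argue via the characteristic polynomial of $\gamma^k$ over $E$ and multiplicities, as above.
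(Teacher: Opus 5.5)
Your proof is correct in substance, and it takes a different, self-contained route from the paper.

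\emph{The paper's route.} The paper takes the divisibility dichotomy directly from Behrens' Proposition~2. Its own contribution is an exponent-parity argument showing that $\gamma^k$ acts on $W_{f_0}$ by the scalar $\pm1$, so the exceptional eigenvalue must come from some $W_{f_1}$ with $f_1\neq f_0$. It then appeals to Behrens' estimate (3) on p.~399 and his Corollary~3 ($\gamma^{2k}$ acts trivially) to get $\dim_E W_{f_1}=1$.

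\emph{Your route.} You derive everything from the characteristic polynomial $(T-\lambda_0^k)^n(T-\nu)\in E[T]$ of $\gamma^k$. Because $n\neq 1$, every automorphism fixing $E$ fixes both roots, so $\lambda_0^k$ and $\nu$ are distinct elements of $\OO_E^\times=\{\pm1\}$. It follows that $\gamma^k$ acts by a scalar on each $W_f$, and the $\nu$-eigenspace is an $E$-rational $\gamma$-stable line, so $\deg f_1=1$. The two cases are then exactly $\lambda_0^k=1$ and $\lambda_0^k=-1$, and $\gamma^{2k}=1$ comes out as a by-product rather than an input. This makes the argument independent of Behrens' computations and shows transparently where $n\geq 2$ and $E\neq\bbQ(\sqrt{-1}),\bbQ(\sqrt{-3})$ are used.

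Two small simplifications. Your paragraph on ``Behrens' argument'' is superfluous, since the preceding paragraph already gives $\{\lambda_0^k,\nu\}=\{1,-1\}$. Also, $f_1\neq f_0$ follows at once from $\lambda_0^k\neq\nu=\zeta_1^k$.

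\emph{One step is not right as written.} In the second case, the subcase $\zeta_1=-1$ with $k$ even is not ``absorbed''. It satisfies $(m_{f_1},k)=m_{f_1}$ but has $m_{f_1}=2$, contradicting the asserted $m_{f_1}=1$. This subcase really occurs. Take $n=2$, $E=\bbQ(\sqrt{-7})$, $L_E=E(\sqrt{-1})\oplus E$ with form $\mathrm{Tr}_{E(\sqrt{-1})/E}(\pm\sqrt7\,x\bar y)\oplus\langle -1\rangle$, the sign chosen so that the $\sqrt{-1}$-eigenline is positive, and $\gamma=\sqrt{-1}\oplus(-1)$. Then $k=2$, $\lambda_0^2=-1$, $f_0=\Phi_4$, and $f_1=T+1$.

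So the final clause holds only after a normalization. Since the paper has reduced to $\Gamma$ containing the center, you may replace $\gamma$ by $-\gamma$, which acts identically on $\mathbb B_L$ and its tangent spaces. For $k$ even:
\begin{itemize}
\item $(-\gamma)^k=\gamma^k$, so $k$ and the quasi-reflection are unchanged;
\item the roots $-\zeta$ of the other factors still satisfy $(-\zeta)^k=-1$;
\item the eigenvalue on $W_{f_1}$ becomes $1$, so $m_{f_1}=1$.
\end{itemize}
The paper's own proof stops at $\dim_E W_{f_1}=1$ and needs the same normalization.
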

\begin{proof}   
    Every statement other than the last follows from \cite{behrens2012singularities}*{Proposition 2}.
    The action of $\gamma^k$ on $W_{f_0}$ has the eigenvalues $\pm 1$ by $n \geq 2$ and the proof of \cite{behrens2012singularities}*{Proposition 2}.
    Suppose that $m_{f_0}$ is odd.
        In this case, since no power of a primitive $m_{f_0}$-th root of unity is $-1$, $\gamma^k$ acts trivially on $W_{f_0}$.
    If $m_{f_0}$ is even, any exponent of primitive $m_{f_0}$-th root of unity is written as $a/m_{f_0}$ with an odd integer $a$ and we have $\{ka/m_{f_0}\} = 0,1/2$.
    This shows that $\{ka/m_{f_0}\}$ is independent of $a$ and that the action $\gamma^k$ on $W_{f_0}$ is scalar $\pm 1$.
    Hence, the nontrivial eigenvalue of $\gamma^k$ on the tangent space comes from $W_{f_1}$ with $f_1 \neq f_0$.
        Applying \cite{behrens2012singularities}*{p.399 (3)} to $W_{f_1}$ and using the fact that $\gamma^{2k}$ acts as trivial by \cite{behrens2012singularities}*{Corollary 3}, it follows that $\dim_EW_{f_1}=1$.
\end{proof}

Consequently, $\gamma$ has order $2k$ if $E\neq\bbQ(\sqrt{-1}),\bbQ(\sqrt{-2}),\bbQ(\sqrt{-3})$.

\begin{lem}\label{lem_ball_interior_q-ref}
    If $n\geq5$ and $\gamma^k$ acts as a quasi-reflection, then $\mathrm{RT}'(\gamma^\ell)\geq1$ for every $1\leq\ell<k$.
\end{lem}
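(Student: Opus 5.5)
We will imitate the proof of Theorem \ref{thm_orth_interior}, replacing the orthogonal decomposition by Proposition \ref{prop_ball_q-ref}. Write $T_x\mathbb B_L\cong\Hom(W^-,W^+)$, where $W^+$ is the positive line. The exponents of $\gamma$ on the tangent space are $a_{f_0,1}-a_{f,j}$, with the single pair $(f_0,1)$ itself removed. By Proposition \ref{prop_ball_q-ref} there is a distinguished factor $f_1\neq f_0$ with $\deg f_1=1$ and $m_{f_1}\in\{1,2\}$. The quasi-reflection eigenvalue of $\gamma^k$ is the one coming from $W_{f_1}$, and $\gamma$ has order $2k$. Since $E\neq\bbQ(\sqrt{-1}),\bbQ(\sqrt{-2}),\bbQ(\sqrt{-3})$, every linear factor has root $\pm1$. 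As in \eqref{def_sigma'}, $\mathrm{RT}'(\gamma^\ell)$ is the age of $\gamma^\ell$ on the $n-1$ non-distinguished directions, plus $\{2\ell a\}$ for the exponent $a$ of the $f_1$-direction. It therefore suffices to bound from below the contribution of the directions other than $f_1$.

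\textbf{Reduction to $\ell$ with small associated character.} The plan is an induction on the order $r$ of the character $\alpha\colon\gamma\mapsto e^{2\pi i a_{f_0,1}}$, exactly as in the orthogonal case. If $(\ell,r)>1$, the character attached to $\gamma^\ell$ has smaller order, and the inductive hypothesis applies. (One also has to check that $\gamma^\ell$ still has a power acting as a quasi-reflection, or else use Lemma \ref{lem_ball_interior_non-q-ref}.) So assume $(\ell,r)=1$. The next step replaces $\gamma$ by $\gamma^\ell$ and decomposes $L_E$ again into irreducible factors for $\gamma^\ell$. This regrouping keeps the factor structure: $\gamma^\ell$ generates the same group on each $W_f$ when $(\ell,m_f)=1$, and the Galois conjugation $\zeta\mapsto\zeta^\ell$ permutes the $E$-irreducible factors.

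\textbf{Case analysis on $\deg f_0$.} If some factor $f\neq f_1$ has degree $\geq 5$, or is outside the exceptional lists of Tables \ref{table_list_c<=1} and \ref{table_list_S(X,0)<=1}, then Lemmas \ref{lem:external-point-bound} and \ref{lem_RT_sum_A_B_empty} already give a contribution $\geq1$. This applies whether that factor is $f_0$ itself, through $c_{\min}$, or a different factor, through $s_{\min}$. The remaining possibilities are those in the proof of Lemma \ref{lem_ball_interior_non-q-ref}:
\begin{itemize}
\item degree $4$ with $D\in\{15,20,24\}$;
\item degree $3$ with $D=7$;
\item degree $2$, where the factor is $\Phi_3,\Phi_4$ or $\Phi_6$;
\item degree $1$, where the root is $\pm1$.
\end{itemize}
If $\alpha$ is trivial or of order $2$, then $a_{f_0,1}\in\{0,1/2\}$. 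Any nonlinear factor then contributes at least $s_{\min}\geq1$. If all factors are linear, every eigenvalue of $\gamma^\ell$ is $\pm1$. Together with $\ell<k$, this contradicts the minimality of $k$ (as in the proof of Lemma \ref{lem_boundary_orthonal_exp=1}).

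If $\alpha$ has order $\geq3$, then $f_0$ is nonlinear. In that case:
\begin{itemize}
\item The $f_0$-block contributes at least $c_{\min}(f_0,E)$, which is $1/3$ in the quadratic case and $4/7$ or $11/15$ otherwise.
\item Each of the other $n-2$ directions, coming from linear factors with exponents $0$ or $1/2$, contributes at least $\min\{\{\ell a_{f_0,1}\},\{\ell a_{f_0,1}-1/2\}\}$, which is $\geq1/6$.
\item The $f_1$-term $\{2\ell a\}$ contributes at least $1/3$ when it is nonzero.
\end{itemize}
This yields a bound of the shape $1/3+(n-2)/6+\dots\geq1$ for $n\geq5$. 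The degree $3$ and $4$ cases mirror the explicit computations in Lemma \ref{lem_ball_interior_non-q-ref}.

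\textbf{Expected obstacle.} The main obstacle is the quadratic case $f_0\in\{\Phi_3,\Phi_4,\Phi_6\}$. There, the constant $1/6$ per linear direction is tight, and the $f_1$-direction may contribute $0$ after the modification in $\mathrm{RT}'$, since $2\ell a$ can be an integer. I would first try to show, using Proposition \ref{prop_ball_q-ref} (the divisibilities $(m_f,k)$ and $2(m_f,k)$), that in this situation $\{\ell a_{f_0,1}\}$ and the $f_1$-exponent cannot both be extremal. If that fails, I would enumerate the finitely many exponent patterns with $r\in\{3,4,6\}$ and $n=5$ directly, in the spirit of the appendix algorithms, and then note that the bound is monotone in $n$.
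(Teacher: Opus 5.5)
Your strategy is the paper's: decompose under $\gamma^\ell$, split on the degree of the factor $f_0'$ through $W^+$, and add the $f_0'$-block, the doubled $f_1$-direction, and per-direction lower bounds. But the proof stops exactly at the decisive case, and your reason for expecting trouble there is mistaken.

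\textbf{The missing observation.} Since $f_1$ is linear with root $\pm1$, its exponent $a_{f_1}$ lies in $\{0,\tfrac12\}$. Hence the modified $f_1$-term in $\mathrm{RT}'(\gamma^\ell)$ is $\{2\ell(a_{f_0,1}-a_{f_1})\}=\{2\ell a_{f_0,1}\}$. This term vanishes if and only if $\ell a_{f_0,1}\in\tfrac12\bbZ$, that is, if and only if $f_0'$ is linear. So in your case ($\alpha$ of order $r\geq3$, $(\ell,r)=1$) it never vanishes. For $f_0'\in\{\Phi_3,\Phi_4,\Phi_6\}$ it lies in $\{1/3,1/2,2/3\}$. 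Thus $W_{f_0'}\oplus W_{f_1'}$ contributes at least $1/3+1/3=2/3$, and the remaining directions give $\mathrm{RT}'(\gamma^\ell)\geq 2/3+(n-3)/6=(n+1)/6\geq1$. This is precisely the paper's key step. The quadratic case is therefore not an obstacle, and no enumeration is needed.

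\textbf{The degree-$3$ case.} Contrary to your remark, this case does not simply mirror Lemma \ref{lem_ball_interior_non-q-ref}. There the worst case is $f_0'=\Phi_7^{+}$ over $\bbQ(\sqrt{-7})$ with exponent $4/7$ on $W^+$, and it yields only $(n+8)/14$, which is $13/14<1$ at $n=5$. What rescues $n=5$ is the same doubling: the $f_1$-direction contributes $\{8/7\}=1/7$ rather than $1/14$. This gives $5/7+1/7+(n-3)/14=(n+9)/14\geq1$, the bound the paper records.

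\textbf{The induction is unnecessary.} Your induction on the order of $\alpha$ can be dropped, and its inductive step is not well posed as written. $\mathrm{RT}'$ is defined relative to $\gamma$ and $k$, and the fallback to Lemma \ref{lem_ball_interior_non-q-ref} fails at $n=5$ for $D=7$. The paper instead treats each $\ell$ directly by the case analysis on $\deg f_0'$. The case $\deg f_0'=1$ is your ``$\alpha$ of order $\leq2$'' argument: any nonlinear factor gives $s_{\min}\geq1$. If all factors are linear, $\gamma^{2\ell}$ would act trivially, contradicting that $\gamma$ has order exactly $2k$.
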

\begin{proof}
    The proof is similar to that of Lemma \ref{lem_ball_interior_non-q-ref}.
    We decompose $L_E$ as a $\gamma^\ell$-module and denote it by
    $\bigoplus_{f'} W_{f'}$.
        We define $f'_0$ and $f'_1$ analogously.
    If $\deg(f_0') \geq 4$, then the contribution of $W_{f_0'} \oplus W_{f_1'}$ to $\mathrm{RT}'(\gamma^\ell)$ is greater than $1$.
    If $\deg(f_0') = 3$, it suffices to consider the cases $(f_0',E) = (\Phi_7^\pm, \bbQ(\sqrt{-7})), (\Phi_{14}^\pm, \bbQ(\sqrt{-7}))$.
        In the worst case, we obtain
    \[
    \mathrm{RT}'(\gamma^\ell) \geq \frac{5}{7} + 2 \cdot \frac{1}{14} + \frac{n-3}{14} = \frac{n+9}{14} \geq 1.
    \]
    If $\deg(f_0') = 2$, then the contribution of $W_{f_0'} \oplus W_{f_1'}$ to $\mathrm{RT}'(\gamma^\ell)$ is at least $2/3$.
        By the same argument as in the proof of Lemma \ref{lem_ball_interior_non-q-ref}, we have
    \[
    \mathrm{RT}'(\gamma^\ell) \geq \frac{2}{3} + \frac{n-3}{6} = \frac{n+1}{6} \geq 1.
    \]
    If $\deg(f_0') = 1$ and $\deg(f') \geq 2$ for some $f' \neq f'_0$, then $\mathrm{RT}'(\gamma^{\ell}) \geq s_{\min}(f', E) \geq 1$ by Table \ref{table_list_S(X,0)<=1} and the assumption that $E \neq \bbQ(\sqrt{-1}), \bbQ(\sqrt{-2}), \bbQ(\sqrt{-3})$.
    If all $f'$ are linear, then all the eigenvalues are $\pm 1$.
By $\ell < k$, this case does not happen.
\end{proof}

The boundary case is simpler. Let $I\subset L_E$ be an isotropic line.
A neighborhood of the corresponding boundary component is modeled on
\[
\bbC^\times\times((I^\perp/I)\otimes_E\bbC)
\cong \bbC^\times\times\bbC^{n-1},
\]
and the boundary divisor corresponds to $\{0\}\times\bbC^{n-1}$.
Fix $z_0=(0,\underline{\omega_0})$ and let $\gamma$ lie in its stabilizer.
Following the notation \eqref{equation_action_d_gamma}, the action of $\gamma$ on the tangent space is given by
\[
d_{\gamma_{z_0}} = \begin{pmatrix}
    D_{\gamma,z_0}^{\mathrm{tor}} & * \\
    0 & (v \mapsto a_\gamma v d_\gamma^*)
\end{pmatrix}
\]
and $a_\gamma \in \{\pm 1\}$.
We decompose a $d_\gamma$-module $I^\perp/I$ by $\bigoplus_f W_f$.
Let $m_f$ be the unique positive integer such that $f = \Phi_{m_f}$ or $\Phi_{m_f}^\pm$.
\begin{lem}\label{lem_action_translation}
If $d_\gamma$ has order at most two,
then the action of $\gamma$ on the
tangent space has eigenvalues $\pm1$.
\end{lem}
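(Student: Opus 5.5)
We need to show: at a boundary point $z_0=(0,\underline{\omega_0})$ of a ball quotient, if $d_\gamma$ has order at most two, then every eigenvalue of $d\gamma_{z_0}$ is $\pm1$. The matrix of $d\gamma_{z_0}$ is block upper triangular, so its eigenvalues are those of the diagonal blocks: the one-dimensional toric block $D^{\mathrm{tor}}_{\gamma,z_0}$ and the block $v\mapsto a_\gamma v d_\gamma^*$ on $(I^\perp/I)\otimes_E\bbC$. We treat these in turn.

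\emph{The block on $I^\perp/I$.} Since $a_\gamma\in\{\pm1\}$ and $d_\gamma^2=1$, the map $v\mapsto a_\gamma v d_\gamma^*$ squares to the identity. Moreover $d_\gamma$ is unitary, so $d_\gamma^*=d_\gamma^{-1}=d_\gamma$. Hence this block is diagonalizable with eigenvalues $\pm1$.

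\emph{The toric block.} This is the main point. Here we imitate Lemma \ref{lem_action_exp_e}(1) in the orthogonal setting. The toric coordinate is $\exp(2\pi i Z/\mu)$, where $Z$ is the one-dimensional $U_P$-coordinate and $\mu$ generates $\Upsilon_P$. We use the transformation formula for $Z'$ obtained from \eqref{description_gamma}, specialized to $s=1$. At a boundary point, $\gamma$ acts on the toric coordinate by multiplication by a root of unity $\exp(2\pi i\,\theta)$. This factor is produced by the translation part $c_\gamma-\tfrac12 b_\gamma^*hb_\gamma$ together with the $v$-dependent terms, evaluated at the fixed point $\omega_0$.

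\emph{Passing to $\gamma^2$.} The Levi part of $\gamma^2$ is $(a_\gamma^2,d_\gamma^2)=(1,1)$, so $\gamma^2\in N_P$. We may use $Z_L\subset\Gamma$, which is assumed by Lemma \ref{lem:irregular singularity to non-irregular}. The element $\gamma^2$ fixes $z_0$, so its image in $V_P$ acts on the $\mathcal V_P$-coordinate by a translation that must vanish. Arguing as at the end of the proof of Lemma \ref{lem:boundary-interior-comparison}, we get $\gamma^2\in U_P\cap\Gamma=\Upsilon_P$. Elements of $\Upsilon_P$ act trivially on $\Upsilon_P\backslash\mathcal D$ and its toroidal extension, so $\gamma^2$ acts trivially on the tangent space. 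In particular $\exp(2\pi i\,\theta)^2=1$, i.e.\ the toric eigenvalue is $\pm1$.

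\emph{Expected obstacle.} The delicate point is justifying that $\gamma^2$ has trivial $V_P$-component. A unipotent element's $V_P$-part acts on the $v$-coordinate by the translation $v\mapsto v+b$, as computed after \eqref{description_gamma}. Such an element fixes $\underline{\omega_0}$ only if $b=0$, and $\Upsilon_P$ acts trivially on the chart, so the argument should go through. If it turns out to be awkward, the fallback is a direct computation: $D^{\mathrm{tor}}_{\gamma,z_0}$ is a character of the finite stabilizer whose square is the character of $\gamma^2$, and that character is trivial.
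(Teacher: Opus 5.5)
Your proof is correct but takes a different route from the paper. The paper proves this lemma only by citing \cite{behrens2012singularities}*{Proposition 4} and the proof of \cite{Gritsenko2007kodaira}*{Proposition 2.28}, whereas you give a self-contained argument.

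Your key step runs as follows. The Levi part of $\gamma^2$ is $(a_\gamma^2,d_\gamma^2)=(1,1)$, so $\gamma^2\in N_P\cap\Gamma$. On the boundary divisor $\{0\}\times\bbC^{n-1}$ an element of $N_P$ acts by translation by its $V_P$-component, so fixing $z_0$ forces that component to vanish. Hence $\gamma^2\in U_P\cap\Gamma=\Upsilon_P$, which acts trivially on $(\Upsilon_P\backslash\mathcal D)_{\Sigma_P}$. This is the same mechanism the paper uses at the end of the proof of Lemma \ref{lem:boundary-interior-comparison} and in Lemma \ref{lem_action_exp_e}(2), and it is what the cited results rest on.

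That step alone already finishes the proof. It shows $\gamma$ has order at most two in $\Gamma_P/\Upsilon_P$, so $d\gamma_{z_0}$ squares to the identity and all its eigenvalues are $\pm1$. Your separate block-by-block analysis is therefore redundant, though harmless.

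You also do not need $Z_L\subset\Gamma$ here, since the Levi part of $\gamma^2$ is already trivial. You only use $a_\gamma\in\{\pm1\}$, which holds because $\OO_E^\times=\{\pm1\}$ in this subsection.

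Drop the ``fallback'' in your last paragraph. On its own it presupposes that $\gamma^2$ acts trivially, which is exactly what your main argument establishes.

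Compared with the citation, your version is transparent and independent of the conventions in Behrens and Gritsenko--Hulek--Sankaran. It also gives a slightly stronger conclusion: the local action of $\gamma$ is an involution, not merely a map whose eigenvalues are $\pm1$.
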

\begin{proof}
    This is proved in \cite{behrens2012singularities}*{Proposition 4}.
    See also the proof of \cite{Gritsenko2007kodaira}*{Proposition 2.28}.
\end{proof}

\begin{lem}\label{lem_ball_boundary_non-q-ref}
If 
$\gamma$ acts on the tangent space nontrivially and no power of it
acts as a quasi-reflection, then
$\mathrm{RT}(\gamma)\geq1$.
\end{lem}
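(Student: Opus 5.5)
We have to prove Lemma \ref{lem_ball_boundary_non-q-ref}: for a ball quotient boundary point $z_0=(0,\underline{\omega_0})$, if $\gamma$ acts nontrivially and no power of $\gamma$ is a quasi-reflection, then $\mathrm{RT}(\gamma)\geq1$. The standing assumption is $E\neq\bbQ(\sqrt{-1}),\bbQ(\sqrt{-2}),\bbQ(\sqrt{-3})$, so $w_E=2$ and $a_\gamma\in\{\pm1\}$.

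The tangent space carries a block upper-triangular action. Its eigenvalues are the toric eigenvalue $\lambda:=D^{\mathrm{tor}}_{\gamma,z_0}$ on the normal direction, together with the eigenvalues of $v\mapsto a_\gamma v d_\gamma^*$ on $(I^\perp/I)\otimes\bbC\cong\bbC^{n-1}$. The latter have exponents $\{\theta_a - y\}$, where $\theta_a\in\{0,1/2\}$ is the exponent of $a_\gamma$ and $y$ runs over the exponents of $d_\gamma$. Decomposing $I^\perp/I=\bigoplus_f W_f$ as a $d_\gamma$-module, we get
\[
\mathrm{RT}(\gamma)\geq \sum_f S(\{\theta_a\},X_f),
\]
discarding the nonnegative toric contribution where convenient, as in Lemma \ref{lem:boundary-interior-comparison}.

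The proof splits into two cases.

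\textbf{Some factor $f$ has $\deg f\geq2$.} Then $S(\{\theta_a\},X_f)\geq s_{\min}(f,E)$, because $\theta_a$ is an exponent of a unit of $\OO_E$. By Lemma \ref{lem_RT_sum_A_B_empty} and Table \ref{table_list_S(X,0)<=1}, with the three excluded fields removed, every entry satisfies $s_{\min}(f,E)\geq1$. Indeed, the remaining small values all occur only for $E=\bbQ(\sqrt{-1}),\bbQ(\sqrt{-2}),\bbQ(\sqrt{-3})$. The surviving entries $\Phi_3,\Phi_4,\Phi_6$ for general $E$, $\Phi_7^\pm,\Phi_{14}^\pm$, $\Phi_{15}^\pm,\Phi_{30}^\pm$, $\Phi_{20}^\pm$ over $\bbQ(\sqrt{-5})$, and $\Phi_{24}^\pm$ over $\bbQ(\sqrt{-6})$ are all equal to $1$ or larger. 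Hence $\mathrm{RT}(\gamma)\geq1$ immediately.

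\textbf{All factors are linear.} Every eigenvalue of $d_\gamma$ is a unit, i.e.\ $\pm1$, so $d_\gamma$ has order at most two. Lemma \ref{lem_action_translation} then shows that every eigenvalue of $\gamma$ on the tangent space, including $\lambda$, is $\pm1$. Since the action is nontrivial, some eigenvalue equals $-1$. Since $\gamma$ itself is not a quasi-reflection (the case $k=1$ is excluded by hypothesis), at least two eigenvalues equal $-1$. Each contributes $1/2$, so $\mathrm{RT}(\gamma)\geq1$.

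The main point requiring care is the first case: the lower bound must come only from the Levi block and not depend on the unknown off-diagonal entries. This holds because the differential is block triangular, so its eigenvalues are those of the diagonal blocks, and the toric eigenvalue contributes a nonnegative amount. One must also check that the table entries which survive the exclusion of the three fields really are at least $1$. No bound on $n$ is needed in either case.
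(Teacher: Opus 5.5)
Your proof is correct and follows the paper's own argument. A nonlinear factor gives $\mathrm{RT}(\gamma)\geq s_{\min}(f,E)\geq 1$ by Table \ref{table_list_S(X,0)<=1} once the three fields are excluded. In the all-linear case, Lemma \ref{lem_action_translation} makes every eigenvalue $\pm1$, and the non-quasi-reflection hypothesis forces at least two eigenvalues $-1$. Your remarks on the block-triangular differential and on diagonalizability of finite-order maps only make explicit what the paper leaves implicit.
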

\begin{proof}
    By Table \ref{table_list_S(X,0)<=1} and the assumption $E \neq \bbQ(\sqrt{-1}),  \bbQ(\sqrt{-2}), \bbQ(\sqrt{-3})$, if $\deg(f) \geq 2$ for some $f$, then $\mathrm{RT}(\gamma) \geq s_{\min}(f,E) \geq 1$.
    We may assume $\deg(f) \leq 1$ for any $f$.
    The eigenvalues of the action of $\gamma$ on the tangent space is $\pm 1$ by Lemma \ref{lem_action_translation}.
    Since $\gamma$ does not act as a quasi-reflection, $\mathrm{RT}(\gamma) \geq 1$ is immediate.
\end{proof}

\begin{lem}\label{lem_ball_boundary_q-ref}
Assume $n\geq3$, and let $k$ be the least positive
integer such that $\gamma^k$ acts as a quasi-reflection.
Then the  action of $\gamma$ has
order $2k$, and
$\mathrm{RT}'(\gamma^\ell)\geq1$
for every $1\leq\ell<k$.
\end{lem}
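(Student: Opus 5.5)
I will mirror the interior quasi-reflection argument (Lemma \ref{lem_ball_interior_q-ref}) with the boundary block form of $d_{\gamma_{z_0}}$ substituted for the interior tangent space. The proof has three steps: locate the unique nontrivial eigenvalue of $\gamma^k$, deduce the order $2k$, and then bound $\mathrm{RT}'(\gamma^\ell)$ by cases on the factors of $d_{\gamma^\ell}$.

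\emph{Locating the reflection eigenvalue.} The tangent space at $z_0$ has a toric direction carrying the eigenvalue $D^{\mathrm{tor}}_{\gamma,z_0}$ and a block $v\mapsto a_\gamma v d_\gamma^*$ on $(I^\perp/I)\otimes_E\bbC\cong\bbC^{n-1}$, with $a_\gamma=\pm1$. Decompose $I^\perp/I=\bigoplus_f W_f$ under $d_\gamma$.
\begin{itemize}
\item If some $f$ has $\deg f\geq 2$, I would argue as in Proposition \ref{prop_ball_q-ref}. Since $\gamma^k$ has only one nontrivial eigenvalue and $n-1\geq 2$, the equation $a_\gamma^k\cdot(\text{eigenvalue of }d_\gamma^{-k})=1$ must hold on all but one coordinate. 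Running the parity argument of that proof on the exponents $a/m_f$ then shows that $\gamma^k$ acts by the scalar $a_\gamma^k$ on every $W_f$ with $\varphi(m_f)\geq 2$.
\item Consequently the nontrivial eigenvalue of $\gamma^k$ lies either on a one-dimensional $W_{f_1}$ (so $m_{f_1}\in\{1,2\}$), or on the toric direction.
\item If it lies on the toric direction, then $d_\gamma^k$ is scalar, hence $\pm1$ because $E\neq\bbQ(\sqrt{-1}),\bbQ(\sqrt{-2}),\bbQ(\sqrt{-3})$. Lemma \ref{lem_action_translation} applied to $\gamma^{k}$ (order at most two after squaring) then makes the toric eigenvalue $\pm1$.
\end{itemize}
In every case the nontrivial eigenvalue of $\gamma^k$ is real, hence equals $-1$. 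So $\gamma^{2k}$ acts trivially, and minimality of $k$ gives that the action has order exactly $2k$. This is the analogue of Lemma \ref{lem_xi^2k=1}.

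\emph{Bounding $\mathrm{RT}'(\gamma^\ell)$ for $1\leq\ell<k$.} Decompose $I^\perp/I$ again, now under $d_{\gamma}^\ell$.
\begin{itemize}
\item If some factor $f'$ has degree $\geq 2$, the contribution of $W_{f'}$ against the single exponent of $a_\gamma^\ell$ is at least $s_{\min}(f',E)$. By Table \ref{table_list_S(X,0)<=1} this is $\geq 1$ for the excluded fields, since every entry for the allowed $E$ is $\geq 1$ and $\mathrm{RT}'\geq\mathrm{RT}$ on these coordinates.
\item If all factors are linear, then $d_\gamma^\ell$ has order at most $2$. Lemma \ref{lem_action_translation} then forces all eigenvalues of $\gamma^\ell$ to be $\pm1$.
\item In that situation, if at least two eigenvalues equal $-1$ then $\mathrm{RT}'(\gamma^\ell)\geq1$ already. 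Otherwise $\gamma^\ell$ is a quasi-reflection or trivial. A quasi-reflection contradicts the minimality of $k$. A trivial action contradicts the fact that the order is $2k>\ell$.
\end{itemize}

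\emph{Main obstacle.} The delicate step is the first one: ruling out that the reflecting eigenvalue comes from the toric direction $D^{\mathrm{tor}}$ while $d_\gamma$ has a nonlinear factor. The toric direction also absorbs the translation parts $b_\gamma,c_\gamma$, so it is not determined by the Levi part alone. I would handle this with Lemma \ref{lem_action_translation} applied to $\gamma^k$, and with the fact (Proposition \ref{prop:boundary-no-qref}) that no element acts as a quasi-reflection normal to the boundary divisor. This last fact forces the reflecting hyperplane to be transverse, so the reflection eigenvalue sits in the $\bbC^{n-1}$ block.
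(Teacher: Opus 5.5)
Your argument is correct in substance. The second half is the paper's: you bound $\mathrm{RT}'(\gamma^\ell)$ by $s_{\min}$ when $d_\gamma^\ell$ has a nonlinear factor, and use Lemma \ref{lem_action_translation} when all factors are linear.

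The first half, which establishes order $2k$, takes a different route.
\begin{itemize}
\item \emph{The paper's route.} If the block $v\mapsto a_\gamma^k v(d_\gamma^*)^k$ were trivial, the Levi part of $\gamma^k$ would have interior age $0$. Lemma \ref{lem:boundary-interior-comparison} would then force $\gamma^k$ to act trivially. Hence $D^{\mathrm{tor}}_{\gamma^k,z_0}=1$ and the reflecting eigenvalue lies in the $\bbC^{n-1}$ block. Its eigenspace is a line defined over $E$, so the eigenvalue lies in $\OO_E^\times=\{\pm1\}$.
\item \emph{Your route.} You run the cyclotomic parity argument of Proposition \ref{prop_ball_q-ref} to show that $\gamma^k$ is trivial on every $W_f$ with $\deg f\geq 2$. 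The reflecting eigenvalue then lies either on an $E$-rational line (eigenvalue $\pm1$) or in the toric direction. You handle the toric case by applying Lemma \ref{lem_action_translation} to $\gamma^k$, whose Levi part $d_\gamma^k=a_\gamma^k$ is $\pm1$.
\end{itemize}
Your route avoids Lemma \ref{lem:boundary-interior-comparison}, at the price of a second appeal to Lemma \ref{lem_action_translation}. It also gives the finer fact that $\gamma^k$ is trivial on all nonlinear isotypic parts. The paper's route is shorter and shows in addition that the reflection is never normal to the boundary.

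Your ``main obstacle'' paragraph is therefore unnecessary, because you never need to exclude the toric case. In any event, Proposition \ref{prop:boundary-no-qref}(2) is not available here. It is proved only for $p,q\geq2$, via the absence of interior quasi-reflections, and that fails for ball quotients.

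One step needs repair. In the all-linear case you claim that two eigenvalues equal to $-1$ already give $\mathrm{RT}'(\gamma^\ell)\geq 1$. This is false for $\mathrm{RT}'$. If one of the two sits on the reflecting coordinate, $\mathrm{RT}'$ counts it as $\{2\cdot\tfrac12\}=0$, and the total is only $1/2$. The case is, however, vacuous. If all eigenvalues of $\gamma^\ell$ are $\pm1$, then $\gamma^{2\ell}$ acts trivially with $0<2\ell<2k$, which contradicts the order $2k$ you established in the first step. Replace your trichotomy by this one-line argument, which is exactly how the paper concludes.
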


\begin{proof}
By Lemma \ref{lem:boundary-interior-comparison}, $D_{\gamma^k,z_0}^\tor = 1$ and the action $v \mapsto a_\gamma^k v (d_\gamma^*)^k$ is nontrivial with exactly one nontrivial eigenvalue. 
The eigenspace $W$ with the nontrivial eigenvalue is one-dimensional and defined over $E$ with eigenvalue $\OO_E^\times = \{\pm 1\}$.
Hence, $\gamma^{2k}$ acts as trivial on the tangent space.

If there is a nonlinear factor dividing the characteristic polynomial of $d_\gamma^\ell$, we have $\mathrm{RT}'(\gamma^{\ell}) \geq 1$ since $\dim W = 1$.
If all factors are linear, then
$(d_\gamma^\ell)^2=1$.
By Lemma \ref{lem_action_translation}, all eigenvalues of $\gamma^\ell$ are $\pm1$.
This is impossible for $1\leq\ell<k$.
This proves the assertion.
\end{proof}

Finally, note that the assumption $E\neq\bbQ(\sqrt{-1}),\bbQ(\sqrt{-2}),\bbQ(\sqrt{-3})$ is natural.
In fact, we obtain the following.
\begin{thm}\label{thm:noncanonical-sing-ball-quotient}
Let $E=\bbQ(\sqrt{-1}),\bbQ(\sqrt{-2})$, or
$\bbQ(\sqrt{-3})$.
For every $n\geq2$, there exist a Hermitian $\OO_E$-lattice
$L_n$ of signature $(1,n)$ and a subgroup
$\Gamma_n\subset\U(L_n)$ of finite index such that
$X_{L_n}(\Gamma_n)$ has a noncanonical interior singularity.
\end{thm}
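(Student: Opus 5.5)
The plan is to imitate Example~\ref{ex:exceptional-interior}: produce an explicit finite-order element of $\U(L_n)$ that fixes an interior point of the ball and acts on the tangent space by a scalar root of unity with small exponent. We take $\Gamma_n$ as in Section~\ref{subsec:An example showing the optimality of the bound}, so that the stabilizer is exactly the cyclic group generated by this element. Since a scalar action on $T_x\mathbb B_L\cong\Hom(V^-,V^+)$ with $\dim V^+=1$ is a quasi-reflection only when $n=1$, for $n\geq2$ the Reid--Shepherd-Barron--Tai criterion (Theorem~\ref{thm:Reid--Shepherd-Barron--Tai}) applies directly. It then suffices to find some nontrivial power with $\mathrm{RT}<1$.

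\emph{Construction.} Let $u\in\OO_E^\times$ be a unit of order $w_E\in\{4,6\}$ when $E=\bbQ(\sqrt{-1}),\bbQ(\sqrt{-3})$. Take $L_n=\langle 1\rangle\oplus\langle-1\rangle^{\oplus n}$ and $g=\diag(u,1,\dots,1)$. Then $g$ fixes the point $x$ with $V^+=\bbC e_0$ and $V^-=\operatorname{span}(e_1,\dots,e_n)$. On $T_x=\Hom(V^-,V^+)$ it acts by the scalar $u$, so the singularity has type $\frac1{w_E}(1,\dots,1)$ with $n$ entries. Its age is $n/w_E$, and the powers $g^\ell$ have age $n\{\ell/w_E\}$.

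\emph{Small $n$.} The minimal age is $n/w_E<1$ whenever $n<w_E$, so this already gives noncanonical singularities for small $n$. It does not cover every $n\geq2$, and it gives nothing for $\bbQ(\sqrt{-2})$, where $w_E=2$. The element therefore has to be nonscalar on $V^-$, and the exponents on $V^-$ must be balanced against the one on $V^+$.

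\emph{All $n\geq2$ (main obstacle).} The tool here is the table of small $c_{\min}$ values. For example, $c_{\min}(\Phi_8^\pm,\bbQ(\sqrt{-2}))=1/4$ and $c_{\min}(\Phi_{12}^\pm,\bbQ(\sqrt{-1}))=1/3$ come from $\Phi_d^\pm$ of degree $2$ with one root in $V^+$ and one in $V^-$. For $E=\bbQ(\sqrt{-3})$, use $\Phi_9^\pm$, or $\Phi_3$ over $\bbQ(\sqrt{-1})$. The plan is as follows.
\begin{enumerate}
\item Take a rank-two Hermitian $\OO_E$-lattice $H$ of signature $(1,1)$ carrying an isometry $A$ with characteristic polynomial $f$ of degree $2$. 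Choose it, in the manner of the matrix $A$ in Example~\ref{ex:exceptional-interior}, so that the eigenvector for root $\zeta_d^a$ is positive and the one for $\zeta_d^b$ is negative, with $\{a/d-b/d\}=c_{\min}$ small.
\item Set $L_n=H\oplus\langle-1\rangle^{\oplus(n-1)}$ and $\gamma=A\oplus\mu\,\id$, with $\mu$ a unit chosen so that its exponent $y$ is close to the exponent $x_+=a/d$ of the positive eigenvalue.
\item Then $\mathrm{RT}(\gamma)=\{x_+-b/d\}+(n-1)\{x_+-y\}$. The second term vanishes when $y=x_+$, but a root of unity of order $d$ is not a unit of $E$. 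So instead take $\gamma=A\oplus\diag(\mu_1,\dots)$ and pass to a power $\gamma^\ell$ making the scalar part's exponent coincide with $\ell x_+$ modulo $1$, keeping the first term below $1$.
\end{enumerate}
The concrete difficulty is to realize $A$ integrally with the prescribed signs of eigenvectors. I would attempt this with the explicit forms $H=\begin{pmatrix}0&-\sqrt{-D}\\\sqrt{-D}&0\end{pmatrix}$ and companion-type matrices, checking $A^*HA=H$ as in Example~\ref{ex:exceptional-interior}.

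If uniformity in $n$ fails this way, the fallback is an element whose $V^-$-part is the scalar equal to the $V^+$ eigenvalue except on one block. This makes the age independent of $n$: it equals the contribution of one $\Phi_d^\pm$ split across the signs, as listed in Table~\ref{table_list_c<=1}, which is $<1$ exactly for these three fields.
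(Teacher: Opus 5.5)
\textbf{There is a genuine gap: steps 1--3 cannot produce noncanonical points for all $n$.}

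Suppose the positive line $V^+$ is the eigenline of $\zeta_d^a\notin E$. Then each of the $n-1$ further negative directions carries a unit $\mu_j\neq\zeta_d^a$ and contributes $\{a/d-y_j\}>0$. So the age grows linearly in $n$.

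Passing to a power does not rescue this. Suppose $\mu_j^\ell=\zeta_d^{\ell a}$; then $\zeta_d^{\ell a}\in E$. Since $\zeta_d^b$ is an $E$-conjugate of $\zeta_d^a$ (both are roots of the irreducible $f\in E[T]$), we also get $\zeta_d^{\ell b}=\zeta_d^{\ell a}$. Hence $A^\ell$ is scalar, and the term $\{\ell(a-b)/d\}$ you want to keep vanishes as well. If all $\mu_j^\ell$ match, $\gamma^\ell$ even acts trivially on $T_x$.

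The paper confirms this. Table \ref{table_list_c<=1} has $c_{\min}<1$ also over $\bbQ(\sqrt{-5}),\bbQ(\sqrt{-6}),\bbQ(\sqrt{-7}),\bbQ(\sqrt{-15})$, so $c_{\min}<1$ does not single out the three fields. Yet those ball quotients are canonical for large $n$ by Theorem \ref{thm_ball_quot_singularity}. For instance, the split $\Phi_7^\pm$ over $\bbQ(\sqrt{-7})$ gives age $(n+8)/14$ in the proof of Lemma \ref{lem_ball_interior_non-q-ref}. Also, $\Phi_9^\pm$ over $\bbQ(\sqrt{-3})$ has degree $3$ and $c_{\min}=1$, so it cannot help.

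\textbf{The fallback has the right shape but is inconsistent as written, and it is not carried out.} Since $\dim V^+=1$, if $\gamma$ acts on most of $V^-$ by the scalar equal to its $V^+$-eigenvalue, that eigenvalue must lie in $E$. The exceptional block then lies entirely in $V^-$. Its contribution is $\sum_b\{\theta-b\}$ over the block, i.e.\ a pair of units or an $s_{\min}$-type value as in Table \ref{table_list_S(X,0)<=1}, not a $c_{\min}$ value.

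The missing step is to realize such a block integrally, which is what the paper does. Take $\gamma_n$ trivial on $e_0$ and on all but a rank-two negative-definite summand, and act on that summand as follows:
\begin{itemize}
\item for $\bbQ(\sqrt{-1})$, by $\diag(-\sqrt{-1},-\sqrt{-1})$;
\item for $\bbQ(\sqrt{-3})$, by $\diag(\zeta_3^{-1},\zeta_3^{-1})$;
\item for $\bbQ(\sqrt{-2})$, by $A^{-1}$ on the lattice with Gram matrix $-H$, where $H=\begin{pmatrix}2&-\sqrt{-2}\\ \sqrt{-2}&2\end{pmatrix}$ and $A=\begin{pmatrix}\sqrt{-2}&1\\ 1&0\end{pmatrix}$. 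Here $A^*HA=H$ and $A$ has eigenvalues $\zeta_8,\zeta_8^3$.
\end{itemize}
Then $T_x$ has exactly two nontrivial eigenvalues. The types are $\frac14(1,1)$, $\frac13(1,1)$, $\frac18(1,3)$, with ages $1/2$, $2/3$, $1/2$ independent of $n$. Both nontrivial eigenvalues are primitive roots of the order of $\gamma_n$, so no nontrivial power is a quasi-reflection. With $\Gamma_n=\langle\Gamma_0,\gamma_n\rangle$, the Reid--Shepherd-Barron--Tai criterion concludes.

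For $\bbQ(\sqrt{-1})$ and $\bbQ(\sqrt{-3})$, this is just your small-$n$ element with the unit moved. On the one-dimensional $V^+$ it moves all $n$ tangent directions; placed on two vectors of $V^-$ instead, it moves only two.
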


\begin{proof}
We use the realization of
$\mathbb B_{L_n}=\mathcal D_{1,n}$ by negative $n$-planes
in \eqref{def_realization_D_p,q}.
In each case below, put $W=\bbC e_0$ and let
$x_n=W^\perp\in\mathbb B_{L_n}$.
Thus
$T_{x_n}\mathbb B_{L_n}\cong
\operatorname{Hom}_{\bbC}(W^\perp,W)$.
The elements constructed below act trivially on $W$,
so their eigenvalues on the tangent space are the complex conjugate of their eigenvalues on $W^\perp$.
Write $\zeta_m=\exp(2\pi\sqrt{-1}/m)$.

\medskip
\noindent\textbf{Case $E=\bbQ(\sqrt{-1})$.}
Take $L_n=\OO_E^{n+1}$ with Hermitian form
$|z_0|^2-|z_1|^2-\cdots-|z_n|^2$, and set
\[
  \gamma_n
  =\operatorname{diag}
  (1,-\sqrt{-1},-\sqrt{-1},1_{n-2})
  \in\U(L_n).
\]
Its eigenvalues at $x_n$ are
$\sqrt{-1},\sqrt{-1},1,\ldots,1$.
The cyclic quotient type is
$\frac14(1,1,0,\ldots,0)$, and
$\mathrm{RT}(\gamma_n)=1/2<1$.

\medskip
\noindent\textbf{Case $E=\bbQ(\sqrt{-2})$.}
Put
\[
  H=
  \begin{pmatrix}
    2&-\sqrt{-2}\\
    \sqrt{-2}&2
  \end{pmatrix},
  \qquad
  A=
  \begin{pmatrix}
    \sqrt{-2}&1\\
    1&0
  \end{pmatrix}.
\]
The matrix $H$ is positive definite, and
$A^*HA=H$, $A^4=-1_2$, and $A^8=1_2$.
The eigenvalues of $A$ are $\zeta_8,\zeta_8^3$, since
$\det(T\cdot1_2-A)=T^2-\sqrt{-2}\,T-1$.
Equip
$L_n=\OO_E e_0\oplus\OO_E^2\oplus\OO_E^{n-2}$
with the Hermitian form of Gram matrix
$\operatorname{diag}(1,-H,-1_{n-2})$, and set
\[
  \gamma_n=1\oplus A^{-1}\oplus1_{n-2}\in\U(L_n),
  \qquad
  A^{-1}=
  \begin{pmatrix}
    0&1\\
    1&-\sqrt{-2}
  \end{pmatrix}.
\]
The eigenvalues at $x_n$ are
$\zeta_8,\zeta_8^3,1,\ldots,1$.
Thus the type is $\frac18(1,3,0,\ldots,0)$, and
$\mathrm{RT}(\gamma_n)=1/2<1$.

\medskip
\noindent\textbf{Case $E=\bbQ(\sqrt{-3})$.}
Take $L_n=\OO_E^{n+1}$ with the standard Hermitian form
of signature $(1,n)$, and set
\[
  \gamma_n
  =\operatorname{diag}
  (1,\zeta_3^{-1},\zeta_3^{-1},1_{n-2})
  \in\U(L_n).
\]
Its eigenvalues at $x_n$ are
$\zeta_3,\zeta_3,1,\ldots,1$.
The type is $\frac13(1,1,0,\ldots,0)$, and
$\mathrm{RT}(\gamma_n)=2/3<1$.

\medskip
In each case, choose a torsion-free normal subgroup
$\Gamma_0$ of finite index in $\U(L_n)$, and put
$\Gamma_n=\langle\Gamma_0,\gamma_n\rangle$.
We claim that
$\operatorname{Stab}_{\Gamma_n}(x_n)=\langle\gamma_n\rangle$.
Indeed, normality implies that every element of $\Gamma_n$
has the form $\eta\gamma_n^j$ with $\eta\in\Gamma_0$.
If this element fixes $x_n$, then $\eta$ also fixes $x_n$.
Since the stabilizer of an interior point is finite
and $\Gamma_0$ is torsion-free, we have $\eta=1$.

The two nontrivial eigenvalues on the tangent space are both primitive
roots of the order of $\gamma_n$, namely $4$, $8$, or $3$.
Thus every nontrivial element of $\langle\gamma_n\rangle$
has exactly two nontrivial eigenvalues on the tangent space,
and no such element is a quasi-reflection.
The Reid--Shepherd-Barron--Tai criterion Theorem \ref{thm:Reid--Shepherd-Barron--Tai} therefore applies,
and the above strict inequalities show that the image of
$x_n$ in $X_{L_n}(\Gamma_n)$ is noncanonical.
\end{proof}
Note that the example introduced in the case $E=\Q(\sqrt{-3})$ above is also written in  \cite{watson2026singularities}*{Example 4.3.6}.

\newpage
\appendix

\section{Computer-assisted finite checks}
\label{app:finite-computations}
\label{subsec:finite-computations}
\setcounter{algorithm}{0}
\renewcommand{\thealgorithm}{\thesection.\arabic{algorithm}}
\providecommand{\theHalgorithm}{\thealgorithm}
\renewcommand{\theHalgorithm}{appendix.\thesection.\arabic{algorithm}}
\setcounter{table}{0}
\renewcommand{\thetable}{\thesection.\arabic{table}}
\providecommand{\theHtable}{\thetable}
\renewcommand{\theHtable}{appendix.\thesection.\arabic{table}}

This appendix records the finite enumerations used in Sections \ref{section_auxiliary_lemma}--\ref{sec:boundary-singularities}. All inputs and intermediate quantities are rational numbers, and no numerical approximation is used.

For a positive integer $d$, put
\[
X(\Phi_d)\defeq\left\{\frac{a}{d}\bmod1\ \middle|\ 1\leq a\leq d,\ (a,d)=1\right\}.
\]
We use the notation of Subsection \ref{subsec:cyclic-reps}. If the fundamental discriminant of $E$ is $-D$ and $D\mid d$, let $\omega'_{E,d}:(\bbZ/d\bbZ)^\times\to\{\pm1\}$
be the pullback of the quadratic character associated with $E$. For $\epsilon\in\{\pm1\}$, define
\[
X_{E,\epsilon}(d)\defeq\left\{\frac{a}{d}\bmod1\ \middle|\ (a,d)=1,\ \omega'_{E,d}(a)=\epsilon\right\}.
\]
These are the sets of exponents of the two irreducible factors $\Phi_d^\pm$ over $E$. If $D\nmid d$, the polynomial $\Phi_d$ remains irreducible over $E$ and its corresponding set is $X(\Phi_d)$.
Finally, let
\[
U_E\defeq
\begin{dcases}
\left\{0,\frac{1}{2}\right\}, & \text{if $w_E=2$};\\
\left\{0,\frac{1}{4},\frac{2}{4},\frac{3}{4}\right\}, & \text{if $w_E=4$};\\
\left\{0,\frac{1}{6},\frac{2}{6},\frac{3}{6},\frac{4}{6},\frac{5}{6}\right\}, & \text{if $w_E=6$}.
\end{dcases}
\]

After the elementary estimates in Lemmas
\ref{lemma_RT_sum_irreducible} and \ref{lemma_RT_sum_reducible},
only finitely many $d$ remain to be checked directly.
For the irreducible case of Lemma \ref{lemma_RT_sum_irreducible}, after
the cases $d=1,2,3,4,6$ treated explicitly in its proof, the remaining values are $\mathcal{D}_{\mathrm{irr}}
:=
\{8,10,12,18,30\}$.
For the reducible case of Lemma \ref{lemma_RT_sum_reducible}, the
remaining $d$ are
\[
\begin{split}
\mathcal{D}_{\mathrm{red}}
:=\{&7,8,9,11,12,14,15,16,18,20,21,22,24,28,30,32,36,38,40,42,\\
&48,54,60,66,70,72,78,84,90\}.
\end{split}
\]
For a uniform finite enumeration below, we put
\[
\begin{split}
\mathcal{D}_{\mathrm{exc}}
:=\mathcal{D}_{\mathrm{irr}}\cup\mathcal{D}_{\mathrm{red}}
=\{&7,8,9,10,11,12,14,15,16,18,20,21,22,24,28,30,32,36,38,40,42,\\
&48,54,60,66,70,72,78,84,90\}.
\end{split}
\]

\noindent \textbf{Minimum along bipartitions.}
The first algorithm is the elementary subroutine used throughout the appendix. Its input is a finite multiset in $\bbQ/\bbZ$, and its output is the  minimum of the Reid--Shepherd-Barron--Tai sum over all nontrivial decompositions of that multiset. Since the set of decompositions is finite, the enumeration is exhaustive; the decompositions providing the minimum are recorded in order to identify the equality cases used in the main text.

\begin{algorithm}[H]
\caption{Minimum over non-trivial bipartitions}\label{alg:min-partition}
\begin{algorithmic}[1]
\Require A finite multiset $X=\{x_1,\ldots,x_n\}\subset \mathbb Q/\mathbb Z$ with $n\geq2$.
\Ensure The value $c(X)=\min_{X=A\sqcup B}S(A,B)$ and the set of minimizing decompositions.
\State $c\gets +\infty$, $\mathcal W\gets\emptyset$
\For{each nonempty proper subset of indices $I\subset \{1,\ldots,n\}$}
  \State $A\gets\{x_i:i\in I\}$, $B\gets X\setminus A$
  \State $v\gets \sum_{a\in A,b\in B}\{a-b\}$
  \If{$v<c$}
    \State $c\gets v$, $\mathcal W\gets\{(A,B)\}$
  \ElsIf{$v=c$}
    \State Add $(A,B)$ to $\mathcal W$
  \EndIf
\EndFor
\State \Return $(c,\mathcal W)$
\end{algorithmic}
\end{algorithm}

\noindent \textbf{Exceptional values of $c_{\min}$.}
After the elementary estimates of Section \ref{section_auxiliary_lemma}, only the finite set $\mathcal{D}_{\mathrm{exc}}$ can contribute exceptional values of $c_{\min}$.  The following enumeration takes each such $d$, forms either $X(\Phi_d)$ or $X_{E,\epsilon}(d)$,
and applies Algorithm \ref{alg:min-partition}.  
\begin{algorithm}[H]
\caption{Exceptional values of $c_{\min}$}\label{alg:cmin-exceptional}
\begin{algorithmic}[1]
\Require The list $\mathcal{D}_{\mathrm{exc}}$.
\Ensure The values $c_{\min}$ for all  $X(\Phi_d)$ and $X_{E,\epsilon}(d)$
occurring in Lemmas \ref{lemma_RT_sum_irreducible} and
\ref{lemma_RT_sum_reducible}.
\State $\mathcal R\gets\emptyset$
\For{each $d\in\mathcal{D}_{\mathrm{exc}}$}
  \State Apply Algorithm \ref{alg:min-partition} to $X(\Phi_d)$ and append the output to $\mathcal R$
  \For{each imaginary quadratic fundamental discriminant $-D$ with $D\mid d$}
    \For{each $\epsilon\in\{\pm1\}$}
      \State $X\gets X_{E,\epsilon}(d)$
      \If{$|X|\geq2$}
        \State Apply Algorithm \ref{alg:min-partition} to $X$
        \State Append $(E,d,\epsilon,c,\mathcal W)$ to $\mathcal R$
      \EndIf
    \EndFor
  \EndFor
\EndFor
\State \Return $\mathcal R$
\end{algorithmic}
\end{algorithm}

\noindent \textbf{Exceptional values of $s_{\min}$.}
The next enumeration treats the case in which one of the two eigenspaces is contributed by a unit of $\OO_E^\times$.  The input is the finite list of $\Phi_d$ and $\Phi_d^\pm$ not already removed by the preceding estimates, together with $U_E$.  For each factor, the algorithm compares the two possible contributions $S(X_f,\{\theta\})$ and $S(\{\theta\},X_f)$ for all $\theta\in U_E$, and records the minimum.
This produces the values in Table \ref{table_list_S(X,0)<=1}.

By the proof of Lemma \ref{lem_RT_sum_A_B_empty}, it is enough to
consider $d$ in
\[
\mathcal D_s
:=
\{3,4,6,7,8,9,12,14,15,18,20,24,30\}.
\]
The symbol ``$\mathrm{gen}$'' in Algorithm
\ref{alg:smin-exceptional} denotes any imaginary quadratic field
$E\neq\bbQ(\sqrt{-1}),\bbQ(\sqrt{-3})$ for which $D\nmid d$.
Second, we enumerate separately the finitely many exceptional fields for which either the unit group is larger or $\Phi_d$ is reducible.

\begin{algorithm}[H]
\caption{Exceptional values of $s_{\min}(f,E)$}
\label{alg:smin-exceptional}
\begin{algorithmic}[1]
\Require The finite set $\mathcal D_s$.
\Ensure The values in Table
\ref{table_list_S(X,0)<=1} and Lemma
\ref{lem_RT_sum_A_B_empty}, including the generic irreducible case.

\State $\mathcal R\gets\emptyset$

\For{each $d\in\mathcal D_s$}

  \Statex \textit{Generic irreducible cases.}
  \State
  $X_{\mathrm{gen}}\gets X(\Phi_d)$,
  $U_{\mathrm{gen}}\gets\{0,1/2\}$

  \If{$|X_{\mathrm{gen}}|\geq2$}
    \State
    $s\gets+\infty$, $\mathcal W\gets\emptyset$

    \For{each $\theta\in U_{\mathrm{gen}}$}
      \State
      $v_1\gets S(X_{\mathrm{gen}},\{\theta\})$,
      $v_2\gets S(\{\theta\},X_{\mathrm{gen}})$
      \State Update $s$ and $\mathcal W$ using $v_1$ and $v_2$
    \EndFor

    \State Append
    $(\mathrm{gen},d,X_{\mathrm{gen}},s,\mathcal W)$
    to $\mathcal R$
  \EndIf

  \Statex \textit{Exceptional cases.}
  \State Let $\mathcal E_{\mathrm{exc}}(d)$ consist of
  $\bbQ(\sqrt{-1})$, $\bbQ(\sqrt{-3})$, and all imaginary quadratic
  fields of fundamental discriminant $-D$ satisfying $D\mid d$

  \For{each $E\in\mathcal E_{\mathrm{exc}}(d)$}

    \If{$D\mid d$}
      \State
      $\mathcal X\gets\{X_{E,+}(d),X_{E,-}(d)\}$
    \Else
      \State
      $\mathcal X\gets\{X(\Phi_d)\}$
    \EndIf

    \For{each $X\in\mathcal X$}

      \If{$|X|\geq2$}
        \State
        $s\gets+\infty$, $\mathcal W\gets\emptyset$

        \For{each $\theta\in U_E$}
          \State
          $v_1\gets S(X,\{\theta\})$,
          $v_2\gets S(\{\theta\},X)$
          \State Update $s$ and $\mathcal W$ using $v_1$ and $v_2$
        \EndFor

        \State Append $(E,d,X,s,\mathcal W)$ to $\mathcal R$
      \EndIf

    \EndFor
  \EndFor
\EndFor

\State \Return $\mathcal R$
\end{algorithmic}
\end{algorithm}

\noindent\textbf{Pairwise exceptional inequalities.}
By Lemma \ref{lem:external-point-bound}, we have
$s_{\min}(f,E)\geq c_{\min}(f,E)$.
Hence, to prove Lemmas \ref{lemma_S_AA_BB},
and \ref{lemma_S_XX}, it is enough to consider 
$f$ 
satisfying
$\deg(f)\geq2$ and $c_{\min}(f,E)\leq1$.
By Lemmas \ref{lemma_RT_sum_irreducible} and
\ref{lemma_RT_sum_reducible}, $f$ is equal to $\Phi_d$ or $\Phi_{d}^\pm$ for some $d \in \calD_s$.

Let $\mathcal E_{\mathrm{pair}}$ consist of one irreducible cases together with all imaginary quadratic fields
of fundamental discriminant $-D$ such that $D\mid d$ for some
$d\in\mathcal D_s$.
For $E\in\mathcal E_{\mathrm{pair}}$, let
$\mathcal F_{\mathrm{pair}}(E)$ be the finite collection of
irreducible factors $f$ of $\Phi_d$, $d\in\mathcal D_s$,
satisfying $\deg(f)\geq2$ and $c_{\min}(f,E)\leq1$.
If $D\nmid d$, we take $f=\Phi_d$ and $X_f=X(\Phi_d)$; if $D\mid d$,
we use the factors $\Phi_d^\pm$ with
$X_{\Phi_d^\pm}=X_{E,\pm}(d)$.
For the generic field, all $\Phi_d$ are treated as irreducible and
$c_{\min}(\Phi_d,\mathrm{gen})=c(X(\Phi_d))$.

For a finite multiset $X$, put
$\operatorname{Part}(X)
:=\{(A,B)\mid X=A\sqcup B\text{ as multisets},\
A\neq\emptyset,\ B\neq\emptyset\}$.
These are exactly the quantities required in
Lemmas \ref{lemma_S_AA_BB}, and
\ref{lemma_S_XX}, respectively. 
\begin{algorithm}[H]
\caption{Pairwise finite checks in Section
\ref{sec:ages_unitary_actions}}
\label{alg:exceptional-pair}
\begin{algorithmic}[1]
\Require The finite collections $\mathcal F_{\mathrm{pair}}(E)$,
$E\in\mathcal E_{\mathrm{pair}}$.
\Ensure The assertions of Lemmas \ref{lemma_S_AA_BB}
and \ref{lemma_S_XX}.

\State $\mathcal R\gets\emptyset$
\For{each $E\in\mathcal E_{\mathrm{pair}}$ and each ordered pair
$f,f'\in\mathcal F_{\mathrm{pair}}(E)$}
  \For{each $(A,B)\in\operatorname{Part}(X_f)$}
    \For{each $(A',B')\in\operatorname{Part}(X_{f'})$}
      \State Compute $S(A\sqcup A',B\sqcup B')$
      and record it if it is at most $1$
    \EndFor
  \EndFor
  \State Compute $S(X_f,X_{f'})$
  and record it if it is at most $1$
\EndFor
\State \Return $\mathcal R$
\end{algorithmic}
\end{algorithm}

\begin{rem}
\label{rem:unit-interior-enumeration}
Let $A,B$ be multisets in $U_E$ with
$2\leq |A|=p\leq |B|=q$ and $0<S(A,B)\leq1$.
By Lemma \ref{lem:unit-eigenvalues-interior}, $p\leq w_E$.
If $A$ is non-scalar, every $b\in B$ contributes at least
$1/w_E$, so $q\leq w_E$.
If $A$ consists of $p$ copies of $\alpha$, remove all occurrences
of $\alpha$ from $B$ and denote the resulting multiset by
$B_{\mathrm{ess}}$.
Then $S(A,B)
  =p\sum_{b\in B_{\mathrm{ess}}}\{\alpha-b\}
  \geq p|B_{\mathrm{ess}}|/w_E$ and $
  1\leq |B_{\mathrm{ess}}|
  \leq\left\lfloor\frac{w_E}{p}\right\rfloor$.
These bounds give a finite enumeration.
The full families are recovered by adjoining copies of $\alpha$
to $B_{\mathrm{ess}}$ until $|B|\geq p$; further copies add only zeros for types.
\end{rem}

\noindent\textbf{Types with nonlinear factors in the interior.}
The cases with at least two nonlinear factors are supplied by
Proposition \ref{prop_two_nonlinear_factor}.
Also, by Proposition \ref{prop_one_nonlinear_factor}, if $A_f = \emptyset$ or $B_f = \emptyset$, we have $\mathrm{RT}(\gamma) \geq 1$ and the equality holds only if $s_{\min}(f,E) = 1/2$ and $p=2$.
We may assume $A_f \neq \emptyset$ and $B_f \neq \emptyset$.
Put
$\mathcal F_{\mathrm{one}}
:=\{(E,f)\mid
E\in\mathcal E_{\mathrm{pair}},
f\in\mathcal F_{\mathrm{pair}}(E)\}$.
For $(E,f)\in \mathcal F_{\mathrm{one}}$, set
\[
  M_{E,f} 
  \defeq
  \max_{x,y \in X_f \cup U_E, x \neq y}
  \left\lfloor\frac{1}{\{x-y\}}\right\rfloor.
\]
If $S(A,B) \leq 1$, we have $|A|, |B| \leq M_{E,f}$.
\begin{algorithm}[H]
\caption{Interior low-age types with nonlinear factors}
\label{alg:low-age-types}
\begin{algorithmic}[1]
\Require The finite set $\mathcal F_{\mathrm{one}}$.
\Ensure All interior cyclic quotient types with a nonlinear factor
and age at most $1$.

\State $\mathcal R\gets\emptyset$

\For{each $(E,f)$ with $E \in \calE_{\mathrm{pair}}$ and $f \in \calF_{\mathrm{pair}}(E)$}
  \For{each decomposition $X_f=A_f\sqcup B_f$, not allowing empty parts}
    \For{unit multisets $U^+,U^-\subset U_E$ with
      $|U^+|,|U^-|\leq M_{E,f}$}
      \State $A\gets A_f\sqcup U^+$,\quad $B\gets B_f\sqcup U^-$
      \If{$2\leq|A|\leq|B|$}
        \State $R\gets S(A,B)$
        \If{$0<R\leq1$}
          \State $T\gets\{\{a-b\}\mid a\in A,\ b\in B\}$
          \State Append $(E,f,|A|,|B|,R,T)$ to $\mathcal R$
        \EndIf
      \EndIf
    \EndFor
  \EndFor
\EndFor

\State Add the degree-two equality cases of Proposition
\ref{prop_two_nonlinear_factor} by direct enumeration in signature
$(2,2)$.

\State \Return $\mathcal R$
\end{algorithmic}
\end{algorithm}

\small
\begin{longtable}{
  >{\raggedright\arraybackslash}p{0.22\textwidth}
  >{\raggedright\arraybackslash}p{0.45\textwidth}
  >{\raggedright\arraybackslash}p{0.23\textwidth}
}
\caption{Where the finite enumerations are used in the main text}
\label{tab:computer-assisted-map}\\
\toprule
Statement & Required output & Algorithm \\
\midrule
\endfirsthead

\toprule
Statement & Required output & Algorithm \\
\midrule
\endhead

Lemma \ref{lemma_RT_sum_irreducible}
&
The verification $c_{\min}(\Phi_d,E)>1$ for $d\in\{8,10,12,18,30\}$.
&
Algorithms \ref{alg:min-partition} and
\ref{alg:cmin-exceptional}.
\\
\addlinespace

Lemma \ref{lemma_RT_sum_reducible} and
Table \ref{table_list_c<=1}
&
Exceptional values of $c_{\min}(\Phi_d^\pm,E)$, including equality
cases and minimizing decompositions.
&
Algorithms \ref{alg:min-partition} and
\ref{alg:cmin-exceptional}.
\\
\addlinespace

Lemma \ref{lem_RT_sum_A_B_empty} and
Table \ref{table_list_S(X,0)<=1}
&
Values of $s_{\min}(f,E)$ for the exceptional cases,
including the irreducible cases.
&
Algorithm \ref{alg:smin-exceptional}.
\\
\addlinespace

Lemma \ref{lemma_S_AA_BB}
&
Verification of
$S(A_f\sqcup A_{f'},B_f\sqcup B_{f'})\geq1$
and its equality cases.
&
Algorithm \ref{alg:exceptional-pair}.
\\
\addlinespace

Lemma \ref{lemma_S_XX}
&
Verification of $S(X_f,X_{f'})\geq1$ and its degree-two equality
case.
&
Algorithm \ref{alg:exceptional-pair}.
\\
\addlinespace

Proposition \ref{prop_one_nonlinear_factor}
&
The remaining cases with exactly one nonlinear irreducible factor.
&
Algorithms \ref{alg:smin-exceptional} and
\ref{alg:low-age-types}.
\\
\addlinespace

Tables \ref{list_canonical_sing} and \ref{list_noncanonical_sing}
&
Types in the interior arising from unit eigenvalues and from
nonlinear factors.
&
Remark \ref{rem:unit-interior-enumeration} and
Algorithm \ref{alg:low-age-types}.
\\

\bottomrule
\end{longtable}
\normalsize

\bibliographystyle{alpha}
\bibliography{main}

\end{document}